\newcommand{\m}[1]{
\mathbb{#1}
}
\newcommand{\q}{\mathcal}
\newcommand{\ds}{\displaystyle}
\newcommand{\e}{\varepsilon}
\newtheorem{theorem}{Theorem}[section]
\newtheorem{prop}[theorem]{Proposition}
\newtheorem{lem}[theorem]{Lemma}
\newtheorem{cor}[theorem]{Corollary}
\newtheorem{defi}[theorem]{Definition}
\theoremstyle{remark}
\newtheorem{rem}[theorem]{Remark}
\numberwithin{equation}{section}
\newcommand*\diff{\mathop{}\!\mathrm{d}}
\DeclareMathOperator\supp{supp}
\providecommand{\keywords}[1]
{
  \small	
  \textbf{{Keywords ---}} #1
  
  \medskip
}
\providecommand{\subjclass}[1]
{
  \small	
  \textbf{{2020 Mathematics Subject Classification ---}} #1
  
  \medskip
}
\DeclarePairedDelimiter\abs{\lvert}{\rvert}%
\DeclarePairedDelimiter\norm{\lVert}{\rVert}%
\newcommand{\tnorm}[1]{{\left\vert\kern-0.25ex\left\vert\kern-0.25ex\left\vert #1 
    \right\vert\kern-0.25ex\right\vert\kern-0.25ex\right\vert}}
\let\oldabs\abs
\def\abs{\@ifstar{\oldabs}{\oldabs*}}
\let\oldnorm\norm
\def\norm{\@ifstar{\oldnorm}{\oldnorm*}}
\newcommand*{\myemail}[1]{%
    \normalsize\href{mailto:#1}{#1}
    }
\titleformat{\section}[block]{\centering \scshape \large}{\thesection.}{0.3\baselineskip}{}
\titlespacing{\section}{0pt}{*5}{*2}
\titleformat{\subsection}[block]{\bfseries}{\thesubsection.}{.5em}{}
\titlespacing{\subsection}{0pt}{*2.5}{*1}
\titleformat{\subsubsection}[runin]{\itshape}{\normalfont \thesubsubsection.}{.5em}{}[.]
\titlespacing{\subsubsection}{0pt}{*2.5}{0.5em}
\title{Asymptotic stability of 2-domain walls for the Landau-Lifshitz-Gilbert equation in a nanowire with Dzyaloshinskii-Moriya interaction}
\date{\vspace{-1em}}
\author[*]{Raphaël Côte}
\author[$\dag$]{Guillaume Ferriere}
\affil[*$\dag$]{Institut de Recherche Mathématique Avancée, UMR 7501, Université de Strasbourg, CNRS, Strasbourg, France}
\affil[*]{University of Strasbourg Institute for Advanced Study (USIAS)}
\affil[*]{\myemail{cote@math.unistra.fr}}
\affil[$\dag$]{\myemail{guillaume.ferriere@inria.fr}}
\begin{document}

\maketitle

\begin{abstract}
We consider a ferromagnetic nanowire, with an energy functional $E$ with  easy-axis in the direction $e_1$, and which  takes into account the  Dzya\-lo\-shin\-skii-Mo\-riya interaction. We consider configurations of the magnetization which are perturbations of two well separated domain wall, and study their evolution under the Landau-Lifshitz-Gilbert flow associated to $E$. 

Our main result is that, if the two walls have opposite speed, these configurations are asymptotically stable, up to gauges intrinsic to the invariances of the energy $E$. Our analysis builds on the framework developed in \cite{Cote_Ignat__stab_DW_LLG_DM}, taking advantage that it is amenable to space localisation.
\end{abstract}

\keywords{Landau-Lifshitz-Gilbert equation, domain wall, interaction, stability, large-time behavior, ferromagnetism.}

\subjclass{Primary : 35B40; Secondary : 35B35, 35Q60, 78M35.}

\section{Introduction}

\subsection{A model for a ferromagnetic nanowire}

We model a ferromagnetic nanowire by a straight line $\m Re_1 \subset \m R^3$ (of infinite length) where 
\[ e_1= \begin{pmatrix} 
1 \\ 0 \\ 0
\end{pmatrix}, \quad
e_2= \begin{pmatrix} 
0 \\ 1 \\ 0
\end{pmatrix}, \quad
e_3 = \begin{pmatrix} 
0 \\ 0 \\ 1
\end{pmatrix}
\]
is the canonical basis of $\m R^3$. The magnetization $m = (m_1, m_2, m_3): \m R \to \m S^2$ of this nanowire takes its values into the unit sphere $\m S^2\subset \m R^3$,  and we associate to it the energy functional
\begin{align} \label{def:energy} \index{Functionals!$E$: energy}
E(m) = \frac{1}{2} \int_{\m R} |\partial_x m|^2 + 2\gamma \partial_x m \cdot (e_1\wedge m)+ (1-m_1^2) \, dx,
\end{align}
where  $x$ is the variable in direction $e_1$ of the nanowire and $\gamma \in \m R$ is a given constant with $|\gamma| <1$; it will be convenient to denote
\[ \index{Constants!$\Gamma$}
\Gamma \coloneqq \sqrt{1-\gamma^2}. \]
Here, $\cdot$ and $\wedge$ are the scalar and cross product in $\m R^3$. The term with $\gamma$ accounts for the Dzyaloshinskii-Moriya interaction. We refer to \cite{Cote_Ignat__stab_DW_LLG_DM} where this model was derived from the full 3D system by $\Gamma$-convergence in a special regime.

\bigskip

We are interested in the evolution of the magnetization under the Landau-Lifshitz-Gilbert flow associated to $E$, that is the equation:
\begin{equation} \tag{LLG} \label{eq:llg}
    \partial_t m = m \wedge H(m) - \alpha m \wedge ( m \wedge H(m) ),
\end{equation}
where now $m: I \times \m R \to \m S^2$ is the time dependent magnetization ($I$ is interval of time of $\m R$), $\alpha >0$ is the damping coefficient, and the magnetic field $H$ is given by
\begin{equation*}
    H(m) = - \delta E (m) + h(t) e_1.
\end{equation*}
 $\delta E (m)$ is the variation of the energy, which writes
\begin{equation*}
    \delta E (m) = - \partial_{xx}^2 m - 2 \gamma e_1 \wedge \partial_x m + m_2 e_2 + m_3 e_3.
\end{equation*}
(recall that $m_1^2+m_2^2+m_3^2=1$). Finally, the function $h: I \to \m R$ is the (given) intensity of an applied external field, which we stress that it depends solely on the time variable $t$, and is oriented on the axis $e_1$.

The \eqref{eq:llg} flow is equivariant under the following set of transformations:
\begin{enumerate}
\item[$\bullet$] translations in space $\tau_y m(x) = m(x-y)$ for $y\in \m R$, and
\item[$\bullet$] rotations $\ds R_\phi = \begin{pmatrix} 
1 & 0 & 0 \\
0 & \cos \phi & -\sin \phi \\
0 & \sin \phi & \cos \phi
\end{pmatrix}$ about the axis $e_1$ and angle $\phi\in \m R$.
\end{enumerate}
There is another symmetry: if $m$ solves \eqref{eq:llg} with parameter $\gamma$, then ${}^\sharp m(t,x):= m(t,-x)$ solves \eqref{eq:llg} with parameter $-\gamma$. We nonetheless leave this last symmetry aside (it does not play any role in modulation theory for example), and we are lead to define the group 
\[ \index{Spaces!$G$: gauge group}
G \coloneqq \m R \times \m R /2\pi \m Z \]
which naturally acts on function $w: \m R \to \m R^3$ as follows: if $g= (y, \phi) \in G$, $g.w := R_\phi \tau_y w$.
The action of $G$ preserves $\m S^2$ valued functions, and so acts on magnetizations; it also extends naturally to functions of space and time, for which it preserves solutions to \eqref{eq:llg}. Also, we endow $G$ with the natural quotient distance over $\m R^2$:
\[ \forall g = (y,\phi) \in G,\quad |g| \coloneqq |y| + \inf \{ |\phi+2k \pi|, k \in \m Z \}. \]

\bigskip

Our main object of interest here are (precessing) domain walls. These are explicit solutions studied in \cite{Cote_Ignat__stab_DW_LLG_DM} (to which we refer for further details): given $\sigma = (\sigma_1,\sigma_2) \in \{ \pm 1\}^2$ (we equivalently use the notation $\pm$ instead of $\pm 1$), denote
\begin{equation} \index{Functions!$w_*^\sigma$: domain wall} \index{Functions!$\theta_*$}
\label{formul1}
\forall x \in \m R, \quad w^\sigma_*(x)  \coloneqq \begin{pmatrix}  \cos ( \theta_*(\sigma_1 x))  \\ \sigma_2  \sin (\theta_*(\sigma_1 x))  \cos (\gamma x)  \\ \sigma_1 \sigma_2 \sin ( \theta_*(\sigma_1 x))  \sin (\gamma x) \end{pmatrix}  \quad
\textrm{with} \quad  \theta_*(x) \coloneqq 2\arctan (e^{- \Gamma x}),
\end{equation}
and
\begin{gather} \index{Functions!$g_*$, $y_*$, $\phi_*^\pm$: domain wall gauge}
g_*^\sigma := (\sigma_1 y_*, \phi_*^{\sigma_2}) \quad\text{where} \\
 \text{for } t \ge 0, \quad y_* (t)  \coloneqq - \frac{\alpha}{\Gamma} \int_0^t h(s) \diff s \quad \text{and} \quad \phi_*^\pm (t)  \coloneqq \left(-1 \pm \frac{\alpha \gamma}{\Gamma }\right) \int_0^t h(s) \diff s. 
 \label{eq_y_phi}
 \end{gather}
 Then 
 \begin{equation} \label{def:DM}
 (t,x) \mapsto g_*^{\sigma}(t). w^\sigma_*(x)
 \end{equation}
 is a solution to \eqref{eq:llg}, which we call a domain wall.

Recall that $w_*^\sigma$ are the only solutions, up to a gauge in $G$, to the static equation
\[ w \wedge \delta E (w) =0, \]
which satisfy the suitable limits at infinity $\lim_{\pm \infty} w = \pm \sigma_1 e_1$.
Moreover, they satisfy $\delta E (w_*^\sigma) = \beta_* w_*^\sigma$ where
\begin{equation} \label{eq:beta_*} \index{Functions!$\beta_*$}
    \beta_* \coloneqq 2 \Gamma^2 \sin^2 \theta_*.
\end{equation}
The case $\gamma=0$ (i.e., absence of DMI) corresponds to (in-plane) static domain walls where a rotation in $\theta_*$ of $180^\circ$ takes place along the nanowire axis $e_1$; these transitions are called Bloch walls (see e.g. \cite{CL06a, KK_these}). For future reference, we note that $\theta_*:\m R\to (0,\pi)$ solves the first order ODE 
\begin{equation}
\label{eq:theta*}
\partial_x \theta_* = -\Gamma \sin \theta_*, \quad \theta_*(-\infty)=\pi, \, \theta_*(+\infty)=0,
\end{equation}
and $w_*^\sigma$ satisfies the system of first order ODEs:
\begin{equation}
\label{ode_w*}
\partial_x w_*^\sigma= \sigma_1 \Gamma w_*^\sigma \wedge(e_1\wedge w_*^\sigma)-\gamma e_1\wedge w_*^\sigma.
\end{equation}
Note also that formulas \eqref{formul1}, \eqref{eq_y_phi} and \eqref{def:DM} make sense for all $\alpha \in \m R$; however the condition  $\alpha >0$ is the physically relevant one, and will be required in all the following analysis.


\subsection{Functional spaces and Cauchy problem}

We denote $H^s$ (and $L^p$) for the Sobolev space $H^s(\m R, \m R^3)$ with $s \ge 0$ (and the Lebesgue space $L^p(\m R, \m R^3)$ with $p\in [1, \infty]$, respectively). We also denote $\dot H^s$ for the homogeneous Sobolev space  whose seminorm is given via Fourier transform:
\begin{equation} 
\label{def:H^s} 
\| m \|_{\dot H^s}^2 := \frac{1}{2\pi} \int_\m R |\hat m (\xi)|^2 |\xi|^{2s} d\xi, \quad \text{where} \quad \hat m(\xi) = \int_{\m R} e^{-ix\xi} m(x)\, dx.
\end{equation}
(In particular, $\| m \|_{\dot H^2} = \| \partial_{xx} m \|_{L^2}$). We define for $s \ge 1$, the spaces
\begin{align} \index{Spaces!$\q H^1, \q H^s$: energy spaces}
\label{def:qH^s}  \q H^s &:= \{ m=(m_1, m_2, m_3) \in \mathscr C(\m R, \m S^2) :  \| m \|_{\q H^s} < +\infty \}\\ 
\nonumber &\textrm{with}\quad \| m \|_{\q H^s}:= \| m_2 \|_{L^2} + \| m_3 \|_{L^2} + \| m \|_{\dot H^s}.
\end{align}
The $\q H^s$ spaces are modelled on the usual Sobolev spaces $H^s$, but adapted to the geometry of the target manifold $\m S^2$ and to the energy functional $E$:  the main point is that $|m_1| \to 1$ at $\pm \infty$ so that $m_1 \notin L^2$. $\q H^1$ corresponds to the set of finite energy configurations $E(m) < +\infty$ in which case, the energy gradient $\delta E(m)\in H^{-1}$. Also if $m, \tilde m \in \q H^1$ with $m(\pm \infty)=\tilde m(\pm \infty)$, then $m - \tilde m \in H^1$. Moreover, if $w_*$ is a domain wall \eqref{formul1}, then every configuration $m\in \q H^1$ with $\|m-w_*\|_{\q H^1}$ small enough is actually close to $w_*$ in $H^1$ with Lipschitz bounds, i.e., $\|m-w_*\|_{H^1}\lesssim \|m-w_*\|_{\q H^1}$
(we refer to \cite{Cote_Ignat__stab_DW_LLG_DM} for details and proofs).

Note that all the derivatives of $w_*^\sigma$ of order $k\ge 1$ are exponentially localised, so that
$w_*^\sigma\in {\q H}^k$ for all $k\ge 1$.

We use the following well posedness result, quoted from \cite{Cote_Ignat__stab_DW_LLG_DM}: see section 4 there, and the reference therein for more comments.

\begin{theorem}[Local well-posedness in $\q H^s$] \label{th:lwp} Let $\alpha >0$, $\gamma \in (-1,1)$ and $h \in L^\infty([0,+\infty), \m R)$. Assume $s \ge 1$ and $m_0 \in \q H^s$. Then there exist a maximal time $T_+= T_+(m_0) \in (0, +\infty]$ and a unique solution $m \in\mathscr C([0, T_+), \q H^s)$ to \eqref{eq:llg} with initial data $m_0$. 

Moreover,
\begin{enumerate}
\item if $T_+ <+\infty$, then $\| m(t) \|_{\q H^1} \to +\infty$ as $t \uparrow T_+$;
\item for $T<T_+$ (with $T_+$ finite or infinite), the map $\tilde m_0\in \q H^s \to \tilde m\in \mathscr C([0,T], \q H^s)$ is continuous in a small $\q H^s$ neighbourhood of $m_0$ (for every initial data $\tilde m_0$ in that neighborhood, the maximal time of the corresponding solution $\tilde m$ satisfies $T_+(\tilde m_0) > T$);
\item if $s \ge 2$, one has the energy dissipation identity
: $t \mapsto E(m(t))$ is a locally Lipschitz function in $[0,T_+)$ (even $\mathscr C^1$ provided $h$ is continuous) and for all $t \in [0,T_+)$,
\begin{align}
\frac{d}{dt} E(m) = - \alpha \int ( |\delta E(m)|^2 - |m \cdot \delta E(m)|^2 )\, dx + \alpha h(t) \int (m \wedge e_1) \cdot (m \wedge \delta E(m)) \, dx. \label{eq:en_dissip2}
\end{align}
\end{enumerate}
\end{theorem}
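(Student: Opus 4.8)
The plan is to recast \eqref{eq:llg} as a genuine semilinear parabolic equation on the flat space $H^s(\m R,\m R^3)$, solve it by a contraction argument using the parabolic smoothing coming from $\alpha>0$, and then read off the three extra properties. Since $m_0\in\q H^s$ it is continuous with $m_{0,2},m_{0,3}\in L^2$, hence $m_0(x)\to\epsilon_\pm e_1$ as $x\to\pm\infty$ for some $\epsilon_\pm\in\{\pm1\}$; fix once and for all a smooth map $Q\colon\m R\to\m S^2$ equal to $\epsilon_\pm e_1$ near $\pm\infty$ (one may take $Q=w_*^\sigma$ in the domain-wall case, a constant otherwise), so $Q\in\q H^k$ for every $k$, with exponentially localised derivatives. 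Writing $m=Q+u$ identifies a neighbourhood of $Q$ in $\q H^s$ with a neighbourhood of $0$ in $H^s$ (this is the fact, recalled in the excerpt for $s=1$, that two elements of $\q H^1$ with the same endpoints differ by an $H^1$ function; the argument is the same for $s\ge1$). Using $|m|^2\equiv1$, i.e.\ $m\cdot\partial_x m=0$ and $m\cdot\partial_{xx}m=-|\partial_x m|^2$, together with the equivalent Gilbert form $\partial_t m=m\wedge H^\perp+\alpha H^\perp$ with $H^\perp:=H(m)-(m\cdot H(m))m$, one rewrites the equation as
\[ \partial_t u=\q L u+\q N(u),\qquad \q L=\alpha\,\partial_{xx}+(Q\wedge\,\cdot\,)\,\partial_{xx}+B(x)\,\partial_x+C(x), \]
where $\q L$ is time-independent with smooth, bounded, exponentially-localised coefficients, and $\q N(u)$ gathers the terms quadratic or higher in $(u,\partial_x u)$ — chief among them $u\wedge\partial_{xx}u$ — the $Q$-dependent source, and all terms carrying the factor $h(t)$ (which are of order zero in $\partial_x$). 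One also records the standard fact that the $\m S^2$-constraint propagates automatically: $\rho:=|m|^2-1$ solves a scalar linear parabolic equation with $\rho|_{t=0}=0$, so $\rho\equiv0$ and the constrained and unconstrained problems coincide.

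\textbf{The linear semigroup.} The principal symbol of $\q L$ is $p(x,\xi)=-\xi^2\,(\alpha I_3+Q(x)\wedge\,\cdot\,)$. As $Q(x)\in\m S^2$, the antisymmetric matrix $Q(x)\wedge\,\cdot$ has eigenvalues $0,\pm i$, so $\alpha I_3+Q(x)\wedge\,\cdot$ is normal with spectrum $\{\alpha,\alpha\pm i\}$, of real part $\alpha>0$ uniformly in $x$; hence $\q L$ is parabolic and generates an analytic semigroup $(e^{t\q L})_{t\ge0}$ on every $H^\tau(\m R,\m R^3)$, with the smoothing bounds $\|e^{t\q L}f\|_{H^{\tau+\theta}}\lesssim_{T} t^{-\theta/2}\|f\|_{H^\tau}$ for $0\le t\le T$, $\theta\ge0$. (Equivalently one argues by hand with energy estimates: the second-order skew term produces no net top-order contribution after integration by parts, while $\alpha\,\partial_{xx}$ supplies the dissipation $-\alpha\|\partial_x f\|_{\dot H^\tau}^2$ that dominates the lower-order coefficients.)

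\textbf{Contraction and the extra properties.} Run a contraction for the Duhamel map $u\mapsto e^{t\q L}u_0+\int_0^t e^{(t-s)\q L}\q N(u(s))\,ds$ in a ball of $\q C([0,T],H^s)$. The only term in $\q N$ that is simultaneously second order in derivatives and nonconstant in $u$ is $u\wedge\partial_{xx}u$, but it is a perfect derivative, $u\wedge\partial_{xx}u=\partial_x(u\wedge\partial_x u)$, so in the Duhamel integral it is applied to $e^{(t-s)\q L}\partial_x(\cdot)$, which costs only the \emph{integrable} factor $(t-s)^{-1/2}$; every other nonlinear term loses at most one derivative (e.g.\ $|\partial_x m|^2 m$), the source and the $h$-terms lose none, and all of them are controlled the same way, using $H^s(\m R)\hookrightarrow L^\infty$ for $s\ge1$ to estimate products. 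This yields, for $T$ small depending only on $\|u_0\|_{H^s}$, a unique $u\in\q C([0,T],H^s)$, equivalently a unique $m\in\q C([0,T],\q H^s)$; concatenation and the standard uniqueness/continuation argument produce the maximal solution on $[0,T_+)$, and the Lipschitz dependence of the fixed point on $u_0$, propagated along $[0,T]$ by a covering argument, is item~(2). For item~(1): a bootstrap combining the smoothing of $e^{t\q L}$ with the quasilinear energy inequality $\frac{d}{dt}\|u\|_{H^k}^2\le-\alpha\|u\|_{\dot H^{k+1}}^2+C(\|u\|_{W^{1,\infty}})(\|u\|_{H^k}^2+1)$ shows that if $\|m\|_{\q H^1}$ stays bounded on $[0,T_+)$ then so does $\|m\|_{\q H^s}$, forcing the solution to extend past $T_+$; hence $T_+<\infty$ implies $\|m(t)\|_{\q H^1}\to+\infty$. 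For item~(3), if $s\ge2$ then $\delta E(m)\in L^2$ and, from the equation, $\partial_t m\in\q C([0,T_+),L^2)$, so $t\mapsto E(m(t))$ is differentiable with $\frac{d}{dt}E(m)=\langle\delta E(m),\partial_t m\rangle_{L^2}$; substituting $\partial_t m=m\wedge H^\perp+\alpha H^\perp$ and $\delta E(m)=-H(m)+h(t)e_1$ and using that $m\wedge H^\perp$ is orthogonal to both $m$ and $H^\perp$ yields \eqref{eq:en_dissip2} after a short rearrangement of the $h$-terms — when $h\equiv0$ it reduces directly to $-\alpha\int(|\delta E(m)|^2-|m\cdot\delta E(m)|^2)$, since then $H^\perp=-(\delta E(m))^\perp$ and $|(\delta E(m))^\perp|^2=|\delta E(m)|^2-|m\cdot\delta E(m)|^2$. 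Its right-hand side is locally bounded, whence the local Lipschitz regularity in $t$, and continuous whenever $h$ is, whence $\q C^1$.

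\textbf{The main obstacle.} The one genuine difficulty is the quasilinear term $m\wedge\partial_{xx}m$: a naive semilinear fixed point fails because recovering two derivatives from $e^{(t-s)\q L}$ would cost the non-integrable weight $(t-s)^{-1}$. Its divergence structure $\partial_x(m\wedge\partial_x m)$ is exactly what rescues the scheme, turning this into the integrable $(t-s)^{-1/2}$. The companion subtlety is that $\q L$ itself carries a second-order skew part, which a priori could break parabolicity; the spectral computation for $\alpha I_3+Q(x)\wedge\,\cdot$ above is precisely what rules this out. Everything else — propagation of the constraint, one-dimensional product estimates, the bootstrap behind item~(1), and the algebra behind \eqref{eq:en_dissip2} — is routine.
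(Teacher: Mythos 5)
First, a point of reference: the paper you are working from does not prove Theorem \ref{th:lwp} at all — it is quoted verbatim from \cite{Cote_Ignat__stab_DW_LLG_DM} (``see section 4 there''), so there is no in-paper argument to match your proposal against; what follows is an assessment of your proof on its own terms.

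Your overall strategy (flatten $\q H^s$ to $H^s$ via a reference map $Q$, exploit the parabolicity supplied by $\alpha>0$, run a Duhamel fixed point, then read off the blow-up criterion, continuous dependence and the dissipation identity) is the natural one, and most of the peripheral steps are fine. But the step you yourself single out as ``the main obstacle'' is exactly where the argument has a genuine gap. You claim that the divergence structure $u\wedge\partial_{xx}u=\partial_x(u\wedge\partial_x u)$ lets the Duhamel term be estimated with the integrable weight $(t-s)^{-1/2}$. The bound $\norm{e^{\tau\q L}\partial_x g}_{H^s}\lesssim \tau^{-1/2}\norm{g}_{H^s}$ would require $g=u\wedge\partial_x u\in H^s$, but for $u\in H^s$ one only has $u\wedge\partial_x u\in H^{s-1}$ (with $\norm{u\wedge\partial_x u}_{H^{s-1}}\lesssim\norm{u}_{H^s}^2$); recovering $H^s$ after applying $e^{(t-s)\q L}\partial_x$ to an $H^{s-1}$ function costs $(t-s)^{-1/2-1/2}=(t-s)^{-1}$, which is not integrable. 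In other words, the divergence structure does not improve the derivative count in a pure $\q C([0,T],H^s)$ contraction, and the scheme does not close as written: this is precisely the quasilinear obstruction you set out to avoid. To repair it one needs a genuinely quasilinear device — e.g.\ a fixed point or energy scheme in $\q C([0,T],H^s)\cap L^2((0,T),H^{s+1})$ where the skew structure gives $\int \partial_x(u\wedge\partial_x u)\cdot u\,dx=0$ and commutator estimates absorb the top-order terms into the dissipation, or maximal $L^p$-regularity, or Kato/Amann quasilinear parabolic theory, or a gauge/frame (stereographic) reformulation; this is the kind of argument carried out in the reference the paper cites.

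Two secondary points. In your blow-up criterion (item 1) the stated energy inequality has a constant depending on $\norm{u}_{W^{1,\infty}}$, which is \emph{not} controlled by the $\q H^1$ norm in one dimension; to conclude that an $\q H^1$ bound propagates $H^s$ regularity you must first convert $W^{1,\infty}$ control into $H^1$ control by interpolating against the dissipation (e.g.\ $\norm{\partial_x u}_{L^\infty}^2\lesssim\norm{u}_{H^1}\norm{u}_{H^2}$ and absorption), a step currently missing. In item (3), your computation also produces the term $h(t)\int \delta E(m)\cdot(m\wedge e_1)\,dx$ with no factor $\alpha$; it vanishes, but only because $E_\gamma$ is invariant under the rotations $R_\phi$ about $e_1$ (so $\int\delta E(m)\cdot(e_1\wedge m)\,dx=0$), and this cancellation should be stated rather than folded into ``a short rearrangement of the $h$-terms''.
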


\subsection{Statement of the main result}

In \cite{Cote_Ignat__stab_DW_LLG_DM}, the flow of \eqref{eq:llg} around the domains wall \eqref{def:DM} was studied: for small $H^1$ perturbation, and under a small applied field $h$ (in $L^\infty_t((0,+\infty))$), domains walls were proved to be (exponentially) asymptotically stable, up to a gauge. This work thus extended previous results in two directions: in the absence of Dzya\-lo\-shin\-skii-Mo\-riya interaction (case $\gamma=0$), precessing domain walls were reported in \cite{GRS10}, and their linear asymptotic stability was proved in \cite{GGRS11} (it however completely disregards the gauge involved); nonlinear stability was also checked numerically in \cite{GGRS11}. We can also mention earlier studies of stability for Bloch or Walker walls (which are travelling fronts, not precessing) under some variant of \eqref{eq:llg} (the DMI interaction is not taken into account in the energy $E$): we refer for example to \cite{CL06a,Jiz11,Car10,Car14,Tak11}.

\bigskip

In the present paper, we are further interested to study the dynamics of solutions to \eqref{eq:llg} in the presence of several domain walls. This question is not only academically relevant for the long time dynamics, but also motivated by application of this model to data storage:  domain walls encode information, and their stability property is important for the persistence of this storage over time.

The simplest case to tackle is the interaction of domain walls decoupling with time: in view of $y_*$, there is essentially one such configuration, where the speeds are opposite (the transition of these domains wall are centered at $y_*(t)$ and $-y_*(t)$ respectively, up to a fixed translation). This corresponds to studying the evolution of a perturbation of 
\begin{gather}
g_*^{+}(t). w_*^+ (x) + g_*^{-}(t). w_*^- (x) + e_1
\end{gather}
\index{Functions!$g_*^\pm$: gauge of the two separating domain walls}
\index{Functions!$w_*^\pm$: the two separating domain walls}
where given $\sigma_2,\sigma_2' \in \{ \pm 1 \}$, we let $w_*^+ \coloneqq w_*^{(1,\sigma_2)}$, $w_*^- \coloneqq w_*^{(-1,\sigma_2')}$, and $g_*^+ \coloneqq g_*^{(1,\sigma_2)}$ $g_*^- \coloneqq g_*^{(-1,\sigma_2')}$.

In the decomposition of a $\m S^2$ magnetisation around two decoupled domain walls, it is interesting to consider gauges in $G$ with large translation parameter, which motivate the notation, given $L>0$,
\[ \index{Spaces!$G_{>L}$, $G_{<-L}$: gauges with large translation parameter $y$}
G_{>L} \coloneqq \{ (y,\phi) \in G : y > L \}, \qquad G_{< - L} \coloneqq \{ (y,\phi) \in G : y < - L \}. \]

\begin{theorem} \label{th1} 
\index{Constants!$L_0$, $\delta_0$: in the main theorem} \index{Functions!$q$: interaction function}  \index{Functions!$\kappa$}
There exist $L_0,\delta_0 > 0$ and $C, \lambda > 0$ such that the following holds. Assume that $h$ satisfies 
\begin{equation} \label{eq:ass_h_small}
    \|h \|_{L^\infty ((0, \infty))} \le \delta_0,
\end{equation}
and that 
\begin{equation} \label{eq:ass_Y}
\quad \int_0^\infty \sqrt{q(2 y_*(t))} \diff t <+\infty \quad \text{where, for } r \in \m R, \quad   q(r) \coloneqq (1+|r|) e^{-\Gamma r}.
\end{equation}
Denote for $t \ge 0$,
\begin{align} \label{def:kappa}
\kappa(t) \coloneqq e^{-\Gamma y_*(t)} + \left( \int_0^t e^{-2\lambda(t-s)} q(2y_*(s)) ds \right)^{1/2}.
\end{align}
Let $m_0 \in \mathcal{H}^1$ such that that there exist $L \ge L_0$ and $\zeta^{+} \in G_{> L}$, $\zeta^{-} \in G_{< - L}$ with
\begin{equation} \label{ass:in_data} \index{Constants!$\delta$: size of the perturbation at time $0$; $\delta < \delta_0$}
    \delta \coloneqq  \norm{m_0 - \Bigl( \zeta^{+} . w_*^{+} + \zeta^{-} . w_*^{-}  + e_1 \Bigr) }_{H^1} \le \delta_0.
\end{equation} 
\index{Constants!$L$: (half) minimal distance between the domain walls, $L \ge L_0$}
Then the solution $m$ to \eqref{eq:llg} is global for forward times and there exist 2 gauges $g^{+}, g^{-} \in W^{1,\infty}(\m R_+,G)$, such that,
    \begin{gather} \label{est:conv_m}
 \forall t \ge 0, \quad \norm{m(t) - \Bigl( g^{+} . w_*^{+} + g^{-} . w_*^{-} + e_1 \Bigr) }_{H^1} \le C  (\delta + \sqrt{ q(2L)})  e^{-\lambda t}+ C \sqrt{ q(2L)})  \kappa(t).
    \end{gather}
    Moreover, there exist two gauges $g_\infty^{\pm} \in G$ such that
\begin{equation} \label{est:conv_g}
\forall t \ge 0, \quad    \sum_{\iota \in \{\pm \}} |  g^{\iota}(t) - (g_*^{\iota}(t) + g_\infty^{\iota})| \le C (\delta + \sqrt{ q(2L)}) e^{-\lambda t}+ C \sqrt{ q(2L)} \int_t^{+\infty} \kappa(s) ds.
\end{equation}
\end{theorem}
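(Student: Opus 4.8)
\emph{Strategy and set-up.}
The plan is to combine a four-parameter modulation with a \emph{space-localised} version of the coercive Lyapunov functional constructed in \cite{Cote_Ignat__stab_DW_LLG_DM} for a single domain wall. Since that functional is essentially the integral of a density that only feels the perturbation \emph{near the wall it is attached to}, one can cut it off, attach one copy to each of $w_*^+$ and $w_*^-$, and let the two contributions interact only through the exponentially small tails of the profiles; this is the ``amenability to space localisation'' alluded to in the abstract, and it is what produces the weight $q(2y_*)$. Concretely, I would run a continuity/bootstrap argument on the maximal interval $[0,T^*)$ on which the solution exists and satisfies
\[
\|\varepsilon(t)\|_{H^1}\le \delta_1, \qquad R(t):=y^+(t)-y^-(t)\ge L_0,
\]
where $\delta_1>0$ is a fixed small constant, $L_0$ is taken large, and $\varepsilon(t), y^\pm(t)$ are defined as follows. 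On $[0,T^*)$ I would write
\[
m(t)=g^+(t).w_*^+ + g^-(t).w_*^- + e_1 + \varepsilon(t), \qquad g^\iota(t)=(y^\iota(t),\phi^\iota(t))\in G \quad (\iota\in\{+,-\}),
\]
fixing the four modulation parameters by the orthogonality conditions $\langle \varepsilon(t),\partial_y(g^\iota.w_*^\iota)\rangle_{L^2}=\langle \varepsilon(t),\partial_\phi(g^\iota.w_*^\iota)\rangle_{L^2}=0$. These are solvable for $(g^+,g^-)$ near $(\zeta^+,\zeta^-)$ with Lipschitz bounds by the implicit function theorem: the four tangent vectors split into two pairs, localised up to $O(e^{-cR})$ near $y^+$ and near $y^-$ respectively, so the Gram matrix is a small perturbation of an invertible block-diagonal one as soon as $R\ge L_0$ and $\|\varepsilon\|_{H^1}$ is small. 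One gets $\|\varepsilon(0)\|_{H^1}\lesssim \delta+\sqrt{q(2L)}$, the extra term coming from the mismatch between the reference profile in \eqref{ass:in_data} and the orthogonalised one, and the $\m S^2$-constraint on $\varepsilon$ is handled inside the $\q H^1$ framework exactly as in \cite{Cote_Ignat__stab_DW_LLG_DM}.

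\emph{Modulation equations.}
Differentiating the orthogonality conditions in time, inserting \eqref{eq:llg}, and using that each $g_*^\iota.w_*^\iota$ is an exact solution together with the first-order ODEs \eqref{ode_w*} and the explicit exponential tails of $w_*^\iota$, one should obtain, for $\iota\in\{+,-\}$,
\[
\Bigl| \tfrac{d}{dt}g^\iota(t) - \tfrac{d}{dt}g_*^\iota(t) \Bigr| \lesssim \|\varepsilon(t)\|_{H^1} + |h(t)|\,\|\varepsilon(t)\|_{H^1} + q(R(t)),
\]
together with the crude bound $|\tfrac{d}{dt}g^\iota(t)|\lesssim |h(t)|+\|\varepsilon(t)\|_{H^1}$, whence $g^\iota\in W^{1,\infty}_{\loc}(\m R_+,G)$. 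Integrating and using $\|h\|_\infty<\delta_0$ shows that $y^\pm$ drift only by $O(\delta+\sqrt{q(2L)})$ from $\pm y_*$; since the two walls have opposite speed and $y_*$ does not decrease appreciably, the separation in fact satisfies $R(t)\ge 2L+2y_*(t)-C(\delta+\sqrt{q(2L)})$, which reopens the lower bound $R(t)\ge L_0$ and — via $q(a+b)\le q(a)q(b)$ for $a,b\ge 0$ and monotonicity of $q$ at large arguments — yields the key inequality $q(R(t))\lesssim q(2L)\,q(2y_*(t))$.

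\emph{The localised Lyapunov functional (main step).}
Pick a time-dependent partition of unity $1=\chi^+(t,\cdot)+\chi^-(t,\cdot)$ with $\chi^+(t,\cdot)\equiv 1$ near $y^+(t)$, $\equiv 0$ near $y^-(t)$, transition layer in the central region, and $|\partial_x\chi^\iota|+|\partial_t\chi^\iota|\lesssim (1+|h(t)|+\|\varepsilon(t)\|_{H^1})\,\mathbf{1}_{\{|x-y^\iota(t)|\gtrsim R(t)\}}$, so that on $\supp\partial_x\chi^\iota$ the profiles $w_*^\pm$, all their derivatives, and $\beta_*$ are $O(e^{-cR(t)})$. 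Let $\q G_\iota[\,\cdot\,;w_*^\iota]$ be the coercive quadratic functional of \cite{Cote_Ignat__stab_DW_LLG_DM} for the single wall $w_*^\iota$ — a modulated energy built from $E_\gamma$, the Lagrange identity $\delta E(w_*^\iota)=\beta_* w_*^\iota$, and the orthogonality conditions — and set $\q F(t):=\q G_+[\varepsilon(t);w_*^+(\cdot-y^+(t))]+\q G_-[\varepsilon(t);w_*^-(\cdot-y^-(t))]$ with the weight $\chi^\iota(t)$ inserted inside the integrals defining $\q G_\iota$. Two properties must be established. \emph{(i) Coercivity}: $\q F(t)\gtrsim \|\varepsilon(t)\|_{H^1}^2$, which follows from the single-wall spectral coercivity of \cite{Cote_Ignat__stab_DW_LLG_DM} on the orthogonal complement of the gauge directions, from $\chi^++\chi^-=1$, and from the $O(e^{-cR})$ smallness of the commutator errors created by the cut-off (including control of the low-order $(1-m_1^2)$-type term across the transition layer). \emph{(ii) Almost-monotonicity}: using \eqref{eq:llg}, the localised version of the dissipation identity \eqref{eq:en_dissip2}, and the spectral gap (call it $2\lambda$) of the linearised operators, one should reach
\[
\frac{d}{dt}\q F(t) \le -2\lambda\,\q F(t) + C|h(t)|\,\q F(t) + C\,\q F(t)^{1/2}\sqrt{q(R(t))} + C\,q(R(t)) \qquad \text{on } [0,T^*).
\]
The terms beyond those already present in \cite{Cote_Ignat__stab_DW_LLG_DM} are the interaction terms — cross contributions of $w_*^+$ and $w_*^-$, estimated via their tails, which is exactly where the polynomial-times-exponential weight $q$ has to be extracted — and the cut-off errors $\int(\partial_t\chi^\iota+(\text{flux})\,\partial_x\chi^\iota)(\dots)$, supported away from both centres and hence $O(e^{-cR})$, possibly times $|h|$; all of these are $\lesssim q(R(t))$ (plus the Cauchy--Schwarz cross term). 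I expect this step — and, within it, checking that localisation preserves coercivity and that no interaction contribution of order worse than $q(2y_*)$ survives, with the correct rate $\Gamma$ and polynomial power — to be the main technical obstacle.

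\emph{Conclusion.}
With $\|h\|_\infty<\delta_0$ small the term $C|h|\q F$ is absorbed into the dissipative term, Young's inequality sends $\q F^{1/2}\sqrt{q(R)}$ into $\tfrac{\lambda}{2}\q F+Cq(R)$, and Gronwall — using $\q F(0)\lesssim\delta^2+q(2L)$ and $q(R(s))\lesssim q(2L)q(2y_*(s))$ — gives (after renaming $\lambda$)
\[
\q F(t)\lesssim (\delta^2+q(2L))\,e^{-2\lambda t} + q(2L)\int_0^t e^{-2\lambda(t-s)}q(2y_*(s))\,ds \lesssim (\delta^2+q(2L))\,e^{-2\lambda t} + q(2L)\,\kappa(t)^2.
\]
By coercivity this is \eqref{est:conv_m}, since $\|\varepsilon(t)\|_{H^1}=\|m(t)-(g^+.w_*^++g^-.w_*^-+e_1)\|_{H^1}$; choosing $\delta_0$ small and $L_0$ large (so $q(2L_0)$ is small, and noting $\sup_t\kappa<\infty$) makes the right-hand side strictly smaller than $\delta_1$, which self-improves the continuity hypotheses, forces $T^*=+\infty$, and — via the blow-up criterion of Theorem~\ref{th:lwp}, since a bound on $\|\varepsilon\|_{H^1}$ controls $\|m\|_{\q H^1}$ — yields global existence. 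Finally, feeding $\|\varepsilon(t)\|_{H^1}\lesssim(\delta+\sqrt{q(2L)})e^{-\lambda t}+\sqrt{q(2L)}\,\kappa(t)$ and $q(R(t))\lesssim q(2L)q(2y_*(t))\lesssim\sqrt{q(2L)}\,\kappa(t)$ back into the modulation bound shows $t\mapsto\frac{d}{dt}(g^\iota-g_*^\iota)(t)$ is integrable on $\m R_+$ (this is where hypothesis \eqref{eq:ass_Y} enters, guaranteeing $\kappa\in L^1(\m R_+)$), so $g^\iota(t)-g_*^\iota(t)$ converges to a limit $g_\infty^\iota\in G$ and integrating the bound from $t$ to $+\infty$ gives \eqref{est:conv_g}.
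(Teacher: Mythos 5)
Your overall architecture is the same as the paper's: modulation around the sum of two walls with the orthogonality conditions (Lemma \ref{lem:decomp_magn}), a localized coercivity statement and a localized almost-dissipation (Propositions \ref{prop:equiv_energy} and \ref{prop:est_dt_en}), a bootstrap, a Gronwall-type decay producing exactly the convolution term in $\kappa$, and integration of the modulation equations to obtain $g^\iota_\infty$. The variant is your Lyapunov quantity: you differentiate in time a quadratic modulated functional with a moving partition of unity inserted, whereas the paper works with the exact energy $E(m)-2E(w_*)$, whose time derivative is given exactly by \eqref{eq:en_dissip2}, and localizes only inside the proofs of the coercivity/dissipation estimates, with a cutoff of fixed width $R$. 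This is where your error accounting is too optimistic: the claim that all cut-off errors in $\frac{d}{dt}\mathcal F$ are $O(e^{-cR})$ is not correct. Only the profile tails are exponentially small on the transition layer; the contributions of $\partial_t\chi^\iota$ and $\partial_x\chi^\iota$ acting on densities quadratic in $\varepsilon$ (and the mismatch between the localized quadratic form and the global orthogonality conditions) are of size $O\bigl((|h|+\|\varepsilon\|_{H^1}+q)/R\bigr)\,\|\varepsilon\|_{H^1}^2$, not exponentially small, and must instead be absorbed into the coercive and dissipative terms by taking the transition scale large — this is precisely the role of the $C/R^2$ corrections in \eqref{est:coerc_en}, \eqref{eq:est_var_en} and of the almost-orthogonality Lemma \ref{lem:almst_orth}. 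This is repairable, but as written step (ii) does not hold.

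The more serious gap is a circularity in your bootstrap. Your hypotheses are $\|\varepsilon\|_{H^1}\le\delta_1$ and $R(t)=y^+-y^-\ge L_0$, and you then claim, by ``integrating'' the modulation equation, that $y^\pm$ drift from $\pm y_*$ only by $O(\delta+\sqrt{q(2L)})$; this drift bound is what gives $q(R(t))\lesssim q(2L)\,q(2y_*(t))$, which is the forcing needed in your Gronwall step. But integrating $|\dot g^\iota-\dot g_*^\iota|\lesssim\|\varepsilon\|_{H^1}+q(R)$ using only the bootstrap information yields a drift of order $(\delta_1+q(L_0))\,t$, growing linearly in time; the uniform drift bound requires the time-integrability of $\|\varepsilon\|_{H^1}$, i.e. precisely the decay you are about to prove (with the crude bound $q(R)\le q(L_0)$ one only gets a smallness estimate for $\|\varepsilon\|$, not the decay in $\kappa(t)$ nor convergence of the gauges). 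The paper resolves this by making the drift tracking itself a bootstrap hypothesis (the time $T_2$ on which $|y^\iota(t)-(y^\iota(0)+\iota y_*(t))|<1$), proving the decay on $[0,T]$ with $T\le T_2$, and only then using the decayed $\|\varepsilon\|_{H^1}$ — this is where \eqref{eq:ass_Y} and the integrability of $\kappa$ enter — to strictly improve both the drift and the closeness bounds, so that $T_2=T_1=+\infty$. Your proof needs the same additional hypothesis (or an equivalent device), together with the corrected treatment of the cut-off errors and of the possibility that $y_*$ is temporarily negative (the paper takes $L_0\ge-2\inf y_*$ so that $q(y^+-y^-)\le q(L)$); with these repairs your argument closes along the same lines as the paper's.
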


In view of the definition \eqref{eq_y_phi}, both assumptions \eqref{eq:ass_Y} and \eqref{def:kappa} are about the applied field $h$. As it will be seen from Lemma \ref{lem:kappaL1}, $\kappa(t) \to 0$ as $t \to +\infty$ and is integrable in time, so that
the estimates \eqref{est:conv_m}-\eqref{est:conv_g} yield convergence results. Notice that $\kappa$ is strongly related to $h$, whereas $\lambda$ is essentially a coercivity constant, which depends on $\gamma$ (it is related to the closeness of $|\gamma|$ to $1$). The decay functions $e^{-\lambda t}$ and $\kappa(t)$ are therefore mostly unrelated, even though in most cases (for example, as soon as $h \to 0$), $\kappa(t) \gg  e^{-\lambda t}$. 
On the other hand, if $h \le h_0 <0$ is bounded away from $0$ (for some constant $h_0 <0$), then $\kappa$ tends to $0$ with a exponential rate, and so are the convergences in \eqref{est:conv_m}-\eqref{est:conv_g}.
 
Theorem \ref{th1} therefore quantifies how and under which condition the structure made of two decoupled domain walls persists over time. Assumption \eqref{eq:ass_h_small} ensures that the external magnetic field is not too strong: this is required even for configuration with one domain wall not to be destroyed or for the constant magnetizations $\pm e_1$ to be stable. Our second assumption \eqref{eq:ass_Y} states that the free evolution of the center of the domain wall should separate them indefinitely: in order to have asymptotic stability (that is convergence of the gauge $g^\pm$), a requirement of the type $y_* \to +\infty$ is in order. It turns out that, for our analysis to work, we need a somewhat stronger integrability condition, which however remains rather mild (see point 2) of Lemma \ref{lem:y->0}).

This result is a stability statement for well prepared data, which bear some resemblance with the stability of the sum of decoupled solitons for non linear dispersive model: we refer for example to \cite{MMT02} for the generalised Korteweg-de Vries equation, or to \cite{MMT06} for the nonlinear Schrödinger equation. An important difference though, is that in these settings, each soliton brings its own dynamic, which is leading order (solitons are assumed to have distinct speeds), whereas in the present context, the dynamics is determined at the main order by the external magnetic field represented by $h$.
 
\bigskip
 
Our analysis relies on the framework developed in \cite{Cote_Ignat__stab_DW_LLG_DM}, which combines modulation techniques to split the evolution between some geometric parameters (the gauge) and a remainder term; energy estimates to control the remainder; and dynamical arguments (consequence of energy dissipation) for the gauge. 

An important point of this paper, and a novelty with respect to \cite{Cote_Ignat__stab_DW_LLG_DM}, is that this framework is amenable to space localization, and is therefore suitable to study the interactions of domain walls: we believe that is much less so for earlier methods and results on stability of (single) domain wall (referred to at the beginning of this section), which relied on spectral properties of the linearized \eqref{eq:llg} flow around domain walls. We localize the coercivity properties of the energy around each domain walls, as well as the energy dissipation equality. For these two results to make sense, one must first modulate around a sum of two domain walls; this must be done carefully, keeping into account the geometric constraint $|m|=1$ on the magnetization: this property is crucial for the coercivity. These three results are stated at the beginning of section \ref{sec:stab}, and proven in sections \ref{sec:en_coer}, \ref{sec:en_diss} and \ref{sec:decomp} respectively. Section \ref{sec:local} gives some preliminary results, in particular about a frame adapted to the domain wall and the control of the nonlinearity, and first introduced in \cite{Cote_Ignat__stab_DW_LLG_DM}. 

The proof of stability is done in section \ref{sec:stab}, and consists in a bootstrap argument: on a time interval on which one can modulate the magnetization around two domain walls, and one has sufficient control on the gauges involved and the remainder terms, we combine the localized energy dissipation and coercivity to improve these controls. We give a special attention to the decay of the remainder term in order to make the assumption on the external field $h$ as mild as possible: this is a delicate part of the analysis. We gather useful notations in an index at the end of the paper.

Before going to the proof of the main results, we conclude this section by giving some consequences on the behavior of $y_*$ of the assumptions \eqref{eq:ass_Y}-\eqref{def:kappa} for $h$.

\subsection{On the assumptions on the external field \texorpdfstring{$h$}{h}}

We recall that the distance between the two domains walls is essentially $2y_*(t)$, and it will turn out that $q(2y_*(t))$ measures correctly the interaction between them.

The assumptions on $h$ are relatively mild: apart from uniform smallness (required to ensure stability, even for one domain wall), some oscillation and decay are allowed as long as the external field still pushes the domain walls away, so that their interaction enjoys some integrability in time. This is quantified in the simple computation below. 

\begin{lem} \label{lem:y->0}
1) Assume that $h$ satisfies \eqref{eq:ass_h_small} and \eqref{eq:ass_Y} with $\delta_0 \leq \frac{\Gamma}{\alpha}$. If $t,\tau \ge 0$ are such that $|t-\tau| \le 1$, then $ |y_*(\tau) - y_*(t)| \le 1$.
As a consequence, $y_*(t) \to +\infty$ as $t \to +\infty$, and there exists $C>0$ such that for all $t,\tau \ge 0$ with $|t-\tau| \le 1$,
\begin{align} \label{est:qpm1}
q(2y_*(\tau)) \le C q(2y_*(t)).
\end{align}

2) If $\ds \liminf_{t \to +\infty} \frac{y_*(t)}{\ln t} > \frac{1}{\Gamma}$ then \eqref{eq:ass_Y} is fulfilled. This is in particular the case if $\ds \limsup_{t \rightarrow + \infty} t h(t) < - \frac{1}{\alpha}$.
\end{lem}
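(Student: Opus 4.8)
\medskip

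The plan is to exploit the explicit formula $y_*(t) = -\frac{\alpha}{\Gamma}\int_0^t h(s)\,ds$ together with the uniform smallness $\|h\|_{L^\infty} < \delta_0$. For part 1), the Lipschitz-type bound is immediate: if $|t-\tau| \le 1$, then
\[
|y_*(t) - y_*(\tau)| = \frac{\alpha}{\Gamma}\left| \int_\tau^t h(s)\,ds \right| \le \frac{\alpha}{\Gamma}\|h\|_{L^\infty}|t-\tau| \le \frac{\alpha \delta_0}{\Gamma},
\]
which is $\le 1$ provided $\delta_0$ is chosen small enough (depending only on $\alpha,\Gamma$) — I would make this choice explicit or note it is absorbed into the standing smallness requirement. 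The statement $y_*(t) \to +\infty$ does not follow from smallness alone: here I would argue by contradiction using \eqref{eq:ass_Y}. If $y_*$ did not tend to $+\infty$, then by the Lipschitz bound just established there would be a sequence of unit-length time intervals on which $2y_*$ stays bounded above by some fixed constant $M$; on each such interval $q(2y_*(s)) \ge q(M') > 0$ for a suitable $M'$ (using that $q$ is decreasing for $r$ large, or more carefully that $q$ is bounded below on any bounded set), so $\sqrt{q(2y_*(s))}$ is bounded below by a positive constant on a set of infinite measure, contradicting the integrability in \eqref{eq:ass_Y}. Once $y_*(t)\to+\infty$ is known, estimate \eqref{est:qpm1} follows because for $r$ large, $q(r+h)/q(r) = \frac{1+|r+h|}{1+|r|}e^{-\Gamma h}$ is bounded uniformly for $|h|\le 2$ (the exponential factor is between $e^{-2\Gamma}$ and $e^{2\Gamma}$, and the polynomial ratio tends to $1$); since $|2y_*(\tau)-2y_*(t)| \le 2$ for $|t-\tau|\le1$, we get $q(2y_*(\tau)) \le C q(2y_*(t))$ with $C$ independent of $t,\tau$ once $t$ is large, and the finitely many small $t$ are handled trivially by continuity and positivity of $q$.

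\medskip

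For part 2), suppose $\liminf_{t\to\infty} y_*(t)/\ln t = c > 1/\Gamma$. Then for $t$ large, $y_*(t) \ge c' \ln t$ for some $c' \in (1/\Gamma, c)$, hence $e^{-\Gamma y_*(t)} \le t^{-\Gamma c'}$ with $\Gamma c' > 1$, and
\[
\sqrt{q(2y_*(t))} = (1 + 2y_*(t))^{1/2} e^{-\Gamma y_*(t)} \lesssim (\ln t)^{1/2}\, t^{-\Gamma c'},
\]
which is integrable on $[T,\infty)$ since $\Gamma c' > 1$ (the logarithmic factor is harmless); on $[0,T]$ the integrand is bounded, so \eqref{eq:ass_Y} holds. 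Finally, if $\limsup_{t\to\infty} t h(t) =: -a < -1/\alpha$, then for $t$ large one has $th(t) \le -a'$ with $a' \in (1/\alpha, a)$, i.e. $h(t) \le -a'/t$; integrating, $\int_0^t h(s)\,ds \le \int_{T_0}^t h(s)\,ds + O(1) \le -a' \ln t + O(1)$, whence $y_*(t) = -\frac{\alpha}{\Gamma}\int_0^t h \ge \frac{\alpha a'}{\Gamma}\ln t + O(1)$, so $\liminf y_*(t)/\ln t \ge \alpha a'/\Gamma > 1/\Gamma$ and the previous case applies.

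\medskip

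I expect no serious obstacle here — the lemma is elementary, and the main point of care is the logical structure of part 1): the smallness of $\delta_0$ gives the one-step Lipschitz bound unconditionally, but divergence $y_*\to+\infty$ genuinely requires assumption \eqref{eq:ass_Y} and is proved by contradiction using the lower bound on $q$ over bounded sets. The only mild technical nuisance is keeping track of the behaviour of $q(r) = (1+|r|)e^{-\Gamma r}$ for $r$ near $0$ versus $r$ large when justifying both the lower bound (needed for the contradiction argument) and the ratio bound \eqref{est:qpm1}; this is routine once one notes $q$ is continuous and positive everywhere and eventually decreasing.
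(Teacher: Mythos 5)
Your argument follows essentially the same route as the paper: the one-step Lipschitz bound from \eqref{eq:ass_h_small}; divergence of $y_*$ by contradiction with the integrability in \eqref{eq:ass_Y}, using unit-length intervals on which $y_*$ stays bounded; the ratio bound $q(r+h)\le Cq(r)$ for $|h|\le 2$ to obtain \eqref{est:qpm1}; and, in part 2, the lower bound $y_*(t)\ge c'\ln t$ with $\Gamma c'>1$ together with direct integration for the condition on $h$.

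One step needs repair, though it does not change the conclusion. In part 2 you justify $\sqrt{q(2y_*(t))}=(1+2y_*(t))^{1/2}e^{-\Gamma y_*(t)}\lesssim(\ln t)^{1/2}\,t^{-\Gamma c'}$ by inserting the lower bound $y_*(t)\ge c'\ln t$ into both factors; for the factor $(1+2y_*(t))^{1/2}$ this goes the wrong way, since the hypothesis provides no upper bound on $y_*$ (under \eqref{eq:ass_h_small} it may grow linearly, in which case this factor is of order $t^{1/2}$, not $(\ln t)^{1/2}$). The displayed bound is nevertheless correct, and the fix is exactly the paper's: $q$ is decreasing on $[r_0,+\infty)$ for some $r_0$, so for $t$ large enough that $2c'\ln t\ge r_0$ one has $q(2y_*(t))\le q(2c'\ln t)\lesssim(1+\ln t)\,t^{-2\Gamma c'}$, hence $\sqrt{q(2y_*(t))}\lesssim\sqrt{1+\ln t}\;t^{-\Gamma c'}$ with $\Gamma c'>1$, which is integrable. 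You invoke this eventual monotonicity of $q$ elsewhere in your write-up, so the ingredient is at hand; it simply must replace the factor-wise estimate at this point. The rest — including making the smallness of $\delta_0$ explicit so that $\alpha\delta_0/\Gamma\le 1$, and the ratio bound for $q$, which in fact holds uniformly for all $r\in\m R$, not only for $r$ large — is fine and matches the paper's proof.
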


\begin{proof}
1) Recall that $\delta_0 \le \frac{\Gamma}{\alpha}$, so that the first bound is immediate from the mean value theorem. Assume that for some, $R \ge 1$, there exists $t_n \to +\infty$ such that $y_*(t_n) \le R$. Up to extracting in $n$, we can assume that $t_{n+1} \ge t_n +1$ for all $n$;  also, as $q$ is eventually decreasing to $0$, we can assume that $R \ge 1/\Gamma$ is so large that $\inf\{ q(2r) : r \le R+1\}$ is attained for $r = R+1$. Then, in view of the Lipschitz bound on $y_*$ induced by \eqref{eq:ass_h_small}, $y_*(t) \le R+1$ for all $t \in [t_n,t_n+1]$ so that 
\[  \int_0^\infty \sqrt{q(2y_*(t))} dt \ge \sum_{n \ge 0} \int_{t_n}^{t_{n}+1} \sqrt{ q(2y_*(t))} dt \ge \sum_{n \ge 0} \sqrt{q(2(R+1))} =+\infty,  \]
a contradiction with \eqref{eq:ass_Y}. Hence $y_*(t) \to +\infty$ as $t \to +\infty$.

Now for any $r\ge 2$ and $h \in [-2,2]$,
\[ |q(r+h) -q(r)|= |(1+r+h) e^{- \Gamma (r+h)} - (1+r) e^{- \Gamma r}| \le r e^{- \Gamma r} \abs{e^{- \Gamma h} - 1} +  \abs{h}  e^{- \Gamma (r-1)} \le C |h| q(r). \]
In particular, for any $r \ge 1$,
\[ \sup_{h \in [-1,1]} q(2(r+h)) \le C q(2r). \]
Together with the fact that for $|t-\tau| \le 1$, $|y_*(t) - y_*(\tau)| \le 1$, yields \eqref{est:qpm1}.

2) The assumption on $y_*$ writes that for some $a > 1$, and some $T  \ge 2$,
\[ \forall t \ge T, \quad y_*(t) \ge \frac{a}{\Gamma} \ln t. \]
As $q$ is eventually decreasing to $0$,
 \[ \int_T^{+\infty} \sqrt{q(2y_*(t))} \diff t \lesssim  \int_T^{+\infty} \sqrt{1+\ln t} \frac{\diff t}{t^a} <+\infty. \]
 The condition on $h$ implies that on $y_*$ by direct integration.
 \end{proof}

\begin{lem} \label{lem:kappaL1}
Under the assumptions \eqref{eq:ass_h_small} and \eqref{eq:ass_Y}, the function $\kappa$ defined in \eqref{def:kappa} has the properties:
\[ \kappa (t) \to 0 \quad \text{as} \quad  t \to +\infty \quad \text{and} \quad \ds \int_0^{+\infty} \kappa(\tau)d\tau <+\infty. \]
\end{lem}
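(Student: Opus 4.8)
The plan is to prove Lemma \ref{lem:kappaL1} by treating the two terms in the definition \eqref{def:kappa} of $\kappa$ separately, and for each invoking Lemma \ref{lem:y->0}. Write $\kappa_1(t) \coloneqq e^{-\Gamma y_*(t)}$ and $\kappa_2(t) \coloneqq \left( \int_0^t e^{-2\lambda(t-s)} q(2y_*(s))\, ds \right)^{1/2}$, so $\kappa = \kappa_1 + \kappa_2$. For $\kappa_1$: by part~1) of Lemma \ref{lem:y->0}, $y_*(t) \to +\infty$, hence $\kappa_1(t) \to 0$; and since $q(r) = (1+|r|)e^{-\Gamma r} \gtrsim e^{-\Gamma r}$ for $r \ge 0$, we have $\kappa_1(t) = e^{-\Gamma y_*(t)} \le \sqrt{q(2y_*(t))}\, e^{\Gamma y_*(t)}$... more cleanly: $\kappa_1(t)^2 = e^{-2\Gamma y_*(t)} \le e^{-\Gamma \cdot 2 y_*(t)} \le q(2 y_*(t))$ once $2y_*(t) \ge 1$ (true for large $t$), so $\kappa_1(t) \le \sqrt{q(2y_*(t))}$ eventually, and \eqref{eq:ass_Y} gives $\kappa_1 \in L^1$.

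For $\kappa_2$, the key is the standard fact that convolution of an $L^1$ function with the integrable kernel $e^{-2\lambda t}\mathbbm{1}_{t\ge 0}$ is again $L^1$, with tendency to $0$. Concretely, let $f(s) \coloneqq q(2y_*(s))$; by \eqref{eq:ass_Y}, $\sqrt f \in L^1(\m R_+)$, and since $f$ is bounded (as $y_*$ is locally bounded and $y_* \to +\infty$, $f$ is even bounded on all of $\m R_+$), we get $f = \sqrt f \cdot \sqrt f \le \|\sqrt f\|_{L^\infty} \sqrt f \in L^1(\m R_+)$. Then $\kappa_2(t)^2 = (e^{-2\lambda \cdot}\mathbbm{1}_{\ge 0}) * f\, (t)$, and by Young's inequality $\|\kappa_2^2\|_{L^1} \le \frac{1}{2\lambda}\|f\|_{L^1} < \infty$. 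For the pointwise decay $\kappa_2(t) \to 0$: split the integral at $t/2$, bounding $\int_0^{t/2} e^{-2\lambda(t-s)} f(s)\,ds \le e^{-\lambda t}\|f\|_{L^1} \to 0$ and $\int_{t/2}^t e^{-2\lambda(t-s)} f(s)\, ds \le \frac{1}{2\lambda}\sup_{s \ge t/2} f(s) \to 0$ since $f(s) = q(2y_*(s)) \to 0$ as $s \to \infty$ (again because $y_* \to +\infty$ and $q(r) \to 0$ as $r \to +\infty$). Since $\kappa_2^2 \in L^1$ implies $\kappa_2 \in L^2 \not\Rightarrow L^1$ in general, I must be careful: instead I bound $\kappa_2(t) \le \left(\frac{1}{2\lambda}\right)^{1/2} \big(\sup_{s\ge 0} f(s)\big)^{1/2} \wedge \dots$ — better, just note $\kappa_2(t)^2$ is a convolution and use $\sqrt{a+b}\le\sqrt a + \sqrt b$ differently; in fact the cleanest route is to observe $\kappa_2(t) = \big((e^{-2\lambda\cdot}\mathbbm 1)*f\big)(t)^{1/2}$ and directly estimate $\int_0^\infty \kappa_2(t)\,dt$ by Cauchy–Schwarz after another splitting, or simply bound $\kappa_2(t) \le \int_0^t \lambda^{-1}\dots$.

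To make $\kappa_2 \in L^1$ rigorous I would argue as follows: with $g(t) \coloneqq \sqrt{f(t)} = \sqrt{q(2y_*(t))} \in L^1 \cap L^\infty(\m R_+)$, write $f(s) = g(s)^2$, and use the pointwise bound
\begin{equation*}
\kappa_2(t)^2 = \int_0^t e^{-2\lambda(t-s)} g(s)^2\, ds \le \Big(\sup_{s \in [0,t]} g(s)\,e^{-\lambda(t-s)/1}\dots\Big)
\end{equation*}
— actually the simplest correct inequality is $\kappa_2(t)^2 \le \big(\sup_{s\le t} g(s)\big)\int_0^t e^{-2\lambda(t-s)} g(s)\, ds$, hence $\kappa_2(t) \le \|g\|_{L^\infty}^{1/2}\big((e^{-2\lambda\cdot}\mathbbm 1)*g\big)(t)^{1/2}$, and then once more $(e^{-2\lambda\cdot}\mathbbm 1)*g \in L^1$ with the same pointwise-decay splitting as above; combined with Cauchy–Schwarz $\int_0^T \kappa_2 \le \|g\|_{L^\infty}^{1/2}\int_0^T \big((e^{-2\lambda\cdot}\mathbbm 1)*g\big)^{1/2}$ — still the square root blocks a direct $L^1$ bound. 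The honest fix: bound $\kappa_2(t) \le \eta(t)^{1/2}$ where $\eta \coloneqq (e^{-2\lambda\cdot}\mathbbm 1)*g$, and observe $\eta \in L^1\cap L^\infty$ so $\eta^{1/2} \in L^2$; but we want $L^1$. So instead I will not take the square root of a convolution but rather use Jensen / the elementary inequality $\big(\int_0^t e^{-2\lambda(t-s)}g(s)^2 ds\big)^{1/2} \le \big(\tfrac{1}{2\lambda}\big)^{1/2}\big(\int_0^t 2\lambda e^{-2\lambda(t-s)} g(s)^2 ds\big)^{1/2}$... The cleanest: apply Minkowski's integral inequality to $t\mapsto \kappa_2(t) = \| s\mapsto e^{-\lambda(t-s)}g(s)\mathbbm 1_{s\le t}\|_{L^2_s}$, which gives $\|\kappa_2\|_{L^1_t} = \big\| \|e^{-\lambda(t-s)}g(s)\mathbbm 1_{s\le t}\|_{L^2_s}\big\|_{L^1_t} \le \big\| \|e^{-\lambda(t-s)}g(s)\mathbbm 1_{s\le t}\|_{L^1_t}\big\|_{L^2_s} = \big\| g(s)\cdot \tfrac{1}{\lambda}\big\|_{L^2_s} = \tfrac{1}{\lambda}\|g\|_{L^2} < \infty$, using $g \in L^1\cap L^\infty \subset L^2$. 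That finishes it. The main obstacle, then, is purely bookkeeping: correctly exchanging the $L^2_s$ (inside the square root) and $L^1_t$ norms via Minkowski's integral inequality, and justifying $g = \sqrt{q(2y_*)} \in L^2(\m R_+)$ from $g \in L^1 \cap L^\infty$, which in turn needs $f = q(2y_*)$ bounded — a consequence of $y_* \to +\infty$ (Lemma \ref{lem:y->0}) and continuity of $y_*$ on $[0,\infty)$. I will assemble these pieces and conclude $\kappa = \kappa_1 + \kappa_2 \in L^1(\m R_+)$ with $\kappa(t) \to 0$.
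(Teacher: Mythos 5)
Your handling of $\kappa_1=e^{-\Gamma y_*}$ and of the pointwise decay $\kappa_2(t)\to 0$ (splitting the convolution at $t/2$) is correct and coincides with the paper's. The genuine gap is in the crucial step, the integrability of $\kappa_2(t)=\bigl(\int_0^t e^{-2\lambda(t-s)}q(2y_*(s))\,ds\bigr)^{1/2}$: your final Minkowski step goes in the wrong direction. Minkowski's integral inequality says, for $F\ge 0$, that $\bigl\|\int F(\cdot,t)\,dt\bigr\|_{L^2_s}\le \int\|F(\cdot,t)\|_{L^2_s}\,dt$, i.e. $\bigl\|\,\|F\|_{L^1_t}\bigr\|_{L^2_s}\le \bigl\|\,\|F\|_{L^2_s}\bigr\|_{L^1_t}$; the exchange you invoke, $\bigl\|\,\|F\|_{L^2_s}\bigr\|_{L^1_t}\le\bigl\|\,\|F\|_{L^1_t}\bigr\|_{L^2_s}$, is the reverse one (the larger exponent may be moved outside, not inside) and is false in general. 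Moreover the bound it would yield, $\|\kappa_2\|_{L^1}\le \lambda^{-1}\|\sqrt{q(2y_*)}\|_{L^2}$, cannot be repaired by a better constant: writing $g:=\sqrt{q(2y_*)}$, it fails for $g(s)=(1+s)^{-1}\in L^2\cap L^\infty$, for which $\kappa_2(t)\asymp (1+t)^{-1}\notin L^1$. Even the full information you extract from \eqref{eq:ass_Y}, namely $g\in L^1\cap L^\infty$, does not imply $\kappa_2\in L^1$: for $g=\sum_{n\ge 2}\mathbbm{1}_{[n,n+n^{-2}]}$ one has $g\in L^1\cap L^\infty$ while $\kappa_2\ge e^{-\lambda}n^{-1}$ on $[n+n^{-2},n+1]$, so $\int \kappa_2=+\infty$.

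What is missing is exactly the structural input the paper uses: the almost-constancy of $q(2y_*)$ on unit time intervals, estimate \eqref{est:qpm1} of Lemma \ref{lem:y->0}, which comes from the Lipschitz bound on $y_*$ induced by \eqref{eq:ass_h_small} and rules out the bump counterexample above. With it, the paper compares the convolution for $t\in[n,n+1)$ with the discrete sum $\sum_{k=0}^n e^{-2\lambda(n-k)}q(2y_*(k))$, applies $\sqrt{\sum_k a_k}\le\sum_k\sqrt{a_k}$ to this finite sum, and converts back to an integral, obtaining the pointwise bound $\kappa_2(t)\le C\int_0^t e^{-\lambda(t-s)}\sqrt{q(2y_*(s))}\,ds$; integrating in $t$ and using Fubini together with $\sqrt{q(2y_*)}\in L^1$ (assumption \eqref{eq:ass_Y}) then gives $\kappa_2\in L^1$. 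In short, the square root must be moved inside the convolution before integrating in time, and this requires the slow variation of $q(2y_*)$, not merely its integrability and boundedness; your argument should be amended to invoke \eqref{est:qpm1} along these lines.
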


\begin{proof}
We already saw that $y_* \to +\infty$ so that $e^{-\Gamma y_*(t)} \to 0$ and is integrable on $[0,+\infty)$. For the integral term, this is merely a convolution: as we made the hypothesis that $\sqrt{q(2y_*)}$ is integrable, convergence to zero is straightforward:
\begin{align*}
 \int_0^t e^{-2\lambda(t-s)} q(2y_*(s)) ds&  \le \| q(2y^*) \|_{L^\infty([0,+\infty))} \int_0^{t/2} e^{-2\lambda(t-s)}  + \int_{t/2}^t  q(2y_*(s)) ds \\
 &  \le e^{-\lambda t} \| q(2y^*) \|_{L^\infty([0,+\infty))} + \| \sqrt{ q(2y_*) } \|_{L^\infty([t/2,+\infty))} \int_{t/2}^{+\infty} \sqrt{q(2y_*(s))} ds \to 0.
\end{align*}
For the integrability, we need an extra ingredient: the previous Lemma \ref{lem:y->0} allows to relate to a series. To avoid side effect, first observe that there is no integrability issue on $[0,1]$:
\[ \int_0^1 \sqrt{\int_0^t e^{-2\lambda(t-s)} q(2y_*(s)) ds} dt \le \| \sqrt{q(2y_*)} \|_{L^\infty([0,1])}. \]
 If $t \in [n,n+1)$ for some integer $n \ge 1$, using \eqref{est:qpm1} there hold
\begin{align*}
\int_0^t e^{-2\lambda(t-s)} q(2y_*(s)) ds \le C \int_0^{n+1} e^{-2\lambda(n-s)} q(2y_*(s)) ds \le C \sum_{k=0}^{n} e^{-2\lambda(n-k)} q(2y_*(k))
\end{align*}
Hence, using that $\sqrt{a+b} \le \sqrt{a} + \sqrt{b}$, we infer
\begin{align*}
\sqrt{\int_0^t e^{-2\lambda(t-s)} q(2y_*(s)) ds} \le C \sum_{k=0}^{n} e^{-\lambda(n-k)} \sqrt{q(2y_*(k))}  \le C \int_0^{t} e^{-\lambda(t-s)} \sqrt{q(2y_*(s))} ds.
\end{align*}
(We used again \eqref{est:qpm1} on each interval $[k,k+1]$ for $k \le n-2$, and one last time on $[n-1,t]$ which is of length $t-n+1 \le 2 \le 2t$; this is where $t \ge 1$ is useful). Therefore,
\begin{align*}
\int_1^{+\infty} \sqrt{\int_0^t e^{-2\lambda(t-s)} q(2y_*(s)) ds} dt & \le C \int_{t \ge 1} \int_{0 \le s \le t} e^{-\lambda(t-s)} \sqrt{q(2y_*(s))} dsdt.
\end{align*}
For this last integral we split the integration domain
\[ \{ (s,t) : t \ge 1, 0 \le s \le t \} = \{ (s,t) : 0 \le s \le 1 \le t \} \cup  \{ (s,t) : 1 \le s \le t \}. \]
In both subdomains, we integrate first in $t$: there hold
\begin{align*}
 \iint_{0 \le s \le 1 \le t} e^{-\lambda(t-s)} \sqrt{q(2y_*(s))} dsdt = \frac{1}{\lambda} \int_0^1 e^{-\lambda(1-s)} \sqrt{q(2y_*(s))} ds <+\infty,
\end{align*}
and 
\begin{align*}
 \iint_{1 \le s \le t} e^{-\lambda(t-s)} \sqrt{q(2y_*(s))} dsdt = \frac{1}{\lambda} \int_1^{+\infty} \sqrt{q(2y_*(s))} ds < +\infty,
 \end{align*}
 by assumption.
\end{proof}

\section{Proof of the stability} \label{sec:stab}

\subsection{Preliminary results}

The first result we need is an appropriate modulation of the magnetization when it is near two well separated domain walls, via the use of two gauges $g^\pm$: this gives two degrees of freedom, which allows to require two orthogonality conditions, crucial to derive coervicity properties (in the next 2 Propositions), a bound on the remainder term $\varepsilon$, and the evolution laws (at main order) of the gauges.

\begin{lem}[Decomposition of the magnetization]
\label{lem:decomp_magn}
\index{Constants!$L_1$, $\delta_1$, $C_1$: related to modulation}  
\index{Functions!$g^\pm$, $y^\pm$, $\phi^\pm$: modulated gauge}
\index{Functions!$w^\pm$: modulated domain wall}
\index{Functions!$\e$: modulated error}
    There exist $\delta_1 > 0$, $L_1 \ge 1$ and $C_1 > 0$ such that the following holds. Let $T > 0$, $h \in L^\infty ((0, T))$ and $m \in \mathscr{C} ([0, T], \mathcal{H}^2)$ solution to \eqref{eq:llg}, assume that for all $t \in [0, T]$, and for some $L \ge L_1$,
    \begin{equation*}
        \delta \coloneqq \inf_{\zeta^+ \in G_{>L}, \ \zeta^- \in G_{< -L }} \norm{m (t) - \Bigl( \zeta^+ . w_*^+ + \zeta^- . w_*^- + e_1 \Bigr)}_{H^1} < \delta_1
    \end{equation*}
    Then there exists three functions :
    \begin{itemize}[label=\textbullet]
        \item $g^+ = (y^+, \phi^+) : [0, T] \rightarrow G_{>L-1} $ Lipschitz,
        \item $g^- = (y^-, \phi^-) : [0, T] \rightarrow G_{< -L+1} $ Lipschitz,
        \item $\varepsilon : [0, T] \rightarrow H^2$ continuous,
    \end{itemize}
    such that, for $w^+ = g^+ . w_*^+$ and $w^- = g^- . w_*^-$,
    \begin{itemize}
        \item $m = w^+ + w^- + e_1 + \varepsilon$,
        \item $\varepsilon$ satisfies for $\iota \in \{ \pm 1 \}$
        \begin{equation} \label{eq:orth_est1}
            \int \varepsilon \cdot \partial_x w^\iota \diff x = \int \varepsilon \cdot (e_1 \wedge w^\iota) \diff x = 0,
        \end{equation}
        %
        %
        \item the following bounds hold for all $t \in [0,T]$ and $\iota \in \{ \pm \}$:
           \begin{align} \label{eq:est_dot_g}
           \abs{\dot{g}^{\iota} (t) - \dot{g}_*^{\iota} (t)} & \leq C_1 \Bigl( \norm{\varepsilon (t)}_{H^1} + q(y^+ - y^-) \Bigr), \\
            \norm{\varepsilon (t)}_{H^1} & \leq C_1 \Bigl( \delta + q(y^+ - y^-) \Bigr). \label{est:e}
        \end{align}
        %
    %
    \end{itemize}
    %
\end{lem}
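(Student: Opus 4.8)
The plan is to set up the modulation (decomposition) via a quantitative implicit function theorem, in a neighbourhood of the two-wall manifold, applied uniformly in time. Concretely, for a fixed profile $M = \zeta^+.w_*^+ + \zeta^-.w_*^- + e_1$ close to $m(t)$, I would look for $g^\pm$ near $\zeta^\pm$ such that the four orthogonality conditions \eqref{eq:orth_est1} hold for $\varepsilon = m - g^+.w_*^+ - g^-.w_*^- - e_1$. Define the map $\Phi: (g^+,g^-) \mapsto \bigl(\int \varepsilon\cdot \partial_x w^+, \int \varepsilon\cdot(e_1\wedge w^+), \int\varepsilon\cdot\partial_x w^-, \int\varepsilon\cdot(e_1\wedge w^-)\bigr) \in \m R^4$. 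At $\varepsilon = 0$ (i.e. $m$ exactly a two-wall profile with gauges $g^\pm$), the Jacobian of $\Phi$ with respect to the four real parameters $(y^\pm,\phi^\pm)$ is, up to the exponentially small wall-wall overlap terms, block diagonal with $2\times 2$ blocks given by $\bigl(\int |\partial_x w_*^\iota|^2, \ \int |e_1\wedge w_*^\iota|^2\bigr)$ on the diagonal and the mixed term $\int \partial_x w_*^\iota\cdot(e_1\wedge w_*^\iota)$ off-diagonal; one checks (using \eqref{ode_w*}, and that these are the infinitesimal generators of the $G$-action) that each block is invertible with a bound depending only on $\gamma$. The cross-block entries involve integrals of products of $w_*^+$-quantities against $w_*^-$-quantities, which are $O(q(y^+-y^-))$ and hence negligible for $L$ large. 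So the full Jacobian is uniformly invertible, and the implicit function theorem (quantitative version, with the size of the neighbourhood controlled by the operator norm of the inverse Jacobian and the $C^1$-size of $\Phi$) produces, for each $t$, unique $g^\pm(t)$ near $\zeta^\pm(t)$, with $g^+(t) \in G_{>L}$, $g^-(t)\in G_{<-L}$ provided $\delta_1$ is small and $L_1$ large; uniqueness across the time interval plus continuity of $m$ in $\q H^2 \hookrightarrow H^1_{loc}$ gives continuity of $\varepsilon$ and $g^\pm$ in $t$.

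Next I would prove the static bound \eqref{est:e}. From the decomposition, $\varepsilon = (m - M) + \bigl((\zeta^+.w_*^+ - g^+.w_*^+) + (\zeta^-.w_*^- - g^-.w_*^-)\bigr)$; the first term is $O(\delta)$ in $H^1$ by hypothesis, and the gauge differences are $O(|g^\iota - \zeta^\iota|)$ in $H^1$ because $g\mapsto g.w_*^\iota$ is Lipschitz from $G$ (with the quotient distance) into $\q H^1$ — this uses that $\partial_x w_*^\iota$ and $e_1\wedge w_*^\iota$ are in $L^2$ and bounded. Finally the quantitative IFT gives $|g^\iota - \zeta^\iota| \lesssim |\Phi(\zeta^+,\zeta^-)|$, and $\Phi(\zeta^+,\zeta^-)$ is exactly the quadruple of orthogonality integrals evaluated at $\varepsilon_0 := m - M$; each such integral is $\lesssim \|\varepsilon_0\|_{L^2}\,\|\partial_x w_*^\iota\|_{L^2} + (\text{wall overlap})\lesssim \delta + q(y^+-y^-)$. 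Combining these yields $\|\varepsilon\|_{H^1} + \sum|g^\iota - \zeta^\iota| \lesssim \delta + q(y^+-y^-)$, which is \eqref{est:e}.

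For the modulation equations \eqref{eq:est_dot_g}: since $m\in \q C([0,T],\q H^2)$ solves \eqref{eq:llg}, $\varepsilon$ and $g^\pm$ inherit enough regularity to differentiate the orthogonality relations \eqref{eq:orth_est1} in time (one checks $g^\pm$ is Lipschitz, hence a.e. differentiable). Differentiating $\int \varepsilon\cdot \partial_x w^\iota = 0$ and $\int\varepsilon\cdot(e_1\wedge w^\iota)=0$ and using $\partial_t \varepsilon = \partial_t m - \dot g^+ \cdot(\text{generators of }w^+) - \dot g^-\cdot(\text{generators of }w^-)$ gives a linear system $\mathcal{A}(t)\,(\dot g^+ - \dot g_*^+, \dot g^- - \dot g_*^-) = \mathcal{R}(t)$, where $\mathcal{A}(t)$ is exactly the Jacobian analysed above (uniformly invertible), and $\mathcal{R}(t)$ collects: the contribution of $\partial_t m$ rewritten via \eqref{eq:llg} and the fact that $g_*^\iota.w_*^\iota$ solves \eqref{eq:llg} — this produces terms that are linear in $\varepsilon$ (the linearised operator applied to $\varepsilon$, integrated against the localised $\partial_x w^\iota$/$e_1\wedge w^\iota$), quadratic-and-higher in $\varepsilon$ (controlled in $H^1$ by the algebra/Moser estimates for $\q H^1 \hookrightarrow L^\infty$, hence $\lesssim \|\varepsilon\|_{H^1}$ once $\|\varepsilon\|_{H^1}$ is bounded), the wall-wall interaction terms (the residual of the sum ansatz, since $w^+ + w^- + e_1$ is not an exact solution — its defect is $O(q(y^+-y^-))$ pointwise with exponential localisation), plus terms involving $h(t)$ times localised quantities (bounded by $\|h\|_\infty$, which is absorbed for $\delta_0$ small, or kept since $\dot g_*^\iota$ already carries the $h$-dependence). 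Inverting $\mathcal{A}(t)$ yields $|\dot g^\iota - \dot g_*^\iota| \lesssim \|\varepsilon\|_{H^1} + q(y^+-y^-)$, which is \eqref{eq:est_dot_g}.

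The main obstacle is the bookkeeping of the interaction (overlap) terms and making every implicit-constant uniform: one must verify that all the ``cross'' integrals between the two wall profiles — appearing in the off-diagonal blocks of $\mathcal{A}$, in the defect of the sum ansatz, and in the right-hand side $\mathcal{R}$ — are genuinely controlled by $q(y^+-y^-) = (1+|y^+-y^-|)e^{-\Gamma(y^+-y^-)}$, using the precise exponential tails $|w_*^\iota \mp e_1|, |\partial_x w_*^\iota| \lesssim e^{-\Gamma|x|}$ coming from \eqref{eq:theta*}, and that this is the exact quantity that later appears in the energy estimates. A secondary technical point is justifying the time-differentiation of the constraints at the $\q H^2$ regularity level and the a.e. differentiability of the Lipschitz gauges; this is standard but must be done carefully so that \eqref{eq:est_dot_g} holds for a.e. $t$, which is all that is needed downstream.
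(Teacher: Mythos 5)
Your proposal follows essentially the same route as the paper: modulation via a quantitative implicit-function/contraction argument on the four orthogonality functionals (whose Jacobian at $\varepsilon=0$ is, up to $O(q(y^+-y^-))$ cross-block terms, the invertible matrix built from Lemma \ref{lem:int_w}), combined with time-differentiation of the constraints and inversion of the same almost-block-diagonal matrix to obtain \eqref{eq:est_dot_g}, the $h$-dependent forcing cancelling against $\dot g_*^{\pm}$ for the reason you indicate. Do note that the two points you compress are where the paper spends most of its effort: the construction of a globally defined \emph{Lipschitz} gauge in time (its Lemma \ref{lem:decomp_time}: independence of the decomposition from the reference gauge, the $2\pi$-phase adjustments when patching across time, a smoothed reference gauge path, and an $\mathcal{H}^3$-regularization plus limiting argument to justify the a.e.\ ODE at $\mathcal{H}^2$ regularity), and the fact that the $O(h)$ terms are not small and must cancel exactly with $\mathcal{A}\dot g_*^{\pm}$ rather than be ``absorbed'' — your outline is consistent with both, but these steps need the full arguments, not just the one-line claims.
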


This decomposition also holds with $T = + \infty$, \emph{mutatis mutandis}. The two domain walls are at distance $y^+-y^- \ge 2L$, and \eqref{est:e} bounds the size of the perturbation. Equation \eqref{eq:est_dot_g} shows that the evolution of geometric parameters is driven by $g_*$, that is ultimately, $h$. 
This result will be proved in Section \ref{sec:decomp}.

The proof of the stability then relies on two main estimates. The first one shows an equivalence between the energy $E$ and the norm of $\varepsilon$ defined in the above modulation Lemma \ref{lem:decomp_magn}. For this, observe that $E (w_*^\sigma)$ does not depend on $\sigma \in \{ \pm 1 \}^2$, we denote it $E (w_*)$ hereafter.

\begin{prop}[Coercivity of the energy] \label{prop:equiv_energy}
\index{Constants!$L_2$, $\delta_2$, $C_2$: related to coercivity of the energy}
There exists $0 < \delta_2 \le \delta_1/3$, $L_2 \ge L_1$, $C_2>0$ and $\lambda_2 >0$ such that the following holds.
Under the assumptions (and notations) of Lemma \ref{lem:decomp_magn}, assuming further $\delta \le \delta_2$, $L \ge L_2$, there hold, for any $0< R \le L/2$ and for all $t \in [0, T]$ 
\begin{multline} \label{est:coerc_en}
        C_2 \Bigl( \norm{\varepsilon}_{H^1}^2 + ( e^{2 \Gamma (R-y^+)} + e^{2 \Gamma (R+ y^-)} ) \Bigr) \geq E(m) - 2 E(w_*) \\ \geq \Bigl( 4 \lambda_2 - \frac{C_2}{R^2} \Bigr) \norm{\varepsilon}_{H^1}^2 - C_2 \Bigl( \norm{\varepsilon}_{H^1}^3 + ( e^{2 \Gamma (R-y^+)} + e^{2 \Gamma (R + y^-)} ) \Bigr).
    \end{multline}
\end{prop}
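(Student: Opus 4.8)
The plan is to compute the energy difference $E(m) - 2E(w_*)$ by expanding $m = w^+ + w^- + e_1 + \varepsilon$, and to organize the result into three types of terms: quadratic forms in $\varepsilon$ localized near each wall; interaction terms between $w^+$ and $w^-$ (which decay like $q(y^+ - y^-)$ or its square root, hence like $e^{2\Gamma(R - y^+)} + e^{2\Gamma(R + y^-)}$ once we cut off at a window of half-width $R$); and cubic-and-higher remainders in $\varepsilon$. The point of the auxiliary parameter $R$ is that the coercivity of the linearized energy is a \emph{local} statement near each wall, so I would split $\varepsilon = \chi_+ \varepsilon + \chi_- \varepsilon + (1-\chi_+ - \chi_-)\varepsilon$ using smooth cutoffs $\chi_\pm$ localized in windows of size $\sim R$ around $y^\pm$, estimate each piece, and pay the price $C/R^2$ coming from the commutator $[\partial_x, \chi_\pm]$ (an $O(1/R)$ term in $L^\infty$, squared after integration by parts against $\partial_x \varepsilon$). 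The orthogonality conditions \eqref{eq:orth_est1} are exactly what kills the non-positive directions of the Hessian near each wall, as in \cite{Cote_Ignat__stab_DW_LLG_DM}.

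First I would recall from \cite{Cote_Ignat__stab_DW_LLG_DM} the coercivity of the quadratic form $\langle \mathrm{Hess}\, E(w_*^\iota)\varepsilon, \varepsilon\rangle$ under the two orthogonality conditions against $\partial_x w_*^\iota$ and $e_1 \wedge w_*^\iota$: there is $\lambda_2 > 0$ with $\langle \mathrm{Hess}\,E(w_*^\iota) v, v\rangle \ge 4\lambda_2 \|v\|_{H^1}^2$ for $v$ satisfying these constraints (the factor $4$ absorbing later losses). Then I would Taylor-expand: $E(m) - 2E(w_*) = \langle \delta E(w^+ + w^- + e_1), \varepsilon \rangle + \tfrac12 \langle \mathrm{Hess}\, E(\cdot)\varepsilon, \varepsilon\rangle + O(\|\varepsilon\|_{H^1}^3)$, where the Hessian is evaluated at the sum configuration. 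The first-order term is not zero because $w^+ + w^- + e_1$ is not a critical point, but $\delta E(w^\iota) = \beta_* w_*^\iota$ up to the gauge, and the cross terms between $w^+$ and $w^-$ (and the $e_1$ adjustment) are supported where both profiles are non-trivial, hence of size $O(\sqrt{q(y^+ - y^-)})$; paired with $\varepsilon$ via Cauchy–Schwarz and Young this contributes $\le \eta \|\varepsilon\|_{H^1}^2 + C_\eta q(y^+-y^-)$, and one checks $q(y^+ - y^-) \lesssim e^{2\Gamma(R - y^+)} + e^{2\Gamma(R+y^-)}$ whenever $R \le L/2 \le (y^+ - y^-)/2$ roughly. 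The Hessian at the sum differs from $\mathrm{Hess}\,E(w_*^+) \oplus \mathrm{Hess}\,E(w_*^-)$ (acting on the two localized pieces) by terms again controlled by the overlap, plus the commutator error $C/R^2$ from the cutoffs; this is where the lower bound $(4\lambda_2 - C_2/R^2)\|\varepsilon\|_{H^1}^2$ comes from. The upper bound is easier: $E(m) - 2E(w_*) \le C\|\varepsilon\|_{H^1}^2 + C(\text{overlap})$ directly from the expansion and the boundedness of the Hessian, with the overlap bounded by $e^{2\Gamma(R-y^+)} + e^{2\Gamma(R+y^-)}$ (here one can even take $R$ essentially as large as one wants, but keeping the symmetric form is convenient).

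The main obstacle I anticipate is the bookkeeping of the localization: making precise that (i) the orthogonality conditions of Lemma \ref{lem:decomp_magn}, which involve $\partial_x w^\iota$ and $e_1 \wedge w^\iota$ with $w^\iota = g^\iota . w_*^\iota$ the \emph{modulated} walls, transfer correctly to near-orthogonality of the localized pieces $\chi_\iota \varepsilon$ against the corresponding directions of $\mathrm{Hess}\,E(w_*^\iota)$ up to exponentially small errors; and (ii) that the $G$-equivariance of $E$ lets one reduce the Hessian at $w^\iota = g^\iota.w_*^\iota$ to the Hessian at $w_*^\iota$ (the energy and its derivatives are gauge-invariant, so this is clean but must be spelled out). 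One also needs the elementary but slightly fiddly inequality relating $q(y^+ - y^-) = (1 + |y^+ - y^-|)e^{-\Gamma(y^+ - y^-)}$ and the exponential window terms $e^{2\Gamma(R - y^+)} + e^{2\Gamma(R+y^-)}$, valid in the regime $y^+ > L$, $y^- < -L$, $R \le L/2$; this is just the statement that splitting the gap $y^+ - y^- > 2L$ at a window of half-width $R \le L/2$ leaves at least $\Gamma(y^+ - y^-) - 2\Gamma R \ge \Gamma(y^+ - y^-)/2$ of exponential decay to spare, more than enough to eat the polynomial factor. Everything else — the Taylor expansion with integral remainder, the cubic bound via Sobolev embedding $H^1 \hookrightarrow L^\infty$, the integration by parts for the commutators — is routine.
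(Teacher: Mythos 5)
Your overall architecture (localize, use the coercivity of the linearized operator at each wall under the modulation orthogonality conditions, treat wall--wall overlaps as exponentially small) is the right one, but the proposal misses the step that actually makes the quadratic form coercive, namely the treatment of the first-order term. Since $\delta E_\gamma$ is linear and $\delta E_\gamma(e_1)=0$, one has $\delta E_\gamma(w^++w^-+e_1)=\beta_*^+\,w^+ +\beta_*^-\,w^-$ (with $\beta_*^\iota$ the translated $\beta_*$ of \eqref{eq:beta_*}); this is an $O(1)$ function in $L^2$, localized at the wall centers, and it is \emph{not} orthogonal to $\varepsilon$: the conditions \eqref{eq:orth_est1} involve $\partial_x w^\iota$ and $e_1\wedge w^\iota$, which are pointwise orthogonal to $w^\iota$, so they say nothing about $\int \beta_*^\iota w^\iota\cdot\varepsilon\,dx$. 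Hence the first-order term is genuinely linear in $\varepsilon$ with an $O(1)$ coefficient, not an overlap term of size $O(\sqrt{q})\norm{\varepsilon}_{H^1}$; estimated by Cauchy--Schwarz it beats the quadratic term for small $\varepsilon$ and no coercivity survives. The only way to handle it is the pointwise constraint $\abs{m}=1$: writing $m=g^\iota.(w_*^\iota+\eta^\iota)$ on each side, $w_*^\iota\cdot\eta^\iota=-\tfrac12\abs{\eta^\iota}^2$, so the diagonal part of the linear term becomes the \emph{negative quadratic} term $-\tfrac12\int\beta_*\abs{\eta^\iota}^2\psi_R^\iota\,dx$, which must be kept exactly and combined with the ambient form on $(\nu^\iota,\rho^\iota)$ through $\Gamma^2-\beta_*=\Gamma^2(\cos^2\theta_*-\sin^2\theta_*)$ to produce $L_\Gamma$ (this is Proposition \ref{prop:energy_exp}); coercivity of $L_\Gamma$ modulo $\sin\theta_*$, plus the almost-orthogonality Lemma \ref{lem:almst_orth} (which is where \eqref{eq:orth_est1} is really used), is the crux. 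Your proposal quotes the coercive Hessian at $w_*^\iota$ as input but the expansion you describe never performs this conversion, so the key mechanism is absent.

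Two further points would block the stated inequality even if the above were fixed. First, the comparison you call ``elementary but slightly fiddly'' is false: for $y^+=-y^-=Y$ one has $e^{2\Gamma(R-y^+)}+e^{2\Gamma(R+y^-)}=2e^{2\Gamma R}e^{-2\Gamma Y}$ while $q(y^+-y^-)=(1+2Y)e^{-2\Gamma Y}$, and in the application $R$ is a fixed constant while $Y\to\infty$, so $q(y^+-y^-)\not\lesssim e^{2\Gamma(R-y^+)}+e^{2\Gamma(R+y^-)}$. In the paper the interaction enters the lower bound only \emph{squared}, through $\bigl(\int\sqrt{\psi_R^\pm}\,\nu^\pm\sin\theta_*\,dx\bigr)^2$, and $q^2\le e^{-\Gamma(y^+-y^-)}\le e^{-2\Gamma y^+}+e^{2\Gamma y^-}$ is what matches the error terms of \eqref{est:coerc_en}; a first-power $C_\eta\,q(y^+-y^-)$ error proves a strictly weaker statement. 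Second, your localization (cutoffs of half-width $R\le L/2$ centered at $y^\pm$, plus a middle piece) produces cutoff and tail errors of size $e^{-2\Gamma R}$ --- a constant once $R$ is fixed --- again incompatible with the stated errors. The paper instead splits with the two-piece partition $\psi_R(x)+\psi_R(-x)=1$ cutting midway between the walls and absorbs the far wall into the perturbation $\eta^\pm=(-g^\pm).(w^\mp+e_1+\varepsilon)$ on each side (Lemma \ref{lem:est_eta_eps}), so every localization error carries the growing distances $y^+-R$ and $-y^--R$, which is exactly how the error terms $e^{2\Gamma(R-y^+)}+e^{2\Gamma(R+y^-)}$ arise.
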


In \eqref{est:coerc_en}, the (large but constant) $R$ is related to the size of the support of (the transition of) an adequate cut-off function $\psi_R$, defined in \eqref{def:psi_R}. The condition $R \le L/2$ ensures that each cut-off (localized around each domain wall) has only little interaction with the other domain wall. The coercivity displayed in \eqref{est:coerc_en} of course relies strongly on the orthogonality conditions \eqref{eq:orth_est1}. Proposition \ref{prop:equiv_energy} is proven in Section \ref{sec:en_coer}. 

The second result is an estimate of the evolution of the energy. It shows that, up to some quantities which are negligible enough in some sense, the energy is almost decreasing; its proof is the purpose of Section \ref{sec:en_diss}, and again, \eqref{eq:orth_est1} plays an important role.

\begin{prop}[Localised energy dissipation] \label{prop:est_dt_en}
\index{Constants!$L_3$, $\delta_3$, $C_3$: related to energy dissipation}
    There exists $0 < \delta_3 \le \delta_1/3$, $L_3 \ge L_1$, $C_3>0$ and $\lambda_3$ such that the following holds. 
    Under the same assumptions and notations as Proposition \ref{prop:equiv_energy}, assuming further $\delta \le \delta_3$, $L \ge L_3$ and for any $0< R \le L/2$, there holds, for all $t \in [0, T]$
%
%
        \begin{multline} \label{eq:est_var_en}
        \frac{\diff}{\diff t} E (m) + \left( 4\alpha \lambda_3 - \frac{C_3}{R^2} \right) \norm{\varepsilon}_{H^2}^2 \leq C_3 \Bigl( (|h| + e^{\Gamma (R - y^+)} + e^{\Gamma (R + y^-)}) \| \e \|_{H^1}^2 + \| \e \|_{H^1} \| \e \|_{H^2}^2 \Bigr) \\
       +  C_3  \Bigl( e^{2 \Gamma (R - y^+)} + e^{2 \Gamma (R + y^-)} +q(y^+ - y^-) \Bigr).
       \end{multline}
\end{prop}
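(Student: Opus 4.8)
The plan is to establish the localized energy dissipation estimate \eqref{eq:est_var_en} by starting from the exact dissipation identity \eqref{eq:en_dissip2} and carefully tracking how the decomposition $m = w^+ + w^- + e_1 + \varepsilon$ enters. First I would substitute the decomposition into the right-hand side of \eqref{eq:en_dissip2}. The dominant term is the dissipative one $-\alpha \int ( |\delta E(m)|^2 - |m \cdot \delta E(m)|^2 ) \, dx$; recalling that $\delta E(w_*^\iota) = \beta_* w_*^\iota$, the idea is that $\delta E(m)$ is, up to lower-order terms, the linearization $L\varepsilon$ of the energy gradient around a single domain wall in the region where that wall is concentrated, plus cross-terms coming from the interaction of the two walls. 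So one writes $\delta E(m) = \delta E(w^+ + w^- + e_1) + L_{\text{eff}} \varepsilon + (\text{quadratic in } \varepsilon)$; here $L_{\text{eff}}$ is the relevant second variation operator. The term $|\delta E(m)|^2 - |m\cdot \delta E(m)|^2 = |m \wedge \delta E(m)|^2$ is the squared norm of the projection of $\delta E(m)$ onto the tangent space $T_m \mathbb{S}^2$, which is exactly what vanishes on the domain walls and is coercive in $\|\varepsilon\|_{H^2}$ when $\varepsilon$ is modulated via the orthogonality conditions \eqref{eq:orth_est1} — this is where the localized version of the coercivity (analogous to Proposition \ref{prop:equiv_energy}, but at the $H^2$ level) should be invoked, producing the main negative term $-4\alpha\lambda_3 \|\varepsilon\|_{H^2}^2$.

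Next I would control the error terms. Localization is the crucial mechanism: since $w_*^\iota$ and all its derivatives decay like $e^{-\Gamma|x|}$ away from the wall center, the product $w^+ \cdot w^-$ and all interaction terms between the two walls (their centers being $y^+ > L$ and $y^- < -L$, hence separated by $y^+ - y^- > 2L$) are bounded by $q(y^+ - y^-)$ up to constants — this feeds the last bracket of \eqref{eq:est_var_en}. The boundary/overlap terms that arise when one cuts the coercivity estimate at distance $R$ from each center contribute the $e^{2\Gamma(R - y^+)} + e^{2\Gamma(R + y^-)}$ terms and the spurious $\frac{C_3}{R^2}\|\varepsilon\|_{H^2}^2$ (coming, as usual, from commutators with a smooth cutoff $\chi_R$ of the form $\chi'' \sim 1/R^2$). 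The cubic term $\|\varepsilon\|_{H^1}\|\varepsilon\|_{H^2}^2$ is the standard Sobolev/Gagliardo–Nirenberg estimate of the genuinely nonlinear (quadratic and higher) contributions of $\varepsilon$ to $\delta E(m)$, paired against the quadratic part. Finally the external-field term $\alpha h(t) \int (m\wedge e_1)\cdot(m\wedge \delta E(m))\,dx$: plugging the decomposition, the leading contribution $\alpha h \int (w_*^\iota \wedge e_1)\cdot(w_*^\iota \wedge \beta_* w_*^\iota)\,dx$ vanishes since $w_*^\iota \wedge w_*^\iota = 0$, and the remainder is $O(|h|(\|\varepsilon\|_{H^1}^2 + q(y^+-y^-)))$ after using the exponential localization of $w_*^\iota$ and Cauchy–Schwarz, plus a term absorbed by the modulation equations \eqref{eq:est_dot_g}; this contributes the $|h|\|\varepsilon\|_{H^1}^2$ piece. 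One must also account for the fact that $\frac{d}{dt}E(m)$ is being computed on the \emph{sum} $w^+ + w^-$, not on a single wall, but since the $w^\iota$ evolve (to leading order) by the exact domain-wall flow $g_*^\iota$ and $E$ is conserved along those, the discrepancy is again controlled by $|\dot g^\iota - \dot g_*^\iota| \lesssim \|\varepsilon\|_{H^1} + q(y^+-y^-)$ from \eqref{eq:est_dot_g} times exponentially small factors.

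The main obstacle, I expect, is the bookkeeping of the cross-terms and the careful choice of cutoff: one needs two cutoffs $\chi_R^+$ and $\chi_R^-$ localizing around each wall center, with $\chi_R^+ + \chi_R^-$ not equal to $1$ on the overlap region, and to check that the coercivity of $|m\wedge\delta E(m)|^2$ survives the localization with only the advertised $1/R^2$ loss while the cross-interactions are genuinely of size $q(y^+-y^-)$ rather than merely $e^{-\Gamma(y^+-y^-)}$ — the polynomial prefactor $(1+|r|)$ in $q$ is what one gets from integrating $x e^{-\Gamma x}$-type tails, so the sharp form of the interaction bound must be respected. A secondary subtlety is that \eqref{eq:en_dissip2} is only available for $\mathcal{H}^2$ solutions (as in the hypothesis of Lemma \ref{lem:decomp_magn}), so all manipulations are legitimate and no additional approximation argument is needed at this stage; the passage to $\mathcal{H}^1$ data is handled elsewhere in the stability proof. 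I would organize the proof by: (i) writing $\delta E(m)$ in terms of the linearized operators around $w^\pm$ plus remainders; (ii) inserting into \eqref{eq:en_dissip2} and isolating the four types of terms (coercive, interaction, boundary, field); (iii) applying the localized coercivity for the leading negative term; (iv) bounding each remainder by the stated quantities using exponential localization, Gagliardo–Nirenberg, and \eqref{eq:est_dot_g}–\eqref{est:e}.
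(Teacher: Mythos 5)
Your overall architecture matches the paper's: start from the exact dissipation identity \eqref{eq:en_dissip2}, localise the dissipation and forcing terms with cutoffs around each wall centre, expand in the frame $(w_*^\pm,n_*^\pm,p_*^\pm)$ so that the dissipation becomes $\norm{L_\Gamma \nu^\pm}_{L^2(\psi_R^\pm dx)}^2+\norm{L_\Gamma\rho^\pm}_{L^2(\psi_R^\pm dx)}^2$ up to errors, invoke the localised $H^2$-coercivity of $L_\Gamma$ (Lemma \ref{lem:loc_schr_op}) together with the almost-orthogonality of Lemma \ref{lem:almst_orth}, and control cutoff commutators ($C/R^2$), wall--wall interactions ($q(y^+-y^-)$) and the genuinely nonlinear terms ($\norm{\varepsilon}_{H^1}\norm{\varepsilon}_{H^2}^2$) by the localized $O_k^\ell$ machinery. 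That part of the plan is sound.

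However, there is a genuine gap in your treatment of the forcing term. After the zeroth-order contribution vanishes (because $w_*^\pm\wedge\delta E_\gamma(w_*^\pm)=0$), the expansion of $(m\wedge e_1)\cdot(m\wedge\delta E_\gamma(m))$ still contains a term \emph{linear} in the perturbation, namely $\sin\theta_*\,L_\Gamma\nu^\pm$ integrated against $\psi_R^\pm$. Your proposal disposes of the remainder by ``exponential localization and Cauchy--Schwarz, plus a term absorbed by the modulation equations \eqref{eq:est_dot_g}'', but neither mechanism works: Cauchy--Schwarz on this term yields $|h|\,\norm{\varepsilon}_{H^1}$ (linear, not quadratic, in $\varepsilon$), which does not appear on the right-hand side of \eqref{eq:est_var_en} and would destroy the subsequent bootstrap (after Young's inequality it leaves a non-decaying, non-integrable source of size $|h|$); and the modulation equations are irrelevant here, since \eqref{eq:en_dissip2} is an identity for the solution $m$ itself and the full energy $E$ is not localised, so no $\dot g^\pm-\dot g_*^\pm$ terms ever enter this proposition. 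The paper's resolution is the kernel identity $L_\Gamma(\sin\theta_*)=0$: by self-adjointness the linear term reduces to commutators of $L_\Gamma$ with the cutoff, supported on $\supp\partial_x\psi_R^\pm$ where $\sin\theta_*$ and $\partial_x\sin\theta_*$ are of size $e^{\Gamma(R\mp y^\pm)}$, giving a bound $\frac{C}{R^2}\bigl(\norm{\nu^\pm}_{H^1}^2+e^{2\Gamma(R\pm y^\mp)}\bigr)$ and hence $|F|\le C(\norm{\varepsilon}_{H^1}^2+e^{2\Gamma(R-y^+)}+e^{2\Gamma(R+y^-)})$ with no linear term. Without identifying this cancellation the stated estimate \eqref{eq:est_var_en} cannot be reached. (A minor related point: your paragraph about the discrepancy coming from ``computing $\frac{d}{dt}E(m)$ on the sum $w^++w^-$'' is unnecessary for the same reason --- the dissipation identity is exact for $m$, and the decomposition only enters when expanding $D$ and $F$ at fixed time.)
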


\begin{proof}[Proof of Theorem \ref{th1}, assuming Lemma \ref{lem:decomp_magn} and Propositions \ref{prop:equiv_energy} and \ref{prop:est_dt_en}]

We assume in the following that  $m_0 \in \mathcal{H}^2$, so that all the computations are justified. When $m_0 \in \mathcal{H}^1$, one can use a limiting argument as in \cite[Section 4.4]{Cote_Ignat__stab_DW_LLG_DM}; we will not develop it further here.

\medskip

\emph{Step 1. Main boostrap}

Let $T_+ (m)$ be the maximum time of existence of the solution $m$ to \eqref{eq:llg}.
Let $\delta_0, L_0 >0$, and $M \geq 2$ be an extra large parameter to be fixed later. Assume already that $M\delta_0 \le \min(\delta_2,\delta_3)$, and $L_0 > \max(L_1,L_2)+2$ so large that $q(2L_0) \le 1$ and $q$ is non increasing on $[L_0,+\infty)$. Define $T_1$ as the supremum of
\begin{equation} \label{def:T1} \index{Constants!$M$: main large bootstrap constant} \index{Constants!$T_1,T$: main bootstrap exit times, $T \le T_1$}
    \left\{ T \in (0, T_+ (m)) : \forall t \in [0, T], \inf_{\zeta^{+} \in G_{>L-1}, \ \zeta^{-} \in G_{<-L+1}} \norm{m (t) - ( \zeta^{+} . w_*^{+} + \zeta^{-} . w_*^{-} + e_1 )}_{H^1} < M (\delta +\sqrt{q(2L)}) \right\}.
\end{equation}

By continuity of the flow of \eqref{eq:llg} and the assumption on the initial data as $M>1$ and $L \ge L_0 > L_1$, the above set is non empty and $T_1 > 0$. We aim at proving that $T_1 = T_+ (m) = + \infty$.

As $M\delta_0 \le \delta_1$, $m$ satisfies the assumptions of Lemma \ref{lem:decomp_magn} on $[0, T]$ for any $T \in (0,T_1)$: it provides us with the functions $g^{+} = (y^{+}, \phi^{+})$, $g^{-} = (y^{-}, \phi^{-})$ and $\varepsilon$ satisfying its conclusions, on the interval $[0,T_1)$.  Also, at time $0$, we have the improved bound:
\begin{equation} \label{est:e0}
\|\e (0) \|_{H^1} \le C_1 (\delta + q(2(L-1))) \le \tilde C_1 (\delta+ q(2L)).
\end{equation}
(where $\tilde C_1$ depends on $C_1$ only). We recall that the domain walls with initial data $g^{\pm} (0).w_*^{\pm}$ have center $y_*^{\pm}(t) = \pm y_*(t) + y^{\pm} (0)$ where 
\begin{equation*}
    y_* (t) = - \frac{\alpha}{\Gamma} \int_0^t h(s) \diff s.
\end{equation*}
Let $T_2$ be the supremum of
\begin{equation} \label{def:T2}
    \left\{ T' \in (0, T_1) : \forall t \in [0, T'], \forall \iota \in \{ \pm \}, \ \abs{y^{\iota} (t) - (y^{\iota} (0) + \iota y_* (t))} < 1 \right\}.
\end{equation}
By continuity of $y_*$ and $y^{\pm}$, we know that $T_2 > 0$
(we will show that $T_2 = T_1$ as well). We choose $T \in (0,T_2]$, and we work on the interval $[0, T]$.

\medskip

\emph{Step 2. Deriving convenient bounds on $\e$ and $g^{\pm}$}.

First observe that for $t \in [0,T]$, $y^+(t) \ge L + y_*(t)-2$ and $y^-(t) \le -L - y_*(t)+2$ so that for some constant $K$ (depending solely on $\gamma$)
\begin{gather}
e^{-2\Gamma y^+} \le K e^{-2 \Gamma (L+y_*)}, \quad e^{2\Gamma y^-} \le K e^{-2 \Gamma (L+y_*)}, \\
 q(y^{+} - y^{-}) \le q(2y_* + 2L-4) \le e^{-2\Gamma (L-2)} q(2 y_*) + q(2 (L-2)) e^{-2\Gamma y_*} \le K q(2 L)  q(2 y_*). \label{est:qy}
\end{gather}
This allows to take care of the terms in $y^{\pm}$ in the estimates.

We now choose $R$ so that we gain some coercivity in \eqref{est:coerc_en} and \eqref{eq:est_var_en}. For this we impose that
\begin{gather} \label{est:d0_1}
\delta_0 \le \delta_4:= \min \left( \sqrt{\frac{\lambda_2}{4C_2}},  \sqrt{\frac{\alpha \lambda_3}{4C_3}} \right) \quad \text{and} \quad R = \frac{1}{2\delta_4},
\end{gather}
so that $\frac{C_3}{R^2} \le \alpha \lambda_3$ and $\frac{C_2}{R^2} \le \lambda_2$.

Hence for  $C_4 = \max(K,C_2,C_3) R^2 e^{2\Gamma R}$ (the point is to observe that it does not depend on $M$), for all $t \in [0,T]$ there hold
\begin{align*}
E(m) - 2 E(w_*) & \geq 2 \lambda_2 \norm{\varepsilon}_{H^1}^2 - C_4 \Bigl(  \norm{\varepsilon}_{H^1}^3 + e^{-2\Gamma L} e^{-2 \Gamma y_*} \Bigr), \\
E(m) - 2 E(w_*) & \le C_4 \Bigl( \norm{\varepsilon}_{H^1}^2 +  e^{- 2 \Gamma L} e^{- 2 \Gamma y_*} \Bigr), \\
 \frac{\diff}{\diff t} E (m) + 3\alpha \lambda_3  \norm{\varepsilon}_{H^2}^2 & \leq C_4 (|h| \| \e \|_{H^1}^2 + \| \e \|_{H^1} \| \e \|_{H^2}^2)  + q(2L)  q(2y_*) \Bigr).
\end{align*}

We also want to make use of the smallness of $h$ and $\e$ to get rid of terms which are cubic or higher in $(\e,h)$. We therefore assume that
\begin{align} \label{est:d0_2}
\delta_0 \le \frac{\alpha \lambda_3}{C_3},
\end{align}
so that for all $t \ge 0$, $C_3 |h(t)| \le \alpha \lambda_3$. Recall that $y_* \to +\infty$, so that $\inf y_* > -\infty$: we choose $L_0$ such that
\[ L_0 \ge - 2 \inf y_*+4. \]
Then on $[0,T]$, $y^+ - y^- \ge 2 (L + y_*-2) \ge  L \ge L_0$ and as $q$ is decreasing on $[L_0,+\infty)$, $q(y^{+}-y^{-}) \le q(L)$. Thus, due to \eqref{est:e}
\[ \| \e \|_{H^1} \le C_1 M (\delta + \sqrt{q(2L)}) + C_1 q(L) \le C_1 (M+1) (\delta + q(L)). \]
We therefore assume that $\delta_0 \le \delta_5$ and $L_0 \ge L_5$ where $\delta_5 >0$ and $L_5 >0$ are such that
\begin{align} \label{est:d0_3}
\delta_5 + q (L_5) \le \frac{\min(\lambda_2,\alpha \lambda_3)}{(M+1) C_1 C_4}
\end{align}
and we infer that on $[0,T]$
\[ C_4 \| \e \|_{H^1} \le \min( \lambda_2, \alpha \lambda_3). \]
Therefore, we obtained that
\begin{align}
E(m) - 2 E(w_*) & \geq \lambda_2 \norm{\varepsilon}_{H^1}^2 - C_4 e^{-2 \Gamma L}  e^{-2\Gamma y_*} \label{est:coerc_en2} \\
E(m) - 2 E(w_*) & \le C_4 \Bigl( \norm{\varepsilon}_{H^1}^2 +  e^{-2 \Gamma L}  e^{-2\Gamma y_*}   \Bigr)\label{est:coerc_en3}  \\
 \frac{\diff}{\diff t} E (m) + \alpha \lambda_3  \norm{\varepsilon}_{H^2}^2 & \leq C_4  q(2L)  q(2y_*). \label{eq:est_var_en2}
\end{align}
The point is that, even if they hold in a regime where the relevant quantities are small or large depending on $M$, these estimates (and the constants involved) \emph{do not depend on} $M$.

\medskip

\pagebreak

\emph{Step 3. Decay of $\e$}.

Let $\tau,t \in [0,T]$ such that $\tau \le t$. Integrating \eqref{eq:est_var_en2} on $[\tau,t]$, we infer
\[ E(m(t)) + \alpha \lambda_3 \int_\tau^t \norm{\varepsilon}_{H^2}^2 \le E(m(\tau)) + C_4  q(2L) \int_\tau^t q(2y_*(s)) ds. \]
From there, together with \eqref{est:coerc_en2} and \eqref{est:coerc_en3}, we infer
\begin{align}
\lambda_2 \norm{\varepsilon(t)} _{H^1}^2 +  \alpha \lambda_3 \int_\tau^t \norm{\varepsilon(s)}_{H^2}^2 ds & \le C_4 \left( \| \e (\tau) \|_{H^1}^2 +  e^{-2 \Gamma L} (e^{-2\Gamma y_*(\tau)}+ e^{-2\Gamma y_*(t)}) + q(2L) \int_\tau^t q(2y_*(s)) ds \right) \nonumber \\
& \le 2C_4 ( \| \e (\tau) \|_{H^1}^2 +  q(2L)  \kappa_0(\tau,t)), \label{est:e_decay}
\end{align}
where for $\tau \le t$,
\[ \kappa_0(\tau,t) := e^{- 2 \Gamma y_*(\tau)}+ e^{- 2 \Gamma y_*(t)} + \int_\tau^t q(2y_*(s)) ds. \]
In particular, with $\tau=0$, we obtain a uniform bound:
\begin{gather} \label{est:e_bound_2}
\lambda_2 \norm{\varepsilon(t)} _{H^1}^2 +  \alpha \lambda_3 \int_0^t \norm{\varepsilon(s)}_{H^2}^2 ds \le 2C_4 \left( \| \e(0) \|_{H^1}^2 + q(2L)  \kappa_0(0,+\infty) \right), \\
\text{where} \quad  \kappa_0(0,+\infty) = 1+\int_0^{+\infty}q(2y_*(s)) ds <+\infty. 
\end{gather}
Going back to \eqref{est:e_decay}, fixing for now $t$ and seeing $\tau$ as a variable, we have
\begin{align*}
\MoveEqLeft \frac{\partial}{\partial \tau} \left( e^{\frac{\alpha \lambda_3}{C_4} \tau} \int_\tau^t \| \e(s) \|_{H^1}^2 ds \right) = e^{\frac{\alpha \lambda_3}{C_4} \tau} \left( \frac{\alpha \lambda_3}{C_4} \int_\tau^t \| \e(s) \|_{H^1}^2 ds  - \| \e(\tau) \|_{H^1}^2 \right) \\
& \le  2C_4 e^{\frac{\alpha \lambda_3}{C_4} \tau}   q(2L)  \kappa_0(\tau,t). 
\end{align*}
Now integrate this estimate on $[0,\tau]$ (for $\tau \le t$) to get
\[ e^{\frac{\alpha \lambda_3}{C_4} \tau} \int_\tau^t \| \e(s) \|_{H^1}^2 ds \le \int_0^\tau \| \e(s) \|_{H^1}^2 ds + 2 C_4  q(2L) \int_0^\tau e^{\frac{\alpha \lambda_3}{C_4} s}   \kappa_0(s,t) ds. \]

Assume for now that $t \ge 1$. In view of \eqref{est:e_bound_2} with $\tau=t-1$, we infer that for  $\lambda = \frac{\alpha \lambda_3}{2C_4}$,
\begin{align*}
\MoveEqLeft \int_{t-1}^t \| \e(s) \|_{H^1}^2 ds \le e^{-2\lambda (t-1)} \frac{1}{\lambda} (\| \e(0) \|_{H^1}^2 + q(2L)  \kappa_0(0,+\infty)) + 2C_4 q(2L) \int_0^t e^{- 2\lambda (t-1-s)} \kappa_0(s,t) ds.
\end{align*}
Let $\tau \in [t-1,t]$ such that
\[ \| \e(\tau) \|_{H^1}^2 \le \int_{t-1}^t \| \e(s) \|_{H^1}^2 ds \]
(we use the mean value theorem). Then  \eqref{est:e_decay} now writes,  noticing that $\kappa_0(\tau,t) \le (e \kappa(t))^2$ (due to Lemma \ref{lem:y->0}; $\kappa$ is defined in \eqref{def:kappa}),
\begin{equation} \label{est:e_decay3}
\lambda_2 \| \e (t) \|_{H^1}^2 \le e^{-2\lambda t} \tilde C_4 (\| \e(0) \|_{H^1}^2 + q(2L)  \kappa_0(0,+\infty)) +  \tilde C_4 q(2L)  \left( \kappa(t)^2 + \int_0^t e^{-2 \lambda(t-s)} \kappa_0(s,t) ds \right),
\end{equation}
where $\tilde C_4$ depends on $C_4$ and $\lambda$ only.
Observe that
\begin{align*}
\int_0^t  e^{-2 \lambda(t-s)} \kappa_0(s,t) ds & = \int_0^t e^{-2 \lambda(t-s)} \left( e^{-2\Gamma y_*(s)} +  e^{-2\Gamma y_*(t)} + \int_s^t q(2y_*(u) du \right) ds \\
& \le \frac{1}{2\lambda} e^{-2\Gamma y_*(t)} + \int_0^t e^{-2 \lambda(t-s)} q(2y_*(s))ds +\iint_{0 \le s \le u \le t} e^{-2 \lambda(t-s)} q(2y_*(u)) du ds
\end{align*}
After integrating in $s$, notice that the last double integral is bounded by
\[ \frac{1}{2\lambda} \int_{u=0}^t  e^{-2 \lambda(t-u)}  q(2y_*(u)) du, \]
 so that
 \[ \int_0^t  e^{-2 \lambda(t-s)} \kappa_0(s,t) ds \le \left( 1 + \frac{1}{2\lambda} \right) \kappa(t)^2. \]
We can therefore simplify \eqref{est:e_decay3}: recall the initial estimate  \eqref{est:e0} on $\e(0)$, we also use that $\sqrt{a+b} \le \sqrt{a} + \sqrt{b}$ for $a,b \ge 0$, and $C_1 \ge 1$, $q(2L) \le 1$. For $\ds C_5 = \sqrt{\frac{\tilde  C_4}{\lambda_2} \max \left(\tilde C_1 (1+ \sqrt{ \kappa_0(0,+\infty)}), \sqrt{2+\frac{1}{2\lambda}}  \right)}$, we obtain the bound
\begin{align} \label{est:e_decay2}
\forall t \in [0,T], \quad \| \e(t) \|_{H^1} \le C_5 e^{-\lambda t} \delta + C_5 \sqrt{q(2L)} ( e^{-\lambda t} + \kappa(t)),
\end{align}
Let us also recall at this point that due to Lemma \ref{lem:kappaL1}, $\kappa \to 0$ at $\infty$ and is integrable on $[0,+\infty)$. In particular, $\kappa$ is bounded.

\medskip

\emph{Step 4. $T_1=T_2= T^+(m) = +\infty.$}

Let us first define $M$ and choose $\delta_0$ and $L_0$. For this, we recall \eqref{eq:est_dot_g} and \eqref{est:qy}: there holds for $\iota \in \{ \pm \}$ and $t \in [0,T_1]$
\begin{align} \label{est:y-y*}
 |y^{\iota}(t) - (y^{\iota}(0)  + \iota y_*(t)) | & \le C_1 \int_0^t \| \e(s) \|_{H^1} ds + 2C_1 q(2L) \int_0^t q(2y_*(s)) ds.
\end{align}
Define 
\begin{equation} \label{def:M1}
M_1 = 2 C_1 C_5 \left( \frac{1}{\lambda} + \int_0^{+\infty} q(2y_*(t)) dt + \int_0^{+\infty} \kappa(t) dt \right),
\end{equation}
and
\[ M = 2C_5 (1+\| \kappa \|_{L^\infty([0,+\infty))}). \]
Now that $M$ has been determined, choose $\delta_0$ and $L_0$ so as to satisfies the constraints in the previous steps, namely \eqref{est:d0_1}, \eqref{est:d0_2} and \eqref{est:d0_3}, and also so that
\[ \delta_0 \le 1/(4 M_1) \quad \text{and} \quad \sqrt{2q(L_0) } \le 1/(4 M_1). \]
From \eqref{est:y-y*} and the definition of $M_1$ \eqref{def:M1}, we obtain for $\iota \in \{ \pm \}$ and 
$t \in [0,T]$
\begin{align*}
 |y^{\iota}(t) - (y^{\iota}(0) + \iota y_*(t)) | \le C_1 M_1 (\delta+ \sqrt{2q(L)}) \le \frac{1}{2} <1.
\end{align*}
In view of the definition of $T_2$  \eqref{def:T2}, and using a continuity argument, this last bound implies that $T_2=T_1$. Now, from \eqref{est:e_decay2} and the definition of $M$, we get that
\[ \forall t \in [0,T], \quad \| \e(t) \|_{H^1} \le \frac{M}{2} (\delta+ \sqrt{q(2L)}) .\]
Again, recalling the definition of $T_1$ \eqref{def:T1}, a continuity argument yields that $T_1 = T_+(m)$. This implies in turn that $\| m \|_{\q H^1}$ is uniformly bounded on $[0,T_+(m))$ and from the blow up criterion in Theorem \ref{th:lwp}, $T_+(m)=+\infty$.

\medskip

\emph{Step 5. Convergence of $m$ and $g$.}

The bound \eqref{est:e_decay2} now holds for all $t \ge0$, and gives a rate of convergence of $\e \to 0$, which is precisely \eqref{est:conv_m}.

Finally,  \eqref{eq:est_dot_g}  writes
\[ \forall t, \ge 0, \quad | \dot g^{\iota} -  \dot g^{\iota}_*| \le C_1( \| \e (t) \|_{H^1} + q(2L_0) q(2y_*(t))) \]
for $\iota  \in \{ \pm \}$. As $ \| \e \|_{H^1}$ and $ q(2y_*)$ are integrable in time, $g^{\iota} (t) - g^{\imath }_*(t)$ therefore admits a limit $g^{\iota}_\infty \in G$ as $t \to +\infty$, and
\begin{align*}
 \MoveEqLeft | g^{\iota} (t) - (g^{\iota}_\infty + g^{\iota}_*(t)) |  \le C_1 \int_t^{+\infty} (\| \e (s) \|_{H^1} + q(2L) q(2y_*(s))) ds \\
& \le \frac{C_1 C_5}{\lambda} (\delta+ \sqrt{q(2L)}) e^{-\lambda t} \\
& \qquad + C_1C_5 (1+  \sqrt{q(2L)} \| \sqrt{q(2y_*)} \|_{L^\infty([0,+\infty))})  \sqrt{q(2L)} \int_t^{+\infty} \kappa(s)ds. \end{align*}
and (as $q$ is bounded) this gives \eqref{est:conv_g}.
\end{proof}

\section{Localisation and associated basis} \label{sec:local}

In order to prove Propositions \ref{prop:equiv_energy} and \ref{prop:est_dt_en}, we need to introduce a localisation function $\psi_R$ and the basis related to $w_*^+$ and $w_*^-$. This is the content of this section, along with several miscellaneous notations and results. From now on, $C$ will be a universal positive constant which may change from line to line.

\subsection{Interaction of domain walls}

From the explicit formula of the domain walls, there holds the following.

\begin{lem} \label{lem:est_w_infty}
    There exists $C > 0$ which does not depend on $\gamma \in (-1 , 1)$ such that, for all $j \in \{0, 1, 2\}$ 
    \begin{align*}
        \abs{\partial_x^j (w_*^{(1,\sigma_2)} - e_1) (x)} & \leq C \, e^{- \Gamma x} & \text{if } x & \geq 0, \\
        \abs{\partial_x^j (w_*^{(1,\sigma_2)} + e_1) (x)} & \leq C \, e^{- \Gamma |x|} & \text{if } x &\leq 0.
    \end{align*}
\end{lem}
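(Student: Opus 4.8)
The plan is to read everything off the explicit formula \eqref{formul1} for $w_*^{(1,\sigma_2)}$ together with the ODE characterization \eqref{eq:theta*} of $\theta_*$. Since $\sigma_1 = 1$ here, we have
\[
w_*^{(1,\sigma_2)}(x) = \begin{pmatrix} \cos\theta_*(x) \\ \sigma_2 \sin\theta_*(x)\cos(\gamma x) \\ \sigma_2 \sin\theta_*(x)\sin(\gamma x) \end{pmatrix},
\]
and $\theta_*(x) = 2\arctan(e^{-\Gamma x})$. First I would record the elementary pointwise bounds on $\theta_*$ and its derivatives. From $\partial_x\theta_* = -\Gamma\sin\theta_*$ one gets, by differentiating repeatedly, that every derivative $\partial_x^k\theta_*$ ($k\ge 1$) is a polynomial in $\cos\theta_*$ times $\sin\theta_*$; hence all of them are bounded by $C\sin\theta_*$. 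Next, $\sin\theta_*(x) = 2e^{-\Gamma x}/(1+e^{-2\Gamma x}) \le 2e^{-\Gamma x}$ for all $x$, and in particular $\sin\theta_*(x)\le 2e^{-\Gamma|x|}$ when $x\ge 0$; similarly, for $x\le 0$, $1-(-\cos\theta_*(x)) = 1+\cos\theta_*(x)$ and $\sin\theta_*(x)$ are both $O(e^{-\Gamma|x|})$ because $\theta_*(x)\to\pi$ like $e^{\Gamma x}$ as $x\to-\infty$ (write $\theta_* = \pi - 2\arctan(e^{\Gamma x})$). So near $+\infty$, $|\cos\theta_* - 1| = 1-\cos\theta_* = 2\sin^2(\theta_*/2) \le C e^{-2\Gamma x} \le Ce^{-\Gamma x}$, and near $-\infty$, $|\cos\theta_* + 1| \le Ce^{-\Gamma|x|}$.

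The second step is to propagate these bounds through the three components and the (at most two) derivatives. For the first component, $w_{*,1} - 1 = \cos\theta_* - 1$ on $x\ge 0$ and $w_{*,1}+1 = \cos\theta_* + 1$ on $x\le 0$, which are handled above; derivatives of $\cos\theta_*$ are $(\sin\theta_*)$ times bounded factors involving $\partial_x^k\theta_*$, hence $O(e^{-\Gamma|x|})$ in both regimes. For the second and third components, the point is that $e_1$ has zero second and third components, so $w_{*,2}-0$ and $w_{*,3}-0$ equal $\sigma_2\sin\theta_*\cos(\gamma x)$ and $\sigma_2\sin\theta_*\sin(\gamma x)$ respectively; these are bounded in absolute value by $\sin\theta_*(x)\le Ce^{-\Gamma|x|}$ for $x\ge 0$, and for $x\le 0$ one needs $|{-e_1}|$ again has zero components $2,3$, so the same bound $\sin\theta_*(x)\le Ce^{-\Gamma|x|}$ applies. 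When we differentiate $\sin\theta_*\cos(\gamma x)$ once or twice, each term is a product of $\{\sin\theta_*$ or a derivative thereof (all $O(\sin\theta_*)$ by Step 1)$\}$ with $\{\cos(\gamma x)$ or $\sin(\gamma x)$ times a power of $\gamma\}$; since $|\gamma|<1$, the polynomial factors in $\gamma$ are bounded by an absolute constant, so the whole thing is still $O(\sin\theta_*) = O(e^{-\Gamma|x|})$. One should just be mildly careful that the constant $C$ can be chosen uniform in $\gamma\in(-1,1)$: the only $\gamma$-dependence is through these bounded polynomial factors and through $\Gamma$ inside the exponent, and the claim only asks for the constant, not the rate, to be $\gamma$-independent, so this is automatic.

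There is essentially no hard part here; the statement is a routine consequence of the explicit formula. The only thing to be slightly attentive to is the bookkeeping in the two separate regimes ($x\ge 0$ versus $x\le 0$) and making sure that near $-\infty$ one compares with $-e_1$ rather than $e_1$, using $\theta_*(-\infty)=\pi$. I would therefore organize the proof as: (i) ODE-based bounds $|\partial_x^k\theta_*| \le C_k\sin\theta_*$ for $k\ge 1$ and $\sin\theta_*(x), 1-\cos\theta_*(x) \le Ce^{-\Gamma x}$ on $x\ge 0$, $\sin\theta_*(x), 1+\cos\theta_*(x)\le Ce^{-\Gamma|x|}$ on $x\le 0$; (ii) expand $\partial_x^j$ of each component by Leibniz and bound term by term; (iii) note $\gamma$-uniformity. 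If a reader wants the cleanest route one can even avoid the ODE and differentiate $\theta_*(x)=2\arctan(e^{-\Gamma x})$ directly, getting $\partial_x\theta_* = -2\Gamma e^{-\Gamma x}/(1+e^{-2\Gamma x})$, etc., which manifestly has the required exponential decay with $\gamma$-uniform constants.
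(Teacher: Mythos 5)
Your proof is correct, and it follows the same route the paper takes: the paper states this lemma with no written proof beyond ``From the explicit formula of the domain walls, there holds the following,'' and your computation (exponential bounds on $\sin\theta_*$, $1\mp\cos\theta_*$ and on $\partial_x^k\theta_*$ via \eqref{eq:theta*} or direct differentiation of $\theta_*(x)=2\arctan(e^{-\Gamma x})$, then Leibniz on each component with $\gamma$-uniformity from $|\gamma|<1$, $\Gamma\le 1$) is precisely the routine verification being invoked. Nothing is missing.
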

Similar estimates for $w_*^\sigma$ follow, \emph{mutatis mutandis}.

\begin{cor} \label{cor:est_w_orthogonal}
    There exists $C > 0$ such that for all $g^+ \in G_{> 0}$ and $g^- \in G_{< 0}$, for all $x \in \mathbb{R}$ and $j \in \{ 0, 1, 2 \}$, there holds
    \begin{equation*}
        \abs{\partial_x^j (g^+.w_*^{(1,\sigma_2)} - e_1)} \abs{\partial_x^j (g^- . w_*^{(-1, \sigma_2')} + e_1)} \leq C e^{- \Gamma (2 x - y^+ - y^-)} \qquad \text{if } x \geq y^+,
    \end{equation*}
    \begin{equation*}
        \abs{\partial_x^j (g^+.w_*^{(1,\sigma_2)} + e_1)} \abs{\partial_x^j (g^- . w_*^{(-1, \sigma_2')} + e_1)} \leq C e^{- \Gamma (y^+ - y^-)} \qquad \text{if } y^- \leq x \leq y^+,
    \end{equation*}
    \begin{equation*}
        \abs{\partial_x^j (g^+.w_*^{(1,\sigma_2)} + e_1)} \abs{\partial_x^j (g^- . w_*^{(-1, \sigma_2')} - e_1)} \leq e^{\Gamma (2 x - y^+ + y^-)} \qquad \text{if } x \leq y^-,
    \end{equation*}
    \begin{multline*}
        \Bigl( \abs{\partial_x (g^+.w_*^{(1,\sigma_2)})} + \abs{e_1 \wedge g^+.w_*^{(1,\sigma_2)}} \Bigr) \Bigl( \abs{\partial_x (g^- . w_*^{(-1, \sigma_2')})} + \abs{e_1 \wedge (g^- . w_*^{(-1, \sigma_2')})} \Bigr) \\ \leq C e^{- \Gamma (y^+ - y^-) } \times
        \begin{cases}
            e^{-2\Gamma (x - y^+)} &\qquad \text{if } x \geq y^+ \\
            1 &\qquad \text{if } x \in [y^-, y^+] \\
            e^{2 \Gamma (x - y^-)} &\qquad \text{if } x \leq y^-
        \end{cases}.
    \end{multline*}
    \begin{equation*}
        \abs{\partial_x (g^+.w_*^{(1,\sigma_2)})} \cdot \abs{g^- . w_*^{(-1, \sigma_2')} + e_1} \leq C e^{- \Gamma (y^+ - y^-) } \times
        \begin{cases}
            e^{-2\Gamma (x - y^+)} &\qquad \text{if } x \geq y^+ \\
            1 &\qquad \text{if } x \in [y^-, y^+] \\
            e^{\Gamma (x - y^-)} &\qquad \text{if } x \leq y^-
        \end{cases}.
    \end{equation*}
    \begin{equation*}
        \abs{g^+.w_*^{(1,\sigma_2)} + e_1} \abs{g^- . (\beta_* w_*^{(-1, \sigma_2')})} \leq C e^{- \Gamma (y^+ - y^-) } \times
        \begin{cases}
            e^{- 2 \Gamma (x - y^+)} &\qquad \text{if } x \geq y^+ \\
            e^{- \Gamma (x - y^-)} &\qquad \text{if } x \in [- y^-, y^+] \\
            e^{2 \Gamma (x - y^-)} &\qquad \text{if } x \leq y^-
        \end{cases}.
    \end{equation*}
\end{cor}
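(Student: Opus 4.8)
The plan is to derive all the stated bilinear estimates from the pointwise decay of the individual domain walls established in Lemma \ref{lem:est_w_infty}, together with the explicit first-order ODE \eqref{ode_w*} satisfied by $w_*^\sigma$. The guiding principle is that both $g^+.w_*^{(1,\sigma_2)}$ and $g^-.w_*^{(-1,\sigma_2')}$ (and their space derivatives) decay exponentially away from their respective centers $y^+$ and $y^-$: near $+\infty$ the wall $g^+.w_*^{(1,\sigma_2)}$ is close to $e_1$ while $g^-.w_*^{(-1,\sigma_2')}$ is close to $e_1$ as well (its right limit), and near $-\infty$ the roles are reversed; in the overlap region $[y^-,y^+]$ both are comparably small perturbations. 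Since rotations $R_\phi$ and translations $\tau_y$ are isometries of $\m R^3$ commuting with $\partial_x$, the pointwise bounds of Lemma \ref{lem:est_w_infty} transfer verbatim to $g^\pm.w_*^\pm$ after the shift $x \mapsto x - y^\pm$; I will use this reduction silently throughout.

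The concrete steps are as follows. First I record the shifted estimates: for $j \in \{0,1,2\}$, $|\partial_x^j(g^+.w_*^{(1,\sigma_2)} - e_1)(x)| \le C e^{-\Gamma(x-y^+)}$ for $x \ge y^+$, $|\partial_x^j(g^+.w_*^{(1,\sigma_2)} + e_1)(x)| \le C e^{-\Gamma|x-y^+|}$ for $x \le y^+$, and the analogous bounds for $g^-.w_*^{(-1,\sigma_2')}$ centered at $y^-$ (here the wall connects $e_1$ at $-\infty$ to $-e_1$ at $+\infty$ because $\sigma_1 = -1$, so $g^-.w_*^{(-1,\sigma_2')} - e_1$ is the quantity that decays for $x \le y^-$ and $g^-.w_*^{(-1,\sigma_2')} + e_1$ for $x \ge y^-$). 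Second, the first three displayed inequalities are then immediate products: for $x \ge y^+ \ge y^-$, multiply $C e^{-\Gamma(x-y^+)}$ by $C e^{-\Gamma(x-y^-)}$ to get $C e^{-\Gamma(2x - y^+ - y^-)}$; for $y^- \le x \le y^+$, bound $|g^+.w_*^{(1,\sigma_2)} + e_1|$ by $C e^{-\Gamma(y^+-x)}$ and $|g^-.w_*^{(-1,\sigma_2')} + e_1|$ by $C e^{-\Gamma(x-y^-)}$, whose product is $C e^{-\Gamma(y^+-y^-)}$; and symmetrically for $x \le y^-$. Third, for the estimates involving $|\partial_x w|$ and $|e_1 \wedge w|$ together, I use the ODE \eqref{ode_w*}: it gives $|\partial_x(g^\pm.w_*^\pm)| \le \Gamma |w_*^\pm \wedge (e_1 \wedge w_*^\pm)| + |\gamma| |e_1 \wedge w_*^\pm| \le (\Gamma + |\gamma|)|e_1 \wedge (g^\pm.w_*^\pm)|$, and conversely $|e_1 \wedge (g^\pm.w_*^\pm)|$ equals the norm of the component of $g^\pm.w_*^\pm$ orthogonal to $e_1$, which is controlled by $|g^\pm.w_*^\pm \mp e_1|$ near the appropriate end (up to a constant, since $|v - e_1|^2 = 2(1 - v_1) \ge 1 - v_1^2 = |e_1\wedge v|^2$ for a unit vector $v$ — actually $|e_1 \wedge v| = \sqrt{1-v_1^2} = \sqrt{(1-v_1)(1+v_1)} \le \sqrt{2}|v - e_1|$, and similarly $\le \sqrt{2}|v+e_1|$). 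Thus $|\partial_x(g^+.w_*^{(1,\sigma_2)})| + |e_1 \wedge g^+.w_*^{(1,\sigma_2)}|$ is bounded, for $x \ge y^+$, by $C e^{-\Gamma(x-y^+)}$, on $[y^-,y^+]$ by a constant $C$, and for $x \le y^-$ by $C e^{-\Gamma(y^+ - x)} \le C e^{-\Gamma(y^+-y^-)} e^{\Gamma(x-y^-)}$ is not quite it — rather on $[y^-, y^+]$ one writes $|e_1 \wedge g^+.w_*^{(1,\sigma_2)}| \le C e^{-\Gamma(y^+ - x)}$ and factors $e^{-\Gamma(y^+-x)} = e^{-\Gamma(y^+-y^-)}e^{\Gamma(x-y^-)} \le e^{-\Gamma(y^+-y^-)}$. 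Multiplying the bound for $w^+$ (which is small except near $y^+$, i.e. of size $e^{-\Gamma(y^+-x)}$ when $x \le y^+$ and $e^{-\Gamma(x-y^+)}$ when $x \ge y^+$) by the bound for $w^-$ (of size $e^{-\Gamma(x-y^-)}$ when $x \ge y^-$ and $e^{-\Gamma(y^- - x)}$ when $x \le y^-$) in each of the three regions, and extracting the common factor $e^{-\Gamma(y^+-y^-)}$, yields exactly the claimed piecewise bound. The remaining two displays follow identically, using in addition $|\beta_* w_*^{(-1,\sigma_2')}| = 2\Gamma^2 \sin^2\theta_* |w_*^{(-1,\sigma_2')}| = 2\Gamma^2 \sin^2\theta_*$, and $\sin^2\theta_*(x) \le C e^{-2\Gamma|x|}$ from \eqref{formul1}, so $|\beta_* (g^-.w_*^{(-1,\sigma_2')})|$ decays like $e^{-2\Gamma|x-y^-|}$, which is even better than needed.

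I do not expect a serious obstacle: this is a bookkeeping argument, and the only points requiring a little care are (i) correctly tracking which limit ($+e_1$ or $-e_1$) each wall approaches at each end — in particular that $w_*^{(-1,\sigma_2')}$, having $\sigma_1 = -1$, connects $+e_1$ at $-\infty$ to $-e_1$ at $+\infty$, so the "small" quantities are $w^- - e_1$ for $x \lesssim y^-$ and $w^- + e_1$ for $x \gtrsim y^-$; and (ii) consistently extracting the cross-interaction factor $e^{-\Gamma(y^+-y^-)}$ in the middle region, which is where the two walls genuinely interact. The estimates are deliberately not sharp (constants $C$ absorb all $\gamma$-dependent factors like $\Gamma + |\gamma| \le 2$), which makes the bookkeeping painless. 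The one mild subtlety is that in the second-to-last display the middle-region condition is written as $x \in [-y^-, y^+]$ rather than $[y^-, y^+]$, which appears to be a typo for $[y^-, y^+]$ and should be read as such.
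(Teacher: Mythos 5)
Your proposal is correct and follows essentially the same route as the paper, which states this corollary without any written proof, as an immediate region-by-region consequence of the shifted decay bounds of Lemma \ref{lem:est_w_infty} (the gauge being harmless since $R_\phi$ is an isometry fixing $e_1$); your use of \eqref{ode_w*} together with $\abs{e_1\wedge v}\le \abs{v\mp e_1}$ to control $\abs{\partial_x(g^\pm.w_*^\pm)}+\abs{e_1\wedge g^\pm.w_*^\pm}$, and of $\beta_*=2\Gamma^2\sin^2\theta_*\lesssim e^{-2\Gamma\abs{x}}$ for the last display, is exactly the intended bookkeeping. One remark: for $x\le y^-$ your computation yields $C\,e^{\Gamma(2x-y^+-y^-)}$, which is the correct (symmetric) bound, whereas the exponent $e^{\Gamma(2x-y^++y^-)}$ printed in the third display is a sign typo (it would fail at $x=y^-$ for $\abs{y^-}$ large), of the same kind as the interval $[-y^-,y^+]$ you already flagged in the final display.
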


\begin{cor} \label{cor:est_w_orthogonal2}
The integrals over $x \in \m R$ of all the functions appearing in Corollary \ref{cor:est_w_orthogonal} are bounded (up to a constant) by $q(y^+-y^-)$.
\end{cor}

\begin{lem}[{\cite[Lemma~3.2]{Cote_Ignat__stab_DW_LLG_DM}}] \label{lem:int_w}
    There holds
    \begin{align*}
        \norm{\partial_x w_*^\sigma}_{L^2}^2 = \norm{e_1 \wedge w_*^\sigma}_{L^2}^2 &= \frac{2}{\Gamma}, \\
        \int (e_1 \wedge w_*^\sigma) \cdot \partial_x w_*^\sigma \diff x &= - \frac{2 \gamma}{\Gamma}, \\
        \int (w_*^\sigma \wedge (w_*^\sigma \wedge e_1)) \cdot \partial_x w_*^\sigma \diff x &= - 2 \sigma_1.
    \end{align*}
\end{lem}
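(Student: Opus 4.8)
The plan is to reduce all six integrals to the single scalar integral $\int_{\m R}\sin^2\theta_*\,\diff x$, using the first-order ODE \eqref{ode_w*}. First I would set up an orthogonal frame along the wall: write $b\coloneqq e_1\wedge w_*^\sigma$ and, applying the double cross product identity $v\wedge(e_1\wedge v)=e_1-(e_1\cdot v)\,v$ (valid for $|v|=1$), set $d\coloneqq w_*^\sigma\wedge(e_1\wedge w_*^\sigma)=e_1-(w_*^\sigma)_1\,w_*^\sigma$. A direct computation shows that $b$ and $d$ are orthogonal to $w_*^\sigma$ and to each other, and that
\[ |b|^2=|d|^2=1-(w_*^\sigma)_1^2=\sin^2\bigl(\theta_*(\sigma_1 x)\bigr), \]
while $w_*^\sigma\wedge(w_*^\sigma\wedge e_1)=-d$. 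In this notation, \eqref{ode_w*} reads precisely $\partial_x w_*^\sigma=\sigma_1\Gamma\,d-\gamma\,b$; that is, it is exactly the decomposition of $\partial_x w_*^\sigma$ in the orthogonal pair $(b,d)$.

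Everything then collapses to algebra in $(b,d)$ together with the identity $\Gamma^2+\gamma^2=1$. Pointwise,
\[ |\partial_x w_*^\sigma|^2=\Gamma^2|d|^2+\gamma^2|b|^2=\sin^2\bigl(\theta_*(\sigma_1 x)\bigr)=|e_1\wedge w_*^\sigma|^2, \]
\[ (e_1\wedge w_*^\sigma)\cdot\partial_x w_*^\sigma=-\gamma|b|^2,\qquad \bigl(w_*^\sigma\wedge(w_*^\sigma\wedge e_1)\bigr)\cdot\partial_x w_*^\sigma=-d\cdot\partial_x w_*^\sigma=-\sigma_1\Gamma|d|^2, \]
each a fixed constant times $\sin^2(\theta_*(\sigma_1 x))$. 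Next I would compute the remaining integral from \eqref{eq:theta*}: since $\sin\theta_*=-\tfrac1\Gamma\partial_x\theta_*$, one has $\sin^2\theta_*=\tfrac1\Gamma\partial_x(\cos\theta_*)$, hence
\[ \int_{\m R}\sin^2\theta_*\,\diff x=\frac1\Gamma\bigl[\cos\theta_*\bigr]_{-\infty}^{+\infty}=\frac1\Gamma\bigl(\cos 0-\cos\pi\bigr)=\frac2\Gamma. \]
Finally, the substitution $x\mapsto\sigma_1 x$ (which preserves $\int_{\m R}$) removes the $\sigma_1$ in the argument, and plugging $\int_{\m R}\sin^2\theta_*\,\diff x=2/\Gamma$ into the four pointwise identities above yields the four stated equalities ($2/\Gamma$, $2/\Gamma$, $-2\gamma/\Gamma$, $-2\sigma_1$ respectively).

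There is essentially no real obstacle here: it is a short computation. The only points requiring a bit of care are the sign and orientation in the double cross product identity, and the fact that $(b,d)$ is orthogonal but \emph{not} orthonormal, so that the cross term in $|\partial_x w_*^\sigma|^2$ drops and $\Gamma^2+\gamma^2=1$ can be invoked; and applying the change of variables $x\mapsto\sigma_1 x$ consistently. The one genuinely useful observation, which is what makes the proof a one-liner, is that \eqref{ode_w*} already expresses $\partial_x w_*^\sigma$ in the frame $(b,d)$, after which all six integrands are constant multiples of $\sin^2(\theta_*(\sigma_1\cdot))$.
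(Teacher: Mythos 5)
Your proof is correct. Note that the paper itself gives no argument for this lemma: it is quoted verbatim from \cite[Lemma~3.2]{Cote_Ignat__stab_DW_LLG_DM}, so there is nothing internal to compare against; your verification via the frame $b=e_1\wedge w_*^\sigma$, $d=w_*^\sigma\wedge(e_1\wedge w_*^\sigma)$ is a clean self-contained route. The key points all check out: \eqref{ode_w*} is exactly the decomposition $\partial_x w_*^\sigma=\sigma_1\Gamma\,d-\gamma\,b$ in an orthogonal (non-normalized) pair with $|b|^2=|d|^2=\sin^2(\theta_*(\sigma_1 x))$, the identity $w_*^\sigma\wedge(w_*^\sigma\wedge e_1)=-d$ fixes the sign in the third integral, $\Gamma^2+\gamma^2=1$ collapses $|\partial_x w_*^\sigma|^2$, and $\int_{\m R}\sin^2\theta_*\,\diff x=\frac1\Gamma[\cos\theta_*]_{-\infty}^{+\infty}=\frac2\Gamma$ follows from \eqref{eq:theta*} with the boundary values $\theta_*(-\infty)=\pi$, $\theta_*(+\infty)=0$; the change of variables $x\mapsto\sigma_1 x$ then yields all four stated constants.
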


\begin{lem} \label{lem:est_g_w}
    For all $g \in G$, there holds
    \begin{equation*}
        \norm{g.w_*^\sigma - w_*^\sigma}_{H^1} \leq C \abs{g},
    \end{equation*}
    and, for any $x \in \mathbb{R}$, if $y \ge 0$ and $j=0, 1$,
    \begin{equation*}
        \abs{\partial_x^j g.w_*^\sigma (x) - \partial_x^j w_*^\sigma (x)} \leq C \max(\abs{g}, 1) \times \begin{cases}
            e^{- \Gamma (x - y)} &\qquad \text{if } x \geq y \\
            1 &\qquad \text{if } x \in [0, y] \\
            e^{\Gamma x} &\qquad \text{if } x \leq 0
        \end{cases},
    \end{equation*}
    and similarly when $y \leq 0$.
    Moreover there exists $\tilde \delta_0 > 0$ independent of $g$ such that, if $\norm{g.w_*^\sigma - w_*^\sigma}_{H^1} \leq \tilde \delta_0$, then
    \begin{equation*}
        \abs{g} \le C \norm{g.w_*^\sigma - w_*^\sigma}_{H^1}.
    \end{equation*}
\end{lem}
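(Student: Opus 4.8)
The plan is to prove the three assertions in order, exploiting the explicit formula \eqref{formul1} for $w_*^\sigma$, the decay estimates of Lemma \ref{lem:est_w_infty}, and the fact that the $G$-action is by the smooth, $2\pi$-periodic one-parameter groups $\tau_y$ and $R_\phi$. Write $g=(y,\phi)$, so that $g.w_*^\sigma(x) = R_\phi w_*^\sigma(x-y)$, and decompose the increment as
\[ g.w_*^\sigma - w_*^\sigma = \bigl(R_\phi - I\bigr) w_*^\sigma(\cdot - y) \;+\; \bigl(w_*^\sigma(\cdot - y) - w_*^\sigma\bigr). \]
For the first assertion, I would estimate these two pieces separately in $H^1$. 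The rotation piece is handled by $\|(R_\phi-I)v\|_{H^1}\le C\,|\phi|\,\|v\|_{H^1}$ (using $|R_\phi - I|\le C\inf_k|\phi+2k\pi|$ and that $R_\phi$ commutes with $\partial_x$), together with $\|w_*^\sigma\|_{H^1}<\infty$; note that because $m_1\to\pm1$, one must apply this to $w_*^\sigma$ only through the combination where it makes sense — more cleanly, write $(R_\phi-I)w_*^\sigma = (R_\phi-I)(w_*^\sigma\mp e_1)$ on $\pm$ half-lines since $R_\phi e_1=e_1$, and invoke Lemma \ref{lem:est_w_infty} to see $w_*^\sigma\mp e_1\in H^1$. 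The translation piece is the fundamental-theorem-of-calculus bound $\|w_*^\sigma(\cdot-y)-w_*^\sigma\|_{H^1}\le |y|\,\sup_{|s|\le|y|}\|\partial_x w_*^\sigma(\cdot-s)\|_{H^1} \le C|y|$, again using that all derivatives of $w_*^\sigma$ of order $\ge1$ lie in $H^1$ (exponential localization). Combining gives $\|g.w_*^\sigma-w_*^\sigma\|_{H^1}\le C(|y|+|\phi|_{G})=C|g|$. If one wants the constant uniform in $\gamma$, one uses the $\gamma$-independent bounds of Lemma \ref{lem:est_w_infty}, at the cost of a harmless factor depending on how $\Gamma$ stays bounded below on the relevant range; since the statement does not claim uniformity here, a plain constant suffices.

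For the second (pointwise) assertion, with $y\ge0$ and $j\in\{0,1\}$, I would again split $\partial_x^j g.w_*^\sigma(x)-\partial_x^j w_*^\sigma(x)$ into the rotation and translation contributions and bound each by the piecewise-exponential profile on the three regions $x\ge y$, $x\in[0,y]$, $x\le0$. On $x\ge y$ both $w_*^\sigma(x)-e_1$ and $w_*^\sigma(x-y)-e_1$ decay like $e^{-\Gamma(x-y)}$ by Lemma \ref{lem:est_w_infty} (and $R_\phi e_1=e_1$), giving the $e^{-\Gamma(x-y)}$ bound with a factor $\max(|g|,1)$ coming from $|R_\phi-I|\lesssim|\phi|$ and from $|y|$ absorbed via the crude bound $|y|e^{-\Gamma(x-y)}\lesssim e^{-\Gamma(x-y)/2}$ on that region — or, more simply, one keeps the profile as stated and notes the $\max(|g|,1)$ accommodates both the rotation size and the (bounded by $1$) trivial estimate $|\partial_x^j w_*^\sigma|\le C$. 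On $x\le0$ one uses that $w_*^\sigma(x)+e_1$ and $w_*^\sigma(x-y)+e_1$ decay like $e^{\Gamma x}$ (since $x-y\le x\le0$), again with $R_\phi(-e_1)=-e_1$. On the middle region $[0,y]$ one just uses the uniform bound $|\partial_x^j w_*^\sigma|\le C$ for both terms, yielding the constant $1$ times $\max(|g|,1)$. The case $y\le0$ is symmetric.

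For the third assertion — the converse bound $|g|\le C\|g.w_*^\sigma - w_*^\sigma\|_{H^1}$ when the right side is $\le\tilde\delta_0$ — the cleanest route is a non-degeneracy/implicit-function argument: the map $F:G\to H^1$ (or rather into the appropriate difference space), $F(g)=g.w_*^\sigma - w_*^\sigma$, is $C^1$ with $F(0)=0$, and its differential at $0$ is the injective linear map $(\dot y,\dot\phi)\mapsto -\dot y\,\partial_x w_*^\sigma + \dot\phi\,(e_1\wedge w_*^\sigma)$; injectivity (in fact a lower bound $\|{-\dot y\,\partial_x w_*^\sigma+\dot\phi\,(e_1\wedge w_*^\sigma)}\|_{L^2}^2 = \tfrac{2}{\Gamma}(\dot y^2+\dot\phi^2) + \tfrac{4\gamma}{\Gamma}\dot y\dot\phi \ge c(\dot y^2+\dot\phi^2)$ since $|\gamma|<1$) follows directly from the Gram computations in Lemma \ref{lem:int_w}. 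Hence $DF(0)$ is bounded below, and by $C^1$ continuity $DF(g)$ remains bounded below (and $F$ bi-Lipschitz onto its image) on a fixed small neighborhood $\{|g|\le\eta_0\}$; composing with the first assertion (which shows $\|F(g)\|_{H^1}\le C|g|$, so small $\|F(g)\|$ forces, via the local bi-Lipschitz property, $|g|$ small) gives the claim on $\{\|F(g)\|_{H^1}\le\tilde\delta_0\}$ for suitably chosen $\tilde\delta_0$, with the quotient $2\pi\mathbb Z$ on $\phi$ causing no trouble since we work in a small neighborhood of a single representative. The main obstacle, and the only genuinely non-routine point, is this last step: one must make sure the lower bound on $|g|$ is not lost when $|g|$ is not a priori small — i.e., one needs that $\|g.w_*^\sigma-w_*^\sigma\|_{H^1}$ is bounded \emph{below} for $|g|$ bounded away from $0$ (using that $w_*^\sigma$ is not invariant under any nontrivial element of $G$, its centralizer being trivial because the $\gamma x$-oscillation in \eqref{formul1} kills the would-be rotational symmetry), so that the threshold $\tilde\delta_0$ can be taken uniform; this global non-vanishing is where a little care with the explicit formula \eqref{formul1} (or a compactness argument on the torus $\phi\in\mathbb R/2\pi\mathbb Z$ together with the $|y|\to\infty$ behavior $\|g.w_*^\sigma-w_*^\sigma\|_{H^1}^2\to 2\|w_*^\sigma\mp e_1\|^2>0$) is needed.
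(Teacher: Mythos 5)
Your proposal is correct, and for the two estimates the paper actually proves in situ it follows essentially the paper's route: the same splitting of $g.w_*^\sigma-w_*^\sigma$ into a translation increment (mean value theorem, $|y|\max|\partial_x w_*^\sigma|$) and a rotation increment ($|\phi|\,|e_1\wedge w_*^\sigma|$), the decay of Lemma \ref{lem:est_w_infty} on the outer regions, and the crude $L^\infty$ bound on the middle interval $[0,y]$, with $\max(|g|,1)$ absorbing the factor $|y|$. The real difference is in the last assertion: the paper simply quotes Claim~4.12 of \cite{Cote_Ignat__stab_DW_LLG_DM} (and (4.6) there for the first bound), whereas you give a self-contained non-degeneracy argument — computing the Gram matrix of $(\partial_x w_*^\sigma, e_1\wedge w_*^\sigma)$ from Lemma \ref{lem:int_w} to get the lower bound $\frac{2}{\Gamma}(\dot y^2+\dot\phi^2)+\frac{4\gamma}{\Gamma}\dot y\dot\phi\geq c(\dot y^2+\dot\phi^2)$ for $|\gamma|<1$, invoking local invertibility of $g\mapsto g.w_*^\sigma-w_*^\sigma$, and then, crucially, establishing a global separation property (trivial stabilizer of $w_*^\sigma$, compactness in $\phi$, growth as $|y|\to\infty$) so that the smallness threshold $\tilde\delta_0$ can be taken uniform. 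This buys transparency: it shows exactly where $|\gamma|<1$ enters and why the uniform threshold exists, at the price of length; the paper's citation buys brevity. One small inaccuracy in your global step: as $|y|\to\infty$ the quantity $\norm{g.w_*^\sigma-w_*^\sigma}_{H^1}^2$ does not tend to $2\norm{w_*^\sigma\mp e_1}^2$ — the first components differ by approximately $2$ on an interval of length comparable to $|y|$, so the $L^2$ part in fact diverges. This only strengthens the lower bound you need, so the argument is unaffected, but the stated limit is wrong. Likewise, the passing remark $|y|e^{-\Gamma(x-y)}\lesssim e^{-\Gamma(x-y)/2}$ fails near $x=y$ for large $y$; your fallback of absorbing $|y|$ into $\max(|g|,1)$ is the correct fix and is what the paper does.
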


\begin{proof}
    The third point is \cite[Claim~4.12]{Cote_Ignat__stab_DW_LLG_DM}. The first point was also proved in \cite{Cote_Ignat__stab_DW_LLG_DM}, see (4.6) in there. For the second point, we can refine the latter:
    \begin{align*}
        g.w_*^\sigma (x) - w_*^\sigma (x) &= \tau_y R_\phi w_*^\sigma (x) - R_\phi w_*^\sigma (x) + R_\phi w_*^\sigma (x) - w_*^\sigma (x), \\
        \abs{g.w_*^\sigma (x) - w_*^\sigma (x)} &\leq \abs{\tau_y R_\phi w_*^\sigma (x) - R_\phi w_*^\sigma (x)} + \abs{R_\phi w_*^\sigma (x) - w_*^\sigma (x)} \\
            &\leq \abs{y} \max_{[x-y, x]} \abs{\partial_x w_*^\sigma} + \abs{\phi}_{\sfrac{\mathbb{R}}{2 \pi \mathbb{Z}}} \abs{e_1 \wedge w_*^\sigma (x)},
    \end{align*}
    and the conclusion comes from Lemma \ref{lem:est_w_infty}. On the other hand, one can also estimate in another way for $x \geq 0$:
    \begin{equation*}
        g.w_*^\sigma (x) - w_*^\sigma (x) = g.w_*^\sigma (x) - \sigma_1 e_1 - (w_*^\sigma (x) - \sigma_1 e_1),
    \end{equation*}
    and we can estimate thanks to Lemma \ref{lem:est_w_infty}. A similar computation can be done for $x \leq - y$. Finally, we also have
    \begin{equation*}
        \norm{g.w_*^\sigma (x) - w_*^\sigma (x)}_{L^\infty} \leq \norm{g.w_*^\sigma}_{L^\infty} + \norm{w_*^\sigma}_{L^\infty} \leq 2,
    \end{equation*}
    which gives the estimate for $x \in [-y, 0]$. Similar arguments for the derivative give the conclusion.
\end{proof}

\subsection{Localisation}

We fix some function $\psi$ which satisfies the following assumptions :
\begin{itemize}
    \item $\psi \equiv 0$ on $(- \infty, -1]$,  $\psi \equiv 1$ on $[1, + \infty)$,
    \item $0 \leq \psi \leq 1$ on $\mathbb{R}$,
    \item $\forall x \in \mathbb{R}, 1 - \psi (x) = \psi(-x)$
    \item $\sqrt{\psi} \in W^{3, \infty} (\mathbb{R})$.
\end{itemize}
Then, we take some $R \geq 1$ large to be fixed later and we define a localisation function  and a localised scalar product:
\begin{equation} \label{def:psi_R} 
\index{Functions!$\psi$, $\psi_R$: localization functions}
\psi_R (x) \coloneqq \psi \Bigl( \ds \frac{x}{R} \Bigr), \quad \text{and} \quad   (f, g)_{\psi_R} \coloneqq \int f(x) g(x) \psi_R (x) \diff x,
\end{equation}
defined for all $f, g \in L^2 (\psi_R (x) \diff x) \coloneqq \{ h \in L^2_\text{loc} (\supp \psi_R), \norm{h}_{L^2 (\psi_R (x) \diff x)}^2 \coloneqq (h, h)_{\psi_R} < \infty \}$. 

In the course of gaining control of localized quantities, we will also use $L^2(\supp \psi_R)$ (or variants), and we emphasize the need to pay attention to the difference between
\[ \| f \|_{L^2(\psi_R(x) \diff x)} : = \left( \int |f(x)|^2 \psi_R(x) dx \right)^{1/2} \quad \text{and} \quad \| f \|_{L^2(\supp \psi_R)} : = \left( \int_{\supp \psi_R} |f(x)|^2 \psi(x) \diff x \right)^{1/2}. \]
To avoid confusion, we precise the underlying Lebesgue measure $\diff x$ when a weight function is involved.

We can also define, in a similar way, $H^k (\psi_R (x) \diff x)$ and $H^k (\supp{\psi_R})$ for all $k \in \mathbb{N}$.
Moreover, observe that for all $R \geq 1$ and integer $k$, there hold
\begin{equation} \label{eq:est_phi_A}
    \norm{\partial^k_x \psi_R}_{L^\infty} = \frac{1}{R^k} \norm{\partial^k_x \psi}_{L^\infty} \quad \text{and} \quad
    \norm{\partial^k_x ( \sqrt{\psi_R} )}_{L^\infty} = \frac{1}{R^k} \norm{\partial^k_x \sqrt{\psi}}_{L^\infty}.
\end{equation}

We also show a result with respect to the localised $H^1$ and $H^2$ norms.

\begin{lem} \label{lem:localised_norms}
\index{Functions!$\psi_0$: translated localization function}
    There exists $C > 0$ such that, for any $f \in H^{k} (\mathbb{R})$ and any $y_0$ with $\psi_0 \coloneqq \tau_{y_0} \psi_R$, there holds for $k = 1, 2$
    \begin{equation*}
        \abs{\norm{\sqrt{\psi_0} f}_{H^k}^2 - \norm{f}_{H^k (\psi_0 (x) \diff x)}^2} \leq \frac{C}{R^2} \norm{f}_{H^{k-1} (\supp \partial_x \psi_0)}^2.
    \end{equation*}
\end{lem}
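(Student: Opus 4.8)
The plan is to expand the norms $\norm{\sqrt{\psi_0} f}_{H^k}^2$ and compare them term by term with $\norm{f}_{H^k(\psi_0\,dx)}^2 = \sum_{j=0}^k \int |\partial_x^j f|^2 \psi_0\,dx$, with all discrepancies coming from commutators between multiplication by $\sqrt{\psi_0}$ and differentiation. For $k=0$ the two quantities are literally equal, so there is nothing to do. For $k=1$ I would write $\partial_x(\sqrt{\psi_0} f) = \sqrt{\psi_0}\,\partial_x f + (\partial_x \sqrt{\psi_0}) f$, expand the square, and observe that $\int |\sqrt{\psi_0}\partial_x f + (\partial_x\sqrt{\psi_0})f|^2 - \int |\partial_x f|^2 \psi_0$ equals $2\int \sqrt{\psi_0}(\partial_x\sqrt{\psi_0}) f\,\partial_x f\,dx + \int |\partial_x\sqrt{\psi_0}|^2 f^2\,dx$. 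The first of these is $\int \partial_x(\sqrt{\psi_0})\partial_x(\sqrt{\psi_0})$... more simply, $2\sqrt{\psi_0}\partial_x\sqrt{\psi_0} = \partial_x \psi_0$, so that cross term is $\int \partial_x\psi_0\, f\,\partial_x f = \tfrac12 \int \partial_x\psi_0\, \partial_x(f^2) = -\tfrac12\int \partial_{xx}\psi_0\, f^2$, which together with $\int|\partial_x\sqrt{\psi_0}|^2 f^2$ is bounded by $\bigl(\tfrac12\norm{\partial_{xx}\psi_0}_{L^\infty} + \norm{\partial_x\sqrt{\psi_0}}_{L^\infty}^2\bigr)\norm{f}_{L^2(\supp \partial_x\psi_0)}^2$; by \eqref{eq:est_phi_A} each factor carries a $1/R^2$, since $\partial_x\psi_0$ and $\partial_{xx}\psi_0$ are supported in $\supp\partial_x\psi_0$. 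This gives the claim for $k=1$ with no derivative loss (only $\norm{f}_{L^2}$ on the support of $\partial_x\psi_0$ appears, not $\norm{\partial_x f}_{L^2}$), which is exactly the point of the lemma and the reason the statement is phrased with $f\in H^1$ rather than $H^2$.

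For $k=2$ the same strategy applies but the bookkeeping is heavier: one expands $\partial_{xx}(\sqrt{\psi_0}f) = \sqrt{\psi_0}\partial_{xx}f + 2(\partial_x\sqrt{\psi_0})\partial_x f + (\partial_{xx}\sqrt{\psi_0})f$ and subtracts $\int |\partial_{xx}f|^2\psi_0$. The resulting error terms involve $\int \partial_x\sqrt{\psi_0}\cdot\sqrt{\psi_0}\,\partial_x f\,\partial_{xx}f$, $\int (\partial_{xx}\sqrt{\psi_0})\sqrt{\psi_0}\, f\,\partial_{xx}f$, $\int |\partial_x\sqrt{\psi_0}|^2 |\partial_x f|^2$, $\int (\partial_x\sqrt{\psi_0})(\partial_{xx}\sqrt{\psi_0}) f\,\partial_x f$, and $\int (\partial_{xx}\sqrt{\psi_0})^2 f^2$. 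Naively several of these contain $\norm{\partial_x f}_{L^2}^2$ or $\norm{\partial_{xx}f}_{L^2}^2$, which would be unacceptable. The key manoeuvre is to integrate by parts repeatedly to move all derivatives off $f$ and onto the (smooth, $R$-controlled) cutoff weight: e.g. $\int \partial_{xx}f\,\partial_x f\,(\text{weight}) = \tfrac12\int \partial_x(|\partial_x f|^2)(\text{weight}) = -\tfrac12\int |\partial_x f|^2 \partial_x(\text{weight})$, and then $\int |\partial_x f|^2 (\text{weight}) = -\int f\,\partial_{xx}f\,(\text{weight}) - \int f\,\partial_x f\,\partial_x(\text{weight}) $, iterating until only $\int f^2 \times (\text{derivatives of }\psi_0)$ remains. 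Since $\sqrt{\psi}\in W^{3,\infty}$, up to three derivatives of $\sqrt{\psi_0}$ (equivalently up to two of $\psi_0$ plus products) are available and bounded by $C/R^{\,\ge 2}$ on $\supp\partial_x\psi_0$; using $R\ge 1$, every term is absorbed into $\tfrac{C}{R^2}\norm{f}_{L^2(\supp\partial_x\psi_0)}^2$.

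I would organize the write-up as: (i) note $k=0$ is trivial; (ii) do $k=1$ via the $2\sqrt{\psi_0}\partial_x\sqrt{\psi_0}=\partial_x\psi_0$ identity and one integration by parts; (iii) for $k=2$, list the error terms, and for each one integrate by parts enough times to reduce to $\int(\text{bounded weight supported in }\supp\partial_x\psi_0)\,f^2$, then invoke \eqref{eq:est_phi_A}. The main obstacle — and the only genuinely delicate point — is the $k=2$ case: one has to be careful that every integration by parts is legitimate for $f\in H^1$ only (so that $\partial_{xx}f$ never appears in the final bound, only $f$ itself), and that no boundary terms arise, which holds because $\psi_0$ and all its derivatives either vanish outside a compact-modulo-translation set or are constant there so their derivatives vanish. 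Keeping track of the constants and verifying that $W^{3,\infty}$ regularity of $\sqrt{\psi}$ is exactly what is needed (since two derivatives land on $\sqrt{\psi_0}$ inside $\partial_{xx}(\sqrt{\psi_0}f)$, and one more is produced by an integration by parts) is the bulk of the work, but it is routine once the integration-by-parts scheme is set up.
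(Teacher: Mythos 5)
Your treatment of $k=0$ and $k=1$ is exactly the paper's argument (the identity $2\sqrt{\psi_0}\,\partial_x\sqrt{\psi_0}=\partial_x\psi_0$, one integration by parts, then \eqref{eq:est_phi_A}), and that part is fine. The problem is the $k=2$ case, where your ``key manoeuvre'' does not work. You cannot, by integrating by parts, rewrite a term such as $\int (\partial_x\sqrt{\psi_0})^2\,(\partial_x f)^2\,dx$ as a weighted integral of $f^2$ alone: one integration by parts turns $\int w\,(\partial_x f)^2\,dx$ into $-\int w\,f\,\partial_{xx}f\,dx+\tfrac12\int \partial_{xx}w\,f^2\,dx$, i.e.\ it reintroduces exactly the second derivative you have declared off limits, and iterating only shuttles derivatives back and forth between $f$ and $\partial_{xx}f$. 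There is also no hidden cancellation to exploit: collecting all the $(\partial_x f)^2$-terms in the expansion of $\norm{\sqrt{\psi_0}f}_{H^2}^2-\norm{f}_{H^2(\psi_0\diff x)}^2$ leaves a weight of the form $c_1\,\partial_{xx}\psi_0+c_2\,(\partial_x\sqrt{\psi_0})^2$ with nonzero coefficients, so a function $f=\epsilon\sin(Nx)\chi(x)$ localized in the transition region (with $N$ large) makes the left-hand side of order $\epsilon^2N^2/R^2$ while $\norm{f}_{L^2(\supp\partial_x\psi_0)}^2\sim\epsilon^2$; hence the bound you are aiming for (only $\norm{f}_{L^2(\supp\partial_x\psi_0)}^2$ on the right, for $k=2$) is not provable by any algebraic rearrangement.

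The paper does not attempt this reduction. For $k=2$ it expands $\partial_{xx}(\sqrt{\psi_0}f)$ as you do, uses $2\sqrt{\psi_0}\partial_{xx}\sqrt{\psi_0}=\partial_{xx}\psi_0-2(\partial_x\sqrt{\psi_0})^2$ and integrations by parts that \emph{do} involve $\partial_{xx}f$ (the case $k=2$ is only ever applied to functions in $H^2$, e.g.\ $\nu^\pm,\rho^\pm$ in Lemma \ref{lem:loc_schr_op}; the hypothesis ``$f\in H^1$'' in the statement is to be read per value of $k$), and it stops at error terms carrying $(\partial_x f)^2$ and $f\,\partial_x f$ against derivatives of $\psi_0$. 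Those are then estimated crudely by \eqref{eq:est_phi_A}, which yields a bound by the \emph{local $H^1$ norm} $\norm{f}_{H^1(\supp\partial_x\psi_0)}^2$ (this is also the form in which the estimate is actually used afterwards, cf.\ the $\frac{C}{R}\norm{f}_{H^1(\supp\partial_x\psi_R^\pm)}^2$ error in the second part of Lemma \ref{lem:loc_schr_op}). So your reading of the statement — that for $k=2$ the error must be controlled by the $L^2$ norm of $f$ alone, forcing you to strip all derivatives off $f$ — is both unnecessary and unattainable, and the argument you build on it has a genuine gap. Replacing your step (iii) by the paper's route (keep the $(\partial_x f)^2$ and $f\,\partial_x f$ terms, bound them by $\frac{C}{R^2}\norm{f}_{H^1(\supp\partial_x\psi_0)}^2$, and assume $f\in H^2$ when $k=2$) repairs the proof.
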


\begin{rem}
    For the case $k=0$, we even have the equality
    \begin{equation*}
        \norm{\sqrt{\psi_0} f}_{L^2}^2 = \norm{f}_{L^2 (\psi_0 (x) \diff x)}^2.
    \end{equation*}
\end{rem}

\begin{proof}
    For the homogeneous $H^1$ (semi-)norm, we compute :
    \begin{equation*}
        \partial_x \Bigl( \sqrt{\psi_0} f \Bigr) = \sqrt{\psi_0} \partial_x f + f \partial_x \sqrt{\psi_0}.
    \end{equation*}
    Therefore,
    \begin{align*}
        \norm{\partial_x \Bigl( \sqrt{\psi_0} f \Bigr)}_{L^2}^2 &= \norm{\partial_x f}_{L^2 (\psi_0 (x) \diff x)}^2 + 2 \int \sqrt{\psi_0} \partial_x f \cdot f \partial_x \sqrt{\psi_0} \diff x + \int \abs{f \partial_x \sqrt{\psi_0}}^2 \diff x \\
            &= \norm{\partial_x f}_{L^2 (\psi_0 (x) \diff x)}^2 + \frac{1}{2} \int \partial_x \abs{f}^2 \partial_x \psi_0 \diff x + \int \abs{f}^2 \Bigl(\partial_x \sqrt{\psi_0}\Bigr)^2 \diff x \\
            &= \norm{\partial_x f}_{L^2 (\psi_0 (x) \diff x)}^2 + \int \abs{f}^2 \Bigl[\Bigl(\partial_x \sqrt{\psi_0}\Bigr)^2 - \frac{1}{2} \partial^2_{xx} \psi_0 \Bigr] \diff x.
    \end{align*}
    The conclusion easily follows from the estimate of the $L^\infty$ norm of $\Bigl(\partial_x \sqrt{\psi_0}\Bigr)^2 - \frac{1}{2} \partial^2_{xx} \psi_0$ with \eqref{eq:est_phi_A}.
    Morover, there holds
    \begin{equation*}
        \partial^2_{xx} \Bigl( \sqrt{\psi_0} f \Bigr) = \sqrt{\psi_0} \partial^2_{xx} f + 2 \partial_x \sqrt{\psi_0} \partial_x f + f \partial^2_{xx} \sqrt{\psi_0},
    \end{equation*}
    so that
    \begin{equation*}
        \Bigl( \partial^2_{xx} \Bigl( \sqrt{\psi_0} f \Bigr) \Bigr)^2 - \psi_0 (\partial^2_{xx} f)^2 = 2 \sqrt{\psi_0} \partial^2_{xx} f \Bigl( 2 \partial_x \sqrt{\psi_0} \partial_x f + f \partial^2_{xx} \sqrt{\psi_0} \Bigr) + \Bigl( 2 \partial_x \sqrt{\psi_0} \partial_x f + f \partial^2_{xx} \sqrt{\psi_0} \Bigr)^2.
    \end{equation*}
    Expanding the first term of the right-hand side and integrating, we get
    \begin{align}
         \int \sqrt{\psi_0} \partial^2_{xx} f \partial^2_{xx} \sqrt{\psi_0} f \diff x & = \int f \partial^2_{xx} f \Bigl(\partial^2_{xx} \psi_0 - \Bigl(\partial_x \sqrt{\psi_0}\Bigr)^2\Bigr) \diff x \\
            &= - \int (\partial_{x} f)^2 \Bigl(\partial^2_{xx} \psi_0 - \Bigl(\partial_x \sqrt{\psi_0}\Bigr)^2\Bigr) \diff x - \int f \partial_x f \partial_x \Bigl(\partial^2_{xx} \psi_0 - \Bigl(\partial_x \sqrt{\psi_0}\Bigr)^2\Bigr) \diff x, \nonumber \\
         \int \sqrt{\psi_0} \partial^2_{xx} f \partial_x \sqrt{\psi_0} \partial_x f \diff x & = \frac{1}{4} \int \partial_x \psi_0 \partial_x (\partial_x f)^2 \diff x = - \frac{1}{4} \int \partial^2_{xx} \psi_0 (\partial_x f)^2 \diff x, 
    \end{align}
    and the conclusion follows from obvious estimation.
\end{proof}

\subsection{Localized multilinear estimates in Sobolev spaces}

\begin{defi}
\index{Spaces!$O_k^\ell(f)$: polynomial terms of degree $\ell$ with at most $k$ derivatives}
    For $k \geq 0$ and $\ell \geq 1$, and given a (possibly vector valued) function $f = (f_j)_{1 \leq j \leq J}$, we use the notation
    \begin{equation*}
        g = O_k^\ell (f)
    \end{equation*}
    for a (possibly vector valued) function $g$ if each component of $g$ is an homogeneous polynomial of degree $\ell$ in the components of $f$ and their derivatives such that the total number of derivatives in each term is at most $k$, and whose coefficients are $\mathscr{C}_b^\infty (\mathbb{R})$ functions. $g$ is then the sum of terms of the form
    \begin{equation*}
        \alpha \prod_{j=1}^J \prod_{\kappa=0}^{k} (\partial_x^\kappa f_j)^{\ell_{j, \kappa}}, \qquad
        \textnormal{where} \quad
        \sum_{j, \kappa} \ell_{j, \kappa} = \ell, \quad
        \textnormal{and} \quad
        \sum_{j, \kappa} l_{j, \kappa} \kappa \leq k, \quad
        \textnormal{and} \quad
        \alpha \in \mathscr{C}_b^\infty.
    \end{equation*}
\end{defi}

\begin{lem}
    \begin{enumerate}
        \item If $k' \geq k$, then $O_k^\ell (f) = O_{k'}^\ell (f)$.
        \item If $\alpha \in \mathscr{C}^\infty_b$, then $\alpha O_k^\ell (f) = O_k^\ell (f)$.
        \item $O_k^\ell (f_1) O_{k'}^{\ell'} (f_2) = O_{k + k'}^{\ell + \ell'} (f_1, f_2)$.
        \item $\partial_x O_k^\ell (f) = O_{k+1}^\ell (f)$,
        \item $O_k^\ell (f_1 + f_2) = O_k^\ell (f_1, f_2)$.
    \end{enumerate}
\end{lem}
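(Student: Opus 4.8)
The plan is to verify each of the five identities directly from the definition, working term by term: since $O_k^\ell(f)$ denotes a finite sum of monomials $\alpha\prod_{j,\kappa}(\partial_x^\kappa f_j)^{\ell_{j,\kappa}}$ with $\sum_{j,\kappa}\ell_{j,\kappa}=\ell$, $\sum_{j,\kappa}\ell_{j,\kappa}\kappa\le k$ and $\alpha\in\mathscr{C}_b^\infty$, it suffices to check that the relevant operation sends such a monomial to something of the claimed form (as usual, the ``equalities'' are to be read as ``the left-hand side is of the form of the right-hand side''). Throughout I would use the standing fact that $\mathscr{C}_b^\infty(\mathbb{R})$ is a ring which is moreover stable under $\partial_x$, so that sums, products and derivatives of admissible coefficients remain admissible.

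For (1), I would simply observe that a monomial with derivative count $\sum\ell_{j,\kappa}\kappa\le k$ has derivative count $\le k'$ whenever $k'\ge k$: one enlarges the allowed range of $\kappa$ by setting $\ell_{j,\kappa}=0$ for $k<\kappa\le k'$, at no cost. For (2), multiplying a monomial by $\alpha\in\mathscr{C}_b^\infty$ merely replaces its coefficient $\beta$ by $\alpha\beta\in\mathscr{C}_b^\infty$, changing neither the degree nor the derivative count. For (3), I would expand the product $O_k^\ell(f_1)\,O_{k'}^{\ell'}(f_2)$ into a finite sum of products of one monomial in the components of $f_1$ (degree $\ell$, at most $k$ derivatives) with one monomial in the components of $f_2$ (degree $\ell'$, at most $k'$ derivatives): degrees and derivative counts add, and the new coefficient is a product of two $\mathscr{C}_b^\infty$ functions, so the result is of the form $O_{k+k'}^{\ell+\ell'}(f_1,f_2)$.

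For (4), I would apply the Leibniz rule to a single monomial. Either $\partial_x$ hits the coefficient, yielding $(\partial_x\alpha)\prod(\partial_x^\kappa f_j)^{\ell_{j,\kappa}}$ with the same degree and derivative count and with coefficient $\partial_x\alpha\in\mathscr{C}_b^\infty$; or it hits a factor $(\partial_x^\kappa f_j)^{\ell_{j,\kappa}}$, which replaces one copy of $\partial_x^\kappa f_j$ by one copy of $\partial_x^{\kappa+1}f_j$ — the degree stays $\ell$ and the total number of derivatives increases by exactly one, hence is $\le k+1$ (and the new factor carries $\kappa+1\le k+1$ derivatives, which is now permitted). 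In both cases the result lies in $O_{k+1}^\ell(f)$. For (5), I would use linearity of $\partial_x^\kappa$ to write $\partial_x^\kappa(f_1+f_2)_j=\partial_x^\kappa(f_1)_j+\partial_x^\kappa(f_2)_j$ and then expand each power $(\partial_x^\kappa(f_1)_j+\partial_x^\kappa(f_2)_j)^{\ell_{j,\kappa}}$ by the binomial theorem; this turns a monomial in the components of $f_1+f_2$ into a finite sum of monomials in the components of $(f_1,f_2)$, each still homogeneous of total degree $\ell$ and carrying the same number $\sum\ell_{j,\kappa}\kappa\le k$ of derivatives, the binomial coefficients being absorbed into the $\mathscr{C}_b^\infty$ coefficient. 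The whole argument is elementary bookkeeping and I do not expect any genuine obstacle; the only points where I would be slightly careful are checking in (4) that differentiating a factor raises the derivative count by exactly one and not more, and in (5) that the binomial expansion does not inflate the derivative count.
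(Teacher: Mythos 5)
Your verification is correct: each of the five properties follows by exactly the bookkeeping you describe (enlarging the allowed derivative count, absorbing $\mathscr{C}^\infty_b$ factors into the coefficient, adding degrees and derivative counts for products, Leibniz for $\partial_x$, and multinomial expansion for sums). The paper states this lemma without proof, treating it as an immediate consequence of the definition, so your direct term-by-term argument is precisely the intended (and only natural) one.
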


This notation has been used in \cite{Cote_Ignat__stab_DW_LLG_DM} to express pointwise bounds that turn into Sobolev bounds with linear dependence in the highest term. We will generalize these estimates for localised integrations :

\begin{lem} \label{lem:tech_o_lem}
    \begin{enumerate}
        \item Assume $g = O_k^\ell (f)$. Then there holds if $k \geq 2$
        \begin{equation*}
            \norm{g}_{L^2 (\supp \psi_R)} \lesssim \norm{f}_{H^k (\supp \psi_R)} \norm{f}_{H^{k-1} (\supp \psi_R)}^{\ell - 1}.
        \end{equation*}
        If $k=1$,
        \begin{equation*}
            \norm{g}_{L^2 (\supp \psi_R)} \lesssim \norm{f}_{H^1 (\supp \psi_R)}^{\ell}.
        \end{equation*}
        \item If $f \in H^1$, we have for $\ell \geq 2$,
        \begin{equation*}
            \abs{\int O_2^\ell (f) \psi_R (x) \diff x} \lesssim \norm{f}_{H^1 (\supp \psi_R)}^\ell,
        \end{equation*}
        and, if $g \in H^1$,
        \begin{equation*}
            \abs{\int O_2^1 (f) g(x) \psi_R (x) \diff x} \lesssim \norm{f}_{H^1 (\supp \psi_R)} \norm{g}_{H^1 (\supp \psi_R)}.
        \end{equation*}
        \item If $f \in H^2$, we have for $\ell \geq 2$
        \begin{equation*}
            \abs{\int O_3^\ell (f) \psi_R (x) \diff x} \lesssim \norm{f}_{H^1 (\supp \psi_R)}^{\ell - 1} \norm{f}_{H^2 (\supp \psi_R)},
        \end{equation*}
        \begin{equation*}
            \abs{\int O_4^\ell (f) \psi_R (x) \diff x} \lesssim \norm{f}_{H^1 (\supp \psi_R)}^{\ell - 2} \norm{f}_{H^2 (\supp \psi_R)}^2.
        \end{equation*}
    \end{enumerate}
\end{lem}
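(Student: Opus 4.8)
The plan is to establish these as localised tame product (Moser-type) inequalities, by combining Hölder's inequality on $\Omega := \supp \psi_R$ with one-dimensional Sobolev and Gagliardo--Nirenberg inequalities. The three ingredients I will rely on are: (i) since $\Omega$ is a half-line and all functions at hand tend to $0$ at $+\infty$, one has $\norm{\partial_x^j u}_{L^\infty(\Omega)}^2 \le 2 \norm{\partial_x^j u}_{L^2(\Omega)} \norm{\partial_x^{j+1} u}_{L^2(\Omega)}$, hence in particular $\norm{\partial_x^j u}_{L^\infty(\Omega)} \lesssim \norm{u}_{H^{j+1}(\Omega)}$; (ii) the scaling bound $\norm{\partial_x^m \psi_R}_{L^\infty} = R^{-m}\norm{\partial_x^m \psi}_{L^\infty} \le \norm{\partial_x^m \psi}_{L^\infty}$ for $R \ge 1$ (from \eqref{eq:est_phi_A}), together with the uniform boundedness of all derivatives of the $\mathscr{C}^\infty_b$ coefficients occurring in the $O^\ell_k$ notation; (iii) the elementary monotonicity $\norm{f}_{H^j(\Omega)} \le \norm{f}_{H^k(\Omega)}$ for $0 \le j \le k$.

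For item (1), write a generic term of $g = O^\ell_k(f)$ as $c(x) \prod_{i=1}^\ell \partial_x^{\kappa_i} f_{j_i}$ with $c \in \mathscr{C}^\infty_b$ and $\sum_i \kappa_i \le k$, relabelled so that $\kappa_1 = \max_i \kappa_i$. By Hölder's inequality on $\Omega$, putting the first factor in $L^2(\Omega)$ and the remaining $\ell-1$ factors in $L^\infty(\Omega)$, and using (i), this term is $\lesssim \norm{f}_{H^{\kappa_1}(\Omega)} \prod_{i=2}^\ell \norm{f}_{H^{\kappa_i+1}(\Omega)}$. Since $\sum_i \kappa_i \le k$ and $\kappa_1 \ge \kappa_i$, one checks that each of the $\ell$ exponents $\kappa_1, \kappa_2+1, \dots, \kappa_\ell+1$ is $\le k$, and that at most one of them can equal $k$ (two such would already force $\sum_i \kappa_i > k$ when $k \ge 2$); with (iii) the product is therefore $\lesssim \norm{f}_{H^k(\Omega)} \norm{f}_{H^{k-1}(\Omega)}^{\ell-1}$, and summing the finitely many terms concludes the case $k \ge 2$. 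When $k = 1$, every factor carries at most one derivative, so placing any one factor in $L^2(\Omega)$ and the rest in $L^\infty(\Omega)$ gives $\norm{f}_{H^1(\Omega)}^\ell$ at once.

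Items (2) and (3) run along the same Hölder scheme, the only difference being that a factor may carry more derivatives than the order of the available target norm: $\partial_{xx}f$ in $O^\ell_2$ (where only $H^1$ is at hand), and $\partial_x^3 f$ or $\partial_x^4 f$ in $O^\ell_3$, $O^\ell_4$. In each such case I integrate by parts against $\psi_R$ --- once for $O^\ell_2$ and $O^\ell_3$, twice for a $\partial_x^4 f$ in $O^\ell_4$ --- lowering the top derivative order to $1$, resp.\ $2$. By (ii), the derivatives that fall on $\psi_R$ or on the coefficients become harmless bounded factors, while those falling on the remaining $f$-factors merely raise their order by one. After this reduction every term is of the form $\tilde c \prod_i \partial_x^{\tilde\kappa_i} f_{\tilde j_i}$ times a bounded weight, with $\tilde\kappa_i \le 1$ (for $O^\ell_2$) resp.\ $\tilde\kappa_i \le 2$ (for $O^\ell_3$, $O^\ell_4$); one then applies Hölder on $\Omega$ by sending the (at most two) factors carrying a derivative to $L^2(\Omega)$, the plain factors to $L^\infty(\Omega)$, and --- should a reduced term still carry three or four derivative-factors --- the surplus ones to $L^\infty(\Omega)$ via the Agmon inequality $\norm{\partial_x u}_{L^\infty(\Omega)} \lesssim \norm{u}_{H^1(\Omega)}^{1/2} \norm{u}_{H^2(\Omega)}^{1/2}$ from (i). A short bookkeeping then matches every term with the stated right-hand side, the half-integer powers from the Agmon inequality being absorbed using $\norm{f}_{H^1(\Omega)} \le \norm{f}_{H^2(\Omega)}$. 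The estimate for $\int O^\ell_2(f)\, g\, \psi_R$ is obtained the same way: $O^\ell_2(f) = c_0 f + c_1 \partial_x f + c_2 \partial_{xx} f$, the first two terms are bounded directly by $\norm{f}_{H^1(\Omega)}\norm{g}_{L^2(\Omega)}$, and the last is integrated by parts onto $g\psi_R$, all resulting terms being $\lesssim \norm{f}_{H^1(\Omega)}\norm{g}_{H^1(\Omega)}$.

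The only genuinely delicate point is the bookkeeping in (2)--(3): one must verify that after the integrations by parts every term indeed fits the Hölder scheme and that the powers of $\norm{f}_{H^1(\Omega)}$ and $\norm{f}_{H^2(\Omega)}$ it produces never exceed those in the target --- the inequality $\norm{f}_{H^1} \le \norm{f}_{H^2}$ is precisely what renders the half-integer powers innocuous. Everything else (the finite number of terms, the boundedness of the coefficients, the $R \ge 1$ scaling) is routine.
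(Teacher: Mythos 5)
Your proof is correct and follows essentially the same route as the paper, which simply refers to \cite{Cote_Ignat__stab_DW_LLG_DM} for the standard H\"older/one-dimensional Sobolev argument (with integration by parts against $\psi_R$ for the integral estimates) and only highlights the two localisation points you also exploit: the uniform embedding $H^1(\supp \psi_R) \hookrightarrow L^\infty(\supp \psi_R)$ on the unbounded interval and the bound \eqref{eq:est_phi_A} on the derivatives of $\psi_R$. One small wording slip: in the case $k=1$ it is not ``any one factor'' that may be placed in $L^2(\supp\psi_R)$ — the (at most one) factor carrying the derivative must be the one put in $L^2$, the underived factors going to $L^\infty$ — which is exactly the scheme you already use for $k\ge 2$.
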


    The proof of all these estimates is similar to that of \cite{Cote_Ignat__stab_DW_LLG_DM}, and we refer to it. We emphasize that all the integrals involved are indeed on the support of $\psi_R$, but also that for all $j \geq 1$, $H^j (\supp \psi_R) \hookrightarrow L^\infty (\supp \psi_R)$ with uniform constant since $\supp \psi_R = (-R, \infty)$ is an unbounded interval.

\subsection{Coercivity of a Schrödinger operator}

We also define the following operator for $\Gamma = \sqrt{1 - \gamma^2}$ which was already used in \cite{Cote_Ignat__stab_DW_LLG_DM} :
\begin{equation*} \index{Operator!$L_\Gamma$: main linearized operator}
    L_\Gamma v = - \partial^2_{xx} v + \Gamma^2 (\cos^2 \theta_* - \sin^2 \theta_*) v.
\end{equation*}
We recall the main properties of this operator.

\begin{lem}[{\cite[Lemma~4.10]{Cote_Ignat__stab_DW_LLG_DM}}] \label{lem:schr_op}
    $L_\Gamma$ is a self-adjoint operator on $L^2 (\mathbb{R})$ with dense domain $H^2 (\mathbb{R})$. It admits $0$ as a simple eigenvalue with eigenfunction $\sin \theta_*$, and its spectrum is $[\Gamma^2, + \infty)$. As a consequence, there exists $\lambda_0 > 0$ such that, for all $v \in H^1 (\mathbb{R})$, $(L_\Gamma v, v) \leq 2 \norm{v}_{H^1}^2$ and
    \begin{equation*}
        (L_\Gamma v, v) \geq 4 \lambda_0 \norm{v}_{H^1}^{2} - \frac{1}{\lambda_0} \Bigl( \int v \sin( \theta_* ) \diff x \Bigr)^2,
    \end{equation*}
    and for all $v \in H^2 (\mathbb{R})$,
    \begin{equation*}
        \norm{L_\Gamma v}_{L^2}^{2} \geq 4 \lambda_0 \norm{v}_{H^2}^{2} - \frac{1}{\lambda_0} \Bigl( \int v \sin( \theta_* ) \diff x \Bigr)^2.
    \end{equation*}
\end{lem}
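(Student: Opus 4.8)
The plan is to bring $L_\Gamma$ to an explicit, classical form. From $\theta_*(x) = 2\arctan(e^{-\Gamma x})$ — or directly from \eqref{eq:theta*} — one computes $\cos\theta_*(x) = \tanh(\Gamma x)$ and $\sin\theta_*(x) = 1/\cosh(\Gamma x)$, so that
\[
L_\Gamma = -\partial_{xx}^2 + \Gamma^2 - \frac{2\Gamma^2}{\cosh^2(\Gamma x)},
\]
i.e. $L_\Gamma$ is (a rescaling of) a Pöschl--Teller operator. Its potential is smooth and takes values in $[-\Gamma^2,\Gamma^2]$, hence $L_\Gamma$ is a bounded self-adjoint perturbation of $-\partial_{xx}^2$: this yields self-adjointness on $L^2(\m R)$ with dense domain $H^2(\m R)$, and also the upper bound, since $(L_\Gamma v,v) \le \|\partial_x v\|_{L^2}^2 + \Gamma^2\|v\|_{L^2}^2 \le 2\|v\|_{H^1}^2$.

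Next I would determine the spectrum. Since $V(x) - \Gamma^2 = -2\Gamma^2/\cosh^2(\Gamma x) \to 0$ exponentially as $|x| \to \infty$, multiplication by it is relatively compact with respect to $-\partial_{xx}^2 + \Gamma^2$, so Weyl's theorem gives $\sigma_{\mathrm{ess}}(L_\Gamma) = [\Gamma^2, +\infty)$. Differentiating \eqref{eq:theta*} shows $\partial_{xx}^2 \sin\theta_* = \Gamma^2(\cos^2\theta_* - \sin^2\theta_*)\sin\theta_*$, that is $L_\Gamma(\sin\theta_*) = 0$. Because $\sin\theta_* = 1/\cosh(\Gamma x) > 0$ everywhere and lies in $L^2$, the variational characterization of the ground state (a positive $L^2$ eigenfunction must realise the bottom of the spectrum and be simple) forces $0 = \inf\sigma(L_\Gamma)$ to be a simple eigenvalue. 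Hence $\sigma(L_\Gamma) = \{0\} \cup [\Gamma^2, +\infty)$, with spectral gap $\Gamma^2 > 0$; accordingly the coercivity constant $\lambda_0$ will degenerate as $|\gamma| \to 1$.

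It then remains to convert this into the quantitative statements. Write $\chi := \sin\theta_*/\|\sin\theta_*\|_{L^2}$ and split $v = \langle v,\chi\rangle\chi + v^\perp$ with $v^\perp \perp \chi$ in $L^2$; note $L_\Gamma v = L_\Gamma v^\perp$ since $L_\Gamma\chi = 0$. The spectral gap gives $(L_\Gamma v^\perp, v^\perp) \ge \Gamma^2\|v^\perp\|_{L^2}^2$, while $V \ge -\Gamma^2$ gives $(L_\Gamma v^\perp, v^\perp) \ge \|\partial_x v^\perp\|_{L^2}^2 - \Gamma^2\|v^\perp\|_{L^2}^2$; averaging the two cancels the bad term and produces $(L_\Gamma v^\perp, v^\perp) \ge c_\Gamma\|v^\perp\|_{H^1}^2$ for some $c_\Gamma = c(\Gamma) > 0$. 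Since $\chi$ is a fixed exponentially decaying $H^1$ function, $\|v^\perp\|_{H^1}^2 \ge c\|v\|_{H^1}^2 - C(\int v\sin\theta_*\diff x)^2$, which gives the first coercivity bound for a suitable $\lambda_0$. For the $H^2$ bound, elliptic regularity on $v^\perp$ (write $-\partial_{xx}^2 v^\perp = L_\Gamma v^\perp - V v^\perp$ and interpolate) gives $\|v^\perp\|_{H^2} \lesssim \|L_\Gamma v^\perp\|_{L^2} + \|v^\perp\|_{L^2}$, and the $L^2$ gap gives $\|v^\perp\|_{L^2} \le \Gamma^{-2}\|L_\Gamma v^\perp\|_{L^2}$, so $\|L_\Gamma v\|_{L^2} = \|L_\Gamma v^\perp\|_{L^2} \gtrsim \|v^\perp\|_{H^2} \gtrsim \|v\|_{H^2}$ again up to a multiple of $(\int v\sin\theta_*\diff x)^2$.

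The main obstacle is the structural spectral fact of the second paragraph: ruling out spectrum strictly below $0$ and proving that $0$ is simple (not merely that $0$ is an eigenvalue). Positivity of $\sin\theta_*$ from the explicit formula, combined with the Perron--Frobenius/Sturm oscillation principle for one-dimensional Schrödinger operators — or, equivalently, the well-known explicitly solvable (reflectionless) nature of the Pöschl--Teller potential — settles this cleanly. The passage from the $L^2$ spectral gap to the $H^1$ and $H^2$ coercive estimates with the single correction term $(\int v\sin\theta_*\diff x)^2$ is then routine bookkeeping of constants.
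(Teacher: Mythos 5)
This lemma is not proved in the paper at all: it is imported verbatim from \cite[Lemma~4.10]{Cote_Ignat__stab_DW_LLG_DM}, so there is no internal proof to compare with, and what you supply is a self-contained argument. Your argument is correct and is the standard one for this operator: the identities $\cos\theta_*=\tanh(\Gamma x)$, $\sin\theta_*=1/\cosh(\Gamma x)$ do put $L_\Gamma$ in P\"oschl--Teller form $-\partial^2_{xx}+\Gamma^2-2\Gamma^2/\cosh^2(\Gamma x)$; self-adjointness on $H^2$ and the upper bound follow from boundedness of the potential; Weyl's theorem gives $\sigma_{\mathrm{ess}}=[\Gamma^2,+\infty)$; $L_\Gamma(\sin\theta_*)=0$ follows from \eqref{eq:theta*}; and positivity of $\sin\theta_*$ forces $0$ to be the simple bottom of the spectrum, so that $L_\Gamma\ge\mu>0$ on $\{\sin\theta_*\}^{\perp}$. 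Two bookkeeping points deserve a word. First, the ``averaging'' as you literally state it (equal weights of the gap bound and of $(L_\Gamma v^\perp,v^\perp)\ge\norm{\partial_x v^\perp}_{L^2}^2-\Gamma^2\norm{v^\perp}_{L^2}^2$) cancels the $L^2$ part entirely and only yields $\tfrac12\norm{\partial_x v^\perp}_{L^2}^2$; you need an unequal convex combination (say weights $\tfrac13,\tfrac23$) to retain a positive multiple of $\norm{v^\perp}_{L^2}^2$ and hence the full $H^1$ coercivity --- a one-line fix. Second, in the $H^2$ estimate your projection error enters linearly, as $|\langle v,\sin\theta_*\rangle|\,\norm{\sin\theta_*}_{H^2}$, whereas the quoted statement carries the square $\bigl(\int v\sin\theta_*\diff x\bigr)^2$; note the quoted inequalities are themselves dimensionally inhomogeneous (e.g. $\norm{v}_{H^1}$ unsquared), so this mismatch lies in the citation rather than in your argument, but if you want the exact stated form you should either work with $\norm{L_\Gamma v}_{L^2}^2$ via the spectral theorem or absorb the linear term by Young's inequality. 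With these minor adjustments your proof is complete.
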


However, we will not be able to apply directly this lemma on the same functions as in \cite{Cote_Ignat__stab_DW_LLG_DM}. Indeed, the localisation function needs to be taken into account, as follows.

\begin{lem} \label{lem:loc_schr_op}
    There exists $C > 0$ such that, for any $f \in H^2 (\mathbb{R})$ and any $y_0$ with $\psi_0 \coloneqq \tau_{y_0} \psi_R$, 
    \begin{multline*}
        2 \norm{f}_{H^1 (\psi_0 (x) \diff x)}^2 + \frac{C}{R^2} \norm{f}_{L^2 (\supp{\partial_x \psi_0})}^2 
        \geq \biggl(L_\Gamma f, f\biggr)_{\psi_0} \\
        \geq 4 \lambda_0 \norm{f}_{H^1 (\psi_0 (x) \diff x)}^2 - \frac{1}{\lambda_0} \Bigl( \int \sqrt{\psi_0} f \sin( \theta_* ) \diff x \Bigr)^2 - \frac{C}{R^2} \norm{f}_{L^2 (\supp{\partial_x \psi_0})}^2,
    \end{multline*}
    and
    \begin{equation*}
        \norm{L_\Gamma f}_{L^2 (\psi_0 (x) \diff x)}^2 \geq 4 \lambda_0 \norm{f}_{H^2 (\psi_0 (x) \diff x)}^2  - \frac{1}{\lambda_0} \Bigl( \int \sqrt{\psi_0} f \sin( \theta_* ) \diff x \Bigr)^2 - \frac{C}{R^2} \norm{f}_{H^1 (\supp \partial_x \psi_0)}.
    \end{equation*}
\end{lem}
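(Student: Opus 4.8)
The plan is to deduce everything from the unlocalised Lemma~\ref{lem:schr_op} applied to the \emph{single} function $v \coloneqq \sqrt{\psi_0}\,f$, and to pay for the localisation with the commutator computations already present in the proofs of Lemma~\ref{lem:localised_norms} together with the elementary bounds \eqref{eq:est_phi_A}. Since $\sqrt{\psi}\in W^{3,\infty}(\mathbb{R})$, one has $v\in H^1$ (resp. $v\in H^2$) as soon as $f\in H^1$ (resp. $f\in H^2$), so Lemma~\ref{lem:schr_op} applies to $v$; moreover $\int v\sin\theta_*\,\diff x=\int\sqrt{\psi_0}\,f\sin\theta_*\,\diff x$, which is exactly the modulation term appearing in the statement. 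All the implied constants are uniform in $y_0$ and in $R\ge 1$ because $\psi_0=\tau_{y_0}\psi_R$ is merely a translate of $\psi_R$, the bounds \eqref{eq:est_phi_A} are translation invariant, all derivatives of $\sqrt{\psi_0}$ of order $\ge 1$ are $O(R^{-1})$ and supported in the fixed-length interval $\supp\partial_x\psi_0$, and Lemmas~\ref{lem:schr_op}--\ref{lem:localised_norms} themselves carry constants independent of these parameters.

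For the quadratic-form inequality, I would integrate by parts in $(L_\Gamma f,f)_{\psi_0}$ exactly as in the proof of Lemma~\ref{lem:localised_norms}. The potential part is insensitive to the substitution $f\mapsto v$, while the kinetic part produces the identity
\[
(L_\Gamma f,f)_{\psi_0}-\bigl(L_\Gamma v,v\bigr)=-\int |f|^2\bigl(\partial_x\sqrt{\psi_0}\bigr)^2\,\diff x ,
\]
whose right-hand side is nonpositive, supported in $\supp\partial_x\psi_0$, and bounded in absolute value by $C R^{-2}\norm{f}_{L^2(\supp\partial_x\psi_0)}^2$. Inserting this into the two bounds of Lemma~\ref{lem:schr_op} for $v$ — the upper bound $(L_\Gamma v,v)\le 2\norm{v}_{H^1}^2$ and the coercive lower bound — and then converting $\norm{v}_{H^1}^2$ back into $\norm{f}_{H^1(\psi_0(x)\,\diff x)}^2$ via Lemma~\ref{lem:localised_norms}, again at the cost of a $C R^{-2}\norm{f}_{L^2(\supp\partial_x\psi_0)}^2$ error, yields both sides of the first display after absorbing numerical constants.

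For the $L^2$-inequality on $L_\Gamma f$, I would write $\norm{L_\Gamma f}_{L^2(\psi_0(x)\,\diff x)}=\norm{\sqrt{\psi_0}\,L_\Gamma f}_{L^2}$ and use the commutator identity
\[
L_\Gamma v = \sqrt{\psi_0}\,L_\Gamma f - r ,\qquad r\coloneqq 2\,\partial_x\sqrt{\psi_0}\,\partial_x f + f\,\partial^2_{xx}\sqrt{\psi_0},
\]
so that $\norm{r}_{L^2}\lesssim R^{-1}\norm{f}_{H^1(\supp\partial_x\psi_0)}$ by \eqref{eq:est_phi_A} and $\norm{\sqrt{\psi_0}\,L_\Gamma f}_{L^2}\ge\norm{L_\Gamma v}_{L^2}-\norm{r}_{L^2}$. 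Squaring (the case $\norm{L_\Gamma v}_{L^2}\le\norm{r}_{L^2}$ being trivial since the target right-hand side is then $\lesssim 0$), controlling the cross term by Young's inequality $2ab\le\tfrac12 a^2+2b^2$, using the $H^2$ coercivity bound of Lemma~\ref{lem:schr_op} for $v$, and finally replacing $\norm{v}_{H^2}^2$ by $\norm{f}_{H^2(\psi_0(x)\,\diff x)}^2$ via Lemma~\ref{lem:localised_norms} gives the claim, at the price of a factor in front of $\norm{f}_{H^2(\psi_0(x)\,\diff x)}^2$ that is harmless since one is free to shrink $\lambda_0$.

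There is no genuine obstacle here: the proof is entirely a matter of carefully bookkeeping the $O(R^{-2})$ remainders and checking their uniformity in $y_0$ and $R$. The only point requiring a little attention is precisely this uniformity, which, as noted above, is immediate from the translation invariance of the estimates \eqref{eq:est_phi_A} and the parameter-independent constants of Lemmas~\ref{lem:schr_op} and~\ref{lem:localised_norms}.
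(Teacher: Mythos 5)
Your proposal is correct and follows essentially the same route as the paper: apply the unlocalised Lemma \ref{lem:schr_op} to $v=\sqrt{\psi_0}\,f$, control the commutator between $L_\Gamma$ and multiplication by $\sqrt{\psi_0}$ via \eqref{eq:est_phi_A}, and convert $\norm{v}$ back into localised norms with Lemma \ref{lem:localised_norms}. The only differences are cosmetic: for the $L^2$ estimate you use a triangle-inequality/Young argument instead of expanding the square and integrating by parts (costing only a harmless constant in front of $4\lambda_0$, fixable by relabelling $\lambda_0$), and the sign of your quadratic-form commutator error $-\int |f|^2(\partial_x\sqrt{\psi_0})^2\,\diff x$ (which is in fact the correct one, the paper's display \eqref{eq:localised_L} carrying a sign slip) is immaterial since both arguments only use its absolute value.
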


\begin{proof}
    First, remark that we constructed $\psi_R$ so that $\sqrt{\psi_0} f \in H^2$ as soon as $f \in H^2$. Then, we also have
    \begin{equation*}
        (L_\Gamma f, f)_{\psi_0} = \biggl(\sqrt{\psi_0} L_\Gamma f, \sqrt{\psi_0} f \biggr).
    \end{equation*}
    From the definition of $L_\Gamma$, there holds
    \begin{equation} \label{eq:localised_L}
        L_\Gamma (\sqrt{\psi_0} f) - \sqrt{\psi_0} L_\Gamma f = - \partial^2_{xx} \Bigl(\sqrt{\psi_0} f\Bigr) + \sqrt{\psi_0} \partial^2_{xx} f
            = - \partial^2_{xx} \sqrt{\psi_0} f - 2 \partial_x \sqrt{\psi_0} \partial_x f.
    \end{equation}
    Thus,
    \begin{align*}
        (L_\Gamma f, f)_{\psi_0} - \biggl(L_\Gamma \Bigl( \sqrt{\psi_0} f \Bigr), \sqrt{\psi_0} f \biggr) &= \biggl( \partial^2_{xx} \sqrt{\psi_0} f, \sqrt{\psi_0} f \biggr) + 2 \biggl( \partial_{x} \sqrt{\psi_0}  \partial_x f, \sqrt{\psi_0} f \biggr) \\
            &= \biggl( \sqrt{\psi_0} \partial^2_{xx} \sqrt{\psi_0} f, f \biggr) + 2 \biggl( \sqrt{\psi_0} \partial_{x} \sqrt{\psi_0}  \partial_x f, f \biggr).
    \end{align*}
    Therefore, after an integration by parts in the last term,
    \begin{align*}
        (L_\Gamma f, f)_{\psi_0} - \biggl(L_\Gamma \Bigl( \sqrt{\psi_0} f \Bigr), \sqrt{\psi_0} f \biggr) &= \biggl( \sqrt{\psi_0} \partial^2_{xx} \sqrt{\psi_0} f, f \biggr) - \biggl( \partial_x \Bigl( \sqrt{\psi_0} \partial_{x} \sqrt{\psi_0} \Bigr) f, f \biggr) \\
            &= - \biggl( \Bigl( \partial_{x} \sqrt{\psi_0} \Bigr)^2 f, f \biggr).
    \end{align*}
    Therefore, we get
    \begin{equation*}
        \abs{\biggl(L_\Gamma f, f\biggr)_{\psi_0} - \biggl(L_\Gamma \Bigl(\sqrt{\psi_0} f\Bigr), \sqrt{\psi_0} f\biggr)} \leq \frac{C}{R^2} \norm{f}_{L^2 (\supp \partial_x \psi_0)}^2.
    \end{equation*}
    The conclusion follows by applying Lemma \ref{lem:schr_op} to $\biggl(L_\Gamma \Bigl(\sqrt{\psi_0} f\Bigr), \sqrt{\psi_0} f\biggr)$ and with Lemma \ref{lem:localised_norms}.
    As for the second estimate, from \eqref{eq:localised_L}, we also get
    \begin{equation*}
        \Bigl( L_\Gamma \Bigl(\sqrt{\psi_0} f \Bigr) \Bigr)^2 = \Bigl( L_\Gamma f \Bigr)^2 \psi_0 - 2 \sqrt{\psi_0} L_\Gamma f \Bigl( \partial^2_{xx} \sqrt{\psi_0} f + 2 \partial_x \sqrt{\psi_0} \partial_x f \Bigr) + \Bigl( \partial^2_{xx} \sqrt{\psi_0} f + 2 \partial_x \sqrt{\psi_0} \partial_x f \Bigr)^2.
    \end{equation*}
    For the second term, expanding $L_\Gamma f$, we obtain by integrating the following terms
\begin{gather}
    \begin{aligned}
      \MoveEqLeft  \int \sqrt{\psi_0} \partial^2_{xx} f \partial^2_{xx} \sqrt{\psi_0} f \diff x = \int f \partial^2_{xx} f \Bigl(\frac{1}{2} \partial^2_{xx} \psi_0 - \Bigl(\partial_x \sqrt{\psi_0}\Bigr)^2\Bigr) \diff x  \\
           & =  - \int (\partial_{x} f)^2 \Bigl(\frac{1}{2} \partial^2_{xx} \psi_0 - \Bigl(\partial_x \sqrt{\psi_0}\Bigr)^2\Bigr) \diff x - \int f \partial_x f \partial_x \Bigl(\frac{1}{2} \partial^2_{xx} \psi_0 - \Bigl(\partial_x \sqrt{\psi_0}\Bigr)^2\Bigr) \diff x,
            \end{aligned} \label{eq:term1} \\
        \int \sqrt{\psi_0} \partial^2_{xx} f \partial_x \sqrt{\psi_0} \partial_x f \diff x = \frac{1}{4} \int \partial_x \psi_0 \partial_x (\partial_x f)^2 \diff x = - \frac{1}{4} \int \partial^2_{xx} \psi_0 (\partial_x f)^2 \diff x, \label{eq:term2}
    \end{gather}
    and also
    \begin{equation*}
        \int \sqrt{\psi_0} \Gamma^2 (\cos^2 \theta_* - \sin^2 \theta_*) \partial^2_{xx} \sqrt{\psi_0} f^2 \diff x
        \qquad \text{and} \qquad
        \frac{1}{2} \int \Gamma^2 (\cos^2 \theta_* - \sin^2 \theta_*) \partial_x \psi_0 f \partial_x f \diff x.
    \end{equation*}
    From straightforward estimates thanks to \eqref{eq:est_phi_A}, we get
    \begin{equation*}
        \abs{\norm{L_\Gamma f}_{L^2 (\psi_0 (x) \diff x)}^2 - \norm{L_\Gamma \Bigl( \sqrt{\psi_0} f \Bigr)}_{L^2}^2} \leq \frac{C}{R} \norm{f}_{H^1 (\supp \partial_x \psi_0)}^2.
    \end{equation*}
    The estimate then comes by applying Lemma \ref{lem:schr_op} to $\norm{L_\Gamma \Bigl( \sqrt{\psi_0} f \Bigr)}_{L^2}^2$ and Lemma \ref{lem:localised_norms} again.
\end{proof}

\subsection{Expansion in the associated basis}

The computations made in \cite{Cote_Ignat__stab_DW_LLG_DM} show that the following frame is better adapted to a $\mathbb{S}^2$-valued magnetisation $m$ close to a domain wall $w_*^\sigma$ for some $\sigma = (\sigma_1, \sigma_2) \in \{ \pm 1\}^2$. Define
\begin{equation*} \index{Functions!$n_*$, $p_*$: coordinates of the adapted frame}
    n_*^\sigma \coloneqq - \frac{1}{\sin \theta_*} w_*^\sigma \wedge (e_1 \wedge w_*^\sigma), \qquad
    p_*^\sigma \coloneqq w_*^\sigma \wedge n_*.
\end{equation*}
$(w_* (x), n_* (x), p_* (x))$ is thus an orthonormal basis in $\mathbb{R}^3$ for all $x \in \mathbb{R}$. 

One important observation, which motivates the introduction of this basis, is the following. Let $m = w + \eta \in \m S^2$ with $\eta$ small: if one decomposes 
\[
\index{Functions!$\mu,\nu,\rho$: coordinates of $\eta$ in the adapted frame}
\eta = \mu w_* + \nu n_* + \rho p_*, \]
then $\mu$ is quadratic in $\eta$, whose norm is thus equivalent to that of $\nu$ and $\rho$. This is a pointwise in $x$, and is can be globalized or localized.

The precise statement is as follows.

\begin{lem} \label{lem:expand_eta}
    There exists $\delta_3 > 0$ and $C_2 > 0$ such that the following holds. Let $w_* \coloneqq w_*^\sigma$ for some $\sigma = (\sigma_1, \sigma_2) \in \{ \pm 1\}^2$ be a domain wall.
    Let $m = w_* + \eta : \mathbb{R} \rightarrow \mathbb{S}^2$ and $x_0 > 0$ be such that
    \begin{equation*}
        \norm{\eta}_{H^1 ((- x_0, \infty))} < \delta_3.
    \end{equation*}
    We decompose $\eta$ in the $(w_*, n_*, p_*)$ basis pointwise in $x$:
    \begin{equation*} 
        \eta = \mu w_* + \nu n_* + \rho p_* \quad
        \text{where} \quad
        \mu \coloneqq \eta \cdot w_*, \quad
        \nu = \eta \cdot n_*, \quad
        \rho = \eta \cdot p_*.
    \end{equation*}
    Then $\mu, \nu, \rho \in H^1 ((- x_0, \infty))$, with
    \begin{equation} \label{eq:ineg1}
        \norm{\mu}_{H^1 ((- x_0, \infty))} \leq C_2 \norm{\eta}_{H^1 ((- x_0, \infty))}^2, \qquad
        \frac{1}{C_2} \norm{\eta}_{H^1 ((- x_0, \infty))} \leq \norm{(\nu, \rho)}_{H^1 ((- x_0, \infty))} \leq C_2 \norm{\eta}_{H^1 ((- x_0, \infty))}.
    \end{equation}
    Moreover, as soon as $x_0 \geq R$, there also holds
    \begin{equation} \label{eq:equiv_eta_nu}
        \norm{\mu}_{H^1 (\psi_R \diff x)} \leq C_2 \norm{\eta}_{H^1 (\psi_R \diff x)} \norm{\eta}_{H^1 (\supp \psi_R)}, \qquad
        \frac{1}{C_2} \norm{\eta}_{H^1 (\psi_R \diff x)} \leq \norm{(\nu, \rho)}_{H^1 (\psi_R \diff x)} \leq C_2 \norm{\eta}_{H^1 (\psi_R \diff x)}.
    \end{equation}
    In particular, $\mu = \frac{1}{2} \abs{\eta}^2 = O_0^2 (\eta)$.
    If furthermore $\eta \in H^2$, then $\mu, \nu, \rho \in H^2$ and
    \begin{equation*} \label{eq:equiv_eta_nu2}
        \norm{(\nu, \rho)}_{H^2 ((- x_0, \infty))} \leq C_2 \norm{\eta}_{H^2 ((- x_0, \infty))}
    \end{equation*}
    Last, there also hold
    \begin{equation} \label{eq:rel_orth}
        \rho \sin \theta_* = \eta \cdot (e_1 \wedge w_*), \qquad
        \sigma_1 \sin \theta_* \nu = \frac{1}{\Gamma^2} \eta \cdot \partial_x w_* - \gamma \eta \cdot (e_1 \wedge w_*)
    \end{equation}
\end{lem}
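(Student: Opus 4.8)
The whole statement reduces to one pointwise algebraic fact, and I would begin there. Since $m = w_*^\sigma + \eta$ is $\mathbb{S}^2$-valued and $|w_*^\sigma| \equiv 1$, expanding $1 = |m|^2 = 1 + 2\,\eta\cdot w_*^\sigma + |\eta|^2$ gives $2\mu + |\eta|^2 = 0$ pointwise in $x$; in particular $\mu$ is a homogeneous degree-$2$ polynomial in the components of $\eta$ with constant coefficients and no derivatives, i.e.\ $\mu = O_0^2(\eta)$, with $|\mu| = \tfrac12|\eta|^2$. Since $(w_*^\sigma, n_*^\sigma, p_*^\sigma)$ is an orthonormal frame, $\nu = \eta\cdot n_*^\sigma$, $\rho = \eta\cdot p_*^\sigma$, and $\mu^2 + \nu^2 + \rho^2 = |\eta|^2$. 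A preliminary observation I would record is that $n_*^\sigma = -\tfrac{1}{\sin\theta_*}\big(e_1 - (w_*^\sigma\cdot e_1)\,w_*^\sigma\big)$ and $p_*^\sigma = w_*^\sigma\wedge n_*^\sigma$ have $\mathscr{C}_b^\infty(\mathbb{R})$ components --- the apparent $\tfrac{1}{\sin\theta_*}$ singularity is removable, since $e_1 - (w_*^\sigma\cdot e_1)w_*^\sigma$ carries a factor $\sin\theta_*$ --- and that $w_*^\sigma$ has bounded smooth derivatives of every order (they are exponentially localised).

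Everything else is soft bilinear Sobolev algebra; the only analytic input is the embedding $H^1(I)\hookrightarrow L^\infty(I)$ with a constant uniform over half-lines $I$ (here $I = (-x_0,\infty)$, and later $I = \supp\psi_R = (-R,\infty)$). From $\partial_x\mu = -\eta\cdot\partial_x\eta$ (and $\partial^2_{xx}\mu = -|\partial_x\eta|^2 - \eta\cdot\partial^2_{xx}\eta$) I obtain $\|\mu\|_{H^1(I)}\lesssim \|\eta\|_{H^1(I)}^2 \le \delta_3\,\|\eta\|_{H^1(I)}$, and $\|\mu\|_{H^2(I)}\lesssim \|\eta\|_{H^2(I)}^2$ when $\eta\in H^2$; this is the first inequality of \eqref{eq:ineg1}. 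The Leibniz rule against the bounded smooth frame fields gives $\|(\nu,\rho)\|_{H^k(I)}\lesssim \|\eta\|_{H^k(I)}$ for $k = 1,2$, which is the rightmost inequality of \eqref{eq:ineg1} together with the $H^2$ bound. For the remaining (lower) bound I would reassemble $\eta = \mu w_*^\sigma + \nu n_*^\sigma + \rho p_*^\sigma$ and estimate $\|\eta\|_{H^1(I)} \le C\big(\|\mu\|_{H^1(I)} + \|(\nu,\rho)\|_{H^1(I)}\big) \le C\delta_3\,\|\eta\|_{H^1(I)} + C\,\|(\nu,\rho)\|_{H^1(I)}$, then shrink $\delta_3$ so that $C\delta_3\le\tfrac12$ and absorb the first term on the left.

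For the localised estimates \eqref{eq:equiv_eta_nu} I would rerun the same computations against the measure $\psi_R\,dx$: the pointwise identities for $\mu$, $\partial_x\mu$ and the Leibniz rule are unaffected by the weight, one only replaces $\|\cdot\|_{L^\infty(I)}$ by $\|\cdot\|_{L^\infty(\supp\psi_R)}$, and the hypothesis $x_0\ge R$ guarantees $\supp\psi_R\subset I$, so that $\|\eta\|_{H^1(\supp\psi_R)}\le\|\eta\|_{H^1(I)}<\delta_3$ stays small for the absorption step; this produces $\|\mu\|_{H^1(\psi_R\,dx)}\lesssim \|\eta\|_{H^1(\supp\psi_R)}\,\|\eta\|_{H^1(\psi_R\,dx)}$ and the two-sided equivalence between $\|\eta\|_{H^1(\psi_R\,dx)}$ and $\|(\nu,\rho)\|_{H^1(\psi_R\,dx)}$. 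Finally, the relations \eqref{eq:rel_orth} are pure linear algebra on the frame: the identity $a\wedge(a\wedge b) = (a\cdot b)\,a - |a|^2 b$ with $a = w_*^\sigma$, $b = e_1\wedge w_*^\sigma$, together with $w_*^\sigma\cdot(e_1\wedge w_*^\sigma) = 0$, gives $w_*^\sigma\wedge\big(w_*^\sigma\wedge(e_1\wedge w_*^\sigma)\big) = -(e_1\wedge w_*^\sigma)$, hence $p_*^\sigma = \tfrac{1}{\sin\theta_*}(e_1\wedge w_*^\sigma)$ and therefore $\rho\sin\theta_* = \eta\cdot(e_1\wedge w_*^\sigma)$; and the first-order ODE \eqref{ode_w*} lets me solve $w_*^\sigma\wedge(e_1\wedge w_*^\sigma)$ --- hence $n_*^\sigma$ --- as an explicit linear combination of $\partial_x w_*^\sigma$ and $e_1\wedge w_*^\sigma$, so that $\nu = \eta\cdot n_*^\sigma$ becomes the corresponding combination of $\eta\cdot\partial_x w_*^\sigma$ and $\eta\cdot(e_1\wedge w_*^\sigma)$, and multiplying by $\sigma_1\sin\theta_*$ yields the second identity.

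I do not expect a serious obstacle: once the $\mathbb{S}^2$-geometry identity $2\mu + |\eta|^2 = 0$ is in hand, the rest is bilinear $H^s$ estimates plus an absorption argument. The two points that need genuine care are verifying that the $\tfrac{1}{\sin\theta_*}$ factor in $n_*^\sigma$ really cancels, so that $n_*^\sigma, p_*^\sigma \in \mathscr{C}_b^\infty$ (and not merely smooth on $\mathbb{R}$), and keeping the absorption constants coherent in both the global and --- under the extra constraint $x_0\ge R$ --- the $\psi_R$-localised settings.
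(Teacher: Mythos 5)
Your proposal is correct and follows essentially the same route as the paper: the pointwise identity $2\mu+|\eta|^2=0$ from $|m|=1$, bilinear Sobolev estimates against the bounded smooth frame (the paper packages these as the $O_k^\ell$ bounds of Lemma \ref{lem:tech_o_lem}, with the same uniform $H^1\hookrightarrow L^\infty$ input on half-lines), reassembly plus absorption for the lower bound in both the global and $\psi_R$-weighted settings, and the frame identities $e_1\wedge w_*=\sin\theta_*\,p_*$ together with \eqref{ode_w*} for \eqref{eq:rel_orth}. Your sign $\mu=-\tfrac12|\eta|^2$ is in fact the consistent one (the $+$ in the statement is a typo, as the later use $\mu^\pm=-\tfrac12((\nu^\pm)^2+(\rho^\pm)^2)-\tfrac18|\eta^\pm|^4$ confirms), and your derivation of the $\nu$-relation as a fixed linear combination of $\eta\cdot\partial_x w_*$ and $\eta\cdot(e_1\wedge w_*)$ is the intended argument.
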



\begin{proof}
    The proof is similar to the first step of the proof of \cite[Proposition~4.16]{Cote_Ignat__stab_DW_LLG_DM}.
    First, the relations between $\mu$, $\nu$, $\rho$ and $\eta$
    along with Lemma \ref{lem:tech_o_lem} give
    \begin{equation*}
        \norm{\mu}_{H^k ((- x_0, \infty))} + \norm{\nu}_{H^k ((- x_0, \infty))} + \norm{\rho}_{H^k ((- x_0, \infty))} \lesssim \norm{\eta}_{H^k ((- x_0, \infty))}.
    \end{equation*}
    On the other side, $\eta = \mu w_* + \nu n_* + \rho p_*$ and therefore
    \begin{equation*}
        \norm{\eta}_{H^k ((- x_0, \infty))} \lesssim \norm{\mu}_{H^k ((- x_0, \infty))} + \norm{\nu}_{H^k ((- x_0, \infty))} + \norm{\rho}_{H^k ((- x_0, \infty))}.
    \end{equation*}
    $\mu = \frac{1}{2} \abs{\eta}^2$ comes from the expansion of $\abs{w_* + \eta}^2 = 1$, which gives the first inequality of \eqref{eq:ineg1} with Lemma \ref{lem:tech_o_lem}. As soon as $\norm{\eta}_{H^1 ((- x_0, \infty))}$ is small enough the second inequality is then straightforward.
    In a similar way, 
    we also get $\partial_x \mu = \eta \cdot \partial_x \eta$, and the first inequality of \eqref{eq:equiv_eta_nu} is then easily proved. If $\supp \psi_R \subset (- x_0, \infty)$, then $\norm{\eta}_{H^1 (\supp \psi_R)} < \delta_3$ from the assumption and the second inequality of \eqref{eq:equiv_eta_nu} is proved similarly.
    
    Eventually, the last equality comes from the formulas (see \eqref{ode_w*} for the first one)
    \begin{equation*}
        \partial_x w_* = \Gamma^2 \sin \theta_* ( \sigma_1 n_* + \gamma p_*), \qquad
        e_1 \wedge w_* = \sin \theta_* p_*. \qedhere
    \end{equation*}
\end{proof}

With this result, the magnetization can be decomposed in a similar way when it is close to a 2-domain wall structure (with the two domain walls far away enough).

\begin{lem} \label{lem:est_eta_eps}
    There exists $\delta_2' > 0$ and $L_0 > R$ such that the following holds.
    Let $L > L_0$, $g^{\pm} = (y^{\pm}, \phi^{\pm})$ such that $g^+ \in G_{> L}$ and $g^- \in G_{< - L}$. Let $m = w^+ + w^- + e_1 +  {\color{black} \varepsilon} \in \mathcal{H}^1$ for some $w^+ = g^+ . w^{(1, \sigma_2)}_*$ and $w^- = g^- . w^{(- 1, \sigma_2')}_*$, with $\varepsilon \in H^1$.
    Define also
    \begin{equation*} \index{Functions!$\eta^\pm$: error related to one domain wall $w_*^\pm$}
        \eta^\pm \coloneqq (- g^\pm).m - w^\pm_* = (- g^\pm).(w^\mp + e_1 + \varepsilon).
    \end{equation*}
    %
    $\eta^\pm$ can be decomposed in the $(w_*^\pm, n^\pm, p^\pm)$ basis associated to $w_*^\pm$ :
    \begin{equation*} \index{Functions!$\mu^\pm, \nu^\pm, \rho^\pm$: coordinates of $\eta^\pm$ in the adapted frame}
        \eta^\pm = \mu^\pm w_*^\pm + \nu^\pm n_*^\pm + \rho^\pm p_*^\pm.
    \end{equation*}
    Finally, define $\psi_R^\pm (x) = \psi_R (\pm x - y^\pm)$.
    \index{Functions!$\psi_R^\pm$: gauge translated localization functions}
    Then, for any $k \in \{ 0, 1, 2 \}$, if $m \in \mathcal{H}^k$ and $\norm{\varepsilon}_{H^1} < \delta_2'$,
    there hold
    \begin{equation} \label{eq:ineg2}
        \norm{\eta^\pm}_{H^k (\psi_R^\pm \diff x)} \leq \norm{\eta^\pm}_{H^k (\supp \psi_R^\pm)} \leq \norm{\varepsilon}_{H^k} + C e^{\Gamma (R \pm y^\mp)},
    \end{equation}
    %
    %
    \begin{equation} \label{eq:equiv_eta_eps}
        \norm{\varepsilon}_{H^k} \leq \norm{\eta^+}_{H^k (\psi_R^+ (x) \diff x)} + \norm{\eta^-}_{H^k (\psi_R^- (x) \diff x)} + C \Bigl( e^{\Gamma (R - y^+)} + e^{\Gamma (R + y^-)} \Bigr).
    \end{equation}
    %
    %
    %
    Moreover, there also holds
    \begin{equation} \label{eq:equiv_eta_nu_rho}
        \frac{1}{C} \norm{(\nu^\pm, \rho^\pm)}_{H^k (\supp \psi_R^\pm)} \leq \norm{\eta^\pm}_{H^k (\supp \psi_R^\pm)} \leq C \norm{(\nu^\pm, \rho^\pm)}_{H^k (\supp \psi_R^\pm)},
    \end{equation}
    %
\end{lem}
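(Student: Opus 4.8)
The statement of Lemma \ref{lem:est_eta_eps} collects three kinds of estimates, and each is a localized avatar of something already established, so the strategy is essentially to reduce each to the tools just introduced. The preliminary observation is that $\supp \psi_R^\pm$ is a half-line: by construction $\psi_R^+(x) = \psi_R(x-y^+)$ is supported on $(y^+-R,\infty)$, and since $g^+ \in G_{>L}$ with $L > L_0 > R$, this half-line is contained in the region where the ``$+$'' wall $w^+$ is close to $\sigma_1 e_1 = e_1$ and, crucially, where the ``$-$'' wall $w^-$ (centered far to the left, at $y^- < -L$) is exponentially close to $-e_1$. Symmetrically for $\psi_R^-$. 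This is exactly the setting in which Corollary \ref{cor:est_w_orthogonal} and Lemma \ref{lem:est_g_w} quantify the decay of the ``far'' wall.

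\textbf{Step 1: the bound \eqref{eq:ineg2}.} The first inequality is trivial since $0 \le \psi_R^\pm \le 1$. For the second, recall $\eta^+ = (-g^+).(w^- + e_1 + \varepsilon)$; since translations and rotations are isometries on $H^k$, it suffices to bound $w^- + e_1 + \varepsilon - $ (something that vanishes) on $\supp \psi_R^+$, i.e. to bound $\|w^- + e_1\|_{H^k(\supp \psi_R^+)} + \|\varepsilon\|_{H^k}$. On $\supp \psi_R^+ = (y^+-R,\infty)$ one has $x \ge y^+ - R \ge y^- $, so $w^-$ is in its ``right tail'' regime; applying the pointwise estimates of Lemma \ref{lem:est_w_infty} (transported by $g^-$ via Lemma \ref{lem:est_g_w}) and integrating over $(y^+-R,\infty)$ gives $\|w^- + e_1\|_{H^k(\supp \psi_R^+)} \lesssim e^{-\Gamma(y^+ - R - y^-)} \le e^{\Gamma(R + y^-)}$ (using $y^+ \ge 0$, or more precisely $y^+ > L > 0$). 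Wait — the claimed term is $e^{\Gamma(R \pm y^\mp)}$, i.e. $e^{\Gamma(R + y^-)}$ for the ``$+$'' case, which matches once one drops the favorable $e^{-\Gamma y^+}$ factor. The smallness $\|\varepsilon\|_{H^1} < \delta_2'$ is needed only to ensure $\eta^\pm$ is small enough in $H^1(\supp\psi_R^\pm)$ to later invoke Lemma \ref{lem:expand_eta} with $\delta_3$.

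\textbf{Step 2: the reverse bound \eqref{eq:equiv_eta_eps}.} Here one writes $\varepsilon = m - w^+ - w^- - e_1$ and splits $\mathbb{R}$ using a partition subordinate to $\psi_R^+$ and $\psi_R^-$ (since $1-\psi(x) = \psi(-x)$, we have $\psi_R^+ + \psi_R^- $ comparable to $1$ once the two supports are far enough apart — this is where $L > L_0$ with $L_0$ chosen relative to $R$ enters). On $\supp \psi_R^+$, $g^+.\eta^+ = m - w^+ - e_1 - (\text{stuff})$... more carefully, $\varepsilon = g^+.\eta^+ - (w^- + e_1) + (w^- + e_1)$ is awkward; instead use $\varepsilon = g^+.\eta^+$ on the region where $w^- + e_1 \approx 0$, up to the exponential error $e^{\Gamma(R+y^-)}$ from Step 1, and symmetrically $\varepsilon \approx g^-.\eta^-$ where $w^+ - e_1 \approx 0$, up to $e^{\Gamma(R-y^+)}$; in the overlap one has both, and outside both supports (if any) $\varepsilon$ is controlled by either. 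Summing the localized pieces, using $\|\sqrt{\psi_0}f\|_{H^k} \approx \|f\|_{H^k(\psi_0 dx)}$ from Lemma \ref{lem:localised_norms} to pass between weighted-$L^2$ and genuine Sobolev norms (the $C/R^2$ commutator errors here are absorbed into the constant since $R \ge 1$ is fixed), yields \eqref{eq:equiv_eta_eps}.

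\textbf{Step 3: the equivalence \eqref{eq:equiv_eta_nu_rho}.} This is a direct application of Lemma \ref{lem:expand_eta} to $\eta^\pm$ on the half-line $\supp \psi_R^\pm = (\mp(y^\pm \mp R), \infty)$ (after the reflection $x \mapsto -x$ in the ``$-$'' case to bring it to the form $(-x_0,\infty)$ required there): the hypothesis $\|\eta^\pm\|_{H^1(\supp\psi_R^\pm)} < \delta_3$ is guaranteed by \eqref{eq:ineg2} together with $\|\varepsilon\|_{H^1} < \delta_2'$ and $L > L_0$ making the exponential term small, and $x_0 \ge R$ holds since $L_0 > R$. The estimate \eqref{eq:ineg1} of Lemma \ref{lem:expand_eta} is precisely \eqref{eq:equiv_eta_nu_rho} (and also gives $\mu^\pm = \frac12|\eta^\pm|^2 = O_0^2(\eta^\pm)$, which is recorded for later use).

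\textbf{Main obstacle.} The delicate point is the reverse estimate in Step 2: one must carefully track that the two localization windows $\supp\psi_R^\pm$ genuinely cover $\mathbb{R}$ with bounded overlap, that the ``transfer'' from $\varepsilon$ to $g^\pm.\eta^\pm$ really costs only the stated exponential $e^{\Gamma(R\mp y^\pm)}$ (and not, say, an uncontrolled term from the other wall evaluated on the wrong side), and that reflections and rotations do not spoil the weighted norms. All of this is bookkeeping, but it is the kind of bookkeeping where a sign or a shift in the exponent is easy to get wrong; the choice of $L_0$ as a function of $R$ (both eventually fixed) is what makes every error term harmless.
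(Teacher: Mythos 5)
Your Steps 1 and 3 follow the paper's route (triangle inequality plus the exponential tail of the far wall for \eqref{eq:ineg2}; Lemma \ref{lem:expand_eta} applied on a half-line for \eqref{eq:equiv_eta_nu_rho}), but Step 2 --- the proof of \eqref{eq:equiv_eta_eps}, which is the substantive part of the lemma --- has a genuine gap, rooted in a misreading of the weights $\psi_R^\pm$. You take $\psi_R^+(x)=\psi_R(x-y^+)$ literally and treat it as a window around the $+$ wall; but $\eta^+$ lives in the frame where the $+$ wall sits at the origin, and the weight actually used (and forced by \eqref{eq:equiv_eta_eps} itself) is $\psi_R^\pm(x)=\psi_R(\pm(x+y^\pm))$, i.e.\ the fixed cutoff $\psi_R(\pm x)$ of the original frame transported to the $\pm$ wall frame, with $\supp\psi_R^+=[-R-y^+,\infty)$ and $\supp\psi_R^-=(-\infty,R-y^-]$ (the formula in the statement has a sign typo; this is the convention of Section \ref{sec:en_coer} and of the paper's proof). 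With your reading, the two weighted norms on the right of \eqref{eq:equiv_eta_eps} only see, in the original variable, the regions $x\gtrsim 2y^+-R$ and $x\lesssim R$; since $y^+>L>R$ there is always a large uncovered middle region, so an $\varepsilon$ concentrated there would make \eqref{eq:equiv_eta_eps} fail --- and this is exactly the point your argument waves away (``outside both supports (if any) $\varepsilon$ is controlled by either''), together with the claim that $\psi_R^++\psi_R^-$ is comparable to $1$ ``once the two supports are far enough apart'', which is backwards: far-apart supports create the gap, they do not close it.

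The correct mechanism, which is the paper's short proof, needs no covering discussion and no commutator estimates from Lemma \ref{lem:localised_norms}: since $\psi_R(x)+\psi_R(-x)\equiv 1$, one has exactly $\norm{\varepsilon}_{H^k}^2=\norm{\varepsilon}_{H^k(\psi_R(x)\diff x)}^2+\norm{\varepsilon}_{H^k(\psi_R(-x)\diff x)}^2$, independently of $y^\pm$; on the right half one writes $\varepsilon=g^+.\eta^+-(w^-+e_1)$, uses $\norm{g^+.\eta^+}_{H^k(\psi_R(x)\diff x)}=\norm{\eta^+}_{H^k(\psi_R^+\diff x)}$ (this identity is what pins down the definition of $\psi_R^+$), and bounds $\norm{w^-+e_1}_{H^k(\supp\psi_R)}\le Ce^{\Gamma(R+y^-)}$ via Lemma \ref{lem:est_w_infty}; symmetrically on the left half. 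The same bookkeeping slip appears in your Step 1: undoing the gauge sends the norm over a set $S$ to the norm over $S+y^+$, so with the correct window one lands exactly on $(-R,\infty)$ and obtains the stated error $Ce^{\Gamma(R+y^-)}$; the ``extra favorable factor $e^{-\Gamma y^+}$'' you found and then discarded is a symptom that the window was wrong, not a stronger result.
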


\begin{rem}
    This lemma shows that, as soon as $\norm{\varepsilon}_{H^1}$ is small enough, estimating $\norm{\varepsilon}_{H^k}$ is equivalent to estimating both $\norm{(\nu^\pm, \rho^\pm)}_{H^k (\supp \psi_R^\pm)}$ and $e^{\Gamma (R \pm y^\pm)}$. This property will be intensively used in the following.
\end{rem}

\begin{proof}
    The first inequality of \eqref{eq:ineg2} comes from the fact that $0 \leq \psi_R^\pm \leq 1$. The second one can be easily deduced from the following computation :
    \begin{align*}
        \norm{\eta^+}_{H^k (\supp \psi_R^+)} &= \norm{(- g^+).(w^- + e_1 + \varepsilon)}_{H^k (\supp \psi_R^+)} = \norm{w^- + e_1 + \varepsilon}_{H^k (\supp \psi_R)} \\
            &\leq \norm{w^- + e_1}_{H^k (\supp \psi_R)} + \norm{\varepsilon}_{H^k (\supp \psi_R)} \\
            &\leq \norm{g^-.(w^-_* + e_1)}_{H^k ((-R, \infty))} + \norm{\varepsilon}_{H^k} \\
            &\leq \norm{w^-_* + e_1}_{H^k ((- R - y^-, \infty))} + \norm{\varepsilon}_{H^k},
    \end{align*}
    and the conclusion with Lemma \ref{lem:est_w_infty}. The computations for $\eta^-$ are similar.
    We also have
    \begin{equation*}
        \norm{\varepsilon}_{H^k}^2 = \norm{\varepsilon}_{H^k (\psi_R (x) \diff x)}^2 + \norm{\varepsilon}_{H^k (\psi_R (-x) \diff x)}^2,
    \end{equation*}
    and, similarly,
    \begin{align*}
        \norm{\varepsilon}_{H^k (\psi_R (x) \diff x)} &= \norm{g^+.\eta^+ - (w^- + e_1)}_{H^k (\psi_R (x) \diff x)} \\
            &\leq \norm{g^+.\eta^+}_{H^k (\psi_R (x) \diff x)} + \norm{w^- + e_1}_{H^k (\psi_R (x) \diff x)} \\
            &\leq \norm{\eta^+}_{H^k (\psi_R^+ (x) \diff x)} + \norm{w^- + e_1}_{H^k (\supp \psi_R)} \\
            &\leq \norm{\eta^+}_{H^k (\psi_R^+ (x) \diff x)} + C e^{\Gamma (R + y^-)}.
    \end{align*}
    Once again, the computation for $\norm{\varepsilon}_{H^k (\psi_R (- x) \diff x)}$ is similar and symmetric.
    Eventually, \eqref{eq:equiv_eta_nu_rho} comes from Lemma \ref{lem:expand_eta} and the fact that $\supp \psi_R^+ \subset [- R - y^+, \infty)$ and $\supp \psi_R^- \subset (- \infty, R - y^-]$.
\end{proof}

The goal is to use the previous lemma with the decomposition provided by Lemma \ref{lem:decomp_magn}. However, the localisation function will still remain in the integrals we compute. Therefore, we won't be able to get the same vanishing integrals as in \cite{Cote_Ignat__stab_DW_LLG_DM} when we apply Lemma \ref{lem:schr_op}. However, the integrals we will obtain are still small enough : the reminiscence of the localisation function gives only negligible terms, as shown in the following lemma.

\begin{lem}[Almost orthogonality] \label{lem:almst_orth}
    With the same assumptions and notations as in Lemma \ref{lem:decomp_magn}, define $\eta^\pm$, $\mu^\pm$, $\nu^\pm$, $\rho^\pm$ and $\psi_R^\pm$ as in Lemma \ref{lem:est_eta_eps}. Then there holds
    \begin{equation*} 
        \abs{\int \sqrt{\psi_R^\pm} \rho^\pm \sin \theta_* \diff x} + \abs{\int \sqrt{\psi_R^\pm} \nu^\pm \sin \theta_* \diff x} \leq C \Bigl( q(y^+ - y^-) + \norm{\varepsilon}_{L^2} e^{\Gamma (R \mp y^\pm)} \Bigr)
    \end{equation*}
\end{lem}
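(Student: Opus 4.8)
The plan is to reduce, via the pointwise identities of Lemma~\ref{lem:expand_eta}, to two scalar integrals against $\sqrt{\psi_R^\pm}$, transport them back to the fixed frame using the gauge $g^\pm$, and then separate the contribution of $\varepsilon$ (controlled through the orthogonality relations \eqref{eq:orth_est1}) from the two-wall interaction (controlled through Corollary~\ref{cor:est_w_orthogonal}).

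\emph{Step 1: reduction to $\eta^\pm$.} Applying \eqref{eq:rel_orth} with $\eta=\eta^\pm$, $w_*=w_*^\pm$, one writes $\rho^\pm\sin\theta_*$ and $\nu^\pm\sin\theta_*$ as linear combinations of $\eta^\pm\cdot(e_1\wedge w_*^\pm)$ and $\eta^\pm\cdot\partial_x w_*^\pm$, so it suffices to estimate, for $V_*^\pm\in\{e_1\wedge w_*^\pm,\ \partial_x w_*^\pm\}$, the quantity $I^\pm:=\int\sqrt{\psi_R^\pm}\,\eta^\pm\cdot V_*^\pm\,dx$; recall moreover that $|V_*^\pm|\le C|\sin\theta_*|$ pointwise, from the formulas $\partial_x w_*=\Gamma^2\sin\theta_*(\sigma_1 n_*+\gamma p_*)$ and $e_1\wedge w_*=\sin\theta_*\,p_*$ used at the end of the proof of Lemma~\ref{lem:expand_eta}.

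\emph{Step 2: back to the fixed frame.} By Lemma~\ref{lem:est_eta_eps}, $\eta^\pm=(-g^\pm).(w^\mp+e_1+\varepsilon)$ and $w_*^\pm=(-g^\pm).w^\pm$. Since the action of $G$ is a pointwise isometry of $\mathbb{R}^3$ and commutes with $\partial_x$ and with $e_1\wedge\,\cdot\,$, a translation change of variables turns $I^\pm$ into $\int\sqrt{\tilde\psi^\pm}\,(w^\mp+e_1+\varepsilon)\cdot V^\pm\,dx$, with $V^\pm\in\{e_1\wedge w^\pm,\ \partial_x w^\pm\}$ and $\tilde\psi^\pm$ the corresponding shift (resp. reflection) of $\psi_R^\pm$. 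By the very definition of $\psi_R^\pm$, one of the two functions $\sqrt{\tilde\psi^\pm}$ and $1-\sqrt{\tilde\psi^\pm}$ is supported at distance $\gtrsim |y^\pm|-R$ from $y^\pm$, the location of the $\pm$-wall (namely $\sqrt{\tilde\psi^+}$ for the $+$-wall, and $1-\sqrt{\tilde\psi^-}$ for the $-$-wall).

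\emph{Step 3: splitting and estimates.} Write $(w^\mp+e_1+\varepsilon)\cdot V^\pm=(w^\mp+e_1)\cdot V^\pm+\varepsilon\cdot V^\pm$; it is crucial to keep the constant $e_1$ paired with $w^\mp$, since $w^\mp+e_1\to 0$ at the relevant infinity while $\int e_1\cdot\partial_x w^\pm=\pm2\ne0$. For the $\varepsilon$-term we use \eqref{eq:orth_est1}, i.e. $\int\varepsilon\cdot V^\pm\,dx=0$, to replace (when that is the better-localised choice, namely for the $-$-wall) $\int\sqrt{\tilde\psi^-}\varepsilon\cdot V^-\,dx$ by $-\int(1-\sqrt{\tilde\psi^-})\varepsilon\cdot V^-\,dx$; for the $+$-wall one keeps $\int\sqrt{\tilde\psi^+}\varepsilon\cdot V^+\,dx$ directly. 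In both cases the integrand of the resulting expression carries $|V^\pm|\le C|\sin\theta_*(\cdot\mp y^\pm)|$ against a cutoff supported far from $y^\pm$, so Cauchy–Schwarz and Lemma~\ref{lem:est_w_infty} yield a bound $C\|\varepsilon\|_{L^2}\,e^{\Gamma(R\mp y^\pm)}$. For the $(w^\mp+e_1)$-term, the pointwise size of $(w^\mp+e_1)\cdot V^\pm$ is estimated on each of the regions $\{x\ge y^+\}$, $\{y^-\le x\le y^+\}$, $\{x\le y^-\}$ by pairing $w^\pm$ with the favourable constant ($+e_1$ or $-e_1$) and using Lemma~\ref{lem:est_w_infty}; this produces a product of two exponentials centred at $y^+$ and at $y^-$, whose integral is $\le Cq(y^+-y^-)$, exactly as in Corollary~\ref{cor:est_w_orthogonal} used \emph{mutatis mutandis} — the factor $1+|y^+-y^-|$ in $q$ arising from the middle region, where both profiles decay at the common rate $\Gamma$. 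Collecting the two contributions and using $\sqrt{a+b}\le\sqrt a+\sqrt b$ gives the announced estimate.

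\emph{Main difficulty.} The argument is not conceptually hard; the delicate part is the geometric bookkeeping: locating $\supp\psi_R^\pm$, $\supp(1-\sqrt{\psi_R^\pm})$ and $\supp\partial_x\psi_R^\pm$ relative to \emph{both} walls so that the tails contribute exactly $e^{\Gamma(R\mp y^\pm)}$ and the wall--wall overlap contributes exactly $q(y^+-y^-)$ (rather than a crude $O(1)$), together with the correct choice of which constant to pair with $w^\pm$ on each region so that the non-decaying contributions such as $\int e_1\cdot\partial_x w^\pm=\pm2$ cancel against the corresponding pieces of $\int w^\mp\cdot\partial_x w^\pm$.
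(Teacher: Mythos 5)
Your overall route is the same as the paper's: reduce via \eqref{eq:rel_orth} to the integrals $\int\sqrt{\psi_R^\pm}\,\eta^\pm\cdot(e_1\wedge w_*^\pm)\,dx$ and $\int\sqrt{\psi_R^\pm}\,\eta^\pm\cdot\partial_x w_*^\pm\,dx$, transport back to the fixed frame, split $\eta^\pm$ into the other wall plus $\varepsilon$, and control the wall--wall overlap by $q(y^+-y^-)$ as in Corollary~\ref{cor:est_w_orthogonal}. However, there is a genuine error in your localisation bookkeeping (Steps 2--3). In the fixed frame the cutoff attached to the $+$-wall is $\tilde\psi^+=\psi_R(x)$ (equivalently $\psi_R^+(x)=\psi_R(x+y^+)$ in the wall frame), which is identically $1$ on $[R,+\infty)$ and hence identically $1$ in a neighbourhood of $y^+\ge L\ge R$; it is $1-\sqrt{\psi_R}$, supported in $(-\infty,R]$, that sits at distance $\ge y^+-R$ from the $+$-wall. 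Your claim that $\sqrt{\tilde\psi^+}$ is the well-localised factor is therefore false, and the proposed direct estimate of $\int\sqrt{\tilde\psi^+}\,\varepsilon\cdot V^+\,dx$ \emph{without} invoking \eqref{eq:orth_est1} cannot produce the factor $e^{\Gamma(R-y^+)}$: since $\sqrt{\tilde\psi^+}\equiv 1$ precisely where $V^+$ carries its $L^2$ mass, Cauchy--Schwarz only yields $C\norm{\varepsilon}_{L^2}$, which is not exponentially small and does not give the stated bound for the $+$ case.

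The two walls are mirror images and must be treated identically: for \emph{both} of them one uses the orthogonality conditions \eqref{eq:orth_est1} to write $\int\sqrt{\tilde\psi^\pm}\,\varepsilon\cdot V^\pm\,dx=\int\bigl(\sqrt{\tilde\psi^\pm}-1\bigr)\,\varepsilon\cdot V^\pm\,dx$, where $\sqrt{\tilde\psi^\pm}-1$ is supported at distance at least $\pm y^\pm-R$ from $y^\pm$, and then concludes by Cauchy--Schwarz together with the exponential decay of $e_1\wedge w^\pm$ and $\partial_x w^\pm$ (Lemma~\ref{lem:est_w_infty}); this is exactly what the paper does, e.g. $\int\sqrt{\psi_R}\,\varepsilon\cdot(e_1\wedge g^+.w_*^+)\,dx=\int(\sqrt{\psi_R}-1)\,\varepsilon\cdot(e_1\wedge g^+.w_*^+)\,dx$, estimated over $(-\infty,R]$ by $C\norm{\varepsilon}_{L^2}e^{\Gamma(R-y^+)}$. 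With this single correction (orthogonality used symmetrically for both walls, rather than only for the $-$-wall), your argument coincides with the paper's proof; the treatment of the interaction term $(w^\mp+e_1)\cdot V^\pm$ and the reduction via \eqref{eq:rel_orth} are fine as written.
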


\begin{proof}
    From \eqref{eq:rel_orth}, we get
    \begin{align*}
        \int \sqrt{\psi_R^\pm} \rho^\pm \sin \theta_* \diff x &= \int \sqrt{\psi_R^\pm} \eta^\pm \cdot (e_1 \wedge w_*^\pm) \diff x, \\
        \sigma_1 \int \sqrt{\psi_R^\pm} \nu^\pm \sin \theta_* \diff x &= \frac{1}{\Gamma} \int \sqrt{\psi_R^\pm} \eta^\pm \cdot \partial_x w_*^\pm \diff x - \gamma \int \sqrt{\psi_R^\pm} \eta^\pm \cdot (e_1 \wedge w_*^\pm) \diff x.
    \end{align*}
    On the other hand, by the expression of $\eta^\pm$,
    \begin{equation*}
        \int \sqrt{\psi_R^\pm} \eta^\pm \cdot (e_1 \wedge w_*^\pm) \diff x = \int \sqrt{\psi_R (\pm x)} (g^\mp . w_*^\mp + e_1) \cdot (e_1 \wedge g^\pm . w_*^\pm) \diff x + \int \sqrt{\psi_R (\pm x)} \, \varepsilon \cdot (e_1 \wedge g^\pm . w_*^\pm) \diff x.
    \end{equation*}
    %
    For the first term, we can estimate by using the fact that $R < L < \min{(y^+, - y^-)}$ and with Corollary \ref{cor:est_w_orthogonal}:
    \begin{align*}
        \abs{\int \sqrt{\psi_R} (g^- . w_*^- + e_1) \cdot (e_1 \wedge g^+ . w_*^+) \diff x} &\leq \int_{-R}^\infty \abs{g^- . w_*^- + e_1} \abs{e_1 \wedge g^+ . w_*^+} \diff x \\
            &\leq C (1 + y^+ - y^-) e^{- \Gamma (y^+ - y^-)}.
    \end{align*}
    For the second term, by using the orthogonality conditions \eqref{eq:orth_est1}, we get
    \begin{align*}
        \abs{\int \sqrt{\psi_R} \varepsilon \cdot (e_1 \wedge g^+ . w_*^+) \diff x} &= \abs{\int \Bigl( \sqrt{\psi_R} - 1 \Bigr) \varepsilon \cdot (e_1 \wedge g^+ . w_*^+) \diff x} \\
            &\le \int_{- \infty}^{R} \abs{\varepsilon} \abs{e_1 \wedge g^+ . w_*^+} \diff x \le C \norm{\varepsilon}_{L^2} e^{\Gamma (R - y^+)}.
    \end{align*}
    Similar estimates hold for $\ds \int \sqrt{\psi_R^-} \eta^- \cdot (e_1 \wedge w_*^-) \diff x$ and for $\ds \int \sqrt{\psi_R^\pm} \eta^\pm \cdot \partial_x w_*^\pm \diff x$, and thus does the conclusion.
\end{proof}

\section{Localised energies} \label{sec:en_coer}

In this section, we prove Proposition \ref{prop:equiv_energy}. For this, we localise the energy thanks to the localisation $\psi_R$, which is a classical technique for to study multi-solitons for nonlinear dispersive equations.

We define the localised energies :
\begin{equation*} \index{Functionals!$E^\pm$: localized energy with transition around $0$}
    E^+ (m) \coloneqq \frac{1}{2} \int \Bigl( \abs{\partial_x m}^2 + 2 \gamma \partial_x m \cdot (e_1 \wedge m) + (1 - m_1^2) \Bigr) \psi_R (x) \diff x,
\end{equation*}
\begin{equation*}
    E^- (m) \coloneqq \frac{1}{2} \int \Bigl( \abs{\partial_x m}^2 + 2 \gamma \partial_x m \cdot (e_1 \wedge m) + (1 - m_1^2) \Bigr) \psi_R (- x) \diff x
\end{equation*}
By the properties of $\psi_R$, we know that $E^+ + E^- = E$.
Then, we define the following modified energies :
\begin{equation*} \index{Functionals!$\tilde E^\pm$: localized energy, with transition not far from a domain wall}
    \tilde{E}^+ (m) \coloneqq E^+ (\tau_{y^+} m) = \frac{1}{2} \int \Bigl( \abs{\partial_x m}^2 + 2 \gamma \partial_x m \cdot (e_1 \wedge m) + (1 - m_1^2) \Bigr) \psi_R^+ (x) \diff x
\end{equation*}
where $\psi_R^+ \coloneqq \tau_{- y^+} \psi_R = \psi_R (x + y^+)$, and
\begin{equation*}
    \tilde{E}^- (m) \coloneqq E^- (\tau_{y^-} m ) = \frac{1}{2} \int \Bigl( \abs{\partial_x m}^2 - 2 \gamma \partial_x m \cdot (e_1 \wedge m) + (1 - m_1^2) \Bigr) \psi_R^- (x) \diff x,
\end{equation*}
where $\psi_R^- \coloneqq \tau_{- y^-} \psi_R (-x) = \psi_R (- x + y^-)$.

\subsection{First estimate on the localised energies}

First, we want to expand the localised energies defined previously. For this, we define $\eta^\pm$, and then $\mu^\pm$, $\nu^\pm$ and $\rho^\pm$ like in Lemma \ref{lem:est_eta_eps}. With similar computations as in \cite{Cote_Ignat__stab_DW_LLG_DM}, we show that, up to some additional negligible terms, the expansion of the localised energies gives no term of order $1$ and nice terms of order $0$ and $2$.

\begin{prop} \label{prop:energy_exp}
    Let the assumptions of Lemma \ref{lem:est_eta_eps} be satisfied.
    Then, 
\[
        \abs{E^\pm (m) - \Bigl[ \tilde{E}^\pm (w_*^\pm) + \frac{1}{2} \Bigl( (L_\Gamma \nu^\pm, \nu^\pm)_{\psi_R^\pm} + (L_\Gamma \rho^\pm, \rho^\pm)_{\psi_R^\pm} \Bigr) \Bigr] } 
        \leq C \Bigl[ \norm{\varepsilon}_{H^1}^3 + \frac{1}{R^2} \norm{\varepsilon}_{L^2}^2 + e^{2 \Gamma (R - y^+)} + e^{2 \Gamma (R + y^-)} \Bigl],
\]
\end{prop}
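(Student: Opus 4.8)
\textbf{Proof strategy for Proposition \ref{prop:energy_exp}.}

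The plan is to substitute $m = w^{+} + w^{-} + e_1 + \varepsilon$ (equivalently, work after translation with $\eta^{\pm}$ around $w_*^{\pm}$) into the localised energy $E^{\pm}$, expand every quadratic expression, and sort the resulting terms by their order in $\varepsilon$ (and hence in $\eta^{\pm}$), keeping careful track of the interaction terms coming from the presence of the \emph{other} domain wall and of the error produced by the localisation weight $\psi_R^{\pm}$. Since the three terms in the energy density are at most quadratic in $m$ and its first derivative, the substitution produces only order $0$, order $1$ and order $2$ contributions in $\eta^{\pm}$, plus genuinely nonlinear remainders controlled by Lemma \ref{lem:tech_o_lem} of size $O(\|\varepsilon\|_{H^1}^3)$. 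Concretely, after translating by $y^{\pm}$, one has $\tau_{y^{\pm}}m = w_*^{\pm} + \eta^{\pm}$, so $E^{\pm}(m) = \tilde E^{\pm}(w_*^{\pm} + \eta^{\pm})$, and one expands the three quadratic forms in the density around $w_*^{\pm}$, localised by $\psi_R^{\pm}$.

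The steps, in order, would be: (1) \emph{Order-zero term}: identify the $\eta^{\pm}$-independent part as $\tilde E^{\pm}(w_*^{\pm})$. (2) \emph{Order-one term}: collect all terms linear in $\eta^{\pm}$; these assemble into $\int \sqrt{\psi_R^{\pm}}\,\eta^{\pm}\cdot \delta E_\gamma(w_*^{\pm})\,\sqrt{\psi_R^{\pm}}\,dx$ up to a commutator between $\partial_x$ and $\sqrt{\psi_R^{\pm}}$ supported on $\supp\partial_x\psi_R^{\pm}$ (contributing $O(R^{-2}\|\varepsilon\|_{L^2}^2)$ after Cauchy–Schwarz and \eqref{eq:est_phi_A}). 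Using $\delta E(w_*^{\pm}) = \beta_* w_*^{\pm}$ from \eqref{eq:beta_*}, the leading linear term is $\int \psi_R^{\pm}\beta_*\,\eta^{\pm}\cdot w_*^{\pm}\,dx = \int \psi_R^{\pm}\beta_*\,\mu^{\pm}\,dx$, which by Lemma \ref{lem:expand_eta} ($\mu^{\pm} = \tfrac12|\eta^{\pm}|^2 = O_0^2(\eta^{\pm})$) is actually quadratic and should be absorbed into the order-two part via $\beta_* = 2\Gamma^2\sin^2\theta_*$; together with the order-two contributions from $|\partial_x\eta|^2$ etc., this rebuilds precisely $\tfrac12\big((L_\Gamma\nu^{\pm},\nu^{\pm})_{\psi_R^{\pm}} + (L_\Gamma\rho^{\pm},\rho^{\pm})_{\psi_R^{\pm}}\big)$ — this is the same algebraic identity as in \cite{Cote_Ignat__stab_DW_LLG_DM}, now carried out with the weight $\psi_R^{\pm}$ inside every integral. (3) \emph{Interaction terms}: every term still containing $w^{\mp}+e_1$ (either from $\eta^{\pm} = (-g^{\pm}).(w^{\mp}+e_1+\varepsilon)$ or from cross terms) is estimated pointwise by Corollary \ref{cor:est_w_orthogonal}, integrated against $\psi_R^{\pm}$ supported in $\{\pm x \ge y^{\pm} - R\}$, producing the $e^{2\Gamma(R-y^{+})} + e^{2\Gamma(R+y^{-})}$ bound. (4) \emph{Nonlinear remainder}: terms of order $\ge 3$ in $\eta^{\pm}$ are $O_2^{\ell}(\eta^{\pm})$-type and bounded by $\|\eta^{\pm}\|_{H^1(\supp\psi_R^{\pm})}^3 \lesssim \|\varepsilon\|_{H^1}^3 + e^{3\Gamma(R\mp y^{\pm})}$ via Lemma \ref{lem:tech_o_lem} and \eqref{eq:ineg2}. (5) Collect all error contributions and invoke \eqref{eq:equiv_eta_nu_rho}/\eqref{eq:ineg2} to re-express $\|\eta^{\pm}\|$ bounds in terms of $\|\varepsilon\|_{H^1}$ plus exponential interaction terms.

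The main obstacle is bookkeeping rather than a conceptual difficulty: one must verify that the quadratic part really reassembles into $L_\Gamma$ applied to $(\nu^{\pm},\rho^{\pm})$ \emph{with the localisation weight threaded consistently through every integration by parts}, so that each time $\partial_x$ lands on $\sqrt{\psi_R^{\pm}}$ (or $\psi_R^{\pm}$) the resulting commutator is supported on $\supp\partial_x\psi_R^{\pm}$ and hence contributes only $O(R^{-2}\|\varepsilon\|_{L^2}^2)$ — this is exactly the mechanism already isolated in Lemma \ref{lem:localised_norms} and Lemma \ref{lem:loc_schr_op}, so the cleanest route is to reduce the weighted quadratic form to the unweighted one applied to $\sqrt{\psi_R^{\pm}}\,\eta^{\pm}$ and quote those lemmas. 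The quadratic-in-$\mu$ term hiding inside the order-one part (because $\mu$ is secretly quadratic) is the subtle algebraic point, but it is handled exactly as in \cite{Cote_Ignat__stab_DW_LLG_DM} using $\delta E(w_*) = \beta_* w_*$; the only new feature here is that all of this must survive multiplication by $\psi_R^{\pm}$, which it does because $\psi_R^{\pm}$ is smooth with derivatives of size $O(1/R)$ supported away from the core of $w_*^{\pm}$.
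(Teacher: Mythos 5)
Your proposal is correct and follows essentially the same route as the paper: expand $\tilde E^\pm(w_*^\pm+\eta^\pm)$ with the weight $\psi_R^\pm$ threaded through every integration by parts, use $\delta E_\gamma(w_*^\pm)=\beta_* w_*^\pm$ together with the fact that $\mu^\pm$ is quadratic in $\eta^\pm$ to fold the formally linear term into the quadratic part, reassemble the quadratic part into $(L_\Gamma\nu^\pm,\nu^\pm)_{\psi_R^\pm}+(L_\Gamma\rho^\pm,\rho^\pm)_{\psi_R^\pm}$ via $\Gamma^2-\beta_*=\Gamma^2(\cos^2\theta_*-\sin^2\theta_*)$, bound the cubic/quartic remainders with Lemma \ref{lem:tech_o_lem}, and pass from $\eta^\pm$ to $\varepsilon$ by \eqref{eq:ineg2}. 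One small imprecision: the boundary terms $\int\eta^\pm\cdot\partial_x w_*^\pm\,\partial_x\psi_R^\pm\,dx$ and $\gamma\int\eta^\pm\cdot(e_1\wedge w_*^\pm)\,\partial_x\psi_R^\pm\,dx$ are linear in $\eta^\pm$, so Cauchy--Schwarz and \eqref{eq:est_phi_A} alone do not give $O(R^{-2}\|\varepsilon\|_{L^2}^2)$; one also needs the exponential smallness of $\partial_x w_*^\pm$ and $e_1+w_*^\pm$ on $\supp\partial_x\psi_R^\pm$ (Lemma \ref{lem:est_w_infty}) — which you invoke only implicitly at the end — and then Young's inequality to place them inside the stated error.
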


\begin{proof}
    The pointwise estimate of steps 2 and 3 of the proof of \cite[Proposition~4.16]{Cote_Ignat__stab_DW_LLG_DM} still hold, both for $\eta^+$ and $\eta^-$. In particular, we have :
    \begin{align}
        \delta E (\eta^\pm) & = O_2^2 (\eta^\pm) \pm 2 (\partial_{xx}^2 \theta_* \nu^\pm + \partial_x \theta_* \partial_x \nu^\pm) w_*^\pm + (- \partial_{xx}^2 \nu^\pm + \Gamma \nu^\pm) n_*^\pm + (- \partial_{xx}^2 \rho^\pm + \Gamma \rho^\pm) p_*^\pm, \\
        \label{eq:eta_var_energy_w}
        \eta^\pm \cdot \delta E (w_*^\pm) & = \beta_* \eta^\pm \cdot w_*^\pm = - \frac{1}{2} \beta_* \abs{\eta^\pm}^2, \\ \label{eq:eta_var_energy_eta}
        \eta^\pm \cdot \delta E (\eta^\pm) & = O_2^3 (\eta^\pm) - \nu^\pm \partial_{xx}^2 \nu^\pm + \Gamma^2 (\nu^\pm)^2 - \rho^\pm \partial_{xx}^2 \rho^\pm + \Gamma^2 (\rho^\pm)^2
    \end{align}
    Moreover, even if $E^\pm$ consists only in quadratic terms of $m$, it is not invariant under translation due to the localisation term $\psi_R (\pm x)$, and one should also take care about the integrations by part, so that the relations of the step 4 of the proof of \cite[Proposition~4.11]{Cote_Ignat__stab_DW_LLG_DM} are different :
    \begin{align*}
        E^\pm (m) &= \tilde{E}^\pm ( \eta^\pm + w_*^\pm ) \\
            &\begin{multlined}
                = \tilde{E}^\pm (w_*^\pm) + \int \eta^\pm \cdot \delta E (w_*^\pm) \psi_R^\pm (x) \diff x + \frac{1}{2} \int \eta^\pm \cdot \delta E (\eta^\pm) \psi_R^\pm (x) \diff x \\
                \begin{aligned}
                    &- \int \eta^\pm \cdot \partial_x w_*^\pm \partial_x \psi_R^\pm (x) \diff x - \frac{1}{2} \int \eta^\pm \cdot \partial_x \eta^\pm \partial_x \psi_R^\pm (x) \diff x \\
                    &- \gamma \int \eta^\pm \cdot (e_1 \wedge w_*^\pm) \partial_x \psi_R^\pm (x) \diff x.
                \end{aligned}
            \end{multlined}
    \end{align*}
    Using both \eqref{eq:eta_var_energy_w} and \eqref{eq:eta_var_energy_eta} along with Lemma \ref{lem:tech_o_lem}, we get
    \begin{align*}
        \MoveEqLeft E^\pm (m) - \tilde{E}^\pm (w_*^\pm) = O (\norm{\eta^\pm}_{H^1 (\supp \psi_R^\pm)}^3) \\
                &- \frac{1}{2} \int \beta_* \abs{\eta^\pm}^2 \psi_R^\pm (x) \diff x + \frac{1}{2} \int \Bigl( (- \partial^2_{xx} \nu^\pm + \Gamma^2 \nu^\pm) \nu^\pm + (- \partial_{xx}^2 \rho^\pm + \Gamma^2 \rho^\pm) \rho^\pm \Bigr) \psi_R^\pm (x) \diff x \\
                &- \int \eta^\pm \cdot \partial_x w_*^\pm \partial_x \psi_R^\pm (x) \diff x - \gamma \int \eta^\pm \cdot (e_1 \wedge w_*^\pm) \partial_x \psi_R^\pm (x) \diff x + \frac{1}{4} \int \abs{\eta^\pm}^2 \partial^2_{xx} \psi_R^\pm (x) \diff x.
    \end{align*}
    We now use the fact that
    \begin{equation*}
        \abs{\eta^\pm}^2 = (\mu^\pm)^2 + (\nu^\pm)^2 + (\rho^\pm)^2 = (\nu^\pm)^2 + (\rho^\pm)^2 + \frac{1}{4} \abs{\eta^\pm}^4.
    \end{equation*}
    Then we also use the fact that
    \begin{equation} \label{eq:link_beta_L}
        \Gamma^2 - \beta_* = \Gamma^2 (\cos^2 \theta_* - \sin^2 \theta_*),
    \end{equation}
    so that
    \begin{multline*}
        - \frac{1}{2} \int \beta_* \abs{\eta^\pm}^2 \psi_R^\pm (x) \diff x + \frac{1}{2} \int \Bigl( (- \partial^2_{xx} \nu^\pm + \Gamma^2 \nu^\pm) \nu^\pm + (- \partial_{xx}^2 \rho^\pm + \Gamma^2 \rho^\pm) \rho^\pm \Bigr) \psi_R^\pm (x) \diff x \\
            = \frac{1}{2} \Bigl( (L_\Gamma \nu^\pm, \nu^\pm)_{\psi_R^\pm} + (L_\Gamma \rho^\pm, \rho^\pm)_{\psi_R^\pm} \Bigr) - \frac{1}{8} \int \beta_*^\pm \abs{\eta^\pm}^4 \psi_R^\pm (x) \diff x.
    \end{multline*}
    Moreover, using \eqref{eq:est_phi_A}, we get
    \begin{align*}
        \abs{\int \eta^\pm \cdot \partial_x w_*^\pm \partial_x \psi_R^\pm (x) \diff x} & \leq \frac{C}{R} \norm{\eta^\pm}_{L^2 (\supp \partial_x \psi_R^\pm)} \norm{\partial_x w_*^\pm}_{L^2 (\supp \partial_x \psi_R^\pm)}, \\
        \abs{\int \eta^\pm \cdot (e_1 \wedge w_*^\pm) \partial_x \psi_R^\pm (x) \diff x} & \leq \frac{C}{R} \norm{\eta^\pm}_{L^2 (\supp \partial_x \psi_R^\pm)} \norm{e_1 + w_*^\pm}_{L^2 (\supp \partial_x \psi_R^\pm)}, \\
        \abs{\int \abs{\eta^\pm}^2 \partial^2_{xx} \psi_R^\pm (x) \diff x} & \leq \frac{C}{R^2} \norm{\eta^\pm}_{L^2 (\supp \partial_x \psi_R^\pm)}^2.
    \end{align*}
    Therefore, there holds
    \begin{align*}
     \MoveEqLeft  \abs{E^\pm - \Bigl[ \tilde{E}^\pm (w^\pm) + \frac{1}{2} \Bigl( (L_\Gamma \nu^\pm, \nu^\pm)_{\psi_R^\pm} + (L_\Gamma \rho^\pm, \rho^\pm)_{\psi_R^\pm} \Bigr) \Bigr] } \\ 
            & \leq C \Bigl[ \norm{\eta^\pm}_{H^1 (\supp \psi_R^\pm)}^3 + \frac{1}{R} \norm{\eta^\pm}_{L^2 (\supp \partial_x \psi_R^\pm)} \norm{\partial_x w_*^\pm}_{L^2 (\supp \partial_x \psi_R^\pm)} \\
            & \qquad + \frac{1}{R} \norm{\eta^\pm}_{L^2 (\supp \partial_x \psi_R^\pm)} \norm{e_1 + w_*^\pm}_{L^2 (\supp \partial_x \psi_R^\pm)} + \frac{1}{R^2} \norm{\eta^\pm}_{L^2 (\supp \partial_x \psi_R^\pm)}^2 \Bigl],
    \end{align*}
    and the conclusion comes from \eqref{eq:ineg2} and Lemma \ref{lem:est_w_infty}.
\end{proof}



However, in the previous lemma, $\tilde{E}^\pm (w_*^\pm)$ is not a constant : it still depends on the localisation $\psi_R^\pm$, and therefore on $y^\pm$. The following lemma estimates how far this quantity is from the constant $E (w_*) \coloneqq E (w_*^\pm)$.

\begin{lem} \label{lem:est_const}
    There exists $C > 0$ such that, for any $y^\pm$ such that $y^\pm - R > 0$ for $i=1$ and $2$, there holds
    \begin{equation*}
        \abs{\tilde{E}^\pm (w_*^\pm) - E (w_*)} \leq C \, e^{2 \Gamma (R - y^\pm)}.
    \end{equation*}
\end{lem}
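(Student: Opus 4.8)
The plan is to compute the difference $\tilde{E}^\pm(w_*^\pm) - E(w_*)$ directly. Write $E(w_*) = E_\gamma(w_*^\pm) = \frac{1}{2}\int (\abs{\partial_x w_*^\pm}^2 \pm 2\gamma \partial_x w_*^\pm \cdot (e_1 \wedge w_*^\pm) + (1-(w_*^\pm)_1^2))\,dx$ (with the correct sign convention matching the definitions of $\tilde E^+$ and $\tilde E^-$), and recall that $\tilde E^\pm(w_*^\pm)$ is the same integrand but weighted by $\psi_R^\pm$. Since $\psi_R^+ = \psi_R(x+y^+)$ equals $1$ on $[-y^+ + R, +\infty)$ and $0$ on $(-\infty, -y^+ - R]$, and symmetrically for $\psi_R^-$, the difference is
\[
E(w_*) - \tilde E^\pm(w_*^\pm) = \frac{1}{2}\int \bigl(\abs{\partial_x w_*^\pm}^2 \pm 2\gamma \partial_x w_*^\pm\cdot(e_1\wedge w_*^\pm) + (1-(w_*^\pm)_1^2)\bigr)\,(1-\psi_R^\pm(x))\,dx,
\]
and the integrand of $(1-\psi_R^\pm)$ is supported in a half-line that is pushed out to $x \le -y^\pm + R$ (for the $+$ case; the region is $x \ge -y^- + R$ composed with the reflection for the $-$ case). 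On that half-line one is in the regime far from the wall transition, where Lemma~\ref{lem:est_w_infty} gives exponential smallness.

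**Key steps in order.** First I would record, using Lemma~\ref{lem:est_w_infty} with $j=0,1$, that on the relevant tail region each of the three quantities $\abs{\partial_x w_*^\pm}^2$, $\abs{\partial_x w_*^\pm}\,\abs{e_1 \wedge w_*^\pm}$, and $1 - (w_*^\pm)_1^2 = \abs{w_*^\pm \mp \sigma_1 e_1}^2 \cdot (\text{bounded factor})$ is bounded pointwise by $C e^{-2\Gamma\abs{x'}}$ in the appropriate shifted variable (here I use that $1-m_1^2 = (1-m_1)(1+m_1)$ and $1 \mp m_1 = O(e^{-2\Gamma\abs{x}})$ away from the wall center, which follows from $\abs{w_*^\pm \mp \sigma_1 e_1} \lesssim e^{-\Gamma\abs{x}}$ and the sphere constraint — indeed $(w_*^\pm)_1 = \cos\theta_*(\pm x)$ and $1 - \cos\theta_* = O(\sin^2\theta_*) = O(e^{-2\Gamma x})$). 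Second, I would integrate this bound over the tail half-line: $\int_{-\infty}^{-y^+ + R} C e^{-2\Gamma\abs{x}}\,dx$ — but one must be careful about which tail. Actually the transition of $w_*^+ = w_*^{(1,\sigma_2)}$ is centered at $0$ and connects $-e_1$ at $-\infty$ to $e_1$ at $+\infty$; the weight $1-\psi_R^+$ lives on $x \lesssim -y^+ + R < 0$ (since $y^+ > L > R$), i.e., deep in the $x \to -\infty$ tail where $w_*^+ + e_1$ is exponentially small with rate $\Gamma$. Hence $\abs{\partial_x w_*^+}^2 + \abs{1 - (w_*^+)_1^2} \lesssim e^{2\Gamma x}$ there, and $\int_{-\infty}^{-y^+ + R} e^{2\Gamma x}\,dx = \frac{1}{2\Gamma} e^{2\Gamma(R - y^+)}$, which is exactly the claimed bound. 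The $\gamma$ cross term is handled identically since it is controlled by the geometric mean of the other two. The $-$ case is symmetric under $x \mapsto -x$.

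**Main obstacle.** There is no real analytic difficulty — the estimate is a one-line tail integral once the pointwise decay of Lemma~\ref{lem:est_w_infty} is in hand. The only point requiring care is bookkeeping: making sure the sign convention on the $\gamma$-term in $\tilde E^-$ matches $E_\gamma(w_*^-)$ (the paper's definition of $\tilde E^-$ carries $-2\gamma$, and $w_*^- = w_*^{(-1,\sigma_2')}$ indeed has energy density with the flipped sign by the reflection symmetry $x \mapsto -x$, $\gamma \mapsto -\gamma$), and correctly identifying that the tail on which $1 - \psi_R^\pm$ is supported is the half-line where the wall has already reached its asymptotic value $\mp\sigma_1 e_1$, so that the decay rate is the full $\Gamma$ (giving $e^{2\Gamma(R-y^\pm)}$) and not something weaker. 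Once that is pinned down, the conclusion follows by the direct integration sketched above.
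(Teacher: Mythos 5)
Your proposal is correct and takes essentially the same route as the paper: there, the difference is likewise written as the integral of the energy density of $w_*^\pm$ against $1-\psi_R^\pm$, which is supported on the tail half-line $I_\pm$, and is bounded by $\norm{w_*^\pm + e_1}_{H^1(I_\pm)}^2$ together with the exponential decay of Lemma \ref{lem:est_w_infty} — exactly your pointwise-decay-plus-tail-integration argument. The only minor bookkeeping point is that $1-\psi_R^\pm$ is nonzero on the whole half-line beginning where the cutoff leaves the value $1$ (not just where it equals $1$), but including that transition zone of width $2R$ only affects the constant, not the rate $e^{2\Gamma(R-y^\pm)}$.
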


\begin{proof}
    By the properties of $w^\pm$ and $\psi_R^\pm$, we have
    \begin{align*}
        \abs{\tilde{E}^\pm (w^\pm) - E (w_*)} &= \abs{\frac{1}{2} \int \Bigl( \abs{\partial_x w_*^\pm}^2 + 2 \gamma \partial_x w_*^\pm \cdot (e_1 \wedge w_*^\pm) + \sin^2 \theta_* \Bigr) (1-\psi_R^\pm (x)) \diff x}
            \leq C \norm{w_*^\pm + e_1}_{H^1 (I_\pm)}^2,
    \end{align*}
    where $I_+ \coloneqq (- \infty, R - y^+)$ and $I_- \coloneqq (-R - y^-, \infty)$,
    and the conclusion follows from Lemma \ref{lem:est_w_infty}.
\end{proof}


As for the quadratic terms in Proposition \ref{prop:energy_exp}, we can estimate them by applying Lemma \ref{lem:loc_schr_op} to $\nu^\pm$ and $\rho^\pm$.
Applying also Lemma \ref{lem:est_eta_eps}, the following estimates hold.

\begin{cor} \label{cor:sc_prod_loc}
    Under the assumptions of Proposition \ref{prop:energy_exp}, there holds
    \begin{align*}
        \biggl(L_\Gamma \nu^\pm, \nu^\pm\biggr)_{\psi_R^\pm}  &\geq 4 \lambda_0 \norm{\nu^\pm}_{H^1 (\psi_R^\pm (x) \diff x)}^2 - \frac{1}{\lambda_0} \Bigl( \int \sqrt{\psi_R^\pm} \nu^\pm \sin( \theta_* ) \diff x \Bigr)^2 - \frac{C}{R^2} \Bigl( \norm{\varepsilon}_{L^2}^2 + e^{2 \Gamma (R \pm y^\mp) } \Bigr), \\
        \biggl(L_\Gamma \rho^\pm, \rho^\pm\biggr)_{\psi_R^\pm} & \geq 4 \lambda_0 \norm{\rho^\pm}_{H^1 (\psi_R^\pm (x) \diff x)}^2 - \frac{1}{\lambda_0} \Bigl( \int \sqrt{\psi_R^\pm} \rho^\pm \sin( \theta_* ) \diff x \Bigr)^2 - \frac{C}{R^2} \Bigl( \norm{\varepsilon}_{L^2}^2 + e^{2 \Gamma (R \pm y^\mp) } \Bigr), \\
        \biggl(L_\Gamma \nu^\pm, \nu^\pm\biggr)_{\psi_R^\pm}  &\leq 2 \norm{\nu^\pm}_{H^1 (\psi_R^\pm (x) \diff x)}^2 + \frac{C}{R^2} \Bigl( \norm{\varepsilon}_{L^2}^2 + e^{2 \Gamma (R \pm y^\mp) } \Bigr), \\
        \biggl(L_\Gamma \rho^\pm, \rho^\pm\biggr)_{\psi_R^\pm} & \leq 2 \norm{\rho^\pm}_{H^1 (\psi_R^\pm (x) \diff x)}^2 + \frac{C}{R^2} \Bigl( \norm{\varepsilon}_{L^2}^2 + e^{2 \Gamma (R \pm y^\mp) } \Bigr),
    \end{align*}
\end{cor}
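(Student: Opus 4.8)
The plan is to feed the localised quadratic forms $(L_\Gamma \nu^\pm, \nu^\pm)_{\psi_R^\pm}$ and $(L_\Gamma \rho^\pm, \rho^\pm)_{\psi_R^\pm}$ directly into Lemma~\ref{lem:loc_schr_op}, applied with $f = \nu^\pm$, resp. $f=\rho^\pm$, and $\psi_0 = \psi_R^\pm$ (for the ``$-$'' wall one either invokes the obvious reflection covariance of the computations in the proof of Lemma~\ref{lem:loc_schr_op}, or reruns that argument verbatim with the reflected profile). This step produces, on the one hand, the lower bounds with the constant $4\lambda_0$ in front of $\norm{\nu^\pm}_{H^1(\psi_R^\pm \diff x)}^2$ (resp.\ $\rho^\pm$) and the orthogonality-type square $\bigl(\int \sqrt{\psi_R^\pm}\,\nu^\pm \sin\theta_*\,\diff x\bigr)^2$, which is exactly the quantity the statement keeps explicit and which will be handled later via Lemma~\ref{lem:almst_orth}; and on the other hand the upper bounds with constant $2$, for which no orthogonality term is needed. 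In all four cases there remains an error term of the shape $\tfrac{C}{R^2}\norm{\nu^\pm}_{L^2(\supp \partial_x \psi_R^\pm)}^2$ (resp.\ with $\rho^\pm$).

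It then only remains to absorb this error term into $\tfrac{C}{R^2}\bigl(\norm{\varepsilon}_{L^2}^2 + e^{2\Gamma(R\pm y^\mp)}\bigr)$. First I would bound $\norm{\nu^\pm}_{L^2(\supp \partial_x \psi_R^\pm)} \le \norm{\nu^\pm}_{L^2(\supp \psi_R^\pm)}$, then use the equivalence \eqref{eq:equiv_eta_nu_rho} of Lemma~\ref{lem:est_eta_eps} to replace the right-hand side, up to a constant, by $\norm{\eta^\pm}_{L^2(\supp \psi_R^\pm)}$, and finally invoke \eqref{eq:ineg2} to get $\norm{\eta^\pm}_{L^2(\supp \psi_R^\pm)} \le \norm{\varepsilon}_{L^2} + C e^{\Gamma(R\pm y^\mp)}$. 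Squaring and using $(a+b)^2 \le 2a^2 + 2b^2$ gives the claimed bound, and the very same chain of inequalities works with $\rho^\pm$ in place of $\nu^\pm$. The left-hand sides $\norm{\nu^\pm}_{H^1(\psi_R^\pm\diff x)}^2$ and $\norm{\rho^\pm}_{H^1(\psi_R^\pm\diff x)}^2$ are left untouched, so no further manipulation is required.

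This is essentially bookkeeping, and there is no real obstacle; the two points that deserve a little care are (i) the regularity, since Lemma~\ref{lem:loc_schr_op} is phrased for $f\in H^2$ while a priori $\varepsilon\in H^1$ under the hypotheses of Lemma~\ref{lem:est_eta_eps} --- this is harmless because the corollary is only ever used in the $\mathcal H^2$ setting of Step~1 of the proof of Theorem~\ref{th1}, and otherwise one passes to the limit using continuity of the localised $H^1$ (resp.\ $H^2$) norms under perturbation --- and (ii) correctly matching $e^{\Gamma(R+y^-)}$ with the ``$+$'' wall and $e^{\Gamma(R-y^+)}$ with the ``$-$'' wall, which is precisely the sign convention already fixed in \eqref{eq:ineg2}.
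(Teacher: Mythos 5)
Your proposal is correct and follows the same route as the paper, which obtains the corollary precisely by applying Lemma~\ref{lem:loc_schr_op} to $\nu^\pm$ and $\rho^\pm$ with $\psi_0=\psi_R^\pm$ and then converting the remainder $\tfrac{C}{R^2}\norm{\cdot}_{L^2(\supp\partial_x\psi_R^\pm)}^2$ via Lemma~\ref{lem:est_eta_eps} (estimates \eqref{eq:equiv_eta_nu_rho} and \eqref{eq:ineg2}) into $\tfrac{C}{R^2}\bigl(\norm{\varepsilon}_{L^2}^2+e^{2\Gamma(R\pm y^\mp)}\bigr)$. Your two side remarks (regularity of $f$ and the sign matching $e^{\Gamma(R\pm y^\mp)}$) are exactly the right points of care and cause no difficulty.
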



Putting everything together, we get bounds by below and above for $E^\pm (m)$.

\begin{cor} \label{cor:equiv_energy}
    Under the assumptions of Lemma \ref{lem:decomp_magn} and assuming $L_1 > R$, there exists $C > 0$ such that the following holds. With same notations as in the conclusion of Lemma \ref{lem:decomp_magn}, for all $t \in [0, T]$,
    \begin{multline*}
        2 C_2^2 \norm{\eta^\pm}_{H^1 (\psi_R^\pm \diff x)}^2 + C \Bigl( \norm{\varepsilon}_{H^1}^2 + e^{2 \Gamma (R - y^+)} + e^{2 \Gamma (R + y^-)} \Bigr) \geq E^\pm (m) - E (w_*) \\ \geq \frac{2 \lambda_0}{C^2} \norm{\eta^\pm}_{H^1 (\psi_R^\pm \diff x)}^2 - C \Bigl( \norm{\varepsilon}_{H^1}^3 + \frac{1}{R^2} \norm{\varepsilon}_{H^1}^2 + e^{- 2 \Gamma (R - y^+)} + e^{2 \Gamma (R + y^-)} \Bigr).
    \end{multline*}
\end{cor}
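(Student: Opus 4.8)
The plan is to obtain Corollary~\ref{cor:equiv_energy} by chaining the three ingredients already in place: Proposition~\ref{prop:energy_exp} expands $E^\pm(m)$, Lemma~\ref{lem:est_const} replaces the localised base energy $\tilde E^\pm(w_*^\pm)$ by the constant $E(w_*)$ at the cost of an exponentially small error, and Corollary~\ref{cor:sc_prod_loc} controls the quadratic forms $(L_\Gamma\nu^\pm,\nu^\pm)_{\psi_R^\pm}$ and $(L_\Gamma\rho^\pm,\rho^\pm)_{\psi_R^\pm}$ from above and below. First I would combine Proposition~\ref{prop:energy_exp} with Lemma~\ref{lem:est_const} via the triangle inequality to get
\begin{equation*}
\Bigl| E^\pm(m) - E(w_*) - \tfrac12\bigl((L_\Gamma\nu^\pm,\nu^\pm)_{\psi_R^\pm} + (L_\Gamma\rho^\pm,\rho^\pm)_{\psi_R^\pm}\bigr)\Bigr| \le C\Bigl(\norm{\varepsilon}_{H^1}^3 + \tfrac1{R^2}\norm{\varepsilon}_{L^2}^2 + e^{2\Gamma(R-y^+)} + e^{2\Gamma(R+y^-)}\Bigr),
\end{equation*}
the error of Lemma~\ref{lem:est_const} being one of the two exponential terms on the right. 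Inserting the two-sided bounds of Corollary~\ref{cor:sc_prod_loc} then reduces matters to handling the defect-of-orthogonality terms and converting $\nu^\pm,\rho^\pm$ back to $\eta^\pm$.

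Next I would dispose of the quantities $\bigl(\int\sqrt{\psi_R^\pm}\,\nu^\pm\sin\theta_*\,dx\bigr)^2$ and $\bigl(\int\sqrt{\psi_R^\pm}\,\rho^\pm\sin\theta_*\,dx\bigr)^2$ that enter the lower bound of Corollary~\ref{cor:sc_prod_loc} with a minus sign. By Lemma~\ref{lem:almst_orth} and $(a+b)^2\le 2a^2+2b^2$, each is at most $C\bigl(q(y^+-y^-)^2 + \norm{\varepsilon}_{L^2}^2 e^{2\Gamma(R\mp y^\pm)}\bigr)$. The interaction square is recast using the elementary fact that, since $y^+-y^- > 2L \ge 4R$, AM--GM gives $e^{2\Gamma(R-y^+)} + e^{2\Gamma(R+y^-)} \ge 2e^{2\Gamma R}e^{-\Gamma(y^+-y^-)} \ge 2e^{-\Gamma(y^+-y^-)}$, whence, using that $r\mapsto(1+r)^2e^{-\Gamma r}$ is bounded on $[0,\infty)$,
\[ q(y^+-y^-)^2 = \bigl(1+(y^+-y^-)\bigr)^2 e^{-2\Gamma(y^+-y^-)} \le C\bigl(e^{2\Gamma(R-y^+)} + e^{2\Gamma(R+y^-)}\bigr). \]
The remaining term $\norm{\varepsilon}_{L^2}^2 e^{2\Gamma(R\mp y^\pm)}$, whose exponential factor is at most $e^{2\Gamma(R-L_1)}$, is absorbed into $\tfrac1{R^2}\norm{\varepsilon}_{H^1}^2$ after enlarging $L_1$ (relative to the now essentially fixed $R$) so that $e^{2\Gamma(R-L_1)}\le R^{-2}$; the same enlargement makes the errors $e^{\Gamma(R\mp y^\pm)}$ small enough for Lemma~\ref{lem:expand_eta} to be applicable to $\eta^\pm$ in the last step.

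Finally I would convert $\norm{\nu^\pm}_{H^1(\psi_R^\pm\,dx)}^2 + \norm{\rho^\pm}_{H^1(\psi_R^\pm\,dx)}^2$ into $\norm{\eta^\pm}_{H^1(\psi_R^\pm\,dx)}^2$ by the equivalence~\eqref{eq:equiv_eta_nu} of Lemma~\ref{lem:expand_eta}, applied to $w_*^\pm + \eta^\pm = (-g^\pm).m$ (legitimate since $\norm{\eta^\pm}_{H^1(\supp\psi_R^\pm)}\le\norm{\varepsilon}_{H^1}+Ce^{\Gamma(R\mp y^\pm)}<\delta_3$ by~\eqref{eq:ineg2}), which gives $\tfrac1{C_2^2}\norm{\eta^\pm}_{H^1(\psi_R^\pm\,dx)}^2 \le \norm{\nu^\pm}_{H^1(\psi_R^\pm\,dx)}^2 + \norm{\rho^\pm}_{H^1(\psi_R^\pm\,dx)}^2 \le C_2^2\norm{\eta^\pm}_{H^1(\psi_R^\pm\,dx)}^2$ with the constant $C_2$ of Lemma~\ref{lem:expand_eta}; this produces the coefficients $\tfrac{2\lambda_0}{C^2}$ and $2C_2^2$ of the statement (a factor $2$ of slack is kept for the comparison and for merging the residual $R^{-2}$-terms). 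For the upper estimate one tidies up with $\norm{\varepsilon}_{H^1}^3\le\norm{\varepsilon}_{H^1}^2$ and $\tfrac1{R^2}\norm{\varepsilon}_{L^2}^2\le\norm{\varepsilon}_{H^1}^2$ (valid since $R\ge1$ and $\norm{\varepsilon}_{H^1}<1$), while for the lower estimate those two terms are kept as in the statement. I expect the only genuinely delicate point to be the bookkeeping ensuring coercivity survives, namely that on the negative side of the lower bound every term with a bare $O(1)$ constant is either truly exponentially small ($e^{2\Gamma(R\mp y^\pm)}$ and the recast $q(y^+-y^-)^2$) or carries a small prefactor ($\norm{\varepsilon}_{H^1}$ or $R^{-2}$); this is exactly where $L_1$ must be taken large relative to $R$ and where the interaction bound $q(y^+-y^-)^2\lesssim e^{2\Gamma(R-y^+)}+e^{2\Gamma(R+y^-)}$ is essential.
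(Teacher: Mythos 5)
Your proposal is correct and follows essentially the same route as the paper's proof: Proposition \ref{prop:energy_exp} combined with Lemma \ref{lem:est_const}, the two-sided bounds of Corollary \ref{cor:sc_prod_loc}, the almost-orthogonality Lemma \ref{lem:almst_orth} with the recast $q(y^+-y^-)^2 \lesssim e^{2\Gamma(R-y^+)}+e^{2\Gamma(R+y^-)}$, and the equivalence between $(\nu^\pm,\rho^\pm)$ and $\eta^\pm$ from Lemma \ref{lem:expand_eta}. The only cosmetic difference is your absorption of $\norm{\varepsilon}_{L^2}^2 e^{2\Gamma(R\mp y^\pm)}$ into $R^{-2}\norm{\varepsilon}_{H^1}^2$ by enlarging $L_1$, which is not needed since $\norm{\varepsilon}_{H^1}<\delta_1\le 1$ already lets this term be bounded directly by the exponential error terms.
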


\begin{proof}
    We use Proposition \ref{prop:energy_exp} whose assumptions are satisfied thanks to the conclusion of Lemma \ref{lem:decomp_magn}. Thus, we get for all $t \in [0, T]$
\[
        E^\pm (m) - \tilde{E}^\pm (w_*^\pm)
        \geq \frac{1}{2} \Bigl( (L_\Gamma \nu^\pm, \nu^\pm)_{\psi_R^\pm} + (L_\Gamma \rho^\pm, \rho^\pm)_{\psi_R^\pm} \Bigr) \Bigr] - C \Bigl[ \norm{\varepsilon}_{H^1}^3 + \frac{1}{R^2} \norm{\varepsilon}_{L^2}^2 + e^{2 \Gamma (R - y^+)} + e^{2\Gamma (R + y^-)} \Bigl],
\]
    From this inequality, we can substitute $\tilde{E}^\pm (w_*^\pm)$ into $E (w_*)$ thanks to Lemma \ref{lem:est_const}. Then, both $(L_\Gamma \nu^\pm, \nu^\pm)_{\psi_R^\pm}$ and $(L_\Gamma \rho^\pm, \rho^\pm)_{\psi_R^\pm}$ can be estimated by below by using Corollary \ref{cor:sc_prod_loc}. Moreover, the terms
    \begin{equation*}
        \Bigl( \int \sqrt{\psi_R^\pm} \nu^\pm \sin( \theta_* ) \diff x \Bigr)^2
        \qquad \text{and} \qquad
        \Bigl( \int \sqrt{\psi_R^\pm} \rho^\pm \sin( \theta_* ) \diff x \Bigr)^2
    \end{equation*}
    are controlled by the "almost-orthogonality" estimates of Lemma \ref{lem:almst_orth}. From this estimate, we have
    \begin{equation*}
        q(y^+ - y^-)^2 \leq C e^{-3 \Gamma (y^+ - y^-)/2} \leq C e^{- 2 \Gamma y^+} + C e^{2 \Gamma y^-},
    \end{equation*}
    and thus the conclusion. The estimate by above is obtained with similar computations.
\end{proof}

Proposition \ref{prop:equiv_energy} follows by taking the sum of the two estimates of Corollary \ref{cor:equiv_energy} and applying \eqref{eq:equiv_eta_eps} from Lemma \ref{lem:est_eta_eps}.

 \section{Evolution of the energy} \label{sec:en_diss}

In this section, we prove Proposition \ref{prop:est_dt_en}. The evolution of the energy is already known from \cite{Cote_Ignat__stab_DW_LLG_DM}. We recall it here.

\begin{lem}[{\cite[Theorem~4.1]{Cote_Ignat__stab_DW_LLG_DM}}]
Define the \emph{dissipation term}
\index{Functionals!$D$: dissipation term}
\begin{equation*}
    D(t) \coloneqq \int (\abs{\delta E (m)}^2 - \abs{m \cdot \delta E (m)}^2 ) \diff x,
\end{equation*}
and the \emph{forcing term}
\begin{equation*}
\index{Functionals!$F$: force term related to the field $h$}
    F(t) \coloneqq \int (m \wedge e_1) \cdot (m \wedge \delta E (m)) \diff x.
\end{equation*}
Then there hold the energy dissipation equality:
\begin{equation} \label{eq:en_diss}
    \frac{\diff}{\diff t} E(m(t)) = - \alpha D(t) + \alpha h(t) F(t) .
\end{equation}
\end{lem}

\subsection{Localisation}

Proceeding as before, we localise each of these terms :
\begin{gather*} 
\index{Functionals!$D^\pm$: localized dissipation term, with transition around $0$}
\index{Functionals!$F^\pm$: localized force term, with transition around $0$}
    D^\pm  (m) \coloneqq \int (\abs{\delta E (m)}^2 - \abs{m \cdot \delta E (m)}^2 ) \psi_R (\pm x) \diff x, \\
    F^\pm  (m) \coloneqq \int (m \wedge e_1) \cdot (m \wedge \delta E (m)) \psi_R (\pm x) \diff x, \\
\end{gather*}
Therefore, we have $F = F^+ + F^-$ and $D = D^+ + D^-$.
We also define, in a similar way as previously,
\begin{gather*}
\index{Functionals!$\tilde D^\pm$: localized dissipation term, with transition not far from a domain wall}
\index{Functionals!$\tilde F^\pm$: localized force term, with transition not far from a domain wall}
    \tilde{D}^\pm (m) \coloneqq D^\pm (\tau_{y^\pm} m) = \int (\abs{\delta E (m)}^2 - \abs{m \cdot \delta E (m)}^2 ) \psi_R^\pm (x) \diff x, \\
    \tilde{F}^\pm (m) \coloneqq F^\pm (\tau_{y^+} m) = \int (m \wedge e_1) \cdot (m \wedge \delta E (m)) \psi_R^\pm (x) \diff x, \\
\end{gather*}

\subsection{Estimates on the localised terms}

\subsubsection{Dissipation term}

First, we show that the dissipation term is positive and can be estimated up to some negligible terms.

\begin{lem}
    Under the assumptions of Proposition \ref{prop:energy_exp}, there exists $C > 0$ such that, if $\norm{\varepsilon}_{H^1} \leq 1$,
    \begin{equation} \label{eq:first_est_diss}
        \abs{D^\pm (m) - \Bigl( \norm{L_\Gamma \nu^\pm}_{L^2 (\psi_R^\pm \diff x)}^2 + \norm{L_\Gamma \rho^\pm}_{L^2 (\psi_R^\pm \diff x)}^2 \Bigr)} \leq C \Bigl( \norm{\varepsilon}_{H^2}^2 (\norm{\varepsilon}_{H^1} + e^{\Gamma (R \pm y^\mp)}) + e^{3 \Gamma (R \pm y^\mp)} \Bigr).
    \end{equation}
\end{lem}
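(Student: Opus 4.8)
Since $|m|=1$, the integrand of $D^\pm$ is a perfect square: $|\delta E_\gamma(m)|^2-|m\cdot\delta E_\gamma(m)|^2=|\delta E_\gamma(m)-(m\cdot\delta E_\gamma(m))m|^2$, i.e. the squared norm of the component of $\delta E_\gamma(m)$ orthogonal to $m$; in particular $D^\pm\ge 0$. This density is invariant under the action of $G$, because $\delta E_\gamma$ is equivariant, $\delta E_\gamma(g.v)=g.\delta E_\gamma(v)$ (the energy $E_\gamma$ being $G$-invariant), and the action of $g$ preserves Euclidean norms and scalar products pointwise. Translating and rotating by $-g^\pm$, and recalling $\tilde m^\pm:=(-g^\pm).m=w_*^\pm+\eta^\pm$, we therefore obtain
\[
D^\pm(m)=\tilde D^\pm(\tilde m^\pm)=\int\bigl(|\delta E_\gamma(w_*^\pm+\eta^\pm)|^2-|(w_*^\pm+\eta^\pm)\cdot\delta E_\gamma(w_*^\pm+\eta^\pm)|^2\bigr)\psi_R^\pm(x)\diff x,
\]
so we are reduced to the one--domain--wall computation of \cite{Cote_Ignat__stab_DW_LLG_DM}, now carrying the extra bounded multiplier $\psi_R^\pm$ inside the integral. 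The decisive point, and the reason the localisation costs nothing here (no $1/R^2$ term, unlike in Proposition \ref{prop:energy_exp} or Lemma \ref{lem:loc_schr_op}), is that this density is a \emph{pointwise} square: no integration by parts is performed, so $\partial_x\psi_R^\pm$ never enters.

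Plugging $\tilde m^\pm=w_*^\pm+\eta^\pm$ into $\delta E_\gamma$ (which is linear) and using the pointwise expansions already recalled in the proof of Proposition \ref{prop:energy_exp} --- the splitting of $\delta E_\gamma(\eta^\pm)$ along the frame $(w_*^\pm,n_*^\pm,p_*^\pm)$, the relations \eqref{eq:eta_var_energy_w}--\eqref{eq:eta_var_energy_eta}, the identity $\Gamma^2-\beta_*=\Gamma^2(\cos^2\theta_*-\sin^2\theta_*)$ of \eqref{eq:link_beta_L}, and $\mu^\pm=-\tfrac12|\eta^\pm|^2$ --- a direct expansion of the two squares shows that all terms of order $0$ and $1$ in $\eta^\pm$ cancel, that the order-$2$ part reduces exactly to $(L_\Gamma\nu^\pm)^2+(L_\Gamma\rho^\pm)^2$ (the $w_*^\pm$-components of the linearisation, and the corrections proportional to $\beta_*$, cancelling out), and that the remainder $\mathcal R^\pm$ gathers only terms cubic and quartic in $\eta^\pm$ with at most $4$ derivatives in total:
\[
|\delta E_\gamma(\tilde m^\pm)|^2-|\tilde m^\pm\cdot\delta E_\gamma(\tilde m^\pm)|^2=(L_\Gamma\nu^\pm)^2+(L_\Gamma\rho^\pm)^2+\mathcal R^\pm,\qquad \mathcal R^\pm=O_3^3(\eta^\pm)+O_4^4(\eta^\pm).
\]
Multiplying by $\psi_R^\pm$ and integrating yields $D^\pm(m)=\|L_\Gamma\nu^\pm\|_{L^2(\psi_R^\pm\diff x)}^2+\|L_\Gamma\rho^\pm\|_{L^2(\psi_R^\pm\diff x)}^2+\int\mathcal R^\pm\psi_R^\pm\diff x$.

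It remains to estimate $\int\mathcal R^\pm\psi_R^\pm\diff x$. By Lemma \ref{lem:tech_o_lem}(3), $|\int O_3^3(\eta^\pm)\psi_R^\pm\diff x|\lesssim\|\eta^\pm\|_{H^1(\supp\psi_R^\pm)}^2\|\eta^\pm\|_{H^2(\supp\psi_R^\pm)}$ and $|\int O_4^4(\eta^\pm)\psi_R^\pm\diff x|\lesssim\|\eta^\pm\|_{H^1(\supp\psi_R^\pm)}^2\|\eta^\pm\|_{H^2(\supp\psi_R^\pm)}^2$; since $\|\varepsilon\|_{H^1}\le 1$ and \eqref{eq:ineg2} give $\|\eta^\pm\|_{H^1(\supp\psi_R^\pm)}\lesssim 1$, and $\|\eta^\pm\|_{H^1(\supp\psi_R^\pm)}\le\|\eta^\pm\|_{H^2(\supp\psi_R^\pm)}$, both contributions are $\lesssim\|\eta^\pm\|_{H^1(\supp\psi_R^\pm)}\|\eta^\pm\|_{H^2(\supp\psi_R^\pm)}^2$. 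Inserting \eqref{eq:ineg2} once more, $\|\eta^\pm\|_{H^k(\supp\psi_R^\pm)}\le\|\varepsilon\|_{H^k}+Ce^{\Gamma(R\pm y^\mp)}$, and expanding $(\|\varepsilon\|_{H^1}+Ce^{\Gamma(R\pm y^\mp)})(\|\varepsilon\|_{H^2}+Ce^{\Gamma(R\pm y^\mp)})^2$, every resulting cross term is absorbed into $\|\varepsilon\|_{H^2}^2(\|\varepsilon\|_{H^1}+e^{\Gamma(R\pm y^\mp)})+e^{3\Gamma(R\pm y^\mp)}$ by Young's inequality --- one uses $\|\varepsilon\|_{H^1}\le\|\varepsilon\|_{H^2}$ to dispose of the terms of type $e^{2\Gamma(R\pm y^\mp)}\|\varepsilon\|_{H^1}$, together with $e^{\Gamma(R\pm y^\mp)}\le 1$, which holds because $R\le L/2<y^+$ and $-L>y^-$ force $R\pm y^\mp<0$. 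This is \eqref{eq:first_est_diss}. The only genuinely delicate step is the algebraic cancellation in the pointwise expansion --- that the order-$\le 2$ part of $|\delta E_\gamma(\tilde m^\pm)|^2-|\tilde m^\pm\cdot\delta E_\gamma(\tilde m^\pm)|^2$ collapses precisely to $(L_\Gamma\nu^\pm)^2+(L_\Gamma\rho^\pm)^2$, with no surviving quadratic-in-$(\nu^\pm,\rho^\pm)$ term; this is exactly the computation of \cite{Cote_Ignat__stab_DW_LLG_DM}, which applies verbatim for $w_*^+$ and mutatis mutandis for $w_*^-$ (the sign changes stemming from $\sigma_1=-1$). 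Everything else is routine, precisely because the localisation is harmless.
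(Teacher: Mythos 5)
Your proof is correct and follows essentially the same route as the paper: reduce via the gauge to $\tilde{D}^\pm(w_*^\pm+\eta^\pm)$, invoke the pointwise expansions of \cite{Cote_Ignat__stab_DW_LLG_DM} so that the density collapses to $(L_\Gamma\nu^\pm)^2+(L_\Gamma\rho^\pm)^2$ plus cubic/quartic remainders (and, as you note, no integration by parts is performed, so the cutoff $\psi_R^\pm$ costs nothing), then control the remainder with Lemma \ref{lem:tech_o_lem} and pass from $\eta^\pm$ to $\varepsilon$ via \eqref{eq:ineg2}. The only slip is labelling the cubic remainder $O_3^3(\eta^\pm)$ rather than $O_4^3(\eta^\pm)$ (it contains products with two second derivatives of $\eta^\pm$), but this is immaterial since the bound $\norm{\eta^\pm}_{H^1(\supp\psi_R^\pm)}\norm{\eta^\pm}_{H^2(\supp\psi_R^\pm)}^2$ you end up using is exactly what Lemma \ref{lem:tech_o_lem} yields for $O_4^3$.
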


\begin{proof}
    First, we recall that
    \begin{equation*}
        m = g^+.(\eta^+ + w^+_*) = g^-.(\eta^- + w_*^-),
    \end{equation*}
    so that
    \begin{equation*}
        D^\pm (m) = \tilde{D}^\pm (\eta^\pm + w^\pm_*)
    \end{equation*}
    Once again, the pointwise estimate of the steps 3 and 5 of the proof of \cite[Proposition~4.16]{Cote_Ignat__stab_DW_LLG_DM} still holds here, so that we get:
    \begin{gather*}
        \abs{\delta E (\eta^\pm + w^\pm_*)}^2 - \abs{\delta E (w_*^\pm)}^2 = 2 \beta_* w_*^\pm \cdot \delta E (\eta^\pm) + \abs{\delta E (\eta^\pm)}^2, \\
        \begin{aligned}
        \abs{(\eta^\pm + w_*^\pm) \cdot \delta E (\eta^\pm + w_*^\pm)}^2 - \abs{w_*^\pm \cdot \delta E (w_*^\pm)}^2 & = 2 \beta_*^2 \mu^\pm + 2 \beta_* w_*^\pm \cdot \delta E (\eta^\pm) + 2 \beta_* \eta^\pm \cdot \delta E (\eta^\pm) \\
        & \qquad + \abs{w_*^\pm \cdot \delta E (\eta^\pm)}^2 + O_4^3 (\eta^\pm) + O_4^4 (\eta^\pm), 
        \end{aligned} \\
        \abs{w_*^\pm \cdot \delta E (\eta^\pm)}^2 = (2 \partial_{xx}^2 \theta_* \nu^\pm + 2 \partial_x \theta_* \partial_x \nu^\pm)^2 + O_4^3 (\eta^\pm) + O_4^4 (\eta^\pm), \\
        \abs{\delta E (\eta^\pm)}^2 = (2 \partial^2_{xx} \theta_* \nu^\pm + 2 \partial_x \theta_* \partial_x \nu^\pm)^2 + (- \partial_{xx}^2 \nu^\pm + \Gamma^2 \nu^\pm)^2 + (- \partial_{xx}^2 \rho^\pm + \Gamma^2 \rho^\pm)^2 + O_4^3 (\eta^\pm) + O_4^4 (\eta^\pm).
    \end{gather*}
    We also recall that $\mu^\pm = - \frac{1}{2} ((\nu^\pm)^2 + (\rho^\pm)^2) - \frac{1}{8} \abs{\eta^\pm}^4$ and that $\abs{\delta E (w_*^\pm)} = {w_*^\pm \cdot \delta E (w_*^\pm)} = \beta_*$. Therefore:
    \begin{equation*}
        D^\pm (m) = \int \Bigl[ ( - \partial_{xx} \nu^\pm + \Gamma^2 \nu^\pm - \beta_* \nu^\pm )^2 + ( - \partial_{xx} \rho^\pm + \Gamma^2 \rho^\pm - \beta_* \rho^\pm )^2 + O_4^3 (\eta^\pm) + O_4^4 (\eta^\pm) \Bigr] \psi_R^\pm (x) \diff x.
    \end{equation*}
    Using \eqref{eq:link_beta_L} and Lemma \ref{lem:tech_o_lem}, we get
    \begin{multline*}
        D^\pm (m) - \int \Bigl[ \abs{L_\Gamma \nu^\pm}^2 + \abs{L_\Gamma \rho^\pm}^2 \Bigr] \psi_R^\pm (x) \diff x \\ = O \Bigl( \norm{\eta^\pm}_{H^1 (\supp \psi_R^\pm)} \norm{\eta^\pm}_{H^2 (\supp \psi_R^\pm)}^2 \Bigr) + O \Bigl( \norm{\eta^\pm}_{H^1 (\supp \psi_R^\pm)}^2 \norm{\eta^\pm}_{H^2 (\supp \psi_R^\pm)}^2 \Bigr).
    \end{multline*}
    The conclusion follows from \eqref{eq:ineg2} and the fact that both $\norm{\varepsilon}_{H^1} \leq 1$ and $e^{\Gamma (R - y^\pm)} \leq 1$.
\end{proof}

Once again, the localisation remains in the quadratic terms of the left-hand side. 
Applying Lemma \ref{lem:loc_schr_op} to both $\nu^\pm$ and $\rho^\pm$, along with Lemma \ref{lem:est_eta_eps}, we obtain the following estimates:

\begin{cor} \label{cor:est_F1}
    Under the assumptions of Proposition \ref{prop:energy_exp}, there exists $C, \lambda_5 > 0$ such that, for $R \geq 1$,
    \begin{multline*}
        D^\pm \geq \lambda_5 \norm{\eta^\pm}_{H^2 (\psi_R^\pm \diff x)}^2 - C \Bigl( \norm{\varepsilon}_{H^2}^2 (\norm{\varepsilon}_{H^1} + e^{\Gamma (R \pm y^\mp)} + \frac{1}{R}) + e^{2 \Gamma (R \pm y^\mp)} \Bigr) \\ - \frac{1}{\lambda_0} \Bigl( \int \sqrt{\psi_R^\pm} \nu^\pm \sin (\theta_*) \diff x \Bigr)^2 - \frac{1}{\lambda_0} \Bigl( \int \sqrt{\psi_R^\pm} \rho^\pm \sin (\theta_*) \diff x \Bigr)^2,
    \end{multline*}
    \begin{multline}
        D \geq \lambda_5 \norm{\varepsilon}_{H^2}^2 - C \Bigl( \norm{\varepsilon}_{H^2}^2 (\norm{\varepsilon}_{H^1} + e^{\Gamma (R - y^+)} + e^{\Gamma (R + y^-)} + \frac{1}{R}) + e^{2 \Gamma (R - y^+)} + e^{2 \Gamma (R + y^-)} \Bigr) \\ - \frac{1}{\lambda_0} \sum_\iota \Bigl( \int \sqrt{\psi_R^\iota} \nu^\iota \sin (\theta_*) \diff x \Bigr)^2 + \Bigl( \int \sqrt{\psi_R^\iota} \rho^\iota \sin (\theta_*) \diff x \Bigr)^2.
    \end{multline}
\end{cor}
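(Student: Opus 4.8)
The plan is to feed the expansion \eqref{eq:first_est_diss} of the preceding lemma into the localised coercivity of $L_\Gamma$ from Lemma \ref{lem:loc_schr_op}, and then to translate everything back to $\varepsilon$ by means of Lemma \ref{lem:est_eta_eps}. First I would rewrite \eqref{eq:first_est_diss} as a one‑sided bound: since $\norm{\varepsilon}_{H^1}\le 1$ and all the exponents $R\pm y^\mp$ are negative (because $R<L<\min(y^+,-y^-)$), one has $e^{3\Gamma(R\pm y^\mp)}\le e^{2\Gamma(R\pm y^\mp)}$, and hence
\begin{equation*}
D^\pm \ \ge\ \norm{L_\Gamma\nu^\pm}_{L^2(\psi_R^\pm\diff x)}^2 + \norm{L_\Gamma\rho^\pm}_{L^2(\psi_R^\pm\diff x)}^2 \ -\ C\Bigl(\norm{\varepsilon}_{H^2}^2\bigl(\norm{\varepsilon}_{H^1}+e^{\Gamma(R\pm y^\mp)}\bigr)+e^{2\Gamma(R\pm y^\mp)}\Bigr).
\end{equation*}
As $\varepsilon\in H^2$, Lemma \ref{lem:est_eta_eps} (through Lemma \ref{lem:expand_eta}) gives $\nu^\pm,\rho^\pm\in H^2$, so Lemma \ref{lem:loc_schr_op} may be applied with $f=\nu^\pm$ and with $f=\rho^\pm$, the weight $\psi_0$ being the translate $\psi_R^\pm$ of $\psi_R$.

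Next, adding the second inequality of Lemma \ref{lem:loc_schr_op} for $\nu^\pm$ and for $\rho^\pm$ produces the main term $4\lambda_0\bigl(\norm{\nu^\pm}_{H^2(\psi_R^\pm\diff x)}^2+\norm{\rho^\pm}_{H^2(\psi_R^\pm\diff x)}^2\bigr)$, the two ``almost orthogonality'' corrections $-\frac{1}{\lambda_0}\bigl(\int\sqrt{\psi_R^\pm}\nu^\pm\sin\theta_*\,\diff x\bigr)^2$ and $-\frac{1}{\lambda_0}\bigl(\int\sqrt{\psi_R^\pm}\rho^\pm\sin\theta_*\,\diff x\bigr)^2$, and a localisation remainder $\frac{C}{R}\bigl(\norm{\nu^\pm}_{H^1(\supp\partial_x\psi_R^\pm)}^2+\norm{\rho^\pm}_{H^1(\supp\partial_x\psi_R^\pm)}^2\bigr)$. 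I would bound the last one by $\tfrac{C}{R}\norm{\eta^\pm}_{H^1(\supp\psi_R^\pm)}^2$ using \eqref{eq:equiv_eta_nu_rho}, and then by $\tfrac{C}{R}\norm{\varepsilon}_{H^1}^2+C e^{2\Gamma(R\pm y^\mp)}$ using \eqref{eq:ineg2}; the contribution $\tfrac1R\norm{\varepsilon}_{H^1}^2\le\tfrac1R\norm{\varepsilon}_{H^2}^2$ is exactly the $\tfrac1R$ term appearing in the statement, and the exponential is of the admissible type.

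It then remains to replace $(\nu^\pm,\rho^\pm)$ by $\eta^\pm$ itself. Writing $\eta^\pm=\mu^\pm w_*^\pm+\nu^\pm n_*^\pm+\rho^\pm p_*^\pm$ with $\mu^\pm=O_0^2(\eta^\pm)$ (Lemma \ref{lem:expand_eta}) and using that the frame vectors and their derivatives are bounded, a direct computation gives $\norm{\eta^\pm}_{H^2(\psi_R^\pm\diff x)}\le\norm{\nu^\pm}_{H^2(\psi_R^\pm\diff x)}+\norm{\rho^\pm}_{H^2(\psi_R^\pm\diff x)}+C\norm{\eta^\pm}_{H^1(\supp\psi_R^\pm)}\norm{\eta^\pm}_{H^2(\supp\psi_R^\pm)}$, and the last, cubic term is absorbed via \eqref{eq:ineg2} into an error of the claimed form. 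Choosing $\lambda_5$ a fixed multiple of $\lambda_0$ then yields the first displayed inequality. For the global bound I would sum the $\iota=+$ and $\iota=-$ estimates, bound each $e^{\Gamma(R\pm y^\mp)}$ by $e^{\Gamma(R-y^+)}+e^{\Gamma(R+y^-)}$, and use \eqref{eq:equiv_eta_eps} in the squared form $\norm{\eta^+}_{H^2(\psi_R^+\diff x)}^2+\norm{\eta^-}_{H^2(\psi_R^-\diff x)}^2\ge\tfrac12\norm{\varepsilon}_{H^2}^2-C\bigl(e^{2\Gamma(R-y^+)}+e^{2\Gamma(R+y^-)}\bigr)$ to convert the sum of the two coercive terms into $\tfrac{\lambda_5}{2}\norm{\varepsilon}_{H^2}^2$ up to admissible errors.

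Almost all of this is bookkeeping: keeping straight the three kinds of norms involved (the weighted $H^k(\psi_R^\pm\diff x)$, the support norms $H^k(\supp\psi_R^\pm)$, and the global $H^k$ of $\varepsilon$) and the conversions between $\varepsilon$, $\eta^\pm$ and $(\nu^\pm,\rho^\pm)$. The one genuinely substantive point — and the step I expect to be the main obstacle — is the passage in the third paragraph: one must verify that the quadratic component $\mu^\pm$ contributes only a cubic error, so that coercivity survives for the full $\eta^\pm$ (equivalently for $\varepsilon$) rather than merely for $(\nu^\pm,\rho^\pm)$; this is precisely where the smallness of $\norm{\varepsilon}_{H^1}$ is used.
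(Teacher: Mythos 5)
Your proposal is correct and follows essentially the same route as the paper: the paper obtains the corollary precisely by combining \eqref{eq:first_est_diss} with the localised coercivity of Lemma \ref{lem:loc_schr_op} applied to $\nu^\pm$ and $\rho^\pm$, and then using Lemma \ref{lem:est_eta_eps} to pass to $\eta^\pm$ and $\varepsilon$. Your extra bookkeeping (absorbing the $\mu^\pm = O_0^2(\eta^\pm)$ contribution as a cubic error and summing the two localised bounds via \eqref{eq:equiv_eta_eps}) is exactly the content the paper leaves implicit.
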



\subsubsection{Forcing term}

In a second step, we show that the forcing term is negligible enough with respect to the dissipation term.

\begin{prop}
    Under the assumptions of Proposition \ref{prop:energy_exp}, there exists $C > 0$ such that, if $\norm{\varepsilon}_{H^1} \leq 1$,
    \begin{equation*}
        \abs{F^\pm (m)} \leq C (\norm{\varepsilon}_{H^1}^2 + e^{2 \Gamma (R \pm y^\mp)}).
    \end{equation*}
\end{prop}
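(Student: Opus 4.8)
The key point is that the integrand $\mathcal{D}(m) \coloneqq (m \wedge e_1) \cdot (m \wedge \delta E_\gamma(m))$ is invariant under the action of $G$ — rotations about $e_1$ and translations both commute with $e_1 \wedge \,\cdot\,$ and with the linear operator $\delta E_\gamma$ — so that, with $\eta^\pm, \nu^\pm, \rho^\pm, \psi_R^\pm$ as in Lemma \ref{lem:est_eta_eps}, $F^\pm(m) = \int \mathcal{D}(w_*^\pm + \eta^\pm)\, \psi_R^\pm \diff x$. Since $\abs{w_*^\pm + \eta^\pm} = 1$, the pointwise identity $\mathcal{D}(v) = e_1 \cdot \delta E_\gamma(v) - (v \cdot e_1)(v \cdot \delta E_\gamma(v))$ holds; inserting $v = w_*^\pm + \eta^\pm$, using linearity of $\delta E_\gamma$, $\delta E_\gamma(w_*^\pm) = \beta_* w_*^\pm$ and $\mu^\pm = w_*^\pm \cdot \eta^\pm = \tfrac12 \abs{\eta^\pm}^2$, the $\eta^\pm$-independent part cancels (necessarily, since $w_*^\pm \wedge \delta E_\gamma(w_*^\pm) = 0$), and — this is the pointwise computation of steps~5--6 of the proof of \cite[Proposition~4.16]{Cote_Ignat__stab_DW_LLG_DM}, carried over verbatim — the integrand splits as
\[ \mathcal{D}(w_*^\pm + \eta^\pm) = -\sin\theta_*\, L_\Gamma \nu^\pm + O_2^2(\eta^\pm) + O_2^3(\eta^\pm). \]
The first-order part collapses to $-\sin\theta_*\,L_\Gamma \nu^\pm$ after substituting $\partial_x\theta_* = -\Gamma \sin\theta_*$ (recall \eqref{eq:theta*}) together with $e_1 \cdot n_*^\pm = -\sin\theta_*$, $e_1 \cdot p_*^\pm = 0$ (consequences of $e_1 \wedge w_*^\pm = \sin\theta_*\, p_*^\pm$ and $n_*^\pm = \tfrac{1}{\sin\theta_*}(\cos\theta_*\, w_*^\pm - e_1)$), and in particular $\rho^\pm$ does not enter at first order.

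For the higher-order part $\int (O_2^2 + O_2^3)(\eta^\pm)\, \psi_R^\pm \diff x$, a single integration by parts moves the lone second derivative — which sits inside $\delta E_\gamma(\eta^\pm)$ — onto the smooth bounded coefficients and onto $\psi_R^\pm$, whose derivatives are $O(1/R)$ and supported on $\supp \partial_x \psi_R^\pm$; Lemma \ref{lem:tech_o_lem}(2) then bounds this contribution by $C\bigl(\norm{\eta^\pm}_{H^1(\supp \psi_R^\pm)}^2 + \norm{\eta^\pm}_{H^1(\supp \psi_R^\pm)}^3\bigr)$. Since $\norm{\varepsilon}_{H^1} \le 1$ and $e^{\Gamma(R \mp y^\pm)} \le 1$, estimate \eqref{eq:ineg2} turns this into $C(\norm{\varepsilon}_{H^1}^2 + e^{2\Gamma(R \pm y^\mp)})$, already of the claimed size.

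There remains the first-order contribution $-(L_\Gamma \nu^\pm, \sin\theta_*)_{\psi_R^\pm}$. Integrating by parts twice and using $L_\Gamma \sin\theta_* = 0$ (Lemma \ref{lem:schr_op}), the potential terms cancel and one is left with the Wronskian identity
\[ (L_\Gamma \nu^\pm, \sin\theta_*)_{\psi_R^\pm} = \int \partial_x \psi_R^\pm\, \bigl( \partial_x \nu^\pm \sin\theta_* - \nu^\pm\, \partial_x \sin\theta_* \bigr) \diff x, \]
an integral supported on $\supp \partial_x \psi_R^\pm$. With the placement of the localisation used throughout this part of the paper (recall $R < L < \min(y^+, -y^-)$), this support lies, in the frame of $w_*^\pm$, deep in the region where $\theta_* \to \pi$, so that by Lemma \ref{lem:est_w_infty} both $\sin\theta_*$ and $\partial_x\sin\theta_*$ are $O(e^{\Gamma(R \mp y^\pm)})$ there; combined with $\norm{\nu^\pm}_{H^1(\supp \partial_x \psi_R^\pm)} \le \norm{\eta^\pm}_{H^1(\supp \psi_R^\pm)} \lesssim \norm{\varepsilon}_{H^1} + e^{\Gamma(R \pm y^\mp)}$ (Lemma \ref{lem:est_eta_eps}), Cauchy--Schwarz together with \eqref{eq:est_phi_A} and Young's inequality bound this term by $C\bigl(\norm{\varepsilon}_{H^1}^2 + e^{2\Gamma(R \pm y^\mp)} + e^{2\Gamma(R \mp y^\pm)}\bigr)$, which is of the desired form (the symmetric exponential $e^{2\Gamma(R \mp y^\pm)}$ thus produced being in any case harmless in Section~\ref{sec:en_diss}). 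I expect the main obstacle to be the first-order identity: it is precisely the shape $-\sin\theta_*\, L_\Gamma \nu^\pm$ that lets the potentially linear term be neutralised through the kernel relation $L_\Gamma\sin\theta_* = 0$ — equivalently, the vanishing of the Wronskian in the bulk — leaving only an exponentially small boundary layer; once this and the pointwise expansions of \cite{Cote_Ignat__stab_DW_LLG_DM} are in hand, the rest is routine bookkeeping of negligible terms, along the lines of Proposition \ref{prop:energy_exp} and its corollaries.
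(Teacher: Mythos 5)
Your proof follows essentially the same route as the paper's: pass to the wall's frame, expand the integrand so that the part linear in $\eta^\pm$ collapses to $\pm\sin\theta_*\,L_\Gamma\nu^\pm$ while the rest is $O_2^2(\eta^\pm)+O_2^3(\eta^\pm)$, exploit $L_\Gamma\sin\theta_*=0$ so that only commutator terms supported on $\supp\partial_x\psi_R^\pm$ survive, and bound everything through Lemma \ref{lem:tech_o_lem}, \eqref{eq:est_phi_A}, \eqref{eq:ineg2} and the exponential smallness of $\sin\theta_*$ on that support — your scalar Lagrange identity and Wronskian form are only cosmetic variants of the paper's cross-product expansion and double integration by parts (the sign of the linear term is immaterial). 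The additional exponential $e^{2\Gamma(R\mp y^\pm)}$ you obtain also comes out of the paper's own intermediate estimates (whose $\pm/\mp$ bookkeeping is loose at this point) and, as you correctly note, is harmless where the proposition is applied in Section \ref{sec:en_diss}.
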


\begin{proof}
    Similarly, we have
    \begin{equation*}
        F^\pm (m) = \tilde{F}^\pm (\eta^\pm + w^\pm_*).
    \end{equation*}
    Again, we will take advantage of the pointwise computations of \cite{Cote_Ignat__stab_DW_LLG_DM}.
    \begin{multline*}
        ((\eta^\pm + w^\pm_*) \wedge e_1) \cdot ((\eta^\pm + w^\pm_*) \wedge \delta E (\eta^\pm + w^\pm_*)) = (w_*^\pm \wedge e_1) \cdot (w_*^\pm \wedge \delta E (w_*^\pm)) + (\eta^\pm \wedge e_1) \cdot (w_*^\pm \wedge \delta E (w_*^\pm)) \\ + (w_*^\pm \wedge e_1) \cdot (\eta^\pm \wedge \delta E (w_*^\pm) + w_*^\pm \wedge \delta E (\eta^\pm)) + O_2^2 (\eta^\pm) + O_2^3 (\eta^\pm).
    \end{multline*}
    Since $w_*^\pm \wedge \delta E (w_*^\pm) = 0$, the first two terms are 0. Then, observe that $w_*^\pm \wedge e_1 = \sin (\theta_*) p_*^\pm$ and $n_*^\pm \wedge w_*^\pm = - p_*^\pm$, thus
    \begin{align*}
        (w_*^\pm \wedge e_1) \cdot (\eta^\pm \wedge \delta E (w_*^\pm)) &= \sin (\theta_*) p_*^\pm \cdot (\eta^\pm \wedge \beta_* w_*^\pm) = - \beta_* \sin \theta_* \nu^\pm, \\
        (w_*^\pm \wedge e_1) \cdot (w_*^\pm \wedge \delta E (\eta^\pm)) &= \sin \theta_* (- \partial_{xx}^2 \nu^\pm + \Gamma^2 \nu^\pm) + O_2^2(\eta^\pm).
    \end{align*}
    Using \eqref{eq:link_beta_L}, we obtain
    \begin{equation*}
        ((\eta^\pm + w^\pm_*) \wedge e_1) \cdot ((\eta^\pm + w^\pm_*) \wedge \delta E (\eta^\pm + w^\pm_*)) = \sin \theta_* (L_\Gamma \nu^\pm) + O_2^2(\eta^\pm) + O_2^3 (\eta^\pm).
    \end{equation*}
    Moreover, we know that $L_\Gamma (\sin \theta_*) = 0$, therefore
    \begin{equation*}
        \int \sin \theta_* L_\Gamma \nu^\pm \psi_R^\pm (x) \diff x = 2 \int \partial_x (\sin \theta_*) \nu^\pm \partial_x \psi_R^\pm (x) \diff x + \int \sin \theta_* \nu^\pm \partial^2_{xx} \psi_R^\pm (x) \diff x.
    \end{equation*}
    Estimating these terms by using the fact that $\supp \partial_x \psi_R^\pm \subset [- y^\pm - R, - y^\pm + R]$ (and the same for $\partial^2_{xx} \psi_R^\pm$), we get
    \begin{align*}
        \abs{\int \partial_x (\sin \theta_*) \nu^\pm \partial_x \psi_R^\pm (x) \diff x} &\leq \frac{\norm{\partial_x \psi}_{L^\infty}}{R} \norm{\partial_x \sin \theta_*}_{L^2 ((- y^\pm - R, - y^\pm + R))} \norm{\nu^\pm}_{L^2 ((- y^\pm - R, - y^\pm + R))} \\ &\leq C \frac{\norm{\partial_x \psi}_{L^\infty}}{R} e^{\Gamma (R \pm y^\mp)} \norm{\nu^\pm}_{L^2 ((- y^\pm - R, - y^\pm + R))}, \\
        \abs{\int \sin \theta_* \nu^\pm \partial^2_{xx} \psi_R^\pm (x) \diff x} &\leq \frac{\norm{\partial^2_{xx} \psi}_{L^\infty}}{R^2} \norm{\sin \theta_*}_{L^2 ((- y^\pm - R, - y^\pm + R))} \norm{\nu^\pm}_{H^1 ((- y^\pm - R, - y^\pm + R))} \\ &\leq C \frac{\norm{\partial^2_{xx} \psi}_{L^\infty}}{R^2} e^{\Gamma (R \pm y^\mp)} \norm{\nu^\pm}_{H^1 ((- y^\pm - R, - y^\pm + R))}.
    \end{align*}
    Thus, we obtain
    \begin{equation*}
        \abs{\int \sin \theta_* L_\Gamma \nu^\pm \psi_R^\pm (x) \diff x} \leq \frac{C}{R^2} \Bigl( \norm{\nu^\pm}_{H^1 ((- y^\pm - R, - y^\pm + R))}^2 + e^{2 \Gamma (R \pm y^\mp)} \Bigr),
    \end{equation*}
    and the conclusion follows in the same way, using again Lemma \ref{lem:tech_o_lem}.
\end{proof}

By summing for $\iota = \pm 1$, we get an estimate for $F$.

\begin{cor} \label{cor:est_F2}
    Under the assumptions of Proposition \ref{prop:energy_exp}, there exists $C > 0$ such that, if $\norm{\varepsilon}_{H^1} \leq 1$,
    \begin{equation}
        \abs{F (m)} \leq C (\norm{\varepsilon}_{H^1}^2 + e^{2 \Gamma (R - y^+)} + e^{2 \Gamma (R + y^-)})
    \end{equation}
\end{cor}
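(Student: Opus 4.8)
The plan is to obtain the global bound on $F$ by simply adding the two localised estimates for $F^{\pm}$ that were just established. The structural fact making this immediate is that the cut-off was constructed in Section~\ref{sec:local} to satisfy $\psi(x)+\psi(-x)\equiv 1$, hence $\psi_R(x)+\psi_R(-x)\equiv 1$; by the very definition of the localised forcing terms this gives $F(m)=F^+(m)+F^-(m)$ for every admissible $m$, a relation already recorded in Section~\ref{sec:en_diss}.

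Granting this, I would write $\abs{F(m)}\le \abs{F^+(m)}+\abs{F^-(m)}$ by the triangle inequality and invoke the preceding Proposition, whose hypotheses (those of Proposition~\ref{prop:energy_exp} together with $\norm{\varepsilon}_{H^1}\le 1$) are exactly those assumed here. Specialising the signs, the $F^+$ estimate contributes $C\bigl(\norm{\varepsilon}_{H^1}^2+e^{2\Gamma(R+y^-)}\bigr)$ and the $F^-$ estimate contributes $C\bigl(\norm{\varepsilon}_{H^1}^2+e^{2\Gamma(R-y^+)}\bigr)$; summing and renaming the constant yields
\[
\abs{F(m)}\le C\bigl(\norm{\varepsilon}_{H^1}^2+e^{2\Gamma(R-y^+)}+e^{2\Gamma(R+y^-)}\bigr),
\]
which is the claim.

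At the level of this Corollary there is essentially no obstacle: all the analytic content sits in the per-half estimate. The only point deserving a moment of care is the sign bookkeeping, namely checking that the exponential $e^{2\Gamma(R\pm y^{\mp})}$ attached to $F^{\pm}$ does produce the two genuinely small quantities $e^{2\Gamma(R-y^+)}$ and $e^{2\Gamma(R+y^-)}$ — which it does, since $y^+>L>R$ and $y^-<-L<-R$ force both exponents to be large and negative. The real work — the cancellation of the zeroth- and first-order terms of the integrand through $w_*^{\pm}\wedge\delta E_\gamma(w_*^{\pm})=0$, the identification of the surviving term as $\sin\theta_*\,L_\Gamma\nu^{\pm}$, and the $O(R^{-2})$-smallness of its localised integral obtained by transferring derivatives onto $\psi_R^{\pm}$ via $L_\Gamma(\sin\theta_*)=0$ and \eqref{eq:est_phi_A} — is entirely carried out in the Proposition preceding this Corollary, and is what makes the present statement a one-line consequence that will later combine with the dissipation lower bound in the proof of Proposition~\ref{prop:est_dt_en}.
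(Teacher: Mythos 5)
Your argument is correct and is exactly the paper's proof: since $\psi_R(x)+\psi_R(-x)\equiv 1$ gives $F=F^++F^-$, the corollary follows by summing the two localised estimates $\abs{F^\pm(m)}\le C\bigl(\norm{\varepsilon}_{H^1}^2+e^{2\Gamma(R\pm y^\mp)}\bigr)$ from the preceding proposition, with the sign bookkeeping as you state.
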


\subsection{Dissipation estimate}

Now, we prove Proposition \ref{prop:est_dt_en} thanks to the previous lemmas.

    Recall the energy dissipation equation \eqref{eq:en_diss}. From the estimates of Corollaries \ref{cor:est_F1} and \ref{cor:est_F2} on $D$ and $F$ respectively, we get
    \begin{multline*}
         \frac{\diff}{\diff t} E (m) + \lambda_5 \norm{\varepsilon}_{H^2}^2  \leq C \alpha \norm{\varepsilon}_{H^2}^2 \Bigl( \norm{\varepsilon}_{H^1} + e^{\Gamma (R - y^+)} + e^{\Gamma (R + y^-)} + \frac{1}{R} \Bigr) + \abs{h(t)} \norm{\varepsilon}_{H^1}^2 \\
                 + C \alpha \abs{h(t)} \Bigl( e^{2 \Gamma (R - y^+)} + e^{2 \Gamma (R + y^-)} \Bigr)+ \frac{\alpha}{\lambda_0} \sum_{i=1}^2 \Bigl( \int \sqrt{\psi_R^\pm} \nu^\pm \sin (\theta_*) \diff x \Bigr)^2 + \Bigl( \int \sqrt{\psi_R^\pm} \rho^\pm \sin (\theta_*) \diff x \Bigr)^2.
    \end{multline*}
    The conclusion follows by applying Lemma \ref{lem:almst_orth} to the last terms of the right-hand side.

\section{Proof of the decomposition of the magnetization} \label{sec:decomp}

In this section, we prove Lemma \ref{lem:decomp_magn} which decomposes the magnetization with two nice gauges. This result is a consequence of the implicit function theorem; in order to take into account the constraint that the magnetization is $S^2$-valued, it  is used here in the context of Banach manifolds.

For $g^\pm \in G$ such that $\pm y^\pm \geq 0$, we define the profile 
\[ \index{Functions!$P_{g^+, g^-}$: two domain wall profile} P_{g^+, g^-} \coloneqq g^+ . w_*^+ + g^- . w_*^- + e_1. \]
We also call $P_{0, g^-} \coloneqq P_{(0,0), g^-}$.

\subsection{Definition and properties of the functional}

We start with defining and studying the appropriate functional: let
\begin{equation*}
\index{Functionals!$\q F, \overline{\q F}$: related to modulation}
    \overline{\mathcal{F}} : 
    \begin{aligned}[t]
        (L^2 + L^\infty) \times G^2 &\rightarrow \mathbb{R}^4 \\
        (m, g^+, g^-) &\mapsto 
            \begin{pmatrix}
                \int m \cdot (g^+ . \partial_x w_*^+) \diff x \\
                \int m \cdot (e_1 \wedge (g^+ . w_*^+)) \diff x \\
                \int m \cdot (g^- . \partial_x w_*^-) \diff x \\
                \int m \cdot (e_1 \wedge (g^- .  w_*^-)) \diff x
            \end{pmatrix}.
        \end{aligned}
\end{equation*}
and
\begin{equation*}
    \mathcal{F} : 
    \begin{aligned}[t]
        \mathcal{H}^1 \times G^2 &\rightarrow \mathbb{R}^4 \\
        (m, g^+, g^-) &\mapsto 
            \begin{pmatrix}
                \int \varepsilon \cdot (g^+ . \partial_x w_*^+) \diff x \\
                \int \varepsilon \cdot (e_1 \wedge (g^+ . w_*^+)) \diff x \\
                \int \varepsilon \cdot (g^- . \partial_x w_*^-) \diff x \\
                \int \varepsilon \cdot (e_1 \wedge (g^- .  w_*^-)) \diff x
            \end{pmatrix},
        \end{aligned}
\end{equation*}
where $\varepsilon \coloneqq m - P_{g^+, g^-} \in H^1$, so that $\mathcal{F} (m, g^+, g^-) = \overline{\mathcal{F}} (m - P_{g^+, g^-}, g^+, g^-)$. Remark also that $\overline{\mathcal{F}} ( \cdot , g^+, g^-)$ is linear.
%
%

\begin{prop} \label{prop:lin_est_F}
    There exists $C > 0$ such that there holds for all $m \in X$ and all $g^\pm \in G$
    \begin{equation*}
        \abs{\overline{\mathcal{F}} (m, g^+, g^-)} \leq C \norm{m}_{X},
    \end{equation*}
    for $X = L^2$ or $L^\infty$.
\end{prop}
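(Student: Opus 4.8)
The plan is to observe that each of the four components of $\overline{\mathcal{F}}(m,g^+,g^-)$ is the $L^2$ pairing of $m$ against a test function which, although it depends on $g^+$ or $g^-$, has $L^1$ and $L^2$ norms that are \emph{uniformly bounded in the gauge}. Once this is established, the bound for $X = L^2$ follows from the Cauchy--Schwarz inequality and the one for $X = L^\infty$ from Hölder's inequality, pairing $L^\infty$ with $L^1$.

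Concretely, write $g^\pm = (y^\pm,\phi^\pm)$, so that $g^\pm.\partial_x w_*^\pm = R_{\phi^\pm}\tau_{y^\pm}\partial_x w_*^\pm$ and, using that $R_\phi$ is a rotation about the axis $e_1$ (hence $R_\phi e_1 = e_1$ and $R_\phi(a\wedge b) = (R_\phi a)\wedge(R_\phi b)$ since $R_\phi \in SO(3)$), $e_1 \wedge (g^\pm.w_*^\pm) = R_{\phi^\pm}\tau_{y^\pm}(e_1 \wedge w_*^\pm)$. Since rotations and translations are isometries of every $L^p$, it suffices to check that $\partial_x w_*^\sigma$ and $e_1 \wedge w_*^\sigma$ belong to $L^1 \cap L^2$ with norms independent of $g$. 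For $\partial_x w_*^\sigma$ this is immediate from Lemma~\ref{lem:est_w_infty} with $j=1$, which gives $|\partial_x w_*^\sigma(x)| \le C e^{-\Gamma|x|}$. For $e_1 \wedge w_*^\sigma$, write $e_1 \wedge w_*^{(1,\sigma_2)} = e_1 \wedge (w_*^{(1,\sigma_2)} - e_1)$ for $x \ge 0$ and $e_1 \wedge w_*^{(1,\sigma_2)} = e_1 \wedge (w_*^{(1,\sigma_2)} + e_1)$ for $x \le 0$, so that Lemma~\ref{lem:est_w_infty} with $j=0$ yields $|e_1 \wedge w_*^\sigma(x)| \le C e^{-\Gamma|x|}$; the same holds for $w_*^{(-1,\sigma_2')}$, \emph{mutatis mutandis}. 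In particular each of the four test functions has $L^1$ and $L^2$ norms bounded by a constant depending only on $\gamma$.

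Putting this together: for $X = L^2$, each component is bounded by $\|m\|_{L^2}$ times the ($g$-uniform) $L^2$ norm of the corresponding test function; for $X = L^\infty$, each component is bounded by $\|m\|_{L^\infty}$ times the ($g$-uniform) $L^1$ norm of the corresponding test function. Summing the four contributions gives $|\overline{\mathcal{F}}(m,g^+,g^-)| \le C\|m\|_X$ as claimed. There is no real obstacle here: the content is entirely the exponential localisation of $\partial_x w_*^\sigma$ and $e_1 \wedge w_*^\sigma$ together with the invariance of $L^p$ norms under the action of $G$. The only point worth a (trivial) remark is the rewriting of $e_1 \wedge (g.w_*^\sigma)$ using that the rotations in $G$ fix the axis $e_1$, which is precisely what makes the estimate uniform in the angular parameter $\phi$.
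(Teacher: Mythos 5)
Your proof is correct and follows essentially the same route as the paper: the paper's own argument is exactly the observation that $\partial_x w_*^\pm$ and $e_1 \wedge w_*^\pm$ are bounded and exponentially decaying (Lemma \ref{lem:est_w_infty}), hence in $L^1 \cap L^2$, so each component is controlled by $\|m\|_{L^2}$ or $\|m\|_{L^\infty}$ via Cauchy--Schwarz or the $L^\infty$--$L^1$ pairing. You merely make explicit two points the paper leaves implicit, namely the invariance of the $L^1$ and $L^2$ norms of the test functions under the action of $G$ (using $R_\phi e_1 = e_1$) and the subtraction of $\pm e_1$ to see the decay of $e_1 \wedge w_*^\sigma$, which is a welcome but inessential elaboration.
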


\begin{proof}
   The result easily follows the fact that both $\partial_x w_*^{\pm}$ and $e_1 \wedge w_*^{\pm}$ are bounded and decay exponentially at infinity due to Lemma \ref{lem:est_w_infty}.
\end{proof}

\begin{cor}
    There exists $C > 0$ such that there holds for all $m, m' \in \mathcal{H}^1$ and all $g^\pm \in G$
    \begin{equation*}
        \abs{\mathcal{F} (m, g^+, g^-) - \mathcal{F} (m', g^+, g^-)} \leq C \norm{m - m'}_{X},
    \end{equation*}
    \begin{equation*}
        \abs{\mathcal{F} (m, g^+, g^-)} \leq C \norm{m - {P}_{g^+, g^-}}_{X},
    \end{equation*}
    for $X = L^2$ or $L^\infty$.
\end{cor}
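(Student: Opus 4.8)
The plan is to deduce both bounds directly from the linear estimate of Proposition~\ref{prop:lin_est_F}, using only the two structural facts recorded just above it: that $\overline{\mathcal{F}}(\cdot, g^+, g^-)$ is linear in its first argument, and that $\mathcal{F}(m, g^+, g^-) = \overline{\mathcal{F}}(m - P_{g^+,g^-}, g^+, g^-)$.

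First I would establish the difference estimate. For $m, m'$ in the relevant class (so that $m - P_{g^+,g^-}$ and $m' - P_{g^+,g^-}$ belong to $H^1 \subset L^2 \cap L^\infty$), their difference $m - m'$ lies in $X$ for $X = L^2$ or $L^\infty$. By linearity of $\overline{\mathcal{F}}(\cdot, g^+, g^-)$,
\[
\mathcal{F}(m, g^+, g^-) - \mathcal{F}(m', g^+, g^-) = \overline{\mathcal{F}}(m - P_{g^+,g^-}, g^+, g^-) - \overline{\mathcal{F}}(m' - P_{g^+,g^-}, g^+, g^-) = \overline{\mathcal{F}}(m - m', g^+, g^-),
\]
and Proposition~\ref{prop:lin_est_F} applied to $m - m' \in X$ gives $\abs{\mathcal{F}(m, g^+, g^-) - \mathcal{F}(m', g^+, g^-)} \le C \norm{m - m'}_X$, the first claimed inequality, with a constant $C$ independent of $g^\pm$.

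For the second inequality I would observe that $\mathcal{F}(P_{g^+,g^-}, g^+, g^-) = \overline{\mathcal{F}}(0, g^+, g^-) = 0$ by linearity, so it is the special case $m' = P_{g^+,g^-}$ of the first inequality; alternatively, and even more directly, $\abs{\mathcal{F}(m, g^+, g^-)} = \abs{\overline{\mathcal{F}}(m - P_{g^+,g^-}, g^+, g^-)} \le C \norm{m - P_{g^+,g^-}}_X$ is immediate from Proposition~\ref{prop:lin_est_F}.

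There is no genuine obstacle here: the statement is a formal consequence of linearity together with the already-proven linear bound. The only point deserving a line of justification is the membership $m - m' \in X$ (resp.\ $m - P_{g^+,g^-} \in X$), which is why one restricts to $m, m'$ for which the perturbations lie in $H^1$; everything else is bookkeeping with the definitions of $\mathcal{F}$ and $\overline{\mathcal{F}}$.
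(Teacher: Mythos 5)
Your argument is correct and is exactly the paper's: the identity $\mathcal{F}(m,g^+,g^-)-\mathcal{F}(m',g^+,g^-)=\overline{\mathcal{F}}(m-m',g^+,g^-)$ coming from linearity of $\overline{\mathcal{F}}(\cdot,g^+,g^-)$, combined with Proposition \ref{prop:lin_est_F}, is precisely how the paper deduces both bounds (the second being the case $m'=P_{g^+,g^-}$ or a direct application of the linear estimate to $m-P_{g^+,g^-}$). No gaps; nothing further is needed.
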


\begin{proof}
    Those estimates can be easily deduced from Proposition \ref{prop:lin_est_F} and the definition of $\mathcal{F}$, which gives in particular 
    \[ \mathcal{F} (m, g^+, g^-) - \mathcal{F} (m', g^+, g^-) = \overline{\mathcal{F}} (m - m', g^+, g^-). \qedhere \]
\end{proof}

Similarly, we prove a similar property for the partial differential $D_{g^+, g^-} \overline{\mathcal{F}}$, which is represented by a $4 \times 4$ matrix, endowed with a norm denoted $\| \cdot \|$.

\begin{lem} \label{lem:lin_est_DF}
    There exists $C > 0$ such that there holds for all $m \in X$ and all $g^\pm \in G$
    \begin{equation*}
        \norm{D_{g^+, g^-} \overline{\mathcal{F}} (m, g^+, g^-)} \leq C \norm{m}_{X},
    \end{equation*}
    for $X = L^2$ or $L^\infty$.
\end{lem}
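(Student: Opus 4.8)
The plan is to compute $D_{g^+,g^-}\overline{\mathcal F}(m,g^+,g^-)$ explicitly and to observe that each of its entries has exactly the same structure as a component of $\overline{\mathcal F}$ itself: the integral of $m$ against a bounded, exponentially localised test function. The bound will then follow from Hölder's inequality, verbatim as in the proof of Proposition~\ref{prop:lin_est_F}.

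First I would record the two infinitesimal generators of the $G$-action. Writing $g=(y,\phi)$ and $g.w=R_\phi\tau_y w$, and using that $R_\phi$ is the rotation about the $e_1$-axis (so $R_\phi e_1=e_1$ and $R_\phi(a\wedge b)=(R_\phi a)\wedge(R_\phi b)$, whence $R_\phi$ commutes with $v\mapsto e_1\wedge v$), one has
\[
\partial_y(g.w)=-\,g.(\partial_x w),\qquad \partial_\phi(g.w)=e_1\wedge(g.w)=g.(e_1\wedge w).
\]
Applying this to each of the four components of $\overline{\mathcal F}$, and noting that the $g^+$-components do not depend on $g^-$ (and conversely), so the $4\times4$ Jacobian is block diagonal, one finds that every entry of $D_{g^+,g^-}\overline{\mathcal F}(m,g^+,g^-)$ is, up to a sign, of the form $\int m\cdot (g^\pm.v)\diff x$ with $v$ one of
\[
\partial_{xx}^2 w_*^\pm,\qquad e_1\wedge\partial_x w_*^\pm,\qquad e_1\wedge(e_1\wedge w_*^\pm).
\]

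The key point is that each such $v$ is bounded and decays exponentially at $\pm\infty$: for $\partial_x w_*^\pm$ and $\partial_{xx}^2 w_*^\pm$ this is Lemma~\ref{lem:est_w_infty} with $j=1,2$, while $e_1\wedge w_*^{(1,\sigma_2)}=e_1\wedge(w_*^{(1,\sigma_2)}\mp e_1)$ near $\pm\infty$ (using $e_1\wedge e_1=0$) together with the case $j=0$ gives $|e_1\wedge(e_1\wedge w_*^\pm)|\le|e_1\wedge w_*^\pm|\lesssim e^{-\Gamma|x|}$. Since translations and the rotations $R_{\phi^\pm}$ are isometries of every $L^p(\m R,\m R^3)$, one has $\|g^\pm.v\|_{L^p}=\|v\|_{L^p}<\infty$ for $p\in\{1,2,\infty\}$, uniformly in $g^\pm\in G$. (Differentiability of $\overline{\mathcal F}(m,\cdot,\cdot)$ and validity of the formulas above come from differentiating under the integral sign, legitimate because for smooth $v$ with exponential decay $g\mapsto g.v$ is $\mathscr C^1$ into $L^1\cap L^2$, so the difference quotients are dominated.) It then remains to apply Hölder's inequality entry by entry: for $X=L^2$,
\[
\Bigl|\int m\cdot(g^\pm.v)\diff x\Bigr|\le\|m\|_{L^2}\|v\|_{L^2}\le C\|m\|_{L^2},
\]
and for $X=L^\infty$, $\bigl|\int m\cdot(g^\pm.v)\diff x\bigr|\le\|m\|_{L^\infty}\|v\|_{L^1}\le C\|m\|_{L^\infty}$; bounding the operator norm of the $4\times4$ matrix by the sum of the moduli of its entries yields $\|D_{g^+,g^-}\overline{\mathcal F}(m,g^+,g^-)\|\le C\|m\|_X$.

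There is no genuine obstacle here; it is a straightforward variant of Proposition~\ref{prop:lin_est_F}. The two points deserving a line of care are the commutation of $R_\phi$ with $e_1\wedge\,\cdot$ (needed to rewrite the $\phi$-derivatives as $g^\pm.v$ and so keep uniformity in $g^\pm$), and checking that $e_1\wedge w_*^\pm$ and $e_1\wedge(e_1\wedge w_*^\pm)$ inherit exponential decay at infinity from the convergence $w_*^{(1,\sigma_2)}\to\pm e_1$ as $x\to\pm\infty$.
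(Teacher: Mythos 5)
Your proposal is correct and follows essentially the same route as the paper: compute the partial derivatives of $\overline{\mathcal{F}}$ in $(y^\pm,\phi^\pm)$ explicitly (noting the block structure), observe that each entry pairs $m$ against $g^\pm.\partial^2_{xx}w_*^\pm$, $e_1\wedge(g^\pm.\partial_x w_*^\pm)$ or $e_1\wedge(e_1\wedge(g^\pm.w_*^\pm))$, which are bounded and exponentially decaying by Lemma \ref{lem:est_w_infty}, and conclude by Hölder as in Proposition \ref{prop:lin_est_F}. Your extra remarks (commutation of $R_\phi$ with $e_1\wedge\cdot$, decay of $e_1\wedge w_*^\pm$ via $e_1\wedge e_1=0$, uniformity in $g^\pm$) only make explicit what the paper leaves implicit.
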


\begin{proof}
    From the definition of $\overline{\mathcal{F}}$, we see that
    \begin{align*}
        \partial_{y^+} \overline{\mathcal{F}} (m, g^+, g^-) &= 
            \begin{pmatrix}
                - \int m \cdot (g^+ . \partial^2_{xx} w_*^+) \diff x \\
                - \int m \cdot (e_1 \wedge (g^+ . \partial_x w_*^+)) \diff x \\
                0 \\
                0
            \end{pmatrix} \quad \text{and} \quad
        \partial_{\phi^+} \overline{\mathcal{F}} (m, g^+, g^-) &= 
            \begin{pmatrix}
                \int m \cdot (e_1 \wedge (g^+ . \partial_x w_*^+)) \diff x \\
                \int m \cdot (e_1 \wedge (e_1 \wedge (g^+ . w_*^+))) \diff x \\
                0 \\
                0
            \end{pmatrix}.
    \end{align*}
    Therefore, since $\partial^2_x w_*^+$, $e_1 \wedge \partial_x w_*^+$ and $e_1 \wedge w_*^+$ decay exponentially at infinity thanks to Lemma \ref{lem:est_w_infty}, we get the conclusion for these two differentials. As for $\partial_{y^-} \overline{\mathcal{F}}$ and $\partial_{\phi^-} \overline{\mathcal{F}}$, the same arguments give the conclusion.
\end{proof}

Let also define $\mathcal{F}_0 (\zeta^+, \zeta^-, g^+, g^-) \coloneqq \overline{\mathcal{F}} (P_{\zeta^+, \zeta^-}, g^+, g^-)$\index{Functions!$\q F_0$: functional $\q F$ evaluated at a given 2 domain wall profile $P$ and gauge $g$} for any $\zeta^\iota, g^\iota \in G$, and

\begin{equation*}
\index{Constants!$A$: leading order matrix of the differential of $\q F$ with respect to the gauges}
    A \coloneqq \frac{2}{\Gamma} 
            \begin{pmatrix}
                1 & \gamma & 0 & 0 \\
                - \gamma & -1 & 0 & 0 \\
                0 & 0 & 1 & \gamma \\
                0 & 0 & - \gamma & -1
            \end{pmatrix}
\end{equation*}
We point out that $A$ is invertible as soon as $\gamma^2 < 1$.

\begin{lem} \label{lem:est_DF_P}
    For all $g^\pm \in G$, there holds
    \begin{equation*}
        \norm{D_{\zeta^+, \zeta^-} \mathcal{F}_0 (g^+, g^-, g^+, g^-) + A} \leq C q(y^+ - y^-).
    \end{equation*}
\end{lem}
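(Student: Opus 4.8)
The plan is to compute the $4\times4$ Jacobian $D_{\zeta^+,\zeta^-}\mathcal F_0(g^+,g^-,g^+,g^-)$ block by block and check that it reproduces $-A$ up to the claimed exponential error. The starting point is that $\overline{\mathcal F}(\cdot,g^+,g^-)$ is linear and $P_{\zeta^+,\zeta^-}=\zeta^+.w_*^++\zeta^-.w_*^-+e_1$, so only the first summand depends on $\zeta^+$ and only the second on $\zeta^-$; writing $\partial_{\zeta^+}$ (resp. $\partial_{\zeta^-}$) for the pair of partials in $(y^+,\phi^+)$ (resp. $(y^-,\phi^-)$), this gives
\[
\partial_{\zeta^+}\mathcal F_0=\overline{\mathcal F}\!\left(\partial_{\zeta^+}(\zeta^+.w_*^+),\,g^+,g^-\right),\qquad
\partial_{\zeta^-}\mathcal F_0=\overline{\mathcal F}\!\left(\partial_{\zeta^-}(\zeta^-.w_*^-),\,g^+,g^-\right).
\]
Differentiating the group action and evaluating at $\zeta^\pm=g^\pm$, one has, using $\partial_y\tau_y=-\tau_y\partial_x$ and $\partial_\phi R_\phi|_0 v=e_1\wedge v$,
\[
\partial_{y^\pm}(\zeta^\pm.w_*^\pm)=-\,g^\pm.\partial_x w_*^\pm,\qquad
\partial_{\phi^\pm}(\zeta^\pm.w_*^\pm)=g^\pm.(e_1\wedge w_*^\pm).
\]

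First I would handle the two ``diagonal'' $2\times2$ blocks: the derivatives in $\zeta^+$ of the first two components of $\mathcal F_0$, and those in $\zeta^-$ of the last two. Each such entry is of the form $\pm\int(g^\pm.a)\cdot(g^\pm.b)\,dx$ with $a,b\in\{\partial_x w_*^\pm,\ e_1\wedge w_*^\pm\}$. Since $R_\phi$ and $\tau_y$ are $L^2$-isometries and since rotations about $e_1$ commute with $v\mapsto e_1\wedge v$ (so that $e_1\wedge(g.w)=g.(e_1\wedge w)$), these reduce to $\pm\int a\cdot b\,dx$, which are evaluated by Lemma~\ref{lem:int_w}: $\|\partial_x w_*^\pm\|_{L^2}^2=\|e_1\wedge w_*^\pm\|_{L^2}^2=2/\Gamma$ and $\int(e_1\wedge w_*^\pm)\cdot\partial_x w_*^\pm\,dx=-2\gamma/\Gamma$. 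Keeping track of the signs coming from the two formulas above, each of these blocks equals exactly $-\tfrac{2}{\Gamma}\bigl(\begin{smallmatrix}1&\gamma\\-\gamma&-1\end{smallmatrix}\bigr)$, i.e. precisely the corresponding diagonal block of $-A$, with no remaining error.

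Next I would bound the two ``off-diagonal'' blocks, which account for the entire error term. A generic entry there is $\pm\int(g^+.a)\cdot(g^-.b)\,dx$ with $a\in\{\partial_x w_*^+,e_1\wedge w_*^+\}$ and $b\in\{\partial_x w_*^-,e_1\wedge w_*^-\}$. By Lemma~\ref{lem:est_w_infty} (for $j=0,1$, and using that $\partial_x w_*^\pm$ and $e_1\wedge w_*^\pm$ decay like $e^{-\Gamma|\cdot|}$ on all of $\mathbb{R}$), one has $|g^+.a(x)|\le Ce^{-\Gamma|x-y^+|}$ and $|g^-.b(x)|\le Ce^{-\Gamma|x-y^-|}$, whence
\[
\left|\int(g^+.a)\cdot(g^-.b)\,dx\right|\le C\int_{\mathbb{R}}e^{-\Gamma|x-y^+|}\,e^{-\Gamma|x-y^-|}\,dx\le C\,(1+y^+-y^-)\,e^{-\Gamma(y^+-y^-)}\le C\,(y^+-y^-)\,e^{-\Gamma(y^+-y^-)},
\]
the last step using $y^+-y^-\ge y_\textnormal{min}>0$; equivalently this is the fourth pointwise estimate of Corollary~\ref{cor:est_w_orthogonal}, integrated over the three zones $x\ge y^+$, $y^-\le x\le y^+$, $x\le y^-$. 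All eight off-diagonal entries obey this bound, and $A$ vanishes on these blocks, so $D_{\zeta^+,\zeta^-}\mathcal F_0(g^+,g^-,g^+,g^-)+A$ has zero diagonal blocks and off-diagonal entries of size $O\bigl((y^+-y^-)e^{-\Gamma(y^+-y^-)}\bigr)$; taking any matrix norm then gives the claim.

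The argument carries no genuine analytic difficulty — it is a finite computation — and the only point that really needs care is the sign and index bookkeeping required to see that the diagonal blocks reproduce exactly $-A$ (rather than $A$, or its transpose). The three structural ingredients that make it work cleanly, namely invariance of the $L^2$ inner product under the action of $G$, commutation of that action with $e_1\wedge(\cdot)$, and the explicit integrals of Lemma~\ref{lem:int_w}, are all already available.
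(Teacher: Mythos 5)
Your proposal is correct and follows essentially the same route as the paper: differentiate the group action to get $\mp g^\pm.\partial_x w_*^\pm$ and $g^\pm.(e_1\wedge w_*^\pm)$, use linearity of $\overline{\mathcal F}$ and the $G$-invariance of the $L^2$ pairing together with Lemma~\ref{lem:int_w} to identify the diagonal $2\times 2$ blocks exactly with those of $-A$, and control the off-diagonal cross terms by the interaction estimates of Corollary~\ref{cor:est_w_orthogonal} (equivalently, the decay in Lemma~\ref{lem:est_w_infty}), yielding the $C(y^+-y^-)e^{-\Gamma(y^+-y^-)}$ bound. Your sign bookkeeping for the diagonal blocks matches the paper's computation, so no gap remains.
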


\begin{proof}
    From the definition of $\mathcal{F}_0$ and by noting $\zeta^\iota = (z^\iota, \alpha^\iota)$, we know that
    \begin{equation*}
        \partial_{z^+} \mathcal{F}_0 (\zeta^+, \zeta^-, g^+, g^-) = \overline{\mathcal{F}} ( \partial_{z^+} (P_{\zeta^+, \zeta^-}), g^+, g^-) = - \overline{\mathcal{F}} ( \zeta^+ . \partial_x w_*^+, g^+, g^- ).
    \end{equation*}
    Therefore, taking $\zeta^+ = g^+$ and applying Lemma \ref{lem:int_w}, we get
    \begin{equation*}
        \partial_{z^+} \mathcal{F}_0 (g^+, g^-, g^+, g^-) = - \frac{2}{\Gamma} 
            \begin{pmatrix}
                1 \\
                - \gamma \\
                0 \\
                0 
            \end{pmatrix}
            -
            \begin{pmatrix}
                0 \\
                0 \\
                \int g^+ . \partial_x w_*^+ (-x) \cdot (g^- . \partial_x w_*^-) \diff x \\
                \int g^+ . \partial_x w_*^+ (-x) \cdot (e_1 \wedge (g^- .  w_*^-)) \diff x
            \end{pmatrix}.
    \end{equation*}
    The last term can be estimated thanks to Corollary \ref{cor:est_w_orthogonal2}.
    Then, there also holds
    \begin{equation*}
        \partial_{\alpha^+} \mathcal{F}_0 (\zeta^+, \zeta^-, g^+, g^-) = \overline{\mathcal{F}} ( \partial_{\phi^+} (P_{\zeta^+, \zeta^-}), g^+, g^-) = \overline{\mathcal{F}} ( e_1 \wedge \zeta^+ . w_*^+, g^+, g^- ).
    \end{equation*}
    Thus, applying Lemma \ref{lem:int_w} again,
    \begin{equation*}
        \partial_{\alpha^+} \mathcal{F}_0 (g^+, g^-, g^+, g^-) = - \frac{2}{\Gamma} 
            \begin{pmatrix}
                \gamma \\
                - 1 \\
                0 \\
                0 
            \end{pmatrix}
            -
            \begin{pmatrix}
                0 \\
                0 \\
                \int e_1 \wedge g^+ . w_*^+ (-x) \cdot (g^- . \partial_x w_*^-) \diff x \\
                \int e_1 \wedge g^+ . w_*^+ (-x) \cdot (e_1 \wedge (g^- .  w_*^-)) \diff x
            \end{pmatrix},
    \end{equation*}
    and the last term can be estimated again with Corollary \ref{cor:est_w_orthogonal2}. 
    
    Similar computations for $\partial_{z^-} \mathcal{F}_0 (g^+, g^-, g^+, g^-)$ and $\partial_{\phi^-} \mathcal{F}_0 (g^+, g^-, g^+, g^-)$ give the conclusion.
\end{proof}

\begin{lem} \label{lem:est_DF_A}
    For any $m \in \mathcal{H}^1$ and any $g^\pm \in G$, there holds
    \begin{equation*}
        \norm{D_{g^+, g^-} \mathcal{F} (m, g^+, g^-) - A} \leq C \Bigl( \norm{m - {P}_{g^+, g^-}}_{X} + q(y^+ - y^-) \Bigr),
    \end{equation*}
    for $X = L^2$ or $L^\infty$.
\end{lem}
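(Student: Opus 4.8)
The plan is to reduce the statement to the two auxiliary results just established, Lemma~\ref{lem:lin_est_DF} and Lemma~\ref{lem:est_DF_P}, via a careful chain-rule computation; the only subtlety lies in bookkeeping the two distinct ways in which the gauge pair $(g^+,g^-)$ enters the definition of $\mathcal{F}$.

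First I would use the linearity of $\overline{\mathcal{F}}$ in its first slot to write
\[
    \mathcal{F}(m,g^+,g^-) = \overline{\mathcal{F}}(m - P_{g^+,g^-},g^+,g^-) = \overline{\mathcal{F}}(m,g^+,g^-) - \mathcal{F}_0(g^+,g^-,g^+,g^-),
\]
where $\mathcal{F}_0(\zeta^+,\zeta^-,g^+,g^-) = \overline{\mathcal{F}}(P_{\zeta^+,\zeta^-},g^+,g^-)$. All the maps involved are smooth in $(g^+,g^-)$: the test functions $g^\pm.\partial_x w_*^\pm$ and $e_1\wedge(g^\pm.w_*^\pm)$, and the profile $P_{g^+,g^-}$, depend smoothly on the gauge, with exponentially decaying $x$-derivatives by Lemma~\ref{lem:est_w_infty}. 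Differentiating in $(g^+,g^-)$, the pair enters $\mathcal{F}_0(g^+,g^-,g^+,g^-)$ through both its ``profile slot'' $\zeta$ and its ``test-function slot'', and $D_{g^+,g^-}\bigl[\overline{\mathcal{F}}(P_{\zeta^+,\zeta^-},g^+,g^-)\bigr]$ at $\zeta=g$ is exactly $(D_{g^+,g^-}\overline{\mathcal{F}})(P_{g^+,g^-},g^+,g^-)$, which recombines (again by linearity of $\overline{\mathcal{F}}$, hence of its $(g^+,g^-)$-derivative, in the first argument) with $D_{g^+,g^-}\overline{\mathcal{F}}(m,g^+,g^-)$. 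One is left with
\[
    D_{g^+,g^-}\mathcal{F}(m,g^+,g^-) = (D_{g^+,g^-}\overline{\mathcal{F}})(m - P_{g^+,g^-},g^+,g^-) - D_{\zeta^+,\zeta^-}\mathcal{F}_0(g^+,g^-,g^+,g^-),
\]
the first term being the partial derivative of $\overline{\mathcal{F}}$ in its $(g^+,g^-)$ arguments evaluated at the fixed element $\varepsilon := m - P_{g^+,g^-}\in H^1$, the second the ``profile-slot'' derivative of $\mathcal{F}_0$ evaluated on the diagonal.

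It then remains to estimate the two pieces. For the first, since $m\in\mathcal{H}^1$ and $P_{g^+,g^-}\in\mathcal{H}^1$, we have $\varepsilon\in H^1\subset L^2$, so Lemma~\ref{lem:lin_est_DF} with $X=L^2$ gives
\[
    \norm{(D_{g^+,g^-}\overline{\mathcal{F}})(m - P_{g^+,g^-},g^+,g^-)} \lesssim \norm{m - P_{g^+,g^-}}_{L^2} \le \norm{m - P_{g^+,g^-}}_{H^1}.
\]
For the second, the hypothesis $y^+-y^-\ge y_{\mathrm{min}}$ lets us invoke Lemma~\ref{lem:est_DF_P}, which yields $\norm{D_{\zeta^+,\zeta^-}\mathcal{F}_0(g^+,g^-,g^+,g^-)+A}\le C(y^+-y^-)e^{-\Gamma(y^+-y^-)}$. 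Inserting $A$ into the left-hand norm and using the triangle inequality,
\[
    \norm{D_{g^+,g^-}\mathcal{F}(m,g^+,g^-)-A} \le \norm{(D_{g^+,g^-}\overline{\mathcal{F}})(m - P_{g^+,g^-},g^+,g^-)} + \norm{D_{\zeta^+,\zeta^-}\mathcal{F}_0(g^+,g^-,g^+,g^-)+A},
\]
and combining the last three displays gives the claim.

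The only step demanding care — though it carries no genuine analytic difficulty — is the chain-rule splitting: one must not conflate the ``profile'' copy of $(g^+,g^-)$ with the ``test-function'' copy, and must recognize that the test-function part of $D_{g^+,g^-}\bigl[\overline{\mathcal{F}}(P_{g^+,g^-},g^+,g^-)\bigr]$ is precisely what cancels against $D_{g^+,g^-}\overline{\mathcal{F}}(m,g^+,g^-)$, leaving the single term $(D_{g^+,g^-}\overline{\mathcal{F}})(m - P_{g^+,g^-},g^+,g^-)$. Introducing two independent gauge variables for the two roles, differentiating, and then setting them equal is a clean way to make this bookkeeping unambiguous.
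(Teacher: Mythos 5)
Your proposal is correct and follows essentially the same route as the paper: split $\mathcal{F}=\overline{\mathcal{F}}(m,\cdot)-\mathcal{F}_0(g^+,g^-,\cdot)$, use the chain rule plus linearity of $\overline{\mathcal{F}}$ in its first slot to reduce the gauge derivative to $(D_{g^+,g^-}\overline{\mathcal{F}})(m-P_{g^+,g^-},g^+,g^-)-D_{\zeta^+,\zeta^-}\mathcal{F}_0(g^+,g^-,g^+,g^-)$, and then apply Lemma \ref{lem:lin_est_DF} and Lemma \ref{lem:est_DF_P}. Your explicit bookkeeping of the ``profile'' versus ``test-function'' copies of the gauge is exactly the (more tersely stated) computation in the paper's proof.
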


\begin{proof}
    From the definition of $\mathcal{F}$ and $\overline{\mathcal{F}}$, there holds
    \begin{equation*}
        \mathcal{F} (m, g^+, g^-) = \overline{\mathcal{F}} (m, g^+, g^-) - \overline{\mathcal{F}} (P_{g^+, g^-}, g^+, g^-).
    \end{equation*}
    Therefore, using the fact that $\overline{\mathcal{F}} ( \cdot , g^+, g^-)$ is linear and then so is $D_{g^+, g^-} \overline{\mathcal{F}} ( \cdot , g^+, g^-)$, we obtain
    \begin{align*}
        D_{g^+, g^-} \mathcal{F} (m, g^+, g^-) &= D_{g^+, g^-} \overline{\mathcal{F}} ( m, g^+, g^-) - D_{g^+, g^-} \overline{\mathcal{F}} ( P_{g^+, g^-}, g^+, g^-) - D_{\zeta^+, \zeta^-} \mathcal{F}_0 (g^+, g^-, g^+, g^-) \\
            &= D_{g^+, g^-} \overline{\mathcal{F}} ( m - P_{g^+, g^-}, g^+, g^-) - D_{\zeta^+, \zeta^-} \mathcal{F}_0 (g^+, g^-, g^+, g^-).
    \end{align*}
    The conclusion is reached by applying Lemma \ref{lem:lin_est_DF} to the first term of the right-hand side and Lemma \ref{lem:est_DF_P} to the second term.
\end{proof}

\subsection{Construction of the gauges}

We are now in a position to apply the implicit function theorem, except for lipschitz type estimate on $\mathcal F$. We will need an intermediate result for $P_{g^+, g^-}$ in the same context as Lemma \ref{lem:est_g_w}, as follows.

\begin{lem} \label{lem:diff_double_P_tilde}
    For all $g^{[1]}, g^{[2]}, g^{[3]}, g^{[4]} \in G$, there holds
    \begin{equation*}
        \norm{{P}_{g^{[1]}, g^{[2]}} - {P}_{g^{[3]}, g^{[4]}}}_{H^1} \leq C \Bigl( \abs{g^{[1]} - g^{[3]}} + \abs{g^{[2]} - g^{[4]}} \Bigr).
    \end{equation*}
    Moreover, let $y_0 \ge 0$ be such that $(-1)^i y^{[i]} \ge y_0$ for $i \in \{ 1,2,3,4 \}$. Then there also holds
    \begin{equation*}
        \norm{g^{[1]} . w_*^+ - g^{[3]} . w_*^+}_{H^1}^2 + \norm{g^{[2]} . w_*^- - g^{[4]} . w_*^-}_{H^1}^2 \leq \norm{{P}_{g^{[1]}, g^{[2]}} - {P}_{g^{[3]}, g^{[4]}}}_{H^1}^2 + C q(2y_0).
    \end{equation*}
\end{lem}

\begin{proof}
    By using Lemma \ref{lem:est_g_w} and the fact that
    \begin{equation*}
        P_{g^{[1]}, g^{[2]}} - P_{g^{[3]}, g^{[4]}} = (g^{[1]}.w_*^+ - g^{[3]}.w_*^+) + (g^{[2]}.w_*^- - g^{[4]}.w_*^-),
    \end{equation*}
    we get the first estimate. As for the second one,
    we expand
    \begin{multline*}
        \norm{P_{g^{[1]}, g^{[2]}} - P_{g^{[3]}, g^{[4]}}}_{H^1}^2 = \norm{g^{[1]}.w_*^+ - g^{[3]}.w_*^+}_{H^1}^2 + \norm{g^{[2]}.w_*^- - g^{[4]}.w_*^-}_{H^1}^2 \\ + 2 \langle g^{[1]}.w_*^+ - g^{[3]}.w_*^+, g^{[2]}.w_*^- - g^{[4]}.w_*^- \rangle_{H^1}.
    \end{multline*}
    From Lemma \ref{lem:est_g_w}, assuming for instance $y^{[1]} \leq y^{[3]}$ and $y^{[4]} \leq y^{[2]}$, we have for $j \in \{ 0, 1 \}$
    \begin{equation*}
        \abs{\partial_x^j g^{[1]}.w_*^+ (x) - \partial_x^j g^{[3]}w_*^+ (x)} \leq C e^{- \Gamma \max{(0, x - y^{[3]}, y^{[1]}-x)}},
    \end{equation*}
    \begin{equation*}
        \abs{\partial_x^j g^{[2]}.w_*^- (x) - \partial_x^j g^{[4]}w_*^- (x)} \leq C e^{- \Gamma \max{(0, x - y^{[2]}, y^{[4]}-x)}}.
    \end{equation*}
    Then, a computation as in Corollary \ref{cor:est_w_orthogonal} shows that 
    
    \begin{equation*}
        \abs{\partial_x^j g^{[1]}.w_*^+ (x) - \partial_x^j g^{[3]}w_*^+ (x)} \abs{\partial_x^j g^{[2]}.w_*^- (-x) - \partial_x^j g^{[4]}w_*^- (-x)} \leq C e^{- 2 \Gamma y_0 } \times
        \begin{cases}
            e^{-\Gamma (x-y_0)} &\qquad \text{if } x \geq y_0 \\
            1 &\qquad \text{if } x \in [- y_0,y_0] \\
            e^{\Gamma (x - y_0)} &\qquad \text{if } x \leq - y_0
        \end{cases}.
    \end{equation*}
    Therefore, we obtain as in Corollary \ref{cor:est_w_orthogonal2}
    \begin{equation*}
        \abs{\langle g^{[1]}.w_*^+ - g^{[3]}.w_*^+, (g^{[2]}.w_*^- - g^{[4]}.w_*^-) (-x) \rangle_{H^1}} \leq C q(2y_0),
    \end{equation*}
    and the conclusion follows.
\end{proof}

\begin{lem} \label{lem:static_implicit}
    \begin{enumerate}
        \item There exists $C > 0$, $L_1' \ge 1$ and $\delta_1' > 0$ such that, for all $m \in \mathcal{H}^1$ and $g_0 \in G$ satisfying $y_0 < - L_1'$ and
        \begin{equation*}
            \delta \coloneqq \norm{m - P_{0, g_0}}_{H^1} < \delta_1',
        \end{equation*}
        there exist unique $\overline{g}^\pm \in G$ such that
        \begin{itemize}
            \item $\abs{\overline{g}^- - g_0} \leq C \delta$ and $\abs{\overline{g}^+} \leq C \delta$,
            \item $\norm{m - P_{\overline{g}^+, \overline{g}^-}}_{H^1} \leq C \delta$,
            \item $\mathcal{F} (m, \overline{g}^+, \overline{g}^-) = 0$.
        \end{itemize}
        Moreover, 
        $(\overline{g}^+, \overline{g}^-)$ does not depend on $g_0$.
        \item The map
        \begin{align*}
            \{ m \in e_1 + H^1 | \inf_{y_0 < - L_1'} \norm{m - P_{0, g_0}}_{H^1} < \delta_0 \} &\rightarrow G^2 \\
            m &\mapsto (\overline{g}^+, \overline{g}^-) \text{ as in  1)}
        \end{align*}
        is $\mathscr{C}^\infty$ with respect to the $H^1$ topology.
    \end{enumerate}
\end{lem}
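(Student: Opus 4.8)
The plan is to solve $\mathcal{F}(m, g^+, g^-) = 0$ near $(0, g_0)$ by a quantitative contraction argument (equivalently, the quantitative implicit function theorem), using the two structural facts already available: that $D_{(g^+,g^-)}\mathcal{F}$ is a uniformly small perturbation of the fixed invertible matrix $A$ whenever $\norm{m - P_{g^+,g^-}}_{H^1}$ is small and $y^+ - y^-$ is large (Lemma~\ref{lem:est_DF_A}), and that $\mathcal{F}$ is itself controlled linearly by $\norm{m - P_{g^+,g^-}}_{H^1}$ (Proposition~\ref{prop:lin_est_F}, through $\mathcal{F}(m,g^+,g^-) = \overline{\mathcal{F}}(m - P_{g^+,g^-},g^+,g^-)$). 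I would therefore work with the map $\Phi(g) \coloneqq g - A^{-1}\mathcal{F}(m,g)$, whose fixed points are exactly the zeros of $\mathcal{F}(m,\cdot)$, and prove it maps a small ball around $(0,g_0)$ into itself contractively.

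First I would fix the three constants in the order in which they are forced: pick an absolute radius $r_* > 0$ small enough that a ball of radius $r_*$ in $G^2$ lifts isometrically to a Euclidean ball (so the quotient structure of $G$ plays no role) and that $C\norm{A^{-1}} r_*$ is small, where $C$ is the common constant of Lemmas~\ref{lem:est_DF_A} and~\ref{lem:diff_double_P_tilde}; then take $y_{\textnormal{min}}$ so large that $y_{\textnormal{min}} > 4 r_*$ and $C\norm{A^{-1}}\,(y_{\textnormal{min}}/2) e^{-\Gamma y_{\textnormal{min}}/2} \le 1/8$; finally take $\delta_0$ so small that $C\norm{A^{-1}}\,\delta_0 \le 1/8$ and $8C\delta_0 \le r_*$. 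On the closed ball $\mathcal{B} \coloneqq \{ (g^+,g^-) : |g^+| + |g^- - g_0| \le r_* \}$, Lemma~\ref{lem:diff_double_P_tilde} gives $\norm{m - P_{g^+,g^-}}_{H^1} \le \delta + C r_*$, while $y^+ - y^- \ge -y_0 - 2 r_* \ge y_{\textnormal{min}}/2$ since $y_0 < -y_{\textnormal{min}}$; substituting into Lemma~\ref{lem:est_DF_A} yields $\norm{D\Phi} = \norm{A^{-1}(D_g\mathcal{F}(m,g) - A)} \le 1/2$ on $\mathcal{B}$. Moreover $\mathcal{F}(m,(0,g_0)) = \overline{\mathcal{F}}(m - P_{0,g_0}, 0, g_0)$, so Proposition~\ref{prop:lin_est_F} gives $|\Phi((0,g_0)) - (0,g_0)| \le C\norm{A^{-1}}\,\delta \le r_*/4$; hence $\Phi$ is a $\tfrac12$-contraction of $\mathcal{B}$ into itself. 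Banach's theorem then produces the unique fixed point $(\overline{g}^{+}, \overline{g}^{-}) \in \mathcal{B}$ — equivalently the unique zero of $\mathcal{F}(m,\cdot)$ in $\mathcal{B}$ — and the fixed point relation forces $|\overline{g}^{+}| + |\overline{g}^{-} - g_0| \le 2\,|\Phi((0,g_0))-(0,g_0)| \le C\delta$, after which $\norm{m - P_{\overline{g}^{+}, \overline{g}^{-}}}_{H^1} \le \delta + C(|\overline{g}^{+}| + |\overline{g}^{-} - g_0|) \le C\delta$ again by Lemma~\ref{lem:diff_double_P_tilde}. This gives part~(1).

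For part~(2) I would compare the solutions attached to two admissible base gauges $g_0$ and $g_0'$, writing $(\overline{h}^{+}, \overline{h}^{-})$ for the one associated to $g_0'$ and $\delta' \coloneqq \norm{m - P_{0,g_0'}}_{H^1} < \delta_0$. Since $P_{0,g_0} - P_{0,g_0'} = g_0.w_*^- - g_0'.w_*^-$ and the $G$-action is an isometry on $H^1$, one gets $\norm{P_{0,g_0} - P_{0,g_0'}}_{H^1} = \norm{(g_0 - g_0').w_*^- - w_*^-}_{H^1} \le \delta + \delta' \le \tilde\delta_0$ for $\delta_0$ small, so the last assertion of Lemma~\ref{lem:est_g_w} gives $|g_0 - g_0'| \le C(\delta + \delta')$. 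As $\Phi$ is independent of the base gauge and, after possibly shrinking $\delta_0$, $(\overline{h}^{+}, \overline{h}^{-})$ also lies in $\mathcal{B}$ — indeed $|\overline{h}^{+}| + |\overline{h}^{-} - g_0| \le C\delta' + |g_0' - g_0| \le C(\delta + \delta') \le r_*$ — uniqueness of the fixed point of $\Phi$ in $\mathcal{B}$ forces $(\overline{h}^{+}, \overline{h}^{-}) = (\overline{g}^{+}, \overline{g}^{-})$. For part~(3) I would then apply the smooth implicit function theorem: $\mathcal{F}$ is $\mathscr{C}^\infty$ on $(e_1 + H^1) \times G^2$ — it is affine (continuous) in $m$, with coefficients assembled from $g \mapsto g.w_*^\sigma$ and its derivatives, which depend smoothly on the gauge parameters and are exponentially localised by Lemma~\ref{lem:est_w_infty} — and at each solution $(\overline{g}^{+}, \overline{g}^{-})$ above one has $\norm{m - P_{\overline{g}^{+}, \overline{g}^{-}}}_{H^1} \le C\delta$ and $y^+ - y^- \ge y_{\textnormal{min}}/2$, so Lemma~\ref{lem:est_DF_A} gives $\norm{A^{-1}(D_{(g^+,g^-)}\mathcal{F} - A)} \le 1/2 < 1$ and $D_{(g^+,g^-)}\mathcal{F}$ is invertible. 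The theorem then produces, near every admissible $m$, a $\mathscr{C}^\infty$ branch of zeros of $\mathcal{F}(m,\cdot)$ through that solution; by the uniqueness in part~(1) this branch is the solution map of the statement, and by part~(2) the latter is well defined independently of $g_0$, hence globally $\mathscr{C}^\infty$.

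The step I expect to be the real work is the simultaneous calibration of $r_*$, $y_{\textnormal{min}}$ and $\delta_0$: the perturbative estimate of Lemma~\ref{lem:est_DF_A} must hold with a uniformly small perturbation of $A$ over the \emph{whole} ball $\mathcal{B}$, which forces one to fix $r_*$ small and $y_{\textnormal{min}}$ large before shrinking $\delta_0$, and to keep $r_*$ small relative to $y_{\textnormal{min}}$ so that $y^+ - y^-$ stays bounded below on $\mathcal{B}$. The only other point needing care is the quotient structure of $G$, which is neutralised by taking $r_*$ below the injectivity radius so that every ball involved lifts isometrically to a Euclidean ball and the fixed point argument takes place literally in a subset of $\mathbb{R}^4$.
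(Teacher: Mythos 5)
Your proposal is correct and follows essentially the same route as the paper: a contraction argument for $p \mapsto p - A^{-1}\mathcal{F}(m,p)$ on a small ball around $((0,0),g_0)$, using Lemma~\ref{lem:est_DF_A}, Lemma~\ref{lem:diff_double_P_tilde} and Proposition~\ref{prop:lin_est_F}, then uniqueness combined with Lemma~\ref{lem:est_g_w} for independence of $g_0$, and the implicit function theorem (with invertibility of $D_p\mathcal{F}$ via the Neumann series) for the $\mathscr{C}^\infty$ regularity. The only cosmetic difference is that you run the fixed point on a ball of fixed radius $r_*$ rather than one of radius comparable to $\delta$ (or $\delta_0$), a variant the paper itself mentions.
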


\begin{proof}
    \emph{Step 1. Existence and uniqueness}
    First, remark that $\mathcal{F} (m, \overline{g}^+, \overline{g}^-) = 0$ is equivalent to $\overline{p} = \overline{p} - A^{-1} \mathcal{F} (m, \overline{p})$ where $\overline{p} = (\overline{g}^+, \overline{g}^-)$. We define $\mathcal{G} (m,p) = p - A^{-1} \mathcal{F} (m, p)$ for any $p \in G^2$ and we look for a fixed point for this function. Moreover, there holds $D_p \mathcal{G} (m, p) = I_4 - A^{-1} D_p \mathcal{F} (m, p)$. By applying Lemma \ref{lem:est_DF_A}, we get
    \begin{align*}
        \norm{D_p \mathcal{G} (m, p)} &\leq \norm{A^{-1}} \norm{D_p \mathcal{F} (m,p) - A} \\
            &\leq C \norm{A^{-1}} \bigl( q(y^+ - y^-) + \norm{m - {P}_{g^+, g^-}}_{H^1} \bigr).
    \end{align*}
    Therefore, if we take $p \in B_{p_0} (\xi)$ (ball in $G^2$ centered at $p_0$ of radius $\xi$, where $p_0 \coloneqq ((0,0), g_0)$) for some $\xi > 0$ to be defined later, and assuming $\xi < 1 < L_1'$, we get due to Lemma \ref{lem:diff_double_P_tilde}:
    \begin{align*}
        \norm{D_p \mathcal{G} (m, p)} &\leq C \norm{A^{-1}} \bigl( q (y_0) + \delta + \abs{g^+} + \abs{g^- - g_0} \bigr) \\
            &\leq C \norm{A^{-1}} \bigl( q(y_0) + \delta + \xi \bigr).
    \end{align*}
    Hence, $\norm{D_p \mathcal{G} (m, p)} \leq \frac{1}{2}$ as soon as
    \begin{equation} \label{eq:ass_mod_1}
        C \norm{A^{-1}} \Bigl( q(y_0) + \delta + \xi \Bigr) \leq \frac{1}{2}.
    \end{equation}
    On the other hand, we know that $\abs{\mathcal{F} (m, p_0)} \leq C \norm{m - P_{p_0}}_{H^1}$ thanks to Lemma \ref{prop:lin_est_F}.
    Thus,
    \begin{equation*}
        \abs{\mathcal{G} (m, p_0) - p_0} \leq \norm{A^{-1}} \abs{\mathcal{F} (m, p_0)} \leq C \norm{A^{-1}} \delta.
    \end{equation*}
    Moreover, by assuming \eqref{eq:ass_mod_1} so that $\norm{D_p \mathcal{G} (m, .)} \leq \frac{1}{2}$ on $B_{p_0} (\xi)$, we get for all $p \in B_{p_0} (\xi)$,
    \begin{equation*}
        \abs{\mathcal{G} (m, p) - \mathcal{G} (m, p_0)} \leq \frac{1}{2} \abs{p - p_0} \leq \frac{\xi}{2},
    \end{equation*}
    which yields
    \begin{equation*}
        \abs{\mathcal{G} (m, p) - p_0} \leq C \norm{A^{-1}} \delta + \frac{\xi}{2}.
    \end{equation*}
    This means that $\mathcal{G} (m, p) \in B_{p_0} (\xi)$ as soon as $C \norm{A^{-1}} \delta + \frac{\xi}{2} \leq \xi$, i.e.
    \begin{equation} \label{eq:ass_mod_2}
        C \norm{A^{-1}} \delta \leq \frac{\xi}{2}.
    \end{equation}
    From the previous computations, we conclude that $\mathcal{G} (m, \cdot)$ is a contraction on $B_{p_0} (\xi)$ as soon as \eqref{eq:ass_mod_1} and \eqref{eq:ass_mod_2} hold. 
    
    Let $\delta_1^\sharp >0$ and $L_1' \ge 1/\Gamma$ be so large that that 
    \[ C \|A^{-1}\| (q(L_1') + (1+2C \|A^{-1}\|) \delta_1^\sharp) \le \frac{1}{2}, \]
    and fix $\xi = 2 C \norm{A^{-1}} \delta_1^\sharp$. Then for $y_0  \ge - L_1'$ and $\delta \le \delta_1^\sharp$, \eqref{eq:ass_mod_1} and \eqref{eq:ass_mod_2} holds and $\mathcal{G}$ admits a unique fixed point in $B_{p_0} (2 C \norm{A^{-1}} \delta_1^\sharp)$. In particular, the conclusion of 1. follows.

    \medskip
    
    \emph{Step 2. Dependence on $g_0$}

    Let $\tilde \delta_1$ to be fixed later, and
     assume that there exists $g_0$ and $g_0'$ in $G$ such that $y_0,y_0' < - L_1'$ and $\norm{m - P_{0, g_0}}, \norm{m - P_{0, g_0'}} < \tilde \delta_1$. Then, there holds
    \begin{equation*}
        \norm{{P}_{0, g_0'} - {P}_{0, g_0}}_{H^1} \leq \norm{m - {P}_{0, g_0'}}_{H^1} + \norm{m - {P}_{0, g_0}}_{H^1} < 2 \tilde \delta_1.
    \end{equation*}
    On the other hand,
    \begin{equation*}
        {P}_{0, g_0'} - {P}_{0, g_0} = g_0' . w_*^- - g_0 . w_*^-,
    \end{equation*}
    therefore, as soon as $\delta_1'$ is small enough, we can apply Lemma \ref{lem:est_g_w} and get
    \begin{equation*}
        \abs{g_0' - g_0} \leq C \tilde \delta_1.
    \end{equation*}
    Now Step 1 provides $(\overline g^+,\overline g^-)$ and $(\overline g^+{}',\overline g^-{}')$ satisfying 1. and so
    \begin{equation*}
        \abs{\overline{g}^+} + \abs{\overline{g}^- - g_0}, \abs{\overline{g}^+{}'} + \abs{\overline{g}^-{}' - g_0'} \leq C \tilde \delta_1,
    \end{equation*}
    Therefore,
    \begin{equation*}
        \abs{\overline{g}^+{}'} + \abs{\overline{g}^-{}' - g_0} \leq C \tilde \delta_1.
    \end{equation*}
    Taking $\tilde \delta_1 > 0$ small enough, we get that $({g^+}', {g^-}') \in B_{p_0} ( 2 C \norm{A^{-1}} \delta_1^\sharp)$. By uniqueness of $(g^+, g^-)$ in this ball, we get $({g^+}', {g^-}') = (g^+, g^-)$.
    
    We therefore set $\delta_1' = \min(\delta_1^\sharp, \tilde \delta_1)$.
    \medskip
    
    \emph{Step 3. Regularity of the map}

    In Step 1, we only considered $p$ such that
    \begin{equation*}
        \norm{D_p \mathcal{F} (m,p) - A} \leq \frac{1}{2 \norm{A^{-1}}},
    \end{equation*}
    and thus $D_p \mathcal{F} (m,p)$ is invertible since
    \begin{equation*}
        A^{-1} D_p \mathcal{F} (m,p) = I_4 - (I_4 - A^{-1} D_p \mathcal{F} (m,p)),
    \end{equation*}
    with
    \begin{equation*}
        \norm{I_4 - A^{-1} D_p \mathcal{F} (m,p)}_{H^1} \leq \norm{A^{-1}} \norm{D_p \mathcal{F} (m,p) - A} \leq \frac{1}{2}.
    \end{equation*}
    This is in particular true for $(\overline{g}^+, \overline{g}^-)$. Therefore, the regularity of the application at $m$ can be deduced from the implicit function theorem applied on $\mathcal{F}$ (which is a $\mathscr{C}^\infty$ function since $\partial_x w_*^\iota$ and $e_1 \wedge w_*^+$ are $H^\infty$) at the point $(m, \overline{g}^+, \overline{g}^-)$.
\end{proof}

\subsection{Decomposition near a 2-domain wall under the \texorpdfstring{\eqref{eq:llg}}{(LLG)} flow}

We consider here the decomposition of a magnetisation defined on the interval $[0,T]$ for some $T>0$; the case of $[0,+\infty)$ being completely similar.

\begin{lem}[Continuous in time decomposition near a 2-domain wall] \label{lem:decomp_time}
    There exist $\delta_1 > 0$ and $L_1 > 0$ such that the following holds. Let $L \ge L_1$, $T > 0$ and $m \in \mathscr{C} ([0,T], \mathcal{H}^1)$ satisfying, for some $y_0 > L_1$,
    \begin{equation*}
        \delta \coloneqq \sup_{t \in [0,T]}  \inf_{\substack{g^+ \in G_{>L} \\ g^- \in G_{<-L}}}
        \norm{m(t) - P_{g^+, g^-}}_{H^1} < \delta_1
    \end{equation*}
    Then  there exists $\overline{g}^\iota = (\overline{y}^\iota, \overline{\phi}^\iota) \in \mathscr{C}( [0,T], G)$ for $\iota = \pm$ and $\varepsilon \in \mathscr{C} ([0,T], H^{1})$ such that, for all $t \in [0,T]$,
    \begin{itemize}
        \item $\iota \overline{y}^\iota (t) \geq L-1$,
        \item $m(t) = \overline{g}^+ (t) . w_*^+ + \overline{g}^- (t) . w_*^- + e_1 + \varepsilon (t)$,
        \item $\mathcal{F} (m(t), \overline{g}^+, \overline{g}^-) = \overline{\mathcal{F}} (\varepsilon (t), \overline{g}^+, \overline{g}^-) = 0$,
        \item $\norm{\varepsilon (t)}_{H^1} \leq C \Bigl( \delta + q (\overline y^+ - \overline y^-) \Bigr)$.
    \end{itemize}
    Moreover, if $m$ is $\mathscr{C}^1 ([0,T], \mathcal{H}^1)$ (resp. $W^{1, \infty}_\textnormal{loc} ([0,T], \mathcal{H}^1)$), then both $\overline{g}^\iota$ are $\mathscr{C}^1 ([0,T])$ (resp. locally Lipschitz).
\end{lem}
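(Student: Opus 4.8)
The plan is to promote the static construction of Lemma~\ref{lem:static_implicit} to one depending on $t$, by recentering each $m(t)$ so that the ``$+$'' wall sits near the origin, applying the implicit function theorem fiberwise, and then undoing the recentering. Beyond Lemma~\ref{lem:static_implicit}, the only structural facts used are the $G$-equivariance of $\overline{\mathcal{F}}$ and the identity $P_{g+g^+,g+g^-}=g.P_{g^+,g^-}$ (which holds because the action is linear, maps $g^\iota.w_*^\iota$ to $(g+g^\iota).w_*^\iota$, and fixes $e_1$).

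\textbf{Step 1 (pointwise construction).} Fix $t\in I_T$. Since the infimum defining $\delta$ is $<\delta_2$, choose $\zeta^\pm=\zeta^\pm(t)\in G$ with $\pm y_{\zeta^\pm}\ge y_0$ and $\|m(t)-P_{\zeta^+,\zeta^-}\|_{H^1}<\delta_2$, and set $\tilde m:=(-\zeta^+).m(t)$, which is $\mathbb{S}^2$-valued and in $\mathcal{H}^1$. As the $G$-action is by $H^1$-isometries and fixes $e_1$, one has $(-\zeta^+).P_{\zeta^+,\zeta^-}=P_{0,g_0}$ with $g_0:=\zeta^--\zeta^+$, whose translation part is $\le-2y_0<-y_\textnormal{min}$ once $y_0\ge y_\textnormal{min}$ is large enough; hence $\|\tilde m-P_{0,g_0}\|_{H^1}<\delta_2$ and, for $\delta_2$ small, Lemma~\ref{lem:static_implicit} produces a unique $(\overline h^+,\overline h^-)\in G^2$ with $|\overline h^+|+|\overline h^--g_0|\le C\delta_2$, $\|\tilde m-P_{\overline h^+,\overline h^-}\|_{H^1}\le C\|\tilde m-P_{0,g_0}\|_{H^1}$ and $\mathcal{F}(\tilde m,\overline h^+,\overline h^-)=0$. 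I then set
\[
\overline g^+(t):=\zeta^++\overline h^+,\qquad \overline g^-(t):=\zeta^++\overline h^-,\qquad \varepsilon(t):=m(t)-P_{\overline g^+(t),\overline g^-(t)}.
\]
The key observation is that $\overline{\mathcal{F}}(g.v,\,g+g^+,\,g+g^-)=\overline{\mathcal{F}}(v,g^+,g^-)$ for all $g\in G$, since $R_\phi$ is orthogonal, commutes with $\partial_x$ and with $e_1\wedge\cdot$, and translations preserve the integral; together with $P_{g+g^+,g+g^-}=g.P_{g^+,g^-}$ this gives $\mathcal{F}(g.m,\,g+g^+,\,g+g^-)=\mathcal{F}(m,g^+,g^-)$. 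Applying this with $g=\zeta^+$ yields $\mathcal{F}(m(t),\overline g^+(t),\overline g^-(t))=\overline{\mathcal{F}}(\varepsilon(t),\overline g^+(t),\overline g^-(t))=0$, and $\|\varepsilon(t)\|_{H^1}=\|\tilde m-P_{\overline h^+,\overline h^-}\|_{H^1}\le C\|m(t)-P_{\zeta^+,\zeta^-}\|_{H^1}$. Finally $|\overline y^+(t)-y_{\zeta^+}|\le C\delta_2<1$ and $|\overline y^-(t)-y_{\zeta^-}|<1$ (for $\delta_2$ small), so $\pm\overline y^\pm(t)\ge y_0-1\ge y_\textnormal{min}-1$ and, after enlarging $y_\textnormal{min}$, $\overline y^+-\overline y^-\ge 2(y_0-1)\ge y_\textnormal{min}$.

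\textbf{Step 2 (the output depends only on $m(t)$, and the $\varepsilon$-bound).} If $(\zeta^+_1,\zeta^-_1)$ and $(\zeta^+_2,\zeta^-_2)$ are two admissible near-minimizers at time $t$, then $\|P_{\zeta^+_1,\zeta^-_1}-P_{\zeta^+_2,\zeta^-_2}\|_{H^1}<2\delta_2$, so Lemmas~\ref{lem:diff_double_P_tilde} and~\ref{lem:est_g_w} force the associated gauges to be $O(\delta_2)$-close in $G$. Both resulting triples satisfy $\mathcal{F}(m(t),\cdot)=0$; recentering both by $-\zeta^+_1$ places the two candidate solutions of $\mathcal{F}(\tilde m_1,\cdot)=0$ inside the fixed-radius uniqueness ball around $((0,0),g_0^{(1)})$ furnished by Lemma~\ref{lem:static_implicit}, so they coincide and hence so do $(\overline g^+(t),\overline g^-(t))$. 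Consequently the construction is well defined, and optimizing the near-minimizer gives $\|\varepsilon(t)\|_{H^1}\le C\inf_{\zeta}\|m(t)-P_{\zeta^+,\zeta^-}\|_{H^1}\le C\delta\le C\bigl(\delta+q(\overline y^+(t)-\overline y^-(t))\bigr)$ since $q\ge 0$.

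\textbf{Step 3 (time regularity), and the main obstacle.} Fix $t_0\in I_T$. By continuity of $t\mapsto m(t)\in\mathcal{H}^1$ there is a neighbourhood $J$ of $t_0$ in $I_T$ on which $\|m(t)-P_{\zeta^+(t_0),\zeta^-(t_0)}\|_{H^1}<\delta_0$, so on $J$ I freeze the recentering gauge and write $\overline g^\pm(t)=\zeta^+(t_0)+\Phi^\pm\!\bigl((-\zeta^+(t_0)).m(t)\bigr)$, where $\Phi=(\Phi^+,\Phi^-)$ is the $\mathscr{C}^\infty$ map (for the $H^1$ topology) of Lemma~\ref{lem:static_implicit}(3). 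Since $v\mapsto(-\zeta^+(t_0)).v$ is a bounded linear isomorphism of $e_1+H^1$, the curve $t\mapsto(-\zeta^+(t_0)).m(t)$ has the same time regularity as $m$, and composition with the smooth $\Phi$ shows $\overline g^\pm$ is continuous (resp.\ $\mathscr{C}^1$, resp.\ locally Lipschitz) whenever $m\in\mathscr{C}(I_T,\mathcal{H}^1)$ (resp.\ $\mathscr{C}^1$, resp.\ $W^{1,\infty}_{\loc}$). Then $\varepsilon=m-P_{\overline g^+,\overline g^-}$ with $P_{g^+,g^-}-e_1$ smooth and $H^\infty$-valued in $(g^+,g^-)$, so $\varepsilon$ inherits the space regularity of $m-e_1$ (in particular $\varepsilon\in\mathscr{C}(I_T,H^2)$ when $m$ is $\mathcal{H}^2$-valued) and the time regularity of $m$. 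The only genuinely delicate points are the equivariance bookkeeping of Step~1 — so that imposing the orthogonality conditions after recentering is equivalent to imposing them before — and the verification in Steps~2--3 that the recentering gauge can be chosen consistently and held locally constant in time without affecting the output; both rest squarely on the uniqueness clause of Lemma~\ref{lem:static_implicit}.
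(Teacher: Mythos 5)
Your argument is correct, but it takes a genuinely different route from the paper. The paper first builds a global-in-time rough gauge: by uniform continuity it picks gauges $g_k^\pm$ at a finite grid of times, shows consecutive gauges are close via Lemma \ref{lem:diff_double_P_tilde} together with the phase-lifting adjustment of \cite[Claim~4.12]{Cote_Ignat__stab_DW_LLG_DM} (choosing representatives of $\phi$ modulo $2\pi$), interpolates piecewise-affinely, smooths, and only then applies Lemma \ref{lem:static_implicit} to $(-\tilde g^+_0(t)).m(t)$ with this smooth pre-gauge frozen as the recentering; regularity then follows from part (3) of Lemma \ref{lem:static_implicit} composed with the smooth pre-gauge. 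You instead define $\overline g^\pm(t)$ pointwise from an arbitrary admissible near-minimizer, prove the output is independent of that choice by an equivariance-plus-uniqueness argument, and obtain time regularity by freezing the recentering gauge on a small time interval; this avoids the partition/interpolation/smoothing and the $2\pi$-bookkeeping entirely, and it even yields the sharper bound $\norm{\varepsilon(t)}_{H^1}\le C\delta$ (the paper's construction only gives $C(\delta+\sqrt{y_0}\,e^{-\Gamma y_0})$). The one point you should make explicit is that your Steps 2--3 need uniqueness of the zero of $\mathcal{F}(\tilde m,\cdot)$ in a ball of \emph{fixed} radius (of order $\delta_0$, or $C\delta_2$ plus the interaction term) around $((0,0),g_0)$, which is slightly stronger than the literal uniqueness clause of Lemma \ref{lem:static_implicit}(1), whose radius scales with $\norm{\tilde m-P_{0,g_0}}_{H^1}$ and can be arbitrarily small; this stronger version is, however, exactly what the contraction estimate in the proof of Lemma \ref{lem:static_implicit} provides (it is even noted there that one may take $\xi=2C\norm{A^{-1}}\delta_0$), and for the comparison at nearby times it is safest to shrink the time interval $J$ so that $\norm{m(t)-P_{\zeta(t_0)}}_{H^1}\lesssim\delta_2$ rather than merely $<\delta_0$. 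With that quantitative point spelled out, your proof is complete and arguably cleaner than the paper's; the price the paper pays for its explicit pre-gauge construction is the uniform-continuity and phase-representative machinery, while the price you pay is the well-definedness argument resting on the fixed-radius uniqueness.
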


As before, a similar statement with $T=+\infty$ with the obvious modifications holds.

\begin{proof}
    As $m \in \mathscr{C} ([0,T], \mathcal{H}^1)$, $m$ is uniformly continuous on $[0,T]$ (in the case $[0,+\infty)$, one argues on any compact subinterval): We can thus find $0 = t_0 < t_1 < \dots < t_N = T$ such that, for all $0 \leq 0 \leq N - 1$ and $t \in [t_k, t_{k+1}]$, there holds $\norm{m (t) - m(t_k)}_{H^1} \leq \delta$. Then, for any $k$, we can find ${g}^\iota_k \in G$ ($\iota = \pm 1$) such that ${y}^\iota_k \geq y_0$ and
    \begin{equation*}
        \norm{m(t) - P_{{g}_k^+, {g}_k^-}}_{H^1} < 2 \delta.
    \end{equation*}
    In particular, for all $k$,
    \begin{equation*}
        \norm{P_{{g}_{k+1}^+, {g}_{k+1}^-} - P_{{g}_k^+, {g}_k^-}}_{H^1} < 5 \delta.
    \end{equation*}
    From Lemma \ref{lem:diff_double_P_tilde}, we thus get
    \begin{equation*}
        \norm{{g}_{k+1}^+ . w_*^+ - {g}_k^+ . w_*^+}_{H^1}^2 + \norm{{g}_{k+1}^- . w_*^- - {g}_k^- . w_*^-}_{H^1}^2 \leq 25 \delta^2 + C q(2y_0).
    \end{equation*}
    Therefore, if we assume that $\delta$ is small enough and $y_0$ large enough, we can apply \cite[Claim~4.12]{Cote_Ignat__stab_DW_LLG_DM} and get integers $n^+_{k+1}$ and $n^-_{k+1}$ such that, by noting $\mu = \delta + \sqrt{q(2y_0)}$,
    \begin{equation*}
        \abs{{g}_{k+1}^\iota + (0, 2 \pi n_{k+1}^\iota) - {g}_k^\iota} \leq C \norm{{g}^\iota_{k+1} . w_*^\iota - {g}^\iota_k . w_*^\iota}_{H^1} \leq C \mu.
    \end{equation*}
    We can then change every ${\phi}_k^\iota$ by adding $2 \pi {n_k^\iota}'$ for some well chosen ${n_k^\iota}' \in \mathbb{Z}$ such that, for all $k$,
    \begin{equation*}
        \abs{{g}_{k+1}^\iota - {g}_k^\iota} \leq C \mu.
    \end{equation*}
    Consider now $\tilde{g}^\iota_1$ affine on each segment $[t_k, t_{k+1}]$ such that $\tilde{g}^\iota_1 (t_k) = {g}^\iota_k$ for $k = 0, \dots, N$, and then consider smooth functions $\tilde{g}^\iota_0$ such that $\norm{\tilde{g}^\iota_0 - \tilde{g}^\iota_1}_{\mathscr{C} ([0,T])} \leq \mu$. Thus we get for any $t \in [t_k, t_{k+1}]$ by applying Lemma \ref{lem:est_g_w}
    \begin{equation*}
        \norm{\tilde{g}^\iota_1 (t) . w_*^\iota - \tilde{g}^\iota_1 (t_k) . w_*^\iota}_{H^1} \leq C \abs{\tilde{g}^\iota_1 (t) - \tilde{g}^\iota_1 (t_k)} \leq C \abs{\tilde{g}^\iota_1 (t_{k+1}) - \tilde{g}^\iota_1 (t_k)} \leq C \mu,
    \end{equation*}
    and therefore, by using Lemma \ref{lem:diff_double_P_tilde},
    \begin{align*}
        \norm{m (t) - P_{\tilde{g}^+_0 (t), \tilde{g}^-_0 (t)}}_{H^1} &\leq
            \begin{multlined}[t]
                \norm{m(t) - m(t_k)}_{H^1} + \norm{m(t_k) - P_{\tilde{g}^+_1 (t_k), \tilde{g}^-_1 (t_k)}}_{H^1} \\ + \norm{P_{\tilde{g}^+_1 (t_k), \tilde{g}^-_1 (t_k)} - P_{\tilde{g}^+_1 (t), \tilde{g}^-_1 (t)}}_{H^1} + \norm{P_{\tilde{g}^+_1 (t), \tilde{g}^-_1 (t)} - P_{\tilde{g}^+_0 (t), \tilde{g}^-_0 (t)}}_{H^1}
            \end{multlined} \\
            &\leq \delta + 2 \delta + C \mu + C \mu \leq C \mu.
    \end{align*}
    By assuming $\mu$ small enough, i.e. $\delta$ small enough and $y_0$ large enough, we can assume
    \begin{equation*}
        \norm{(- \tilde{g}^+_0 (t)).m (t) - P_{0, (\tilde{g}^+_0)' (t) + \tilde{g}^-_0 (t)}}_{H^1} = \norm{m (t) - P_{\tilde{g}^+_0 (t), \tilde{g}^-_0 (t)}}_{H^1} \leq \delta_1'
    \end{equation*}
    where $\delta_1'$ is given in Lemma \ref{lem:static_implicit} and $\tilde{g}^+_0 = (\tilde{y}^+_0, \tilde{\phi}^+_0)$ gives $(\tilde{g}^+_0)' = (\tilde{y}^+_0, - \tilde{\phi}^+_0)$. Moreover, for any $t \in [t_k, t_{k+1}]$, we have $\abs{\tilde{g}^\iota_0 (t) - g_k^\iota} \leq C \mu$, which gives
    \begin{equation*}
        \pm \tilde{y}^\pm_0 (t) \geq \pm y_k^\pm - C \mu \geq y_0 - C \mu \qquad
        \text{and} \qquad
        \tilde{y}^+_0 (t) - \tilde{y}^-_0 (t) \geq 2 y_0 - C \mu \geq L_1',
    \end{equation*}
    for $\mu$ small enough and $y_0$ large enough ($L_1'$ is given in Lemma \ref{lem:static_implicit}). . These conditions define $\delta_1>0$ small and $L_1 \ge 1$ large.
    Then, we can apply Lemma \ref{lem:static_implicit} to $(- \tilde{g}^+_0 (t)).m (t)$, which gives some $\overline{g}^+_0 (t), \overline{g}^-_0 (t) \in G$ for any $t \in I$ such that
    \begin{itemize}
        \item $\abs{\overline{g}^-_0 (t) - \Bigl( (\tilde{g}^+_0)' (t) + \tilde{g}^-_0 (t) \Bigr)} \leq C \mu$ and $\abs{\overline{g}^+_0 (t)} \leq C \mu$,
        \item $\norm{(- \tilde{g}^+_0 (t)).m (t) - P_{\overline{g}^+_0, \overline{g}^-_0}}_{H^1} \leq C \mu$,
        \item $\mathcal{F} ((- \tilde{g}^+_0 (t)).m (t), \overline{g}^+_0, \overline{g}^-_0) = 0$.
    \end{itemize}
    Then, define
    \begin{equation*}
        \overline{g}^+ \coloneqq \tilde{g}^+_0 + \overline{g}^+_0, \qquad
        \overline{g}^- \coloneqq \overline{g}^-_0 - (\tilde{g}^+_0)'.
    \end{equation*}
    These gauges satisfy $\mathcal{F} (m (t), \overline{g}^+ (t), \overline{g}^- (t)) = 0$, and also $\abs{\overline{g}^\iota (t) - \tilde{g}^\iota_0 (t)} \leq C \mu$, which means that
    \begin{equation*}
        \iota \overline{y}^\iota \geq \tilde{y}_0^\iota - C \mu \geq y_0 - C \mu \geq y_0 - 1,
    \end{equation*}
    (for $\mu $ small enough, up to modifying the choices of $\delta_1$ and $L_1$) and where $\overline{g}^\iota \eqqcolon (\overline{y}^\iota, \overline{\phi}^\iota)$. Moreover, since $\tilde{g}^+_0$ is smooth, $(- \tilde{g}^+_0).m$ has the same regularity as $m$. Therefore, if $m$ is $\mathscr{C}^1 ([0,T], \mathcal{H}^1)$, then the regularity result of Lemma \ref{lem:static_implicit} gives that $\overline{g}^\iota_0 \in \mathscr{C}^1 ([0,T])$ and so are $\overline{g}^\iota$. Similar arguments when $m \in W^{1, \infty} ([0,T], \mathcal{H}^1)$ give the conclusion.
\end{proof}

\begin{lem}[Evolution equations of the gauges] \label{lem:64}
    Up to further reducing $\delta_1>0$ and increasing $L_1 \ge 1$, if $m \in \mathscr{C} ([0,T], \mathcal{H}^2)$ is a solution of \eqref{eq:llg}, then both $\overline{g}^\iota$ given by Lemma \ref{lem:decomp_time} are Lipschitz and satisfy, for a.e. $t \in [0,T]$,
    \begin{equation*}
        \abs{\dot{\overline{g}}^\iota (t) - \dot{g}_*^\iota (t)} \leq C \bigl( \norm{\varepsilon (t)}_{H^1} + q(\overline{y}^+ (t) - \overline{y}^- (t)) \bigr),
    \end{equation*}
\end{lem}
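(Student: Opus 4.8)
The approach is the modulation computation of \cite{Cote_Ignat__stab_DW_LLG_DM} for a single domain wall, now carried out for two walls, all the genuinely two-wall contributions being shown to be $O\!\left((\overline y^+-\overline y^-)e^{-\Gamma(\overline y^+-\overline y^-)}\right)$. Write $w^\iota\coloneqq\overline g^\iota.w_*^\iota$, so that $m=w^++w^-+e_1+\varepsilon$, and set $\varrho(t)\coloneqq\|\varepsilon(t)\|_{H^1}+(\overline y^+(t)-\overline y^-(t))\,e^{-\Gamma(\overline y^+(t)-\overline y^-(t))}$; by Lemma \ref{lem:decomp_time} and the hypotheses, $\|\varepsilon\|_{H^1}$ is small and $\overline y^+-\overline y^-\ge 2y_0$ is large. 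For the Lipschitz regularity: since $m\in\mathscr C(I_T,\mathcal H^2)$ solves \eqref{eq:llg} and $\delta E_\gamma$ is linear with $\delta E_\gamma(m)\in L^2$, $\partial_x m\in L^2$ and $m\wedge e_1\in L^2$ whenever $m\in\mathcal H^2$, the right-hand side $m\wedge H(m)-\alpha\,m\wedge(m\wedge H(m))$ lies in $L^2$ with norm bounded on every compact subinterval (using $h\in L^\infty$); hence $m\in W^{1,\infty}_{\mathrm{loc}}(I_T,L^2)$. The gauges solve $\mathcal F(m(t),\overline g^+(t),\overline g^-(t))=0$, and $\mathcal F(m,g)=\overline{\mathcal F}(m-P_{g^+,g^-},g)$ is a smooth function of $(m,g)\in L^2\times G^2$, affine in $m$ (Proposition \ref{prop:lin_est_F}), whose $g$-differential stays within a small neighbourhood of the invertible matrix $A$ along the curve (Lemma \ref{lem:est_DF_A}, using the smallness of $\|\varepsilon\|_{H^1}$ and the largeness of $\overline y^+-\overline y^-$); the implicit function theorem then yields $\overline g^\pm\in W^{1,\infty}_{\mathrm{loc}}(I_T,G)$, so $\dot{\overline g}^\pm$ exist for a.e.\ $t$.

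From $g=(y,\phi)$ and $g.w=R_\phi\tau_y w$ one has $\partial_t w^\iota=\dot{\overline\phi}^\iota\,(e_1\wedge w^\iota)-\dot{\overline y}^\iota\,\partial_x w^\iota$, hence $\partial_t m=\sum_{\iota\in\{\pm\}}\bigl(\dot{\overline\phi}^\iota\,(e_1\wedge w^\iota)-\dot{\overline y}^\iota\,\partial_x w^\iota\bigr)+\partial_t\varepsilon$. The plan is to pair the identity $\partial_t m=m\wedge H(m)-\alpha\,m\wedge(m\wedge H(m))$ in $L^2$ with the four exponentially localised functions $\partial_x w^\iota$ and $e_1\wedge w^\iota$, $\iota\in\{\pm\}$. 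On the left, the $\partial_t w$-part produces, by Lemma \ref{lem:int_w}, the two diagonal $2\times2$ blocks $-\tfrac{2}{\Gamma}\bigl(\begin{smallmatrix}1&\gamma\\-\gamma&-1\end{smallmatrix}\bigr)$ acting on $(\dot{\overline y}^\iota,\dot{\overline\phi}^\iota)$, plus off-diagonal entries of size $O\!\left((\overline y^+-\overline y^-)e^{-\Gamma(\overline y^+-\overline y^-)}\right)$ by Corollary \ref{cor:est_w_orthogonal}; the $\partial_t\varepsilon$-part, after using $\tfrac{d}{dt}\int\varepsilon\cdot\partial_x w^\iota=\tfrac{d}{dt}\int\varepsilon\cdot(e_1\wedge w^\iota)=0$ (the orthogonality of Lemma \ref{lem:decomp_time}), becomes $-\int\varepsilon\cdot\partial_t(\partial_x w^\iota)$ and its analogue, which is $O(|\dot{\overline g}^\iota|\,\|\varepsilon\|_{L^2})$ since $\partial_t w^\iota$ is a combination of exponentially localised functions weighted by $\dot{\overline g}^\iota$.

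On the right I would use that $\delta E_\gamma$ is linear with $\delta E_\gamma(e_1)=0$ and $\delta E_\gamma(w^\iota)=\beta_*^{\overline g^\iota}w^\iota$ — the $G$-equivariant form of $\delta E_\gamma(w_*^\sigma)=\beta_* w_*^\sigma$ from \eqref{eq:beta_*}, with $\beta_*^{\overline g^\iota}$ the translate of $\beta_*=2\Gamma^2\sin^2\theta_*$ localised near $\overline y^\iota$ — to write $H(m)=-\beta_*^{\overline g^+}w^+-\beta_*^{\overline g^-}w^--\delta E_\gamma(\varepsilon)+h\,e_1$. Expanding $m\wedge H(m)$ and $m\wedge(m\wedge H(m))$ with $m=w^++w^-+e_1+\varepsilon$ and pairing against, say, $\partial_x w^+$ and $e_1\wedge w^+$ (localised near $\overline y^+$): every term carrying a factor $\beta_*^{\overline g^-}$ or $w^-+e_1$ is an interaction term of size $O\!\left((\overline y^+-\overline y^-)e^{-\Gamma(\overline y^+-\overline y^-)}\right)$ by Corollary \ref{cor:est_w_orthogonal}; every term linear in $\varepsilon$ is $O(\|\varepsilon\|_{H^1})$ — in the $\delta E_\gamma(\varepsilon)$-contributions one integrates by parts against the smooth rapidly decaying $\partial_x w^+$ and discards the genuinely second-order piece using $a\cdot(b\wedge a)=0$, so that only $\|\varepsilon\|_{H^1}$ (not $\|\varepsilon\|_{H^2}$) is needed; the remaining main part is $h\,w^+\wedge e_1-\alpha h\,w^+\wedge(w^+\wedge e_1)$. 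Since $(t,x)\mapsto g_*^+(t).w_*^+(x)$ solves \eqref{eq:llg}, $G$-equivariance together with $\partial_t(g_*^+.w_*^+)=\dot\phi_*^{+}\,(e_1\wedge g_*^+.w_*^+)-\dot y_*\,\partial_x(g_*^+.w_*^+)$ gives, after un-gauging, the pointwise identity $h\,w_*^+\wedge e_1-\alpha h\,w_*^+\wedge(w_*^+\wedge e_1)=\dot\phi_*^{+}\,(e_1\wedge w_*^+)-\dot y_*\,\partial_x w_*^+$; pairing this with $\partial_x w_*^+$ and $e_1\wedge w_*^+$ and invoking Lemma \ref{lem:int_w} once more shows that the main part of the right-hand side equals precisely the same $2\times2$ block applied to $\dot g_*^+=(\dot y_*,\dot\phi_*^{+})$. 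The analogous statement holds with $+$ replaced by $-$.

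Collecting the four equations, and recalling that $A$ — hence each $2\times2$ block — is invertible because $\gamma^2<1$, I obtain for $\iota\in\{\pm\}$ an estimate of the form $|\dot{\overline g}^\iota-\dot g_*^\iota|\le C\varrho(t)+C\bigl(\|\varepsilon(t)\|_{L^2}+(\overline y^+-\overline y^-)e^{-\Gamma(\overline y^+-\overline y^-)}\bigr)\,|\dot{\overline g}(t)|$. Summing over $\iota$ and using $|\dot g_*(t)|\lesssim|h(t)|\le\delta_0$ together with the smallness of $\|\varepsilon\|_{L^2}$ and the largeness of $\overline y^+-\overline y^-$ (so that the second constant is $\le\tfrac12$), the factor $|\dot{\overline g}|$ on the right is absorbed into the left-hand side, giving $|\dot{\overline g}^\iota-\dot g_*^\iota|\le C\varrho(t)$, which is the asserted bound. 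The main obstacle I expect is the bookkeeping of the right-hand side in the previous paragraph: isolating, among the many terms of $m\wedge H(m)-\alpha\,m\wedge(m\wedge H(m))$, the single main contribution near each wall that reproduces $\dot g_*^\iota$, while checking that the $\varepsilon$-linear remainder is controllable in $H^1$ only (this is where the integration by parts against $\partial_x w^\iota$ and the identity $a\cdot(b\wedge a)=0$ are essential) and that all cross-wall contributions decay like $(\overline y^+-\overline y^-)e^{-\Gamma(\overline y^+-\overline y^-)}$; the self-referential $|\dot{\overline g}|$ on the right is a minor point, handled by the absorption just described.
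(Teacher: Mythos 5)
Your proposal is correct and follows essentially the paper's own modulation argument: pairing the equation against the four exponentially localized directions $\partial_x w^\iota$, $e_1\wedge w^\iota$, differentiating the orthogonality conditions to control the $\partial_t\varepsilon$ contribution by $\|\varepsilon\|_{H^1}|\dot{\overline g}|$, using Lemma \ref{lem:int_w} for the main $2\times2$ blocks and Corollary \ref{cor:est_w_orthogonal} for the cross-wall terms, and then inverting (or absorbing) the small perturbation of the invertible block matrix. The only cosmetic deviations are that you identify the leading forcing term with $\dot g_*^\iota$ by invoking that the exact precessing wall solves \eqref{eq:llg}, rather than computing the explicit vector $\tfrac{2h}{\Gamma}(-\gamma\pm\alpha\Gamma,1)$ as the paper does, and that you justify the a.e.\ differentiability directly at the $\mathcal H^2$ level via an $L^2$-based implicit-function argument instead of the paper's regularization to $\mathcal H^3$ followed by a limiting argument.
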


\begin{proof}
    Let assume first that $m \in \mathscr{C} ([0,T], \mathcal{H}^3)$. From \eqref{eq:llg}, we get $\partial_t m \in \mathscr{C} ([0,T], H^1)$. Therefore, both $\overline{g}^\iota$ given by Lemma \ref{lem:decomp_time} are $\mathscr{C}^1 ([0,T])$. Then, we can compute using the fact that $\varepsilon \in \mathscr{C}^1 ([0,T], H^1)$:
    \begin{equation*}
        \partial_t m = \partial_t \varepsilon - \dot{\overline{y}^+} \overline{g}^+ . \partial_x w_*^+ + \dot{\overline{\phi}^+} e_1 \wedge \overline{g}^+ . w_*^+ - \dot{\overline{y}^-} \overline{g}^- . \partial_x w_*^- + \dot{\overline{\phi}^-} e_1 \wedge \overline{g}^- . w_*^-
    \end{equation*}
    Then, $\delta E$ is linear and $\delta E (w_*^\pm) = \beta_* w_*^\pm$, so
    \begin{equation*}
        H(m) = - \overline{g}^+ . (\beta_* w^+) - \overline{g}^- . (\beta_* w_*^-) - \delta E (\varepsilon) + h (t) e_1,
    \end{equation*}
    with $\delta E (\varepsilon) = O_2^1 (\varepsilon)$.
    Then, we also have
    \begin{align*}
        m \wedge H(m) &=
            \begin{multlined}[t]
                h(t) \Bigl[ \overline{g}^+ . w_*^+ \wedge e_1 + \overline{g}^- . w_*^- \wedge e_1 \Bigr] \\
                \begin{aligned}
                    &- \Bigl[ \Bigl( \overline{g}^- . w_*^- + e_1 \Bigr) \wedge \overline{g}^+ . (\beta_* w_*^+) + (\overline{g}^+ . w_*^+ + e_1) \wedge \Bigl( g^- . (\beta_* w_*^-) \Bigr) \Bigr] \\
                    &+ h(t) \varepsilon \wedge e_1 - \varepsilon \wedge \Bigl[ \overline{g}^+ . (\beta_* w_*^+) + \overline{g}^- . (\beta_* w_*^-) \Bigr]
                    - P_{\overline{g}^+, \overline{g}^-} \wedge \delta E (\varepsilon) - \varepsilon \wedge \delta E (\varepsilon)
                \end{aligned}
            \end{multlined} \\
            &= h(t) \Bigl[ \overline{g}^+ . w_*^+ \wedge e_1 + \overline{g}^- . w_*^- \wedge e_1 \Bigr] + O (f_{\overline{y}^+, \overline{y}^-} (x)) + O_2^1 (\varepsilon) + O_2^2 (\varepsilon),
    \end{align*}
    by using Corollary \ref{cor:est_w_orthogonal} and where
    \begin{equation*}
        f_{\overline{y}^+, \overline{y}^-} (x) = e^{- \Gamma (y^+ - y^-) } \times
        \begin{cases}
            e^{-\Gamma (x-y^+)} &\qquad \text{if } x \geq y^+ \\
            1 &\qquad \text{if } y \in [y^-, y^+] \\
            e^{\Gamma (x + y^-)} &\qquad \text{if } x \leq y^-
        \end{cases}.
    \end{equation*}
    Last, there also holds
\[
        m \wedge (m \wedge H(m)) = h(t) \Bigl[ \overline{g}^+ . (w_*^+ \wedge (w_*^+ \wedge e_1) + g^- . (w_*^- \wedge (w_*^- \wedge e_1)) \Bigr]+ O ( f_{\overline{y}^+, \overline{y}^-} (x) ) + O_2^1 (\varepsilon) + O_2^2 (\varepsilon) + O_2^3 (\varepsilon).
\]   
    Hence,
    \begin{align}
        \partial_t \varepsilon & = \dot{\overline{y}^+} \overline{g}^+ . \partial_x w_*^+ - \dot{\overline{\phi}^+} (e_1 \wedge \overline{g}^+ . w_*^+) + \dot{\overline{y}^-} (\overline{g}^- . \partial_x w_*^-) - \dot{\overline{\phi}^-} e_1 \wedge (\overline{g}^- . w_*^-)  \nonumber \\
                & \qquad + h(t) \Bigl[ \overline{g}^+ . w_*^+ \wedge e_1 + \overline{g}^- . w_*^- \wedge e_1 \Bigr] - \alpha h(t) \Bigl[ \overline{g}^+ . (w_*^+ \wedge (w_*^+ \wedge e_1)) + \overline{g}^- . (w_*^- \wedge (w_*^- \wedge e_1)) \Bigr] \nonumber \\
                & \qquad + O ( f_{\overline{y}^+, \overline{y}^-} (x) ) + O_2^1 (\varepsilon) + O_2^2 (\varepsilon) + O_2^3 (\varepsilon).  \label{eq:dt_eps}
    \end{align}
    From this equation, we can derive the first order of $\dot{\overline{g}}^\iota$. First, recall that $\overline{\mathcal{F}} (\varepsilon, \overline{g}^+, \overline{g}^-) = 0$. By differentiating with respect to $t$, we thus get for example
    \begin{align*}
        \int \partial_t \varepsilon \cdot (g^+ . \partial_x w_*^+) \diff x &= - \int \varepsilon \cdot \partial_t (g^+ . \partial_x w_*^+) \diff x \\
            &= \dot{\overline{y}}^+ \int \varepsilon \cdot (g^+ . \partial^2_{xx} w_*^+) \diff x - \dot{\overline{\phi}}^+ \int \varepsilon \cdot (e_1 \wedge g^+ . \partial_x w_*^+) \diff x \eqqcolon M_1 (\varepsilon)
            \begin{pmatrix}
                \dot{\overline{y}}^+ \\
                \dot{\overline{\phi}}^+
            \end{pmatrix},
    \end{align*}
    where the $2 \times 2$ matrix $M_1 (\varepsilon)$ satisfies $\norm{M_1 (\varepsilon)} \leq C \norm{\varepsilon}_{H^1}$. More generally, we obtain
    \begin{equation} \label{eq:F_dt_eps_1}
        \overline{\mathcal{F}} (\partial_t \varepsilon, \overline{g}^+, \overline{g}^-) = M_0 (\varepsilon) 
            \begin{pmatrix}
                \dot{\overline{y}}^+ \\
                \dot{\overline{\phi}}^+ \\
                \dot{\overline{y}}^- \\
                \dot{\overline{\phi}}^-
            \end{pmatrix},
    \end{equation}
    with a $4 \times 4$ matrix $M_0 (\varepsilon)$ such that $\norm{M_0 (\varepsilon)} \leq C \norm{\varepsilon}_{H^1}$.
    On the other hand, we can also compute $\overline{\mathcal{F}} (\partial_t \varepsilon, \overline{g}^+, \overline{g}^-)$ with the relation \eqref{eq:dt_eps}, and use Lemma \ref{lem:int_w} for the zeroth order terms, Corollary \ref{cor:est_w_orthogonal} for the terms involving both $w_*^+$ and $w_*^-$, Proposition \ref{prop:lin_est_F} for the terms in $O ( f_{\overline{y}^+, \overline{y}^-} (x) )$ and \cite[Claim~4.9]{Cote_Ignat__stab_DW_LLG_DM} for $O_2^\ell (\varepsilon)$. For example, one of the terms involving both $w_*^+$ and $w_*^-$ is
    \begin{equation*}
        \int (\overline{g}^- . \partial_x w_*^-) \cdot (\overline{g}^+ . \partial_x w_*^+) \diff x,
    \end{equation*}
    and from Corollary \ref{cor:est_w_orthogonal2}, we can estimate
    \begin{equation*}
        \abs{\int (\overline{g}^- . \partial_x w_*^-) \cdot (\overline{g}^+ . \partial_x w_*^+) \diff x} \leq C q(\overline{y}^+ - \overline{y}^-).
    \end{equation*}
    From this, we obtain
    \begin{equation} \label{eq:F_dt_eps_2}
        \overline{\mathcal{F}} (\partial_t \varepsilon, \overline{g}^+, \overline{g}^-) = \frac{2}{\Gamma} (B + B_0) 
            \begin{pmatrix}
                \dot{\overline{y}}^+ \\
                \dot{\overline{\phi}}^+ \\
                \dot{\overline{y}}^- \\
                \dot{\overline{\phi}}^-
            \end{pmatrix}
            + \frac{2 h(t)}{\Gamma}
            \begin{pmatrix}
                - \gamma + \alpha \Gamma \\
                1 \\
                - \gamma - \alpha \Gamma \\
                1
            \end{pmatrix}
            + O \bigl( q(\overline{y}^+ - \overline{y}^-) + \norm{\varepsilon}_{H^1} + \norm{\varepsilon}_{H^1}^3 \bigr),
    \end{equation}
    where
    \begin{equation*}
        B = 
        \begin{pmatrix}
            1 & \gamma & 0 & 0 \\
            - \gamma & - 1 & 0 & 0 \\
            0 & 0 & 1 & \gamma \\
            0 & 0 & - \gamma & - 1
        \end{pmatrix}
        =
        \begin{pmatrix}
            B_\gamma & 0_2 \\
            0_2 & B_{\gamma}
        \end{pmatrix}, \qquad
        B_{\gamma} = 
            \begin{pmatrix}
                1 & \gamma \\
                - \gamma & - 1 
            \end{pmatrix}
    \end{equation*}
    and $B_0$ is a $4 \times 4$ matrix which satisfies $\norm{B_0} \leq C q(\overline{y}^+ - \overline{y}^-)$. Hence, \eqref{eq:F_dt_eps_1} and \eqref{eq:F_dt_eps_2} imply
    \begin{equation*}
        (B + \tilde{B}_0) 
            \begin{pmatrix}
                \dot{\overline{y}}^+ \\
                \dot{\overline{\phi}}^+ \\
                \dot{\overline{y}}^- \\
                \dot{\overline{\phi}}^-
            \end{pmatrix}
            = - h(t)
            \begin{pmatrix}
                - \gamma + \alpha \Gamma \\
                1 \\
                - \gamma - \alpha \Gamma \\
                1
            \end{pmatrix}
            + O \bigl( q(\overline{y}^+ - \overline{y}^-)  + \norm{\varepsilon}_{H^1} + \norm{\varepsilon}_{H^1}^3 \bigr),
    \end{equation*}
    with
    \begin{equation*}
        \norm{\tilde B_0} \leq C \Bigl( q(\overline{y}^+ - \overline{y}^-) + \norm{\varepsilon}_{H^1} \Bigr) \leq C \bigl( q(2y_0) + \delta \bigr).
    \end{equation*}
    We know that $B_{\gamma}$ is invertible because $\gamma^2 < 1$, with inverse $\Gamma^{- 2} B_{\gamma}$, and thus $B$ is also invertible. Therefore, as soon as $q(2y_0) + \delta$ is small enough, $B + \tilde B_0$ is invertible with inverse
    \begin{equation*}
        (B + \tilde B_0)^{-1} = \frac{1}{\Gamma^2}
        \begin{pmatrix}
            B_{\gamma} & 0_2 \\
            0_2 & B_{- \gamma}
        \end{pmatrix}
        + O \bigl( q(\overline{y}^+ - \overline{y}^-) + \norm{\varepsilon}_{H^1} \bigr),
    \end{equation*}
    which leads to
    \begin{equation*}
        (\dot{\overline{g}}^+, \dot{\overline{g}}^-) - (\dot{g}_*^+, \dot{g}_*^-) = O \bigl( q(\overline{y}^+ - \overline{y}^-) + \norm{\varepsilon}_{H^1} + \norm{\varepsilon}_{H^1}^3 \bigr),
    \end{equation*}
    hence the conclusion for the $\mathcal{H}^3$ case. We point out that this estimate only depends on the $H^1$ norm of $\varepsilon$. Therefore, for the general case $m \in \mathscr{C} ([0,T], \mathcal{H}^2)$, we can use a limiting argument: we refer to the proof of Proposition 4.11 (step 4) in \cite{Cote_Ignat__stab_DW_LLG_DM} for further details.
\end{proof}

Lemma \ref{lem:decomp_magn} gathers the content of the above Lemmas \ref{lem:decomp_time} and \ref{lem:64}.

\section*{Acknowledgements}

The authors acknowledge partial support by the ANR project MOSICOF ANR-21-CE40-0004. The research of R.C. has benefitted from support provided by the University of Strasbourg Institute for Advanced Study (USIAS) for a Fellowship within the French national programme ``Investment for the future'' (IdEx-Unistra). The authors would like to thank Clémentine Courtès and Yannick Privat for several discussions during the preparation of this article.

\printindex

\bibliographystyle{abbrv}
\bibliography{sample}

\end{document}